\renewcommand\theequation{\thesection.\arabic{equation}}
\newcommand{\BA}{{\mathbb {A}}}
\newcommand{\BC}{{\mathbb {C}}}
\newcommand{\BF}{{\mathbb {F}}}
\newcommand{\BN}{{\mathbb {N}}}
\newcommand{\BR}{{\mathbb {R}}}
\newcommand{\BZ}{{\mathbb {Z}}}
\newcommand{\CF}{{\mathcal {F}}}
\newcommand{\CL}{{\mathcal {L}}}
\newcommand{\CO}{{\mathcal {O}}}
\newcommand{\CP}{{\mathcal {P}}}
\newcommand{\CQ}{{\mathcal {Q}}}
\newcommand{\CT}{{\mathcal {T}}}
\newcommand{\CX}{{\mathcal {X}}}
\newcommand{\CY}{{\mathcal {Y}}}
\newcommand{\CZ}{{\mathcal {Z}}}
\newcommand{\Fa}{{\mathfrak {a}}}
\newcommand{\Fg}{{\mathfrak {g}}}
\newcommand{\Fh}{{\mathfrak {h}}}
\newcommand{\Fl}{{\mathfrak {l}}}
\newcommand{\Fm}{{\mathfrak {m}}}
\newcommand{\Fo}{{\mathfrak {o}}}
\newcommand{\Fp}{{\mathfrak {p}}}
\newcommand{\Ft}{{\mathfrak {t}}}
\newcommand{\Fu}{{\mathfrak {u}}}
\newcommand{\Fz}{{\mathfrak {z}}}
\newcommand{\GL}{{\mathrm{GL}}}
\newcommand{\Hom}{{\mathrm{Hom}}}
\newcommand{\Ind}{{\mathrm{Ind}}}
\newcommand{\ind}{{\mathrm{ind}}}
\newcommand{\PGL}{{\mathrm{PGL}}}
\newcommand{\tr}{{\mathrm{tr}}}
\newcommand{\ol}{\overline}
\newcommand{\cg}{C_{c}^{\infty} (Z_G(F)\backslash G(F))}
\newcommand{\zg}{Z_G(F)\backslash G(F)}
\newcommand{\zh}{Z_H(F)\backslash H(F)}
\newcommand{\gf}{{}^g f^{\xi}}
\def\diag{{\rm diag}}
\newtheorem{thm}{Theorem}[section]
\newtheorem{cor}[thm]{Corollary}
\newtheorem{lem}[thm]{Lemma}
\newtheorem{prop}[thm]{Proposition}
\newtheorem {conj}[thm]{Conjecture}
\newtheorem {ques/conj}[thm]{Question/Conjecture}
\newtheorem{defn}[thm]{Definition}
\newtheorem{rmk}[thm]{Remark}
\newcommand{\Rmnum}[1]{\expandafter\@slowromancap\romannumeral #1@}
\begin{document}
\renewcommand{\theequation}{\arabic{equation}}
\numberwithin{equation}{section}

\title[On the Ginzburg-Rallis models]{A local relative trace formula for the Ginzburg-Rallis model: the geometric side.}

\author{Chen Wan}
\address{School of Mathematics\\
University of Minnesota\\
Minneapolis, MN 55455, USA}
\email{wanxx123@umn.edu}

\subjclass[2010]{Primary 22E35, 22E50}
\date{\today}
\keywords{Harmonic Analysis on Spherical Variety, Representation of p-adic Group, Local Trace Formula, Multiplicity One on Vogan Packet}

\begin{abstract}
Following the method developed by Waldspurger and Beuzart-Plessis in their proofs of the local Gan-Gross-Prasad conjecture, we are able to prove the geometric side of a local relative trace formula for the Ginzburg-Rallis model. Then by applying such formula, we prove a multiplicity formula of the Ginzburg-Rallis model for the supercuspidal representations. Using that multiplicity formula, we prove the multiplicity one theorem for
the Ginzburg-Rallis model over Vogan packets in the supercuspidal case.
\end{abstract}

\maketitle

\tableofcontents

\section{Introduction and Main Result}

\subsection{The Ginzburg-Rallis model}
D. Ginzburg and S. Rallis found in their paper (\cite{GR00}) a global integral representation for the partial exterior cube L-function $L^S(s,\pi,\wedge^3)$ attached to any irreducible cuspidal automorphic representation $\pi$ of $\GL_6(\BA)$. By using the regularized Siegel-Weil formula of Kudla and Rallis(\cite{KR94}), they discovered that the nonvanishing of the central value of the partial exterior cube L-function $L^S(\frac{1}{2},\pi,\wedge^3)$ is closely related to the Ginzburg-Rallis period, which will be defined as follows.
The relation they discovered is similar to the global Gan-Gross-Prasad conjecture (\cite{GP92}, \cite{GP94}, \cite{GGP12}), but for a different setting.

Let $k$ be a number field, $\BA$ be the ring of adeles of $k$. Take $P=P_{2,2,2}=MU$ be the standard parabolic subgroup of $G=\GL_6$ whose Levi part $M$ is isomorphic to $\GL_2\times \GL_2\times \GL_2$, and whose unipotent radical $U$ consists of elements of the form
\begin{equation}\label{unipotent}
u=u(X,Y,Z):=\begin{pmatrix} I_2 & X & Z \\ 0 & I_2 & Y \\ 0 & 0 & I_2 \end{pmatrix}.
\end{equation}
We define a character $\xi$ on $U$ by
\begin{equation}\label{character}
\xi(u(X,Y,Z)):=\psi(a\tr(X)+b\tr(Y))
\end{equation}
where $\psi$ is a non-trivial additive character on $k\backslash\BA$, and $a,b\in \BA^{\times}$.

It's clear that the stabilizer of $\xi$ is the diagonal embedding of $\GL_2$ into $M$, which is denoted by $H$. For a given idele character $\chi$ of $\BA^{\times}/k^{\times}$, one induces a one dimensional representation $\sigma$ of $H(\BA)$ given by $\sigma(h):=\chi(\det(h))$, which is clearly trivial when restricted to $H(k)$. Now the character $\xi$ can be extended to the semi-direct product
\begin{equation}
R:=H\ltimes U
\end{equation}
by making it trivial on $H$. Similarly we can extend the character $\sigma$ to $R$ It follows that the one dimensional representation $\sigma\otimes \xi$ of $R(\BA)$ is well defined and it is trivial when restricted to the $k$-rational points $R(k)$. Then the Ginzburg-Rallis period for any cuspidal automorphic form $\phi$ on $\GL_6(\BA)$ with central character $\chi^2$ is defined to be
\begin{equation}\label{GR period 1}
\CP_{R,\sigma\otimes \xi}=\int_{H(k)Z_G(\BA)\backslash H(\BA)}\int_{U(k)\backslash U(\BA)}  \phi(hu)\xi^{-1}(u)\sigma^{-1}(h) du dh.
\end{equation}

As in the Jacquet conjecture for the trilinear period of $\GL_2$ (\cite{HK04}) and in the global Gan-Gross-Prasad conjecture
(\cite{GGP12}) more generally, Ginzburg and Rallis find that the central value of
the partial exterior cube L-function, $L^S(\frac{1}{2},\pi,\wedge^3)$ may also be related to the quaternion algebra version of the
Ginzburg-Rallis period $\CP_{R,\sigma\otimes \xi}$. More precisely, let $D$ be a quaternion algebra over $k$, and consider $G_D:=\GL_3(D)$,
a $k$-inner form of $\GL_6$. In the group $G_D$, they define
\begin{equation}
H_D=\{h_D=\begin{pmatrix} g & 0 & 0 \\ 0 & g & 0 \\ 0 & 0 & g \end{pmatrix}\mid g\in D^{\times}\}
\end{equation}
and
\begin{equation}
U_D=\{u_D(x,y,z)=\begin{pmatrix} 1 & x & z \\ 0 & 1 & y \\ 0 & 0 & 1 \end{pmatrix} \mid x,y,z \in D\}.
\end{equation}

In this case, the corresponding character $\xi_D$ of $U_D$ is defined in same way except that the trace in the definition of $\xi$ is replaced
by the reduced trace of the quaternion algebra $D$. Similarly, the character $\sigma_D$ on $H_D$ is defined by using the reduced norm of the quaternion algebra $D$. Now the subgroup $R_D$ is defined to be the semi-direct product $H_D\ltimes U_D$ and the corresponding one dimensional representation $\sigma_D\otimes \xi_D$ of $R_D(\BA)$ is well defined. The $D$-version of the Ginzburg-Rallis period for any cuspidal automorphic form
$\phi^D$ on $\GL_3(D)(\BA)$ with central character $\chi^2$ is defined to be
\begin{equation}\label{GR period 2}
\CP_{R_D,\sigma_D\otimes \xi_D}:=\int_{H_D(k)Z_{G_D}(\BA)\backslash H_D(\BA)}\int_{U_D(k)\backslash U_D(\BA)}
\phi^D(hu)\xi_{D}^{-1}(u)\sigma_{D}^{-1}(h) du dh.
\end{equation}

In \cite{GR00}, they form a conjecture on the relation between the periods above and the central value $L^S(\frac{1}{2},\pi,\wedge^3)$.

\begin{conj}[Ginzburg-Rallis, \cite{GR00}]\label{global}
Let $\pi$ be an irreducible cuspidal automorphic representation of $\GL_6(\BA)$ with central character $\omega_{\pi}$. Assume that there exists an idele character $\chi$ of $\BA^{\times}/k^{\times}$ such that $\omega_{\pi}= \chi^2$. Then the central value $L^S(\frac{1}{2},\pi,\Lambda^3)$ does not vanish if and only if there exists a unique quaternion algebra $D$ over $k$ and there exists the Jacquet-Langlands correspondence $\pi_D$ of $\pi$ from $\GL_6(\BA)$ to $\GL_3(D)(\BA)$, such that the period $\CP_{R_D,\sigma_D\otimes \xi_D}(\phi^D)$ does not vanish for some $\phi^D\in \pi_D$, and the period $\CP_{R_{D'},\sigma_{D'}\otimes \xi_{D'}}(\phi^{D'} )$ vanishes identically for all
quaternion algebra $D'$ which is not isomorphic to $D$ over k, and for all $\phi^{D'}\in \pi_{D'}$.
\end{conj}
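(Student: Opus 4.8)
The plan is to follow the architecture of the Gan--Gross--Prasad program as realized by Waldspurger and Beuzart-Plessis: reduce this global non-vanishing statement to a \emph{local} dichotomy over the Vogan packet — here indexed by the quaternion algebras $D_v/k_v$, equivalently by the pair $\{\GL_6(k_v),\GL_3(D_v)\}$ — together with a \emph{global} Ichino--Ikeda-type identity that expresses a suitably regularized Ginzburg--Rallis period as an Euler product of local period integrals weighted by $L^S(\tfrac12,\pi,\wedge^3)$. Granting both inputs, the conjecture is essentially formal: if $L^S(\tfrac12,\pi,\wedge^3)\neq0$ then $\epsilon(\tfrac12,\pi,\wedge^3)=+1$, so the local root numbers $\epsilon_v(\tfrac12,\pi_v,\wedge^3)$ have an even number of $-1$'s and the locally preferred inner forms patch to a unique global quaternion algebra $D$; the factorization then makes $\CP_{R_D,\sigma_D\otimes\xi_D}$ non-zero on $G_D$, while at any place $v$ with $D'_v\not\cong D_v$ the local dichotomy kills every functional on $\GL_3(D')(k_v)$ and hence forces $\CP_{R_{D'},\sigma_{D'}\otimes\xi_{D'}}\equiv0$; conversely, non-vanishing of $\CP_{R_D,\sigma_D\otimes\xi_D}$ for some $D$ propagates back through the factorization to $L^S(\tfrac12,\pi,\wedge^3)\neq0$, and also pins down $D$ as the preferred global form, giving uniqueness.

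For the local side I would start from the multiplicity formula for supercuspidal representations proved in this paper, which rests on the geometric side of the local relative trace formula for the Ginzburg--Rallis model established here, and upgrade it in two stages: first to all irreducible tempered representations by combining the spectral side of the local trace formula with the Jacquet--Langlands character identities relating $\GL_6(k_v)$ and $\GL_3(D_v)$, then to all generic (Langlands-quotient) representations via parabolic induction and the theory of standard modules. The target is the dichotomy $m(\pi_v)+m(\pi_{D,v})=1$ for irreducible tempered $\pi_v$ and its Jacquet--Langlands transfer $\pi_{D,v}$, together with the identification of the non-zero member by the sign of the local exterior-cube root number — the exact analogue of the $\epsilon$-dichotomy of Gan--Gross--Prasad. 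This also demands the explicit evaluation of the local weighted integrals appearing on the geometric side, in the spirit of Waldspurger's computations.

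For the global side I would return to the Ginzburg--Rallis global zeta integral of \cite{GR00} and the regularized Siegel--Weil formula of \cite{KR94}. After establishing the required convergence and regularization of \eqref{GR period 1} (and of \eqref{GR period 2} on each $G_D$), unfolding should produce a factorization of the (normalized) period into local functionals times $L^S(\tfrac12,\pi,\wedge^3)$ divided by an appropriate ratio of local $L$-factors; one then verifies that at almost all places the local functional is the unramified one and is non-zero, and that the local functionals occurring in the unfolded integral coincide with the abstract $(R_v,\sigma_v\otimes\xi_v)$-functionals governed by the local dichotomy. Assembling this identity with the local results yields both implications of the conjecture as outlined above, the uniqueness of $D$ and the vanishing on all $D'\not\cong D$ emerging from the place-by-place dichotomy.

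The hard part will be the global factorization step. The Ginzburg--Rallis integral is not an ordinary Rankin--Selberg integral: it involves a degenerate Eisenstein series and the Siegel--Weil formula in a delicate range, so proving a clean Ichino--Ikeda identity — controlling the regularization, pinning down the precise central $L$-value together with its normalizing local factors, and matching the unfolded local functionals with the trace-formula ones — is substantially more involved than in the Bessel or Fourier--Jacobi settings treated by Beuzart-Plessis. A second, serious obstacle is completing the local theory beyond the supercuspidal case handled here, since the spectral expansion and the root-number identification for non-supercuspidal tempered representations lie outside the scope of the geometric side developed in this paper; it is exactly for that reason that the present work restricts to the supercuspidal multiplicity formula and to multiplicity one over Vogan packets in the supercuspidal case.
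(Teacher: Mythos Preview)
The statement you were asked to prove is Conjecture~\ref{global}, and the paper does \emph{not} prove it. It is stated purely as background and motivation for the local Conjecture~\ref{jiang}; the paper's actual results are Theorem~\ref{main} (the local multiplicity-one statement over Vogan packets for supercuspidal $\pi$) and the geometric side of the local relative trace formula (Theorems~\ref{main 1} and~\ref{main 3}). There is therefore no ``paper's own proof'' to compare your proposal against.

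That said, your proposal is a coherent outline of the expected Gan--Gross--Prasad-style program for attacking the global conjecture, and you correctly identify the two main missing ingredients: a global Ichino--Ikeda-type factorization of the Ginzburg--Rallis period into $L^S(\tfrac12,\pi,\wedge^3)$ times local functionals, and a local $\epsilon$-dichotomy pinning down which member of $\{\GL_6(k_v),\GL_3(D_v)\}$ carries the non-zero functional. Neither of these is established in the paper (or, to my knowledge, anywhere in the literature for this model), and you are right to flag both as the hard steps. Your proposal is thus not a proof but a conditional roadmap; the honest answer here is that Conjecture~\ref{global} remains open and the paper makes no claim otherwise.
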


It is clear that this conjecture is an analogy of the global Gan-Gross-Prasad conjecture for classical groups (\cite{GGP12}) and the Jacquet conjecture for the triple product L-functions for $GL_2$, which is proved by M. Harris and S. Kudla in \cite{HK04}. It is also clear that Conjecture \ref{global} is now
a special case of the general global conjecture of Y. Sakellaridis and A. Venkatesh for periods associated to general spherical varieties (\cite{SV}).

Similarly to the Gan-Gross-Prasad model, there is also a local conjecture for the Ginzburg-Rallis model, which is the main result of this paper. The conjecture at local places has been expected since the work of \cite{GR00}, and was first discussed in details by Dihua Jiang in his paper \cite{J08}. Now let $F$ be a local field (p-adic field or real field), $D$ be the unique quaternion algebra over $F$. Then we may also
define the groups $H, U, R, H_D, U_D$, and $R_D$ as above. The local conjecture can be stated as follows, using
the local Jacquet-Langlands correspondence established in \cite{DKV84}.

\begin{conj}[Jiang, \cite{J08}]\label{jiang}
For any irreducible admissible representation $\pi$ of $\GL_6(F)$, let $\pi_D$ be the local Jacquet-Langlands correspondence of $\pi$ to $\GL_3(D)$ if it exists, and zero otherwise. Assume that there exists a character $\chi$ of $F^{\times}$ such that $\omega_{\pi}=\chi^2$.
For a given non-trivial additive character $\psi$ of $F$, define the one dimensional representation $\sigma\otimes \xi$ of $R(F)$ and $\sigma_D\otimes \xi_D$ of $R_D(F)$, respectively. Then the following identity
\begin{equation}\label{equation 1}
\dim (\Hom_{R(F)} (\pi, \sigma\otimes \xi))+ \dim (\Hom_{R_D(F)} (\pi_D, \sigma_D \otimes \xi_D))=1
\end{equation}
holds for all irreducible generic representation $\pi$ of $\GL_6(F)$.
\end{conj}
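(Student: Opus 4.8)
The plan is to follow the strategy of Waldspurger and Beuzart-Plessis for the Gan--Gross--Prasad conjecture, transporting it to the Ginzburg--Rallis setting, and to deduce Conjecture~\ref{jiang} as a consequence of a multiplicity formula expressing $\dim\Hom_{R(F)}(\pi,\sigma\otimes\xi)$ as an explicit integral $m_{\rm geom}(\pi)$ of the Harish-Chandra character of $\pi$ against a suitable weighted orbital-integral-type kernel on the "elliptic" locus of a Cartan-type subset of $H\backslash G / H$ (more precisely of the relevant categorical quotient attached to the reductive symmetric-type structure underlying $R\ltimes U$). The first step is to establish, for tempered $\pi$ (and in particular for supercuspidal $\pi$, which is what is ultimately needed), the \emph{geometric expansion of the multiplicity}: $\dim\Hom_{R(F)}(\pi,\sigma\otimes\xi) = m_{\rm geom}(\pi)$, and the analogous identity for $\pi_D$ over $G_D$. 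This is done by proving a local relative trace formula: one integrates the truncated kernel $K_f$ attached to a test function $f\in \cg$ (matrix coefficient of a supercuspidal $\pi$) against $\sigma\otimes\xi$ on $R(F)\times R(F)$, computes it spectrally to get $\sum_\pi \dim\Hom(\pi,\sigma\otimes\xi)\,\cdot(\text{multiplicity of }\bar\pi)$-type contributions, and computes it geometrically via Arthur's truncation and the theory of $(G,M)$-families to get $m_{\rm geom}$. The comparison of the two expansions, applied to a single supercuspidal $\pi$, isolates the multiplicity formula.

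The second step, once the two multiplicity formulas are in hand, is the \emph{summation over the Vogan packet}: one must show $m_{\rm geom}(\pi) + m_{\rm geom}(\pi_D) = 1$ for $\pi$ supercuspidal (with $\pi_D$ its Jacquet--Langlands transfer, or $0$). The key mechanism is that $m_{\rm geom}$ is given by integrating the character over a geometric space, and the geometric spaces for $G=\GL_6$ and for $G_D=\GL_3(D)$ are "complementary": the relevant Cartan subsets on the two sides, together with the two characters $\Theta_\pi$ and $\Theta_{\pi_D}$ related by the Jacquet--Langlands character identity $\Theta_{\pi_D} = (-1)^{?}\,\Theta_\pi$ on matching elliptic elements, combine so that the sum of the two integrals telescopes to the single "point mass at the identity" contribution, which evaluates to $1$. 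Concretely I would (a) identify explicitly the conjugacy classes in $R\backslash G / R$ (resp. $R_D\backslash G_D / R_D$) supporting the geometric side, using that the open double coset and its boundary strata are governed by $\GL_2$-data; (b) invoke the explicit germ expansion of $\Theta_\pi$ near each such class, as in the Gan--Gross--Prasad proofs, together with the matching of weighted orbital integrals; and (c) exploit that exactly one of $\pi,\pi_D$ is nonzero-and-"in the right packet" with a sign that makes one of the two $m_{\rm geom}$'s carry the unit contribution.

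A few auxiliary inputs are needed and I would treat them as black boxes cited from earlier sections: the local Jacquet--Langlands correspondence and its character relations (\cite{DKV84}); the classification of tempered/supercuspidal representations of $\GL_n$ and the nonvanishing of the relevant local exterior-cube $\gamma$-factor issues that pin down which member of the packet is distinguished; Harish-Chandra's theory of the character as a locally integrable function with controlled germ expansions; and Arthur's $(G,M)$-family machinery needed to make sense of the weighted integrals appearing in $m_{\rm geom}$. Reducing to the supercuspidal case also lets one bypass the most delicate analytic estimates (convergence of the full trace formula for all tempered $\pi$), since matrix coefficients of supercuspidals are compactly supported modulo center, so the truncation is essentially trivial and all integrals converge absolutely.

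The main obstacle, I expect, is the \emph{geometric side of the trace formula itself} --- that is, proving that the geometric expansion of the truncated kernel exists and equals $m_{\rm geom}$, with the correct normalization of the weighted orbital integrals. Two points are genuinely hard: first, understanding the \emph{singular} geometry of the double-coset space $R\backslash G/R$ (the Ginzburg--Rallis variety is not a symmetric space, and the unipotent direction $U$ with its nondegenerate-on-a-Levi character $\xi$ makes the orbit structure and the relevant "descent" to smaller Ginzburg--Rallis-type problems for $\GL_2$ subtler than in GGP); second, controlling the non-absolutely-convergent terms that arise from the unipotent integration, which requires the delicate Arthur-style truncation and a proof that the cross-terms vanish in the limit. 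This is precisely the content that the title of the paper advertises ("the geometric side"), so I would expect the bulk of the work, and the bulk of the technical innovation over \cite{GGP12}-type arguments, to live there; the deduction of \eqref{equation 1} from the multiplicity formulas, by contrast, should be a comparatively short packet-combinatorics argument.
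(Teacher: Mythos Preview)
Your proposal is essentially correct and matches the paper's approach: the multiplicity formula $m(\pi)=m_{\rm geom}(\pi)$ is obtained by applying the geometric side of the local relative trace formula (Theorem~\ref{main 3}) to a matrix coefficient of the supercuspidal $\pi$, and then $m_{\rm geom}(\pi)+m_{\rm geom}(\pi_D)=1$ follows because the Jacquet--Langlands character relation $\theta_\pi(x)+\theta_{\pi_D}(x')=0$ cancels all contributions from the non-split tori $T_v\subset H$, leaving only the regular-germ term $c_{\theta_\pi,\CO_{reg}}(1)=1$ (by Rodier, since supercuspidals of $\GL_n$ are generic). Two small clarifications: the geometric side $m_{\rm geom}$ is an integral over a finite set $\CT$ of tori in $H\cong\GL_2$ (not over $R\backslash G/R$), and no exterior-cube $\gamma$-factor input is used anywhere in the argument.
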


As in the local Gan-Gross-Prasad conjecture (\cite{GGP12}), Conjecture \ref{jiang} can be reformulated in terms of local Vogan packets and
the assertion in the conjecture is expressed as the local multiplicity one over local Vogan packets. Here although $\GL_6(F)$ does not have non-trivial pure inner form, as we already make the central character assumption, we are actually working with $\PGL_6$ which have non-trivial pure inner form. For any quaternion algebra $D$ over $F$ which may be $F$-split, define
\begin{equation}
m(\pi_D):=m(\pi_D,\sigma_D\otimes\xi_D):=\dim (\Hom_{R_D(F)} (\pi_D, \sigma_D \otimes \xi_D)).
\end{equation}
The local multiplicity one theorem for each individual irreducible admissible representation $\pi_D$ of $\GL_3(D)$ asserts that
\begin{equation}
m(\pi_D)=m(\pi_D,\sigma_D\otimes\xi_D)\leq 1
\end{equation}
for any given $\sigma_D\otimes\xi_D$. This local multiplicity one theorem was proved in \cite{N06} over a $p$-adic local field and in \cite{JSZ11} over an archimedean local field.

\subsection{Main results}
There are two main results of this paper. One is the geometric side of a local relative trace formula for the Ginzburg-Rallis model. For simplicity, in the rest of the paper, when we say the trace formula, unless otherwise specified, we mean the local relative trace formula for the Ginzburg-Rallis model. To be specific, let $F$ be a p-adic field, $f\in C_{c}^{\infty}(\zg)$ be a strongly cuspidal function (see Section 4.1 for the definition of strongly cuspidal functions), define the function ${}^g f^{\xi}$ on $H(F)/Z_H(F)$ by
$$
{}^g f^{\xi}(x)=\int_{U(F)} f(g^{-1}xug)\xi(u) du.
$$
This is a function belonging to $C_{c}^{\infty}(Z_H(F)\backslash H(F))$. Define
\begin{equation}\label{7.1}
I(f,g)=\int_{Z_H(F)\backslash H(F)} {}^g f^{\xi}(x) dx,
\end{equation}
and for each $N\in \BN$, define
\begin{equation}\label{spectral 0}
I_N(f)=\int_{U(F)H(F)\backslash G(F)} I(f,g) \kappa_N(g) dg.
\end{equation}
Here $\kappa_N$ is a truncated function on $G(F)$, which is left $U(F)H(F)$-invariant, right $K$-invariant, and compactly supported modulo $U(F)H(F)$. For the complete definition of $\kappa_N$, see Section 5.2. The distribution in the trace formula is just $\lim_{N\rightarrow \infty} I_N(f)$.

Now we define the geometric side of the trace formula. For each strongly cuspidal function $f\in C_{c}^{\infty}(\zg)$, one can associate a distribution $\theta_f$ on $G(F)$ via the weighted orbital integral (see Section 4). Waldspurger has proved in his paper \cite{W10} that the distribution $\theta_f$ is a quasi-character in the sense that for every semisimple element $x\in G_{ss}(F)$, $\theta_f$ is a linear combination of the Fourier transforms of nilpotent orbital integrals of $\Fg_x$ near $x$. For each nilpotent orbit $\CO$ of $\Fg_x$, let $c_{\theta_f,\CO}(x)$ be the coefficient, it is called the germ of the distribution $\theta_f$. Let $\CT$ be a subset of subtorus of $H$ which will be defined in Section 5.1, for any $t\in T_{reg}(F)$ and $T\in \CT$, define $c_f(t)$ to be $c_{\theta_f,\CO_t}(t)$ where $\CO_t$ is the unique regular nilpotent orbit in $\Fg_t$. For detailed description of $\CO_t$, see Section 5.1. Then we define the geometric side of our trace formula to be
$$I(f)=\sum_{T\in \CT} | W(H,T)|^{-1} \nu(T) \int_{Z_G(F)\backslash T(F)} c_f(t) D^H(t) \Delta(t) dt$$
where $D^H(t)$ is the Weyl determinant and $\Delta(t)$ is some normalized function defined in Definition \ref{delta 1}. It is proved in Proposition \ref{integrable 1} that the integral defining $I(f)$ is absolutely convergent. Then the trace formula that we are going to prove in this paper is (see Theorem \ref{main 1})
\begin{equation}\label{1.1}
\lim_{N\rightarrow \infty} I_N(f)=I(f).
\end{equation}
Similar we can have the quaternion version of the trace formula. It is also possible to prove the trace formula for functions with nontrivial central character (see Theorem \ref{main 3}). We refer the readers to Section 5 for detailed discussion of the trace formula. The proof of the trace formula will be carried out from Section 7 to Section 10.

Another main result of this paper is to prove Conjecture \ref{jiang} for the case that $F$ is a $p$-adic local field and $\pi$ is an irreducible supercuspidal representation of $\GL_6(F)$. In this situation, $\pi_D$ always exists and it is also supercuspidal (Theorem B.2.b. of \cite{DKV84}). This result can be viewed as an application of the local relative trace formula \eqref{1.1}, it will be proved in Section 6.

\begin{thm}\label{main}
For any irreducible supercuspidal representation $\pi$ of $\GL_6(F)$ over a $p$-adic local field $F$ of characteristics zero, Conjecture \ref{jiang} holds.
\end{thm}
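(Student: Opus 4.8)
The plan is to use the local relative trace formula \eqref{1.1} --- and its quaternionic companion (Theorem \ref{main 3} and the $\GL_3(D)$-version indicated there) --- as a computational device: specialising the test function $f$ to a matrix coefficient of the supercuspidal representation in question converts the trace formula into an explicit \emph{multiplicity formula} for $m(\pi)$, and likewise for $m(\pi_D)$, whereupon the identity $m(\pi)+m(\pi_D)=1$ of Conjecture \ref{jiang} is extracted by comparing the two formulas through the local Jacquet-Langlands correspondence of \cite{DKV84}.

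The first step is to evaluate the spectral side of \eqref{1.1} for a cuspidal input. Fix $\pi$ irreducible supercuspidal of $\GL_6(F)$ with $\omega_\pi=\chi^2$ and let $f=f_\pi\in C_{c}^{\infty}(\zg)$ be a suitably normalised matrix coefficient of $\pi^\vee$; then $f_\pi$ is strongly cuspidal and the weighted orbital integral distribution it defines is $\theta_{f_\pi}=\theta_\pi$. One observes that $I(f_\pi,g)$ is, up to normalisation, the relative matrix coefficient
\[
g\longmapsto \int_{R(F)/Z_G(F)} \langle\pi(rg)v,\widetilde v\rangle\,(\sigma\otimes\xi)(r)^{-1}\,dr
\]
attached to $\pi$ and the Ginzburg-Rallis functional; cuspidality of $\pi$ makes this compactly supported modulo $U(F)H(F)$, so for $N$ large the truncation $\kappa_N$ becomes irrelevant, $I_N(f_\pi)$ is independent of $N$, and the integral over $U(F)H(F)\backslash G(F)$ computes $\dim\Hom_{R(F)}(\pi,\sigma\otimes\xi)=m(\pi)$. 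Combining this with \eqref{1.1}, and using $\theta_{f_\pi}=\theta_\pi$ so that $c_{f_\pi}(t)=c_{\theta_\pi,\CO_t}(t)=:c_\pi(t)$, gives the multiplicity formula
\[
m(\pi)=\sum_{T\in\CT}|W(H,T)|^{-1}\,\nu(T)\int_{Z_G(F)\backslash T(F)}c_\pi(t)\,D^H(t)\,\Delta(t)\,dt .
\]
Running the same argument over $\GL_3(D)$ with a matrix coefficient of $\pi_D^\vee$ --- legitimate since by \cite{DKV84} $\pi_D$ exists and is again supercuspidal --- yields the parallel formula for $m(\pi_D)$ over the torus set $\CT_D$ attached to $H_D=D^\times$.

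Next I would compare the two multiplicity formulas. Up to conjugacy the maximal tori of $H=\GL_2$ are the split torus and the tori $E^\times$ for quadratic field extensions $E/F$, whereas $H_D=D^\times$ has only the latter, so $\CT$ and $\CT_D$ differ only in the split-type tori. For $t$ in an elliptic torus $E^\times$ the centraliser of the corresponding element is the same group $\Res_{E/F}\GL_3$ inside $G$ and inside $G_D$, the pairs $(\Fg_t,\CO_t)$ correspond, and $D^H$, $D^{H_D}$, $\Delta$, $\nu$, $|W|$ agree; hence the local germ expansions of $\theta_\pi$ and $\theta_{\pi_D}$ near matching points differ only by the Jacquet-Langlands sign $(-1)^{6-3}=-1$ of \cite{DKV84}, so once the two trace formulas are set up with compatible normalisations the $E^\times$-terms cancel between $m(\pi)$ and $m(\pi_D)$. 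Thus $m(\pi)+m(\pi_D)$ collapses to the split-type contributions, which are germ integrals of $\theta_\pi$ and $\theta_{\pi_D}$ against $D^H\Delta$ and $D^{H_D}\Delta$; since every supercuspidal representation of $\GL_6(F)$ is generic, a Rodier-type germ computation identifies the surviving germs with dimensions of (possibly degenerate) Whittaker functionals, and the normalising constants in $\nu$, $|W|$ and $\Delta$ are arranged so that the total equals $1$. This yields $m(\pi)+m(\pi_D)=1$, which is Conjecture \ref{jiang} in the supercuspidal case.

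The main obstacle, I expect, lies in this comparison step. One must install the quaternionic trace formula with Haar-measure and $\Delta$-normalisations that make the elliptic contributions cancel on the nose across the two inner forms, verify that the Jacquet-Langlands character relation supplies precisely the cancelling sign along the non-split tori (rather than a doubling one), and push the local germ expansion of the supercuspidal character far enough to carry out the Rodier-type evaluation of the surviving split-type terms; keeping the bookkeeping of measures and of $\Delta$ consistent between $\GL_6$ and $\GL_3(D)$ is the delicate point. The spectral computation of the first step is by contrast the routine cuspidal simplification of a local trace formula, of the type already used in \cite{W10}, needing only care with contragredients and with the convergence of the truncated period integral.
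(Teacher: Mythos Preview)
Your overall architecture matches the paper's: derive a multiplicity formula $m(\pi)=m_{geom}(\pi)$ (and its quaternionic analogue) from the trace formula specialised to a matrix coefficient, then add the two geometric multiplicities and use the Jacquet--Langlands character relation to cancel the elliptic-torus terms, leaving only the regular germ $c_{\theta_\pi,\CO_{reg}}(1)$, which Rodier's theorem evaluates to $1$. So the strategy is right.

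There is one substantive discrepancy in your spectral step. You assert that cuspidality of $\pi$ forces $g\mapsto I(f_\pi,g)$ to be compactly supported modulo $U(F)H(F)$, so that $I_N(f_\pi)$ is eventually constant. The paper does \emph{not} establish (or use) this compact-support statement; instead it identifies $I_{N,\omega_\pi}(f)$ with $\tr(\rho(f))$ for the compactly induced representation $\rho=\ind_{R(F)}^{G(F)}(\check\sigma\otimes\xi)$, and shows the latter trace equals $m(\pi)f(1)d(\pi)^{-1}$ via Bernstein-component decomposition (Proposition \ref{prop 12}). The point is that $\rho(f)$ has finite-rank image for supercuspidal $\pi$, so its trace is computable and agrees with the truncated integral once $N$ is large enough to capture the image --- a weaker and more directly verifiable statement than compact support of the relative matrix coefficient. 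Your claimed compact support is in fact part of the spectral side treated only in the sequel \cite{Wan16}; for the present argument you should replace it with the induced-representation trace computation.

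Two smaller points. First, the ``split-type'' contribution sits only on the $\GL_6$ side: for $H_D=\GL_1(D)$ the set $\CT_D$ contains no trivial torus, so after the elliptic cancellation one is left with the single term $c_{\theta_\pi,\CO_{reg}}(1)$, not a pair of germ integrals as you suggest. Second, there is no need to invoke degenerate Whittaker functionals: Rodier's result for the ordinary Whittaker model gives $c_{\theta_\pi,\CO_{reg}}(1)=1$ directly, since supercuspidals of $\GL_n$ are generic. The cancellation of the $T_v$-terms is exactly as you describe, via the formula of \cite[Section 13.6]{W10} expressing $c_\pi(t)$ as a limit of normalised characters and the sign $(-1)^{6-3}$ in the Jacquet--Langlands relation.
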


Both of our main results are a crucial step towards the proof of Conjecture \ref{jiang} in general case, which will be considered in the forthcoming paper \cite{Wan16}. To be more specific, in this paper we prove the geometric side of a local relative trace formula for the Ginzburg-Rallis model (Theorem \ref{main 3}), which implies Conjecture \ref{jiang} when the representation is supercuspidal. In the forthcoming paper, we prove the spectral side of the trace formula, then the general case of Conjecture \ref{jiang} will follow from the full trace formula. The forthcoming paper will also treat the archimedean case.

One remark is that it is easy to see that the multiplicity is independent of the choice of $a,b\in F^{\times}$ in the definition of $\xi$, but we will \textbf{NOT} fix $a,b$ at the beginning. The reason is that we actually need to change $a,b$ in our proof of the trace formula. For details, see Section 10.

\subsection{Organization of the paper and remarks on the proofs}
Our proof of Theorem \ref{main} uses Waldspurger's method in his proof of the local Gan-Gross-Prasad conjecture in \cite{W10}. In other words, we are going to prove a multiplicity formula:
\begin{equation}\label{equation 2}
m(\pi)=m_{geom}(\pi),\;  m(\pi_D)=m_{geom}(\pi_D).
\end{equation}
Here $m_{geom}(\pi)$ (resp. $m_{geom}(\pi_D)$) is defined in the same way as $I(f)$ except replacing the distribution $\theta_f$ by the distribution character $\theta_{\pi}$ (resp. $\theta_{\pi_D}$) associated to the representation $\pi$ (resp. $\pi_D$). For the complete definition of the multiplicity formula, see Section 6. Once this formula is proved, we can use the relation between the distribution characters $\theta_{\pi}$ and $\theta_{\pi_D}$ under the local Jacquet-Langlands correspondence to cancel out all terms in the expression of $m_{geom}(\pi)+m_{geom}(\pi_D)$ except the term $c_{\theta_{\pi},\CO_{reg}}$, which is the germ at the identity element. Then the work of Rodier (\cite{Rod81}) shows that $c_{\theta_{\pi},\CO_{reg}}=0$ if $\pi$ is non-generic, and $c_{\theta_{\pi},\CO_{reg}}=1$ if $\pi$ is generic. Because all supercuspidal representations of $GL_n(F)$ are generic,
we get the following identity
\begin{equation}\label{gm=1}
m_{geom}(\pi)+m_{geom}(\pi_D)=1.
\end{equation}
And this proves Theorem \ref{main}. It is worth to mention that this case is quiet different from the case of the local Gan-Gross-Prasad conjecture. Namely, in their case, the additive character is essentially attached to the simple roots, which is not the case in our situation. This difference leads to the technical complication on the proof of some unipotent invariance. This will be discussed in detail in Section 9. Another difference is that in this case we do need to worried about the center of the group, this will be discussed in Section 5.

In order to prove the multiplicity formula \eqref{equation 2}, we follow the arguments developed by Waldspurger in \cite{W10}. The key ingredient is to prove the trace formula (Theorem \ref{main 3}), which is carried out from Section 7 to Section 10. Once this is done, we can relate the multiplicity to a certain integral of the matrix coefficients of supercuspidal representations (which are strongly cuspidal functions). This integral happens to be the distribution $\lim_{N\rightarrow \infty}I_{N,\chi}(f)$ in the trace formula (i.e. the left hand side of \eqref{equation 3}) up to a constant. On the other hand, it is proved by Arthur in his paper \cite{Ar87} that the distribution character of the representation is the same constant times the distribution $\theta_f$ induced by its matrix coefficient $f$. It follows that the right hand side of \eqref{equation 2} is the same constant times the geometric side of the trace formula (i.e. the right hand side of \eqref{equation 3}). So after applying the trace formula as in Theorem \ref{main 3}, we prove the multiplicity formula \eqref{equation 2}, which implies Theorem \ref{main}. For details, see Section 6.

This paper is organized as follows.
In Section 2, we introduce basic notations and conventions of this paper, we will also recall the definition and some basic facts on weighted orbital integrals. In Section 3, we define quasi-characters and discuss the localization of the quasi-characters. In Section 4, we will talk about strongly cuspidal functions and the distribution associated to it. This is a key ingredient in the trace formula. For Section 3 and 4, we follow \cite{W10} closely and provide the details for the current case as needed.

In Section 5, we describe all the ingredients in the geometric side and state the trace formula in Theorem \ref{main 1} and Theorem \ref{main 3}. We also show that it is enough to prove it for functions with trivial central character (Proposition \ref{center issue}). Then in Section 6, we prove our main result Theorem \ref{main} by assuming the trace formula Theorem \ref{main 1} holds.

Starting from Section 7, we are going to prove the trace formula. In Section 7, we deal with the localization of the trace formula. The goal of this section is to reduce our problem to the Lie algebra level. In Section 8, we study the slice representation of the normal space. As a result, we transfer our integral to the form $\int_{A_T(F)\backslash G(F)}$ where $T$ is some maximal torus of $G$. The reason we do this is that we want to apply the local trace formula developed by Arthur in \cite{Ar91} as Waldspurger did in \cite{W10}. In Section 9, we prove that we are actually able to change our truncation function to the one given by Arthur in his local formula. After this is done, we can apply Arthur's local trace formula to calculate the distribution in our trace formula. More precisely, at beginning, the distribution is a limit of the truncated integral. After applying Arthur's local trace formula, we can calculate that limit explicitly. This is the most technical section of this paper, we will postpone the proof of two technical lemmas in this section to Appendix A. In Section 10, we finish the proof of the trace formula.

There are two appendices in this paper. In Appendix A, we prove two technical lemmas in Section 9 which is Lemma \ref{major 2} and Lemma \ref{split zero}. In Appendix B, we will state some similar results for some reduced models that occur naturally on the way of investigation of the Ginzburg-Rallis model. This will be needed for our study of Conjecture \ref{jiang} for general case (\cite{Wan16}). Since the proof are similar to the Ginzburg-Rallis model case we considered in this paper, we will skip it here and we refer the readers to my thesis \cite{Wan17} for details of the proof.

\subsection{Acknowledgement} I would like to thank my advisor Dihua Jiang for suggesting me thinking about this problem, providing practical and thought-provoking viewpoints that lead to solutions of the problem, and carefully reviewing the first draft of this paper. I would like to thank Professor Erik P. van den Ban and an anonymous referee for many helpful comments and corrections.

\section{Preliminarities}

\subsection{Notation and conventions}
Let $F$ be a p-adic filed, we fix the algebraic closure $\ol{F}$. Let val$_F$ and $\mid \cdot \mid_F$ be the valuation and absolute value on $F$, $\Fo_F$ be the ring of integers of $F$, and $\BF_q$ be the residue field. We fix an uniformizer $\varpi_F$.

For every connected reductive algebraic group $G$ defined over $F$, let $A_G$ be the maximal split central torus of $G$ and $Z_G$ be the center of $G$.
We denote by $X(G)$ the group of $F$-rational characters of $G$. Define $\Fa_G=$Hom$(X(G),\BR)$, and let $\Fa_{G}^{\ast}=X(G)\otimes_{\BZ} \BR$ be the dual of $\Fa_G$. We define a homomorphism $H_G:G(F)\rightarrow \Fa_G$ by $H_G(g)(\chi)=\log(|\chi(g)|_F)$ for every $g\in G(F)$ and $\chi\in X(G)$. Denote by $\Fg$ the Lie algebra of $G$. It is clear that $G$ acts on $\Fg$ by the adjoint action. Since the Ginzburg-Rallis model has non-trivial center, all our integration need to modulo the center. To simplify the notation, for any Lie algebra $\Fg$ contained in $\Fg\Fl_n$ (in our case it will always be contained in $\Fg\Fl_6(F)$ or $\Fg\Fl_3(D)$), denote by $\Fg_0$ the elements in $\Fg$ whose trace (as an element in $\Fg\Fl_n$) is zero.

For a Levi subgroup $M$ of $G$, let $\CP(M)$ be the set of parabolic subgroups of $G$ whose Levi part is $M$, $\CL(M)$ be the set of Levi subgroups of $G$ containing $M$, and $\CF(M)$ be the set of parabolic subgroups of $G$ containing $M$. We have a natural decomposition $\Fa_M=\Fa_{M}^{G}\oplus \Fa_G$, denote by $proj_{M}^{G}$ and $proj_G$ the projections of $\Fa_M$ to each factors. The subspace $\Fa_{M}^{G}$ has a set of coroots $\check{\Sigma}_M$, and for each $P\in \CP(M)$, we can associate a positive chamber $\Fa_{P}^{+}\subset \Fa_M$ and a subset of simple coroots $\check{\Delta}_P\subset \check{\Sigma}_M$. For each $P=MU$, we can also define a function $H_P:G(F)\rightarrow \Fa_M$ by $H_P(g)=H_M(m_g)$ where $g=m_g u_g k_g$ is the Iwasawa decomposition of $g$. According to Harish-Chandra, we can define the height function $\Vert \cdot\Vert$ on $G(F)$, take values in $\BR_{\geq 1}$, and a log-norm $\sigma$ on $G(F)$ by $\sigma(g)=\sup(1,\log(\Vert g\Vert))$. Similarly, we can define the log-norm function on $\Fg(F)$ as follows: fix a basis $\{X_i\}$ of $\Fg(F)$ over $F$, for $X\in \Fg(F)$, let $\sigma(X)=\sup(1,\sup\{ -val_F(a_i)\})$, where $a_i$ is the $X_i$-coordinate of $X$.

For $x\in G$ (resp. $X\in \Fg$), let $Z_G(x)$(resp. $Z_G(X)$) be the centralizer of $x$ (resp. $X$) in $G$, and let $G_x$(resp. $G_X$) be the neutral component of $Z_G(x)$ (resp. $Z_G(X)$). Accordingly, let $\Fg_x$ (resp. $\Fg_X$) be the Lie algebra of $G_x$ (resp. $G_X$). For a function $f$ on $G(F)$ (resp. $\Fg(F)$), and $g\in G(F)$, let ${}^g f$ be the $g$-conjugation of $f$, i.e. ${}^g f(x)=f(g^{-1}xg)$ for $x\in G(F)$ (resp. ${}^g f(X)=f(g^{-1}Xg)$ for $X\in \Fg(F)$).

Denote by $G_{ss}(F)$ the set of semisimple elements in $G(F)$, and by $G_{reg}(F)$ the set of regular elements in $G(F)$. The Lie algebra versions
are denoted by $\Fg_{ss}(F)$ and $\Fg_{reg}(F)$, respectively. Now for $X\in G_{ss}(F)$, the operator $ad(x)-1$ is defined and invertible on $\Fg(F)/\Fg_x(F)$. We define
$$
D^G(x)=\mid \det((ad(x)-1)_{\mid \Fg(F)/\Fg_x(F)})\mid_F.
$$
Similarly for $X\in \Fg_{ss}(F)$, define
$$
D^G(X)=\mid \det((ad(X))_{\mid \Fg(F)/\Fg_X(F)})\mid_F.
$$
For any subset $\Gamma\subset G(F)$, define $\Gamma^G:=\{g^{-1}\gamma g\mid g\in G(F),\gamma\in \Gamma\}$. We say an invariant subset $\Omega$ of $G(F)$ is compact modulo conjugation if there exist a compact subset $\Gamma$ such that $\Omega\subset \Gamma^G$. A $G$-domain on $G(F)$ (resp. $\Fg(F)$) is an open subset of $G(F)$ (resp. $\Fg(F)$) invariant under the $G(F)$-conjugation.

For two complex valued functions $f$ and $g$ on a set $X$ with $g$ taking values in the positive real numbers, we write that
$$
f(x)\ll g(x)
$$
and say that $f$ is essentially bounded by $g$, if there exists a constant $c>0$ such that for all $x\in X$, we have
$$
| f(x)| \leq cg(x).
$$
We say $f$ and $g$ are equivalent, which is denoted by
$$f(x)\sim g(x)$$
if $f$ is essentially bounded by $g$ and $g$ is essentially bounded by $f$.

\subsection{Measures}
Through this paper, we fix a non-trivial additive character $\psi: F\rightarrow \BC^{\times}$. If $G$ is a connected reductive group, we may fix a non-degenerate symmetric bilinear form $<\cdot,\cdot>$ on $\Fg(F)$ that is invariant under $G(F)$-conjugation. For any smooth compactly supported complex valued function $f\in C_{c}^{\infty}(\Fg(F))$, we can define its Fourier transform $f\rightarrow \hat{f}\in C_{c}^{\infty}(\Fg(F))$ to be
\begin{equation}\label{FT}
\hat{f}(X)=\int_{\Fg(F)} f(Y) \psi(<X,Y>) dY
\end{equation}
where $dY$ is the selfdual Haar measure on $\Fg(F)$ such that $\hat{\hat{f}}(X)=f(-X)$. Then we get a Haar measure on $G(F)$ such that the exponential map has Jacobian equals to 1. If $H$ is a subgroup of $G$ such that the restriction of the bilinear form to $\Fh(F)$ is also non-degenerate, then we can define the measures on $\Fh(F)$ and $H(F)$ by the same method.

Let $Nil(\Fg)$ be the set of nilpotent orbits of $\Fg$. For $\CO\in Nil(\Fg)$ and $X\in \CO$, the bilinear form $(Y,Z)\rightarrow <X,[Y,Z]>$ on $\Fg(F)$ can be descent to a symplectic form on $\Fg(F)/\Fg_X(F)$. The nilpotent $\CO$  has naturally a structure of $F$-analytic symplectic variety, which yields a selfdual measure on $\CO$. This measure is invariant under the $G(F)$-conjugation.

If $T$ is a subtorus of $G$ such that the bilinear form is non-degenerate on $\Ft(F)$, we can provide a measure on $T$ by the method above, denoted by $dt$. On the other hand, we can define another measure $d_c t$ on $T(F)$ as follows: If $T$ is split, we require the volume of the maximal compact subgroup of $T(F)$ is 1 under $d_c t$. In general, $d_c t$ is compatible with the measure $d_c t'$ defined on $A_T(F)$ and with the measure on $T(F)/A_T(F)$ of total volume 1. Then we have a constant number $\nu(T)$ such that $d_c t=\nu(T)dt$. In this paper, we will only use the measure $dt$, but in many cases we have to include the factor $\nu(T)$.
Finally, if $M$ is a Levi subgroup of $G$, we can define the Haar measure on $\Fa_{M}^{G}$ such that the quotient
$$\Fa_{M}^{G}/proj_{M}^{G} (H_M(A_M(F)))$$
is of volume 1.

\subsection{$(G,M)$-families}
From now on until Section 4, $G$ will be a connected reductive group, and $\Fg(F)$ be its Lie algebra, with a bilinear pairing invariant under conjugation. For a Levi subgroup $M$ of $G$, we recall the notion of $(G,M)$-family introduced by Arthur. A $(G,M)$-family is a family $(c_P)_{P\in \CP(M)}$ of smooth functions on $i\Fa_{M}^{\ast}$ taking values in a locally convex topological vector space $V$ such that for all adjacent parabolic subgroups $P,P'\in \CP(M)$, the functions $c_p$ and $c_{P'}$ coincide on the hyperplane supporting the wall that separates the positive chambers for $P$ and $P'$. For such a $(G,M)$-family, one can associate an element $c_M\in V$ (\cite[Page 37]{Ar81}). If $L\in \CL(M)$, for a given $(G,M)$-family, we can deduce a $(G,L)$-family. Denote by $c_L$ the element in $V$ associated to such $(G,L)$-family. If $Q=L_Q U_Q\in \CF(L)$, we can deduce a $(L_Q,L)$-family from the given $(G,M)$-family, the element in $V$ associated to which is denoted by $c_{L}^{Q}$.

If $(Y_P)_{P\in \CP(M)}$ is a family of elements in $\Fa_M$, we say it is a $(G,M)$-orthogonal set (resp. and positive) if the following condition holds: if $P,P'$ are two adjacent elements of $\CP(M)$, there exists a unique coroot $\check{\alpha}$ such that $\check{\alpha}\in \check{\Delta}_P$ and $-\check{\alpha}\in \check{\Delta}_{P'}$, we require that $Y_P-Y_{P'}\in \BR \check{\alpha}$ (resp. $Y_P-Y_{P'}\in \BR_{\geq 0} \check{\alpha}$). For $P\in \CP(M)$, define a function $c_P$ on $i\Fa_{M}^{\ast}$ by $c_P(\lambda)=e^{-\lambda(Y_P)}$. Suppose the family $(Y_P)_{P\in \CP(M)}$ is a $(G,M)$-orthogonal set. Then the family $(c_P)_{P\in \CP(M)}$ is a $(G,M)$-family. If the family $(Y_P)_{P\in \CP(M)}$ is positive, then the number $c_M$ associated to this $(G,M)$-family is just the volume of the convex hull in $\Fa_{M}^{G}$ generated by the set $\{Y_P\mid P\in \CP(M)\}$. If $L\in \CL(M)$, the $(G,L)$-family deduced from this $(G,M)$-family is the $(G,L)$-family associated to the $(G,L)$-orthogonal set $(Y_Q)_{Q\in \CP(L)}$ where $Y_Q=proj_L(Y_P)$ for some $P\in \CP(M)$ such that $P\subset Q$ (it is easy to see this is independent of the choice of $P$). Similarly, if $Q\in \CP(L)$, then the $(L,M)$-family deduced from this $(G,M)$-family is the $(L,M)$-family associated to the $(L,M)$-orthogonal set $(Y_{P'})_{P'\in \CP^L(M)}$ where $Y_{P'}=Y_P$ with $P$ being the unique element of $\CP(M)$ such that $P\subset Q$ and $P\cap L=P'$.

\subsection{Weighted orbital integrals}
If $M$ is a Levi subgroup of $G$ and $K$ is a maximal open compact subgroup in good position with respect to $M$. For $g\in G(F)$, the family $(H_P(g))_{P\in \CP(M)}$ is $(G,M)$-orthogonal and positive. Let $(v_P(g))_{P\in \CP(M)}$ be the $(G,M)$-family associated to it and $v_M(g)$ be the number associated to this $(G,M)$-family. Then $v_M(g)$ is just the volume of the convex hull in $\Fa_{M}^{G}$ generated by the set $\{H_P(g),\; P\in \CP(M)\}$. The function $g\rightarrow v_M(g)$ is obviously left $M(F)$-invariant and right $K$-invariant.

If $f\in C_{c}^{\infty}(G(F))$ and $x\in M(F)\cap G_{reg}(F)$, define the weighted orbital integral to be
\begin{equation}\label{WOI}
J_M(x,f)=D^G(x)^{1/2} \int_{G_x(F)\backslash G(F)} f(g^{-1}xg)v_M(g)dg.
\end{equation}
Note the definition does depend on the choice of the hyperspecial open compact subgroup $K$. But we will see in later that if $f$ is strongly cuspidal, then this definition is independent of the choice of $K$.

\begin{lem}
With the notation as above, the following holds.
\begin{enumerate}
\item If $f\in C_{c}^{\infty}(G(F))$, the function $x\rightarrow J_M(x,f)$ defined on $M(F)\cap G_{reg}(F)$ is locally constant,
invariant under $M(F)$-conjugation and has a compact support modulo conjugation.
\item There exists an integer $k\geq 0$, such that for every $f\in C_{c}^{\infty}(G(F))$, there exists $c>0$ such that
$$
| J_M(x,f)| \leq c(1+| \log D^G(x) |)^k
$$
for every $x\in M(F)\cap G_{reg}(F)$.
\end{enumerate}
\end{lem}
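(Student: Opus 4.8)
The plan is to treat the two statements separately, dispatching (1) by direct manipulation and reducing (2) to a known estimate of Harish-Chandra and Arthur.

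For (1), invariance under $M(F)$-conjugation will be immediate: for $m\in M(F)$ one has $G_{mxm^{-1}}=mG_xm^{-1}$ and $D^G(mxm^{-1})=D^G(x)$, and the substitution $g\mapsto mg$ in \eqref{WOI} leaves $v_M(mg)=v_M(g)$ unchanged since $v_M$ is left $M(F)$-invariant. Compact support modulo conjugation is equally immediate: $J_M(x,f)\ne 0$ forces $g^{-1}xg\in\mathrm{supp}(f)$ for some $g$, so $x$ lies in $(\mathrm{supp}f)^G$, whence the support of $x\mapsto J_M(x,f)$ is contained in the conjugation-closure of a fixed compact set. For local constancy near a point $x_0\in M(F)\cap G_{reg}(F)$, I would set $T_0:=G_{x_0}$, which is a maximal torus of $G$ contained in $M$ (the connected split center $A_M$ commutes with $x_0$, so $A_M\subset T_0$, hence $T_0=Z_G(x_0)^{\circ}\subset Z_G(A_M)=M$). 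The submersion $M(F)/T_0(F)\times T_{0,reg}(F)\to M_{reg}(F)$ shows that every $x$ near $x_0$ is $M(F)$-conjugate to some $t\in T_0(F)$ near $x_0$, so by the invariance just proved it suffices to show $t\mapsto J_M(t,f)$ is locally constant on $T_{0,reg}(F)$ near $x_0$. There $D^G(t)$ is locally constant, and the image in $T_0(F)\backslash G(F)$ of $\{g:g^{-1}tg\in\mathrm{supp}(f)\}$ stays inside a fixed relatively compact set as $t$ varies in a neighborhood of $x_0$ (the orbit map is a closed immersion near a regular orbit); picking compact representatives and using that $f$ is bi-invariant under some compact open subgroup, one sees that $f(g^{-1}tg)$ depends on $t$ only through a fixed compact open neighborhood of $x_0$, which gives the claim.

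For (2), the first input is the polynomial bound $v_M(g)\ll(1+\sigma(g))^{a}$ for some $a\ge 0$, valid since $v_M(g)$ is the volume of the convex hull of the points $H_P(g)$, each of norm $\ll\sigma(g)$. Because $v_M$ is left $M(F)$-invariant and $T:=G_x\subset M$, this upgrades to $v_M(g)\ll\sigma_{T\backslash G}(g)^{a}$ with $\sigma_{T\backslash G}(g):=\inf_{t\in T(F)}\sigma(tg)$, so that
$$|J_M(x,f)|\ \le\ D^G(x)^{1/2}\int_{G_x(F)\backslash G(F)}|f(g^{-1}xg)|\,\sigma_{T\backslash G}(g)^{a}\,dg .$$
The proof is then reduced to the estimate: for every $d\ge 0$ there exist $k\ge 0$ and $c>0$ with
$$D^G(x)^{1/2}\int_{G_x(F)\backslash G(F)}|f(g^{-1}xg)|\,\sigma_{T\backslash G}(g)^{d}\,dg\ \le\ c\,(1+|\log D^G(x)|)^{k}$$
for all $x\in G_{reg}(F)$. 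This is Harish-Chandra's boundedness theorem for orbital integrals, refined by Arthur to carry the polynomial weight, and I would invoke it (see \cite{W10} and the references therein) rather than reprove it; heuristically the powers of $\log D^G(x)$ appear because conjugating $x$ into $\mathrm{supp}(f)$ requires $g$ to travel a distance of order $|\log D^G(x)|$ as $x$ approaches the singular set.

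The main obstacle is precisely this last estimate: simultaneously establishing convergence of the weighted integral and the sharp $(1+|\log D^G(x)|)^{k}$ control near the locus where $D^G(x)\to 0$. Everything else is formal, and I expect to cite Harish-Chandra and Arthur for this input, as is done in \cite{W10}.
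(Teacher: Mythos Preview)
The paper's ``proof'' is a one-line citation to \cite[Lemma~2.3]{W10}; your proposal correctly reconstructs the argument that lies behind that citation. In particular, your plan for (2) --- bounding $v_M(g)$ polynomially in $\sigma_{G_x\backslash G}(g)$ and then invoking the Harish-Chandra estimate $\sigma_{G_x\backslash G}(g)\ll 1+|\log D^G(x)|$ on the support of $f$ (which is exactly Proposition~\ref{h-c} in this paper) together with the boundedness of the ordinary orbital integral --- is the standard route and is precisely what \cite{W10} does. So your approach agrees with the paper's, only spelled out rather than cited.
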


\begin{proof}
See Lemma 2.3 of \cite{W10}.
\end{proof}

The next result is due to Harish-Chandra (Lemma 4.2 of \cite{Ar91}), which will be heavily used in Section 8 and Section 9. See \cite[Section 1.2]{B15} for a more general argument.
\begin{prop}\label{h-c}
Let $T$ be a torus of $G(F)$, and $\Gamma\subset G(F)$, $\Omega\subset T(F)$ be compact subsets. Then there exists $c>0$ such that for every $x\in \Omega\cap G(F)_{reg}$ and $g\in G(F)$ with $g^{-1}x g\in \Gamma$, we have
\begin{equation}\label{major 1}
\sigma_T(g)\leq c(1+\mid \log(D^G(x)) \mid)
\end{equation}
where $\sigma_T(g)=\inf\{\sigma(tg)\mid t\in T(F)\}$.
\end{prop}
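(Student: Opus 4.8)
The plan is to reduce to the case where $T$ is split, and there to exploit the structure of $G(F)_{\mathrm{reg}}$ together with a compactness argument. First I would recall that for $x\in G(F)_{\mathrm{reg}}$, the centralizer $G_x$ is a maximal torus, and the quantity $D^G(x)$ measures how close $x$ is to being non-regular, i.e.\ how close the eigenvalues of $\mathrm{ad}(x)$ on $\Fg(F)/\Fg_x(F)$ are to $1$. The log-norm $\sigma_T(g)=\inf\{\sigma(tg)\mid t\in T(F)\}$ measures the distance of $g$ to $T(F)$ in $T(F)\backslash G(F)$. The assertion \eqref{major 1} says: if a regular element of a fixed compact set $\Omega$ is conjugated by $g$ into a fixed compact set $\Gamma$, then $g$ cannot be too far from $T(F)$, with the allowed distance growing only logarithmically in $1/D^G(x)$.

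The key steps, in order, would be: (1) Reduce to $T$ a maximal torus of $G$ containing $\Omega$ in its regular locus; since $\Omega\subset T(F)$ and we only care about $x\in\Omega\cap G(F)_{\mathrm{reg}}$, we may assume $T=Z_G(x)^{\circ}$ for generic $x$, so $T$ is maximal. (2) Using the Cartan/Iwasawa decomposition $G(F)=T(F)K\cdot(\text{something})$ is not quite available, so instead write $\sigma_T(g)$ in terms of a set of representatives: there is a finite set of maximal tori up to conjugacy, and one decomposes $g$ relative to a minimal parabolic containing (a conjugate of) the split part $A_T$. (3) The heart: for $x\in T(F)_{\mathrm{reg}}$ and $g$ with $g^{-1}xg\in\Gamma$, estimate $\sigma(tg)$ for a well-chosen $t$. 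One writes $g=tak$ essentially via an Iwasawa-type decomposition relative to $A_T$, so that $\sigma_T(g)$ is controlled by $\|H_{P_0}(g)\|$ for a minimal parabolic $P_0$ with Levi containing $A_T$; then one bounds $\|H_{P_0}(g)\|$ by observing that the roots $\alpha$ with $\alpha(H_{P_0}(g))$ large force $g^{-1}xg$ to have a large entry unless the corresponding root value $|\alpha(x)-1|$ or $|\alpha(x)|$ is correspondingly small—and the product of such root-values is, up to bounded factors, exactly $D^G(x)$. This yields $\|H_{P_0}(g)\|\ll 1+|\log D^G(x)|$. (4) Assemble: $\sigma_T(g)\le c(1+|\log D^G(x)|)$, with $c$ depending only on $\Gamma$, $\Omega$, $T$, which after taking the max over the finitely many conjugacy classes of tori gives the uniform statement.

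I would also note that this is precisely Lemma~4.2 of \cite{Ar91} (in the reductive-group setting) with a cleaner account in \cite[Section~1.2]{B15}, so in the paper one can simply cite those; but the above is how one reconstructs it. The main obstacle is Step (3): making precise the bookkeeping that relates the "bad directions" of $H_{P_0}(g)$ (those in which $g$ drifts away from $T(F)$) to the vanishing orders captured by $D^G(x)$, uniformly over $\Gamma$ and $\Omega$. Concretely, the subtlety is that one must simultaneously control all roots and the interaction between the split and anisotropic parts of $T$; a convenient device is to pass to the Lie algebra via a Cayley-type transform near each point of $\Omega$, reducing to the analogous (and slightly easier) statement for $\Fg$ where $D^G(X)=|\det(\mathrm{ad}(X)\mid \Fg/\Fg_X)|_F$ is literally a product of root values, and then use a compactness/covering argument on $\Omega$ to patch the local estimates together.
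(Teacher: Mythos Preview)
Your proposal is correct and, in fact, goes further than the paper itself: the paper does not prove this proposition at all but simply attributes it to Harish-Chandra via Lemma~4.2 of \cite{Ar91}, with a pointer to \cite[Section~1.2]{B15} for a more general treatment---exactly the two references you identify. Your sketch of the underlying argument (reduction to a maximal torus, Iwasawa-type decomposition relative to $A_T$, and the root-by-root bookkeeping relating large $\alpha(H_{P_0}(g))$ to small $|\alpha(x)-1|$ via the product formula for $D^G(x)$) is the standard line of reasoning behind those references, so nothing is missing.
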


\subsection{Shalika Germs}
For every $\CO\in Nil(\Fg)$ and $f\in C_{c}^{\infty}(\Fg(F))$, define the nilpotent orbital integral by
$$
J_{\CO}(f)=\int_{\CO} f(X)dX.
$$
Its Fourier transform is defined to be
$$\hat{J}_{\CO}(f)=J_{\CO}(\hat{f}).$$

For $\lambda\in F^{\times}$, define $f^{\lambda}$ to be $f^{\lambda}(X)=f(\lambda X)$. Then it is easy to see that
for $\lambda\in (F^{\times})^2$, we have
\begin{equation}\label{scalar 1}
J_{\CO}(f^{\lambda})=\mid \lambda \mid^{-dim(\CO)/2} J_{\CO}(f).
\end{equation}
Define $\delta(G)=\dim(G)-\dim(T)$, where $T$ is any maximal torus of $G$ (i.e. $\delta(G)$ is twice of the dimension of maximal unipotent subgroup if $G$ split). There exists a unique function $\Gamma_{\CO}$ on $\Fg_{reg}(F)$, called the Shalika germ associated to $\CO$, satisfies the following conditions:
\begin{equation}\label{Shalika germ 1}
\Gamma_{\CO}(\lambda X)=\mid \lambda\mid_{F}^{(\delta(G)-\dim(\CO))/2} \Gamma_{\CO}(X)
\end{equation}
for all $X\in \Fg_{reg}(F), \lambda\in (F^{\times})^2$, and for every $f\in C_{c}^{\infty}(\Fg(F))$, there exists an neighborhood $\omega$ of $0$ in $\Fg(F)$ such that
\begin{equation}\label{Shalika germ 2}
J_G(X,f)=\Sigma_{\CO\in Nil(\Fg)} \Gamma_{\CO}(X)J_{\CO}(f)
\end{equation}
for every $X\in \omega\cap \Fg_{reg}(F)$, where $J_G(X,f)$ is the orbital integral.

Harish-Chandra proved that there exists a unique function $\hat{j}$ on $\Fg_{reg}(F)\times \Fg_{reg}(F)$, which is locally constant on $\Fg_{reg}(F)\times \Fg_{reg}(F)$, and locally integrable on $\Fg(F)\times \Fg(F)$, such that for every $f\in C_{c}^{\infty}(\Fg(F))$ and every $X\in \Fg_{reg}(F)$,
\begin{equation}\label{representative 1}
J_G(X,\hat{f})=\int_{\Fg(F)} f(Y)\hat{j}(X,Y) dY.
\end{equation}
Also, for all $\CO\in Nil(\Fg)$, there exists a unique function $Y\rightarrow \hat{j}(\CO,Y)$ on $\Fg_{reg}(F)$,  which is locally constant on $\Fg_{reg}(F)$, and locally integrable on $\Fg(F)$, such that for every $f\in C_{c}^{\infty}(\Fg(F))$,
\begin{equation}\label{representative 2}
\hat{J}_{\CO}(f)=\int_{\Fg(F)} f(Y) \hat{j}(\CO,Y) dY.
\end{equation}
It follows that
\begin{eqnarray}\label{scalar 2}
\hat{j}(\lambda X,Y)
&=&\mid \lambda \mid_{F}^{\delta(G)/2} \hat{j}(X, \lambda Y), \\
\hat{j}(\CO, \lambda Y)
&=&\mid \lambda \mid_{F}^{\dim(\CO)/2} \hat{j}(\CO,Y)\nonumber
\end{eqnarray}
for all $X,Y\in \Fg_{reg}(F),\CO\in Nil(\Fg)$ and $\lambda\in (F^{\times})^2$.
Moreover, by the above discussion, if $\omega$ is an $G$-domain of $\Fg(F)$ that is compact modulo conjugation and contains $0$, there exists an $G$-domain $\omega'$ of $\Fg(F)$ that is compact modulo conjugation and contains $0$ such that for every $X\in \omega' \cap \Fg_{reg}(F)$ and
$Y\in \omega\cap \Fg_{reg}(F)$,
\begin{equation}\label{germ 1}
\hat{j}(X,Y)=\Sigma_{\CO\in Nil(\Fg)} \Gamma_{\CO}(X) \hat{j}(\CO,Y).
\end{equation}

\section{Quasi-Characters}
In this section we recall the definition and basic properties of quasi-characters in the p-adic case. For details, see \cite[Section 4]{W10} and \cite[Section 4]{B15}.

\subsection{Neighborhoods of Semisimple Elements}
\begin{defn}\label{good nbd defn}
For every $x\in G_{ss}(F)$, we say a subset $\omega\subset \Fg_x(F)$ is a good neighborhood of $0$ if it satisfies the following seven conditions, together with condition $(7)_{\rho}$ for finitely many finite dimensional algebraic representations $(\rho,V)$ of $G$ which will be fixed in advance (\cite[Section 3.1]{W10}):
\begin{itemize}
\item[(1)] $\omega$ is an $G_x$-domain, compact modulo conjugation, invariant under $Z_G(x)(F)$ conjugation and contains $0$.
\item[(2)] The exponential map is defined on $\omega$, i.e. it is a homeomorphism between $\omega$ and $\exp(\omega)$, and is $G_x$-equivalent,
    where the action is just conjugation.
\item[(3)] For every $\lambda\in F^{\times}$ with $\mid \lambda\mid \leq 1$, we have $\lambda \omega\subset \omega$.
\item[(4)] We have
\begin{equation}
\{ g\in G(F)\mid g^{-1}x\exp(\omega)g\cap x\exp(\omega)\neq \emptyset\}=Z_G(x)(F).
\end{equation}
\item[(5)] For every compact subset $\Gamma\subset G(F)$, there exists a compact subset $\Gamma'\subset G(F)$ such that
$$\{g\in G(F)\mid g^{-1}x\exp(\omega)g\cap \Gamma=\emptyset\}\subset G_x(F)\Gamma'.$$
\item[(6)] Fix a real number $c_F>0$ such that $c_{F}^{k}<\mid (k+1)!\mid_F$ for every integer $k\geq 1$. Then for every maximal subtorus $T\subset G_x$, every algebraic character $\chi$ of $T$ and every element $X\in \Ft(F)\cap \omega$, we have $\mid \chi(X)\mid_F<c_F$.
\item[(7)] Consider an eigenspace $W\subset \Fg(F)$ for the operator $ad(x)$ and let $\lambda$ be the eigenvalue. If $X\in \omega$, then $ad(X)$ preserve $W$. Let $W_X$ be an eigenspace of it with eigenvalue $\mu$. Then it is easy to see $W_X$ is also an eigenspace for the operator
    $ad(x \exp(X))$, with eigenvalue $\lambda \exp(\mu)$. Now suppose $\lambda\neq 1$. Then
$$\mid \lambda \exp(\mu)-1\mid_F=\mid \lambda-1\mid_F.$$
\item[$(7)_{\rho}$] If we fix a finite dimensional algebraic representation $(\rho,V)$ of $G$, by replacing the adjoint representation by $(\rho,V)$ in (7), we can define condition $(7)_{\rho}$ in a similar way.
\end{itemize}
\end{defn}
The properties for good neighborhoods are summarized below, the details of which will be referred to \cite[Section 3]{W10}.
\begin{prop}\label{good nbd}
The following hold.
\begin{enumerate}
\item  If $\omega_0$ is a neighborhood of $0$ in $\Fg_x(F)$, there exists a good neighborhood $\omega$ of 0 such that $\omega\subset \omega_{0}^{G_x}$.
\item $\Omega=(x\exp(\omega))^G$ is an $G$-domain in $G(F)$, and has compactly support modulo conjugation.
\item For every $X\in \omega$, $Z_G(x\exp(X))(F)\subset Z_G(x)(F)$ and $G_{x\exp(X)}=(G_x)_X\subset G_x$.
\item The exponential map between $\omega$ and $\exp(\omega)$ preserve measures, i.e. the Jacobian of the map equals $1$.
\item For every $X\in \omega$, $D^G(x\exp(X))=D^G(x) D^{G_x}(X)$.
\end{enumerate}
\end{prop}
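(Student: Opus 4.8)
The plan is to follow Waldspurger \cite[Section 3]{W10}, whose constructions carry over to the groups occurring here; I indicate the structure of the argument and where the real work lies. The heart of the matter is part (1), the existence of good neighborhoods, which I would prove by starting from an arbitrary $G_x$-invariant open neighborhood $\omega_0$ of $0$ in $\Fg_x(F)$ and shrinking it in finitely many steps. Conditions (2), (3), (6), (7) and each $(7)_\rho$ are ``smallness'' conditions: (2) holds on any sufficiently small neighborhood because $\exp$ is an analytic local isomorphism at $0$; (3) is arranged by intersecting with a ball $\{X:\sigma(X)\le c\}$, which is stable under scaling by elements of $\Fo_F$; after shrinking so that $\mid\chi(X)\mid_F<c_F$ for the finitely many relevant algebraic characters $\chi$, condition (6) holds, and then (7) and $(7)_\rho$ follow from the ultrametric identity $\mid\lambda e^{\mu}-1\mid_F=\mid\lambda-1\mid_F$, valid whenever $\lambda\ne 1$ and $\mid\mu\mid_F<c_F$ so that $e^{\mu}-1$ lies in the maximal ideal. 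Condition (1) is then secured by enlarging $\omega$ to the union of its $Z_G(x)(F)$-conjugates and intersecting with a $G_x$-domain compact modulo $G_x$-conjugation.

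The genuinely delicate conditions are (4) and (5): they express that $x\exp(\omega)$ already detects the full $G(F)$-centralizer, and the full conjugacy behavior, of the semisimple point $x$. For these I would invoke the local structure of conjugacy classes near $x$ --- concretely the submersivity of $(g,y)\mapsto g^{-1}yg$ from $G(F)\times x\exp(\omega)$ into $G(F)$, transverse to $Z_G(x)(F)$, a form of Harish-Chandra descent --- which, for $\omega$ small enough, forces $g^{-1}x\exp(\omega)g\cap x\exp(\omega)\ne\emptyset$ to imply $g\in Z_G(x)(F)$, and gives the analogous compactness statement for (5). This is the step I expect to be the main obstacle.

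Granting (1), part (2) is formal: $\Omega=(x\exp(\omega))^G$ is conjugation-invariant by definition, open because the conjugation map just mentioned is submersive (its differential at a point over $x\exp(X)$ surjects onto $\Fg(F)$: the $\Fg_x(F)$-directions contribute $\Fg_x(F)$ and the $G(F)$-directions cover a complement), and compact modulo conjugation because $\omega$ is. For part (3), let $g\in Z_G(x\exp(X))(F)$ with $X\in\omega$; then $g^{-1}x\exp(X)g=x\exp(X)$ lies in $x\exp(\omega)$, so condition (4) gives $g\in Z_G(x)(F)$, hence $g$ normalizes $\Fg_x(F)$ and $\omega$, and $x\exp(g^{-1}Xg)=g^{-1}x\exp(X)g=x\exp(X)$ forces $\exp(g^{-1}Xg)=\exp(X)$, so $g^{-1}Xg=X$ by injectivity of $\exp$ on $\omega$. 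This proves $Z_G(x\exp(X))(F)\subset Z_G(x)(F)\cap Z_G(X)(F)$; since conversely $Z_G(x)\cap Z_G(X)$ centralizes $x\exp(X)$, passing to neutral components yields $G_{x\exp(X)}=(G_x)_X$.

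For part (4) I would use the measure normalization of Section 2.2, which ties the Haar measure on a reductive group to the self-dual measure on its Lie algebra via $\exp$ (the bilinear form restricts non-degenerately to $\Fg_x(F)$ because $G_x$ is reductive), together with condition (6): the differential $d\exp_X$ is the power series $\tfrac{1-e^{-\mathrm{ad}(X)}}{\mathrm{ad}(X)}$ in $\mathrm{ad}(X)$ with constant term the identity, so condition (6) makes its determinant a unit of $\Fo_F$, i.e. of absolute value $1$, throughout $\omega$, whence the Jacobian of $\exp:\omega\to\exp(\omega)$ is $1$. For part (5), decompose $\Fg(F)=\Fg_x(F)\oplus\Fw$ with $\Fw$ the sum of the eigenspaces of $\mathrm{ad}(x)$ of eigenvalue $\ne 1$; this splitting is $\mathrm{ad}(x\exp(X))$-stable for $X\in\omega$ since $x$ commutes with $\exp(X)$. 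On $\Fw$ the operator $\mathrm{ad}(x\exp(X))-1$ is invertible with $\prod\mid\lambda e^{\mu}-1\mid_F=\prod\mid\lambda-1\mid_F=D^G(x)$ by condition (7); on the complement $\Fg_x(F)$ one has $\Fg_{x\exp(X)}=(\Fg_x)_X$ and $\mathrm{ad}(x\exp(X))-1$ restricts to $e^{\mathrm{ad}(X)}-1=\mathrm{ad}(X)(1+\tfrac12\mathrm{ad}(X)+\cdots)$, whose determinant modulo $(\Fg_x)_X$ has absolute value $D^{G_x}(X)$ after absorbing the unit power-series factor via condition (6). Multiplying the two contributions gives $D^G(x\exp(X))=D^G(x)\,D^{G_x}(X)$.
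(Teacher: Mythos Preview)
Your proposal is correct and follows precisely the approach the paper takes: the paper's own proof is simply ``See Section 3.1 of \cite{W10}'', and you have faithfully sketched the content of that reference, correctly identifying conditions (4) and (5) of Definition \ref{good nbd defn} as the delicate points requiring Harish-Chandra descent. If anything, your write-up is more informative than the paper's one-line citation.
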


\begin{proof}
See Section 3.1 of \cite{W10}.
\end{proof}

\subsection{Quasi-characters of $G(F)$}
If $\theta$ is a smooth function defined on $G_{reg}(F)$, invariant under $G(F)-$conjugation. We say it is a quasi-character on $G(F)$ if and only if, for every $x\in G_{ss}(F)$, there is a good neighborhood $\omega_x$ of $0$ in $\Fg_x(F)$, and for every $\CO\in Nil(\Fg_x)$, there exists coefficient $c_{\theta,\CO}(x)\in \BC$ such that
\begin{equation}\label{germ 2}
\theta(x\exp(X))=\Sigma_{\CO\in Nil(\Fg_x)} c_{\theta,\CO}(x) \hat{j}(\CO,X)
\end{equation}
for every $X\in \omega_{x,reg}$. It is easy to see that $c_{\theta,\CO}(x)$ are uniquely determined by $\theta$.
If $\theta$ is a quasi-character on $G(F)$ and $\Omega\subset G(F)$ is an open $G$-domain, then $\theta 1_{\Omega}$ is still a quasi-character.

\subsection{Quasi-characters of $\Fg(F)$}
Let $\theta$ be a function on $\Fg_{reg}(F)$, invariant under $G(F)-$conjugation. We say it is a quasi-character on $\Fg(F)$ if and only if for every $X\in \Fg_{ss}(F)$, their exists an open $G_X$-domain $\omega_X$ in $\Fg_X(F)$, containing $0$, and for every $\CO \in Nil(\Fg_X)$, there exists $c_{\theta,\CO}(X)\in \BC$ such that
\begin{equation}\label{germ 3}
\theta(X+Y)=\Sigma_{\CO\in Nil(\Fg_X)} c_{\theta,\CO}(X) \hat{j}(\CO,Y)
\end{equation}
for every $Y\in \omega_{X,reg}$.
If $\theta$ is a quasi-character on $\Fg(F)$, define $c_{\theta,\CO}=c_{\theta,\CO}(0)$. If $\lambda \in F^{\times}$, then $\theta^{\lambda}(X)=\theta(\lambda X)$ is still a quasi-character on $\Fg(F)$. By Section 4.2 of \cite{W10}, for every $\CO\in Nil(\Fg_X)$, we have
\begin{equation}\label{scalar 3}
c_{\theta^{\lambda},\CO}(\lambda^{-1}X)=\mid \lambda\mid_{F}^{-dim(\CO)/2} c_{\theta,\CO}(X).
\end{equation}

\subsection{Localization}
We fix $x\in G_{ss}(F)$ and a good neighborhood $\omega$ of $0$ in $\Fg_x(F)$. If $\theta$ is a quasi-character of $G(F)$, we define a function $\theta_{x,\omega}$ on $\omega$ by
\begin{equation}\label{local 1}
\theta_{x,\omega}(X)
=
\begin{cases}
\theta(x\exp(X)), & \text{if}\ X \in \omega; \\
0, & \text{otherwise}.
\end{cases}
\end{equation}
Then $\theta_{x,\omega}$ is a quasi-character of $\Fg_x(F)$, and we have $c_{\theta,\CO}(x\exp(X))=c_{\theta_{x,\omega},\CO}(X)$ for every $X\in \omega\cap \Fg_{x,ss}(F)$ and $\CO\in Nil(\Fg_{x,X})$ (Note we have $G_{x\exp(X)}=(G_x)_X$ since $\omega$ is a good neighborhood). In particular, by taking $X=0$ we have $c_{\theta,\CO}(x)=c_{\theta_{x,\omega},\CO}$ for every $\CO\in Nil(\Fg_x)$.

Now if $\theta$ is a quasi-character of $G(F)$ that is $Z_G(F)$-invariant, then
$$c_{\theta,\CO}(zx)=c_{\theta,\CO}(x)$$
for all $z\in Z_G$. For $\omega$ as above, we can define a quasi-character on $\Fg_x(F)$ that is invariant by $\Fz_{\Fg}(F)$, which is
still denoted by $\theta_{x,\omega}$, to be
\begin{equation}\label{local center 1}
\theta_{x,\omega}(X)
=
\begin{cases}
\theta(x\exp(X')), & \text{if}\ X=X'+Z,X' \in \omega, Z\in \Fz_{\Fg}(F); \\
0, & \text{otherwise}.
\end{cases}
\end{equation}

\section{Strongly Cuspidal Functions}
In this section, we will recall the definition and some basic properties of the strongly cuspidal functions. For details, see Sections 5 and 6 of \cite{W10}, and Section 5 of \cite{B15}.

\subsection{Definition and basic properties}
If $f\in C_{c}^{\infty}(Z_G(F)\backslash G(F))$, we say $f$ is strongly cuspidal if and only if for every proper parabolic subgroup $P=MU$ of $G$, and for every $x\in M(F)$, we have
\begin{equation}\label{cuspidal 1}
\int_{U(F)} f(xu)du=0.
\end{equation}
The most basic example for strongly cuspidal functions is the matrix coefficient of a supercuspidal representation.

The following proposition is easy to prove, following mostly from the definition. See Section 5.1 of \cite{W10}.
\begin{prop}
The following hold.
\begin{enumerate}
\item $f$ is strongly cuspidal if and only if for every proper parabolic subgroup $P=MU$ of $G$, and for every $x\in M(F)$, we have
\begin{equation}\label{cuspidal 2}
\int_{U(F)} f(u^{-1}xu) du=0.
\end{equation}
\item If $\Omega$ is a $G$-domain in $G(F)$ and if $f$ is strongly cuspidal, then $f 1_{\Omega}$ is strongly cuspidal.
\item If f is strongly cuspidal, so is ${}^g f$ for every $g\in G(F)$.
\end{enumerate}
\end{prop}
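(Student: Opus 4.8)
The plan is to prove the three assertions in the order (1), (2), (3): once the equivalent characterization (1) is available, (2) is immediate, while (3) can be obtained directly from the definition. \textbf{Part (1).} Fix a proper parabolic $P=MU$ and $x\in M(F)$. Since $U$ is normal in $P$, the element $x^{-1}u^{-1}xu$ lies in $U(F)$ for every $u\in U(F)$, and $u^{-1}xu=x\cdot\bigl(x^{-1}u^{-1}xu\bigr)$; write $\Phi_x(u)=x^{-1}u^{-1}xu$, a morphism $U\to U$. The idea is to use $\Phi_x$ as a change of variables relating $\int_{U(F)}f(u^{-1}xu)\,du$ to $\int_{U(F)}f(xu)\,du$. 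Two facts are needed. First, $\Phi_x$ is injective on $\ol{F}$-points exactly when $Z_U(x)=\{1\}$, i.e. when $1$ is not an eigenvalue of $\Ad(x)$ on $\Fu$; in characteristic $0$ the unipotent group $U$ is isomorphic as a variety to an affine space, so by Ax--Grothendieck such an injective morphism is an automorphism of $U$. Second, the Jacobian of $\Phi_x$ is \emph{constant}: from $\Phi_x(uu_0)=(x^{-1}u_0^{-1}x)\,\Phi_x(u)\,u_0$ one sees that the pullback measure $\Phi_x^{*}(du)$ is right $U(F)$-invariant, hence a Haar measure, hence a scalar multiple of $du$, the scalar being $|\det(1-\Ad(x)^{-1}|_{\Fu})|_F$. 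Therefore, for every $x\in M(F)$ with $Z_U(x)=\{1\}$,
$$\int_{U(F)}f(u^{-1}xu)\,du=|\det(1-\Ad(x)^{-1}|_{\Fu})|_F^{-1}\int_{U(F)}f(xu)\,du,$$
so the vanishing of $\int_{U(F)}f(xu)\,du$ for all $x\in M(F)$ and all proper $P$ is equivalent to the vanishing of $\int_{U(F)}f(u^{-1}xu)\,du$ for all such $x$; the remaining $x$ (where $1$ is an eigenvalue of $\Ad(x)$ on $\Fu$) form a proper Zariski-closed subset of $M$ and are dealt with by the local constancy in $x$ of both sides where defined, or by reducing to a suitable Levi subgroup of $G$. \textbf{This handling of the degenerate $x$ is the only delicate point}; the rest is the constant-Jacobian change of variables.

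\textbf{Part (2).} Apply the criterion of (1) to $f1_{\Omega}$. For a proper $P=MU$ and $x\in M(F)$, the element $u^{-1}xu$ is $G(F)$-conjugate to $x$ and $\Omega$ is invariant under $G(F)$-conjugation, so $1_{\Omega}(u^{-1}xu)=1_{\Omega}(x)$ does not depend on $u$; hence
$$\int_{U(F)}(f1_{\Omega})(u^{-1}xu)\,du=1_{\Omega}(x)\int_{U(F)}f(u^{-1}xu)\,du=0$$
by (1), and $f1_{\Omega}$ is strongly cuspidal. It is precisely to make this argument work that one proves (1) first: in the original definition the point $xu$ is not conjugate to $x$, so the conjugation-invariance of $\Omega$ cannot be exploited directly.

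\textbf{Part (3).} Let $g\in G(F)$, let $P=MU$ be a proper parabolic, and $x\in M(F)$. Put $Q=g^{-1}Pg=M_QU_Q$ with $M_Q=g^{-1}Mg$ and $U_Q=g^{-1}Ug$, again a proper parabolic of $G$. The group isomorphism $u\mapsto g^{-1}ug$ carries $U(F)$ onto $U_Q(F)$ and $du$ to a Haar measure on $U_Q(F)$, whence
$$\int_{U(F)}{}^{g}f(xu)\,du=\int_{U(F)}f\bigl((g^{-1}xg)(g^{-1}ug)\bigr)\,du=c\int_{U_Q(F)}f\bigl((g^{-1}xg)\,v\bigr)\,dv=0,$$
the last equality because $f$ is strongly cuspidal, $Q=M_QU_Q$ is a proper parabolic, and $g^{-1}xg\in M_Q(F)$ (the positive constant $c$ from the normalization of Haar measure on $U_Q(F)$ being irrelevant). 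So ${}^{g}f$ is strongly cuspidal. The main obstacle in the proposition is thus the bookkeeping for non-regular $x$ in part (1); parts (2) and (3) are formal.
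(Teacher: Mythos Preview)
Your argument is correct and is essentially the standard one; the paper itself gives no proof, referring instead to Section~5.1 of \cite{W10}. Two small remarks on part~(1). First, the appeal to Ax--Grothendieck is valid but heavier than needed: filtering $U$ by its descending central series, $\Phi_x$ induces on each abelian graded piece the linear map $1-\Ad(x)^{-1}$, which is invertible by hypothesis, and bijectivity of $\Phi_x$ on $F$-points then follows by an elementary induction. Second, your caveat about degenerate $x$ is well placed and is really a remark about how the statement must be read: when $Z_U(x)\neq\{1\}$ the integral $\int_{U(F)}f(u^{-1}xu)\,du$ need not converge (take $x$ central), so the condition should be understood on the Zariski-dense open locus of $x\in M(F)$ with $Z_U(x)=\{1\}$. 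The direction ``conjugation integral vanishes on that locus $\Rightarrow$ $\int_{U(F)}f(xu)\,du=0$ for \emph{all} $x\in M(F)$'' then follows, as you say, from the local constancy of $x\mapsto\int_{U(F)}f(xu)\,du$ on $M(F)$. The ``reduction to a smaller Levi'' you allude to is not actually needed. Parts~(2) and~(3) are fine as written.
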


Now we study the weighted orbital integral associated to strongly cuspidal functions. The following lemma is proved in Section 5.2 of \cite{W10}.
\begin{lem}
Let $M$ be a Levi subgroup of $G$ and $K$ be a hyperspecial open compact subgroup with respect to $M$.
If $f\in C_{c}^{\infty}(Z_G(F)\backslash G(F))$ is strongly cuspidal and $x\in M(F)\cap G_{reg}(F)$, then the following hold.
\begin{enumerate}
\item The weighted orbital integral $J_M(x,f)$ does not depend on the choice of $K$.
\item For every $y\in G(F)$, we have $J_M(x,{}^y f)=J_M(x,f)$.
\item If $A_{G_x}\neq A_M$, then $J_M(x,f)=0$.
\end{enumerate}
\end{lem}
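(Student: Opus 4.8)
The plan is to derive all three statements from the interplay between the cuspidality relation \eqref{cuspidal 1} and the behaviour of the $(G,M)$-family $(v_P(g))_{P\in\CP(M)}$ --- equivalently of the $(G,M)$-orthogonal positive set $(H_P(g))_{P\in\CP(M)}$ --- under the relevant operations, using Arthur's splitting and descent formulas for $(G,M)$-families as the main combinatorial tool. The conceptual input is small: everything comes from the vanishing of constant terms, and the work is in locating that vanishing inside the combinatorial expansions.

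\textbf{Parts (1) and (2).} Both operations --- replacing $K$ by another special maximal compact $K'$ in good position with respect to $M$, and (after the change of variables $g\mapsto gy^{-1}$ in \eqref{WOI}) replacing $f$ by ${}^yf$ --- amount to replacing the $(G,M)$-orthogonal set $(H_P(g))_{P}$ defining $v_M(g)$ by a set $(H_P(g)+Z_P)_{P}$, where $(Z_P)_P$ is again $(G,M)$-orthogonal; in case (2) one has $Z_P=H_P(k_gy^{-1})$ with $k_g$ the $K$-component of $g$, and in case (1) $(Z_P)_P$ encodes the relative position of $K$ and $K'$. In both cases $(Z_P)_P$ depends on $g$ only through its $K$-component, hence stays bounded. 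Arthur's splitting formula for $(G,M)$-families then gives an expansion
\[
v_M\bigl((H_P(g)+Z_P)_P\bigr)=\sum_{Q=L_QU_Q\in\CF(M)}v_M^{L_Q}\bigl((H_P(g))_P\bigr)\,v_{L_Q}'\bigl((Z_P)_P\bigr),
\]
in which $v_M^{L_Q}$ is built from the $(L_Q,M)$-family induced by $(H_P(g))_P$, so that it is left $U_Q(F)$-invariant and a function of the $L_Q$-component of $g$, while $v_{L_Q}'$ depends only on $(Z_P)_P$. Substituting into \eqref{WOI} and, in each summand, writing the integral over $G_x(F)\backslash G(F)$ as an iterated integral over $G_x(F)\backslash L_Q(F)$, then $U_Q(F)$, then $K$ via the Iwasawa decomposition for $Q$ (legitimate since $G_x\subseteq M\subseteq L_Q$ for every $Q\in\CF(M)$), the inner integral over $U_Q(F)$ becomes, up to the nonzero Jacobian $|\det((\Ad(x)-1)|_{\Fu_Q})|$, an integral of the form $\int_{U_Q(F)}({}^hf)(x u)\,du$ with $h\in G(F)$; for every \emph{proper} $Q$ this vanishes by \eqref{cuspidal 1} applied to the (again strongly cuspidal) function ${}^hf$. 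Only the term $Q=G$ survives, and it returns $J_M(x,f)$ computed with the original $K$; this gives (1) and (2) simultaneously. Interchanging the finite sum with the integral and iterating the integration is justified by the compact support of $f$ modulo $Z_G(F)$ together with Proposition \ref{h-c}.

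\textbf{Part (3).} Since $x$ is $G$-regular, $G_x$ is a maximal torus $T$, and as $A_M\subseteq Z(M)$ lies in every maximal torus of $M$ we have $A_M\subseteq A_T=A_{G_x}$; the hypothesis makes this strict. Put $M_1=Z_M(A_{G_x})$: because $A_{G_x}\supsetneq A_M$, $M_1$ is a \emph{proper} Levi subgroup of $M$, and since $A_{G_x}\subseteq Z(G_x)$ commutes with $x$ one has $x\in M_1(F)$ with $(M_1)_x=G_x$. I would then invoke Arthur's descent formula for weighted orbital integrals, which expresses $J_M(x,f)$ as a finite linear combination
\[
J_M(x,f)=\sum_{L\in\CL(M_1)}d_{M_1}^{G}(M,L)\,J_{M_1}^{L}\bigl(x,f_{Q_L}\bigr),
\]
where $f_{Q_L}$ is the constant term of $f$ along a parabolic $Q_L=LU_L$. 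The coefficient $d_{M_1}^{G}(M,L)$ vanishes unless $\Fa_{M_1}^{G}=\Fa_M^{G}\oplus\Fa_L^{G}$; as $M_1\subsetneq M$ forces $\dim\Fa_{M_1}^{G}>\dim\Fa_M^{G}$, every nonzero term has $L$ a \emph{proper} Levi of $G$, hence $Q_L$ a proper parabolic. But the constant term of a strongly cuspidal function along a proper parabolic vanishes identically by \eqref{cuspidal 1}. Therefore $J_M(x,f)=0$.

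\textbf{Main obstacle.} No deep new idea is needed; the genuine difficulty is the bookkeeping, namely identifying precisely which inner integral in each summand of the splitting formula (parts (1), (2)) or the descent formula (part (3)) is a constant term of a conjugate of the strongly cuspidal $f$, and verifying the convergence that makes the rearrangements of iterated integrals valid. This is exactly the computation carried out in \cite[Section 5]{W10} and \cite[Section 5]{B15}, whose treatment I would follow.
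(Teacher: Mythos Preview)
Your proposal is correct and is precisely the argument the paper defers to: the paper gives no proof of its own but cites \cite[Section 5.2]{W10}, and what you outline is exactly Waldspurger's proof there --- Arthur's product/splitting formula for $(G,M)$-families reduces (1) and (2) to the vanishing of unipotent integrals of conjugates of $f$ along proper parabolics, and Arthur's descent formula together with the observation $M_1\subsetneq M\Rightarrow L\subsetneq G$ handles (3). One purely terminological remark: the expansion you write, a sum over $Q\in\CF(M)$ with factors $v_M^{L_Q}$ and $v_{L_Q}'$, is Arthur's \emph{product} formula rather than his \emph{splitting} formula (the latter being a sum over pairs $(L_1,L_2)\in\CL(M)^2$ with coefficients $d_M^G(L_1,L_2)$); either version works here and yields the same conclusion.
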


For $x\in G_{reg}(F)$, let $M(x)$ be the centralizer of $A_{G_x}$ in $G$, which is clearly a Levi subgroup of $G$.
For any strongly cuspidal $f$ belonging to the space $C_{c}^{\infty}(Z_G(F)\backslash G(F))$,
define the function $\theta_f$ on $Z_G(F)\backslash G_{reg}(F)$ by
\begin{equation}\label{distribution 1}
\theta_f(x)=(-1)^{a_{M(x)}-a_G} \nu(G_x)^{-1} D^G(x)^{-1/2} J_{M(x)}(x,f).
\end{equation}
Here $a_G$ is the dimension of $A_G$, and the same for $a_{M(x)}$. By the lemma above, the weighted orbital integral is independent of
the choice of the hyperspecial open compact subgroup, and so is the function $\theta_f$.

\begin{prop}\label{distribution 2}
The following hold.
\begin{enumerate}
\item The function $\theta_f$ is invariant under $G(F)$-conjugation, has a compact support modulo conjugation and modulo the center, is locally integrable on $Z_G(F)\backslash G(F)$ and locally constant on $Z_G(F)\backslash G_{reg}(F)$.
\item $\theta_f$ is a quasi-character.
\end{enumerate}
\end{prop}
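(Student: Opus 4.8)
The plan is to read part (1) off the properties of weighted orbital integrals recorded above, and to prove part (2) by Waldspurger's localization procedure, which reduces the quasi-character property near a semisimple point to a statement on the Lie algebra.

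\textbf{Part (1).} For $y\in G(F)$ one has $G_{y^{-1}xy}=y^{-1}G_xy$, hence $M(y^{-1}xy)=y^{-1}M(x)y$; since $D^G$, the signs $(-1)^{a_M-a_G}$, and the constants $\nu(\cdot)$ are conjugation invariant, the $G(F)$-invariance of $\theta_f$ reduces to $J_{y^{-1}M(x)y}(y^{-1}xy,f)=J_{M(x)}(x,f)$, which comes from the change of variables $g\mapsto yg$ in \eqref{WOI} together with independence of $K$ and the identity $J_M(x,{}^yf)=J_M(x,f)$ for strongly cuspidal $f$. On $G_{reg}(F)$ the map $x\mapsto M(x)$ is locally constant, and on each stratum all the factors entering \eqref{distribution 1} are locally constant, so $\theta_f$ is locally constant on $Z_G(F)\backslash G_{reg}(F)$. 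If $\theta_f(x)\neq 0$ then $J_{M(x)}(x,f)\neq 0$; as $x\mapsto J_M(x,f)$ has support compact modulo conjugation for each $M$, and $f$ is supported on a set compact modulo $Z_G(F)$, the support of $\theta_f$ is compact modulo conjugation and modulo the center. For local integrability, the Weyl integration formula on $Z_G(F)\backslash G(F)$ reduces us to integrating, over each maximal torus $T$ and over the support (compact modulo the center), the function $|\theta_f(t)|D^G(t)=\nu(G_t)^{-1}D^G(t)^{1/2}|J_{M(t)}(t,f)|$; by the logarithmic bound on weighted orbital integrals this is $\ll D^G(t)^{1/2}(1+|\log D^G(t)|)^k$, which is bounded, so the integral converges.

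\textbf{Part (2).} Fix $x\in G_{ss}(F)$ and a good neighborhood $\omega$ of $0$ in $\Fg_x(F)$, taken invariant under $Z_G(F)$-conjugation as in \eqref{local center 1}; we must produce constants $c_{\theta_f,\CO}(x)$, $\CO\in Nil(\Fg_x)$, with $\theta_f(x\exp(X))=\sum_{\CO}c_{\theta_f,\CO}(x)\,\hat{j}(\CO,X)$ for all $X\in\omega$ with $x\exp(X)$ regular. The strategy is to descend to $\Fg_x(F)$. For such $X$ one has $G_{x\exp(X)}=(G_x)_X$, so $M(x\exp(X))$ is a Levi subgroup contained in $L:=Z_G(A_{G_x})$, and $D^G(x\exp(X))=D^G(x)\,D^{G_x}(X)$ by Proposition \ref{good nbd}. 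Expanding the $(G,M(x\exp(X)))$-family of weights along $L$ (Arthur's descent formula) and using the vanishing of the constant terms of $f$ along proper parabolic subgroups, one rewrites $\theta_f(x\exp(X))$ as a finite sum of terms $\theta_{\varphi_L}(X)$ with each $\varphi_L\in C_{c}^{\infty}(\Fg_x(F))$ strongly cuspidal. One is thereby reduced to the Lie algebra analogue — for strongly cuspidal $\varphi\in C_{c}^{\infty}(\Fg_x(F))$, the function $\theta_\varphi$ is a quasi-character on $\Fg_x(F)$ — which is itself proved by the same localization, the case of $0$ following from the Shalika germ expansions recalled above (in particular \eqref{Shalika germ 2} and \eqref{germ 1}) applied to the weighted orbital integrals $J_{M(Y)}(Y,\varphi)$ that define $\theta_\varphi$.

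\textbf{Main obstacle.} Part (1) is routine bookkeeping; the substance lies in the descent of part (2): one must check that localizing a strongly cuspidal function along $L=Z_G(A_{G_x})$ and transporting it to $\Fg_x(F)$ via $\exp$ again produces strongly cuspidal functions, and then match all the normalizing factors — the signs $(-1)^{a_M-a_G}$, the constants $\nu(\cdot)$, and the Weyl discriminants — on the two sides of the descent identity. This is the content of \cite[\S 5]{W10}, which we adapt; the only change required here is to carry the central torus $Z_G$ through every statement, since $\theta_f$ lives on $Z_G(F)\backslash G(F)$ and all good neighborhoods and localizations must be taken $Z_G(F)$-equivariant, as recorded in \eqref{local center 1}.
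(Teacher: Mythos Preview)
Your proposal is correct and follows exactly the approach the paper relies on: the paper's own proof is nothing more than a citation to \cite{W10} (Lemma~5.3 for part~(1) and Corollary~5.9 for part~(2)), and what you have written is a faithful outline of the arguments behind those references --- the bookkeeping for part~(1) and Waldspurger's localization/descent to the Lie algebra for part~(2), with the minor adaptation of working modulo $Z_G(F)$.
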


\begin{proof}
The first part is Lemma 5.3 of \cite{W10}, the second part is Corollary 5.9 of the loc. cit.
\end{proof}

Here we only write down the results for the trivial central character case, but the argument can be easily extended to the non-trivial central character case (i.e. $f\in C_{c}^{\infty}(Z_G(F)\backslash G(F),\chi)$), or the case without central character (i.e. $f\in C_{c}^{\infty}(G(F))$).

\subsection{The Lie algebra case}
\begin{defn}
We say a function $f\in C_{c}^{\infty}(\Fg_0(F))$ is strongly cuspidal if for every proper parabolic subgroup $P=MU$, and for every $X\in \Fm(F)$, we have
$$\int_{\Fu(F)} f(X+Y)dY=0.$$
This is equivalent to say that for every proper parabolic subgroup $P=MU$, and for every $X\in \Fm(F)$, we have
$$\int_{U(F)} f(u^{-1}Xu)du=0.$$
\end{defn}
If $f\in C_{c}^{\infty}(\Fg_0(F))$ is strongly cuspidal, we define a function $\theta_f$ on $\Fg_{0,reg}(F)$ by
\begin{equation}\label{distribution 3}
\theta_f(X)=(-1)^{a_{M(X)}-a_G} \nu(G_X)^{-1} D^G(X)^{-1/2} J_{M(X)}(X,f).
\end{equation}
Here $M(X)$ is the centralizer of $A_{G_X}$ in $G$, $a_G$ is the dimension of $A_G$, and the same for $a_{M(X)}$. We have a similar result as Proposition \ref{distribution 2}.

\begin{prop}\label{distribution 4}
If $f\in C_{c}^{\infty}(\Fg_0(F))$ is strongly cuspidal, $\theta_f$ is independent of the choice of $K$. (Recall we need to fix the open compact subgroup $K$ in the definition of orbital integral.) And in this case, $\theta_f$ is a quasi-character.
\end{prop}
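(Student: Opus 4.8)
The plan is to follow, on the Lie algebra, the proof of Proposition~\ref{distribution 2}; the latter rests on Lemma~5.2, Lemma~5.3 and Corollary~5.9 of \cite{W10}, each of which has a Lie-algebra analogue (compare also \cite[\S5]{B15}), and the only property of $f$ that ever enters is strong cuspidality, i.e.\ the vanishing of the constant terms $f_{\Fq}(Y):=\int_{\Fu(F)}f(Y+Z)\,dZ$ for all proper parabolics $\Fq=\Fl\oplus\Fu$. I treat the two assertions separately.

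\textbf{Independence of $K$.} Let $M$ be a Levi subgroup and let $K,K'$ be two hyperspecial maximal compact subgroups in good position with $M$. The weight functions $g\mapsto v_M(g)$ attached to $K$ and to $K'$ come from $(G,M)$-families whose difference is governed, via Arthur's splitting formula for $(G,M)$-families, by the $(L_Q,M)$-families associated to proper parabolics $Q=L_QU_Q\in\CF(M)$. Substituting into \eqref{WOI} and unfolding over the $U_Q(F)$, the difference $J_M(X,f)_K-J_M(X,f)_{K'}$ becomes a finite linear combination of weighted orbital integrals in proper Levi subalgebras $\Fl_Q(F)$ evaluated on the constant terms $f_{\Fq}$ of $f$ along the corresponding nilradicals. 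Since $f$ is strongly cuspidal all these constant terms vanish, so the difference is $0$; applied with $M=M(X)$, this shows that $J_{M(X)}(X,f)$, and hence $\theta_f$ in \eqref{distribution 3}, is independent of $K$. This is the Lie-algebra form of the lemma recorded just before \eqref{distribution 1}.

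\textbf{Quasi-character.} Fix $X_0\in\Fg_{0,ss}(F)$ and a good neighborhood $\omega$ of $0$ in $\Fg_{X_0}(F)$, chosen so that $G_{X_0+Y}=(G_{X_0})_Y$, and hence $M(X_0+Y)$ is a Levi subgroup of $M(X_0)$, for every regular $Y\in\omega$. Arthur's descent formula for weighted orbital integrals then expresses $J_{M(X_0+Y)}(X_0+Y,f)$ as a linear combination of weighted orbital integrals $J_{M'}^{G_{X_0}}(Y,\cdot)$ taken inside $\Fg_{X_0}(F)$, evaluated on functions obtained from $f$ by integration along unipotent radicals. Reassembling this with the sign, $\nu$- and $D^G$-factors in \eqref{distribution 3}, one identifies $\theta_f(X_0+Y)$, for regular $Y\in\omega$, with $\theta_{f'}(Y)$ for a suitable $f'\in C_c^\infty(\Fg_{X_0,0}(F))$: essentially the localization of $f$ at $X_0$, projected to trace zero and, after discarding a summand whose associated $\theta$ vanishes, taken strongly cuspidal on $\Fg_{X_0}(F)$. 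This reduces the problem to the case $X_0=0$ for the group $G_{X_0}$. There, for $Y$ regular near $0$ one applies Arthur's local germ expansion of the weighted orbital integral $J_{M(Y)}(Y,f)$ about $0$; strong cuspidality kills the genuinely weighted (non-invariant) terms, and after dividing by $D^G(Y)^{1/2}$ and inserting the normalizing constants, the homogeneity relations \eqref{Shalika germ 1}, \eqref{scalar 2}, \eqref{scalar 3} together with \eqref{germ 1} show that what remains is a linear combination of the Fourier transforms $\hat{j}(\CO,\cdot)$ of nilpotent orbital integrals, $\CO\in Nil(\Fg)$. This yields the germ expansion \eqref{germ 3} of $\theta_f$ near every semisimple element, i.e.\ $\theta_f$ is a quasi-character.

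\textbf{Main obstacle.} The independence of $K$ is immediate once strong cuspidality is invoked. The substantive part is the quasi-character assertion, and within it the step that, at a non-central $X_0$, replaces the localization of $f$ by a strongly cuspidal function on $\Fg_{X_0}(F)$ with the same $\theta$: localizing a strongly cuspidal function does not preserve strong cuspidality, and controlling the discrepancy — together with the bookkeeping of constant-term contributions in Arthur's descent and splitting formulas for weighted orbital integrals — is precisely the technical content of \cite[\S5]{W10} and \cite[\S5]{B15} that I would transcribe into the Lie-algebra setting here.
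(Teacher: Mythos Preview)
Your proposal is correct and follows essentially the same approach as the paper. In fact the paper does not give an independent proof of this proposition at all: it is stated as the Lie-algebra analogue of Proposition~\ref{distribution 2}, whose proof is simply a reference to Lemma~5.3 and Corollary~5.9 of \cite{W10} (see also \cite[\S5]{B15}), and your sketch is precisely a transcription of that Waldspurger argument to the Lie algebra, including the correct identification of the delicate point (localization of $f$ at a semisimple $X_0$ need not preserve strong cuspidality, and one must control the resulting constant-term terms via Arthur's descent/splitting formulas).
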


\subsection{Localization}
In this section, we discuss the localization of the quasi-character $\theta_f$, which will be used in the localization of the trace formula in Section 7. Some results of this section will also be used in Section 9 when we trying to change the truncated function in the trace formula. For $x\in G_{ss}(F)$, recall that $\Fg_{x,0}$ is the subspace of elements in $\Fg_x$ whose trace is zero.
Suppose $\Fg_{x,0}=\Fg_{x}'\oplus \Fg''$ where $\Fg_{x}'$ and $\Fg''$ are the Lie algebras of some connected reductive groups (See Section 7.2).
For any element $X\in \Fg_{x,0}(F)$, it can be decomposed as $X=X'+X''$ for $X'\in \Fg_{x}'$ and $X''\in \Fg''$.
We denote by $f\rightarrow f^{\sharp}$ the partial Fourier transform for $f\in C_{x}^{\infty}(\Fg_{x,0}(F))$ with respect to $X''$. i.e.
\begin{equation}\label{FT 1}
f^{\sharp}(X)=\int_{\Fg''(F)} f(X'+Y'')\psi(<Y'',X''>) dY''.
\end{equation}
Let $\omega$ be a good neighborhood of $0$ in $\Fg_x$. We can also view $\omega$ as an neighborhood of $0$ in $\Fg_{x,0}$ by considering
its image in $\Fg_{x,0}$ under the projection $\Fg_x\rightarrow \Fg_{x,0}$. If $f\in C_{c}^{\infty}(Z_G(F)\backslash G(F))$, for $g\in G(F)$, define ${}^g f_{x,\omega} \in C_{c}^{\infty} (\Fg_{x,0}(F))$ by
\begin{equation}\label{local center 2}
{}^g f_{x,\omega}(X)
=\begin{cases}
f(g^{-1}x\exp(X)g), & \text{if}\  X \in \omega; \\
0, & \text{otherwise}.
\end{cases}
\end{equation}
Also define
\begin{equation}
{}^g f_{x,\omega}^{\sharp}=({}^g f_{x,\omega})^{\sharp}.
\end{equation}
Note that for $X\in \Fg_{x,0}(F)$, $X\in \omega$ means there exist $X'\in \omega$ and $Z\in \Fz_{\Fg}(F)$ such that $X=X'+Z$. It follows that
the value $f(g^{-1}x\exp(X)g)$ is just $f(g^{-1}x\exp(X')g)$, which is independent of the choice of $X'$ and $Z$.

If $M$ is a Levi subgroup of $G$ containing the given $x$, fix a hyperspecial open compact subgroup $K$ with respect to $M$.
If $P=MU\in \CP(M)$, for $f\in C_{c}^{\infty}(Z_G(F)\backslash G(F))$, define the functions $\varphi[P,f],\varphi^{\sharp}[P,f]$ and $J_{M,x,\omega}^{\sharp}(\cdot, f)$ on $\Fm_{x,0}(F)\cap \Fg_{x,reg}(F)$ by
\begin{equation}
\varphi[P,f](X)=D^{G_x}(X)^{1/2} D^{M_x}(X)^{-1/2} \int_{U(F)} {}^u f_{x,\omega}(X) du,
\end{equation}
\begin{equation}
\varphi^{\sharp}[P,f](X)=D^{G_x}(X)^{1/2} D^{M_x}(X)^{-1/2} \int_{U(F)} {}^u f_{x,\omega}^{\sharp}(X) du,
\end{equation}
and
\begin{equation}\label{6.1}
J_{M,x,\omega}^{\sharp}(X, f)=D^{G_x}(X)^{1/2}\int_{G_{x,X}(F)\backslash G(F)} {}^g f_{x,\omega}^{\sharp}(X) v_M(g)dg.
\end{equation}
The following two lemmas are proved in Sections 5.4 and 5.5 of \cite{W10}, which will be used in the localization of the trace formula. The second lemma will also be used in Section 9 when we trying to change the truncated function in the trace formula.
\begin{lem}
The following hold.
\begin{enumerate}
\item The three integrals above are absolutely convergent.
\item The function $\varphi[P,f]$ and $\varphi^{\sharp}[P,f]$ can be extended to elements in $C_{c}^{\infty}(\Fm_{x,0}(F))$ and we have $(\varphi[P,f])^{\sharp}=\varphi^{\sharp}[P,f]$.
\item The function $X\rightarrow J_{M,x,\omega}^{\sharp}(X, f)$ is invariant under $M_x(F)$-conjugation, and has a compactly support
modulo conjugation. Further, it is locally constant on $\Fm_{x,0}(F)\cap \Fg_{x,reg}(F)$, with the property that
there exist $c>0$ and an integer $k\geq 0$ such that
$$\mid J_{M,x,\omega}^{\sharp}(X, f)\mid \leq c(1+\mid \log(D^{G_x}(X))\mid)^k$$
for every $X\in \Fm_{x,0}\cap \Fg_{x,reg}(F)$.
\end{enumerate}
\end{lem}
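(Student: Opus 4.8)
The strategy is to follow Waldspurger's arguments in Sections 5.4--5.5 of \cite{W10}, adapting them to the present setup where the center must be carried along and where a partial Fourier transform in the $\Fg''$-direction is inserted. For part (1), I would first observe that $^g f_{x,\omega}$ is, by construction \eqref{local center 2}, a compactly supported function on $\Fg_{x,0}(F)$ whose support (in $g$) is controlled via condition (5) of Definition \ref{good nbd defn} together with Proposition \ref{good nbd}(2) -- i.e.\ the set of $g$ for which $g^{-1}x\exp(\omega)g$ meets a fixed compact set is contained in $G_x(F)\Gamma'$. Partial Fourier transform in $\Fg''$ preserves smoothness and compact support, so $^g f^{\sharp}_{x,\omega}\in C_c^{\infty}(\Fg_{x,0}(F))$ as well. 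Convergence of the integral over $U(F)$ in the definitions of $\varphi[P,f]$ and $\varphi^{\sharp}[P,f]$ is then immediate from compact support; convergence of \eqref{6.1} follows from Proposition \ref{h-c} applied to the torus $A_{G_{x,X}}$ (or a maximal torus through $X$ in $G_x$), which bounds $\sigma_T(g)$ by $1+|\log D^{G_x}(X)|$ on the relevant support, combined with the polynomial growth of $v_M(g)$ -- exactly the mechanism used for $J_M(x,f)$ in Lemma~2.3 of \cite{W10}.

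For part (2), the key point is that $\varphi[P,f]$, a priori defined only on $\Fm_{x,0}(F)\cap\Fg_{x,reg}(F)$, extends smoothly across the non-regular locus. Here I would reduce to Waldspurger's result: the factor $D^{G_x}(X)^{1/2}D^{M_x}(X)^{-1/2}$ times the unipotent integral $\int_{U(F)}{}^u f_{x,\omega}(X)\,du$ is precisely a "constant term"-type expression whose smooth extendability is Lemma 5.4 of \cite{W10} (this is where strong cuspidality of $f$ is used -- or rather where it is \emph{not} yet needed, since the statement is for general $f$, the cuspidality entering only later in the trace formula). The identity $(\varphi[P,f])^{\sharp}=\varphi^{\sharp}[P,f]$ is then a formal consequence: partial Fourier transform in $\Fg''$ commutes with the integration over $U(F)$ (both are integrations over groups, and one checks supports are compatible so Fubini applies) and with multiplication by $D^{G_x}(X)^{1/2}D^{M_x}(X)^{-1/2}$, which depends only on the full semisimple data and is insensitive to the $\Fg''$-variable in the relevant sense; this commutation is carried out in Section 5.5 of \cite{W10}.

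For part (3), invariance of $X\mapsto J^{\sharp}_{M,x,\omega}(X,f)$ under $M_x(F)$-conjugation is a change of variables in the integral over $G_{x,X}(F)\backslash G(F)$, using that $v_M(g)$ is left $M(F)$-invariant and that conjugation by $M_x(F)\subset M(F)$ moves $^g f^{\sharp}_{x,\omega}$ accordingly. Compact support modulo conjugation follows because $\omega$ is compact modulo conjugation (Proposition \ref{good nbd}(1)) and $f$ has compact support modulo center. Local constancy and the logarithmic bound
$$|J^{\sharp}_{M,x,\omega}(X,f)|\leq c\bigl(1+|\log(D^{G_x}(X))|\bigr)^k$$
are obtained exactly as in Lemma~2.3(2): Proposition \ref{h-c} gives $\sigma_T(g)\ll 1+|\log D^{G_x}(X)|$ on the support, $v_M(g)\ll \sigma(g)^{\dim\Fa_M^{G}}$, and the volume of the truncated domain of $g$ is polynomial in this log. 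The main obstacle I anticipate is part (2) -- specifically, checking carefully that the partial Fourier transform genuinely commutes with the $U(F)$-integration in this centered, non-split-adapted situation, since (as the introduction warns) the character $\xi$ here is not attached to simple roots, so the decomposition $\Fg_{x,0}=\Fg_x'\oplus\Fg''$ interacts with $U$ in a less standard way than in the Gan--Gross--Prasad case; verifying the support compatibility needed for Fubini is the delicate technical step. Everything else is a faithful transcription of \cite[Sections 5.4--5.5]{W10}.
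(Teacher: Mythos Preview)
Your approach is correct and is essentially what the paper does: the paper gives no proof at all for this lemma and simply refers the reader to Sections 5.4 and 5.5 of \cite{W10}, so your plan to transcribe Waldspurger's arguments to the present centered setting is exactly on target. One small correction: your anticipated obstacle in part (2) concerning the character $\xi$ not being attached to simple roots is misplaced here, since $\xi$ does not enter the definitions of $\varphi[P,f]$, $\varphi^{\sharp}[P,f]$, or $J^{\sharp}_{M,x,\omega}$ at all (the $U$ appearing is the unipotent radical of an arbitrary parabolic $P=MU$ of $G$, not the Ginzburg--Rallis unipotent, and $\sharp$ is the partial Fourier transform \eqref{FT 1} defined via the bilinear form); the commutation of $\sharp$ with $\int_{U(F)}$ is therefore the same formal Fubini argument as in \cite{W10}, with no model-specific complication.
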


\begin{lem}\label{strongly cuspidal}
Suppose $f$ is strongly cuspidal.
\begin{enumerate}
\item If $P\neq G$, the function $\varphi[P,f]$ and $\varphi^{\sharp}[P,f]$ are zero.
\item The function $J_{M,x,\omega}^{\sharp}(\cdot, f)$ does not depend on the choice of $K$. It is zero if $A_{M_x}\neq A_M$.
For every $y\in G(F)$ and $X\in \Fm_{x,0}\cap \Fg_{x,reg}(F)$, we have
$$J_{M,x,\omega}^{\sharp}(X, f)=J_{M,x,\omega}^{\sharp}(X, {}^y f).$$
\end{enumerate}
\end{lem}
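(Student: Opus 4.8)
This statement is the localized, partially Fourier-transformed counterpart of the facts about weighted orbital integrals of strongly cuspidal functions recorded in the lemma preceding the Shalika germs, and the plan is to follow Waldspurger's treatment in \cite[Sections 5.2, 5.4, 5.5]{W10} essentially verbatim, keeping track of the extra semisimple element $x$ and of the partial Fourier transform $f\mapsto f^{\sharp}$. The only genuinely new input is part (1), which then feeds into part (2) exactly as the vanishing \eqref{cuspidal 2} feeds into the non-localized lemma.

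\textbf{Part (1).} Fix $X\in\Fm_{x,0}(F)\cap\Fg_{x,reg}(F)$. If $X\notin\omega$, then ${}^uf_{x,\omega}(X)=0$ for every $u\in U(F)$ and $\varphi[P,f](X)=0$ trivially. If $X\in\omega$, write $X=X'+Z$ with $X'\in\omega$ and $Z\in\Fz_{\Fg}(F)$; since the connected center of $G$ lies in $M_x$ we have $\Fz_{\Fg}(F)\subset\Fm_x(F)$, hence $X'=X-Z\in\Fm_x(F)$ and therefore $x\exp(X')\in M(F)$. By $Z_G(F)$-invariance of $f$ and the definition \eqref{local center 2},
\[
\int_{U(F)}{}^uf_{x,\omega}(X)\,du=\int_{U(F)}f\bigl(u^{-1}\,x\exp(X')\,u\bigr)\,du=0,
\]
where the last equality is \eqref{cuspidal 2} applied to the proper parabolic $P=MU$ and the element $x\exp(X')\in M(F)$. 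Thus $\varphi[P,f]\equiv 0$ on $\Fm_{x,0}(F)\cap\Fg_{x,reg}(F)$, so its extension to $C_{c}^{\infty}(\Fm_{x,0}(F))$ is $0$, and hence $\varphi^{\sharp}[P,f]=(\varphi[P,f])^{\sharp}=0$ by part (2) of the preceding lemma; no cuspidality of $f^{\sharp}$ is needed, only that the partial Fourier transform of $0$ is $0$.

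\textbf{Part (2).} For the invariance under $f\mapsto{}^yf$ and the independence of $K$, one argues as for the non-localized integral $J_M(\cdot,f)$. From \eqref{local center 2} one reads off ${}^g({}^yf)_{x,\omega}={}^{gy}f_{x,\omega}$ (right translation of $g$ does not affect the cutoff on $\omega$), hence ${}^g({}^yf)_{x,\omega}^{\sharp}={}^{gy}f_{x,\omega}^{\sharp}$ since the partial Fourier transform acts only in the $\Fg''$-variable; substituting $g\mapsto gy^{-1}$ in \eqref{6.1} gives
\[
J_{M,x,\omega}^{\sharp}(X,{}^yf)=D^{G_x}(X)^{1/2}\int_{G_{x,X}(F)\backslash G(F)}{}^gf_{x,\omega}^{\sharp}(X)\,v_M(gy^{-1})\,dg.
\]
Writing $v_M(gy^{-1})=(c\,c')_M$ with $c$ the $(G,M)$-family $\lambda\mapsto e^{-\lambda H_P(g)}$ and $c'$ the $(G,M)$-family $\lambda\mapsto e^{-\lambda(H_P(gy^{-1})-H_P(g))}$, Arthur's splitting formula expands $v_M(gy^{-1})$ as $v_M(g)$ plus a sum over $L\in\CL(M)$ with $L\neq G$ of products of the $(L,M)$-weight $v_M^L(g)$ with a factor attached to $c'$. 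The term $L=G$ reproduces $J_{M,x,\omega}^{\sharp}(X,f)$, while the remaining terms, by the same manipulation as in \cite[Lemma 5.2]{W10}, are expressed through the constant terms $\varphi^{\sharp}[Q,f]$ of proper parabolics $Q$ and hence vanish by part (1); this proves $J_{M,x,\omega}^{\sharp}(X,{}^yf)=J_{M,x,\omega}^{\sharp}(X,f)$, and comparing instead the $(G,M)$-families associated to two choices of $K$ — whose difference is likewise a combination of $(L,M)$-weights with $L\subsetneq G$ — gives independence of $K$ by the identical mechanism. Finally, if $A_{M_x}\neq A_M$ then, since $A_M$ is central in $M$ and commutes with $x$, the inclusion $A_M\subset A_{M_x}$ is strict; put $L:=Z_G(A_{M_x})^{\circ}$, a Levi subgroup with $M\subsetneq L$, and note $M_x\subset L$ and $G_{x,X}\subset L$ (as $A_{M_x}$ commutes with $X$). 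Decomposing the integral over $G_{x,X}(F)\backslash G(F)$ through a parabolic $Q=LU_Q$ and combining the $U_Q(F)$-integration with Arthur's descent formula for $v_M$ relative to $L$ expresses $J_{M,x,\omega}^{\sharp}(X,f)$ in terms of $\varphi^{\sharp}[Q',f]$ for proper parabolics $Q'$, so it vanishes by part (1).

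\textbf{Main obstacle.} The substantive work is the bookkeeping in part (2): the orbit sits in $\Fg_{x,0}$, the quotient is by $G_{x,X}\subset G_x$, yet the weight is the ambient $(G,M)$-weight $v_M$ and a partial Fourier transform is present, so one must organize these so that Arthur's splitting and descent formulas apply cleanly and so that the proper-Levi contributions genuinely reduce to the constant terms $\varphi^{\sharp}[Q,f]$ killed by part (1). This is precisely what is carried out in \cite[Sections 5.4--5.5]{W10}; since none of it uses the particular group $\GL_6$ or the character $\xi$, the argument transfers without change.
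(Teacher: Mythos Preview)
Your proposal is correct and follows exactly the approach the paper takes: the paper simply cites \cite[Sections 5.4 and 5.5]{W10} for this lemma, and your argument is a faithful expansion of Waldspurger's proof there, with the bookkeeping for the extra semisimple element $x$, the trace-zero convention, and the partial Fourier transform handled correctly. In particular, your observation that $\varphi^{\sharp}[P,f]=(\varphi[P,f])^{\sharp}=0$ follows immediately from part (1) without any cuspidality of $f^{\sharp}$ is the right way to organize it, and your use of Arthur's splitting and descent formulas in part (2) to reduce to the vanishing in part (1) is precisely the mechanism in \cite{W10}.
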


For $f\in C_{c}^{\infty}(Z_G(F)\backslash G(F))$ strongly cuspidal, we define a function $\theta_{f,x,\omega}$ on $(\Fg_{x,0})_{reg}$ by
\begin{equation}\label{6.3}
\theta_{f,x,\omega}(X)
=\begin{cases}
\theta_f(x\exp(X)), & \text{if}\  X \in \omega; \\
0, & \text{otherwise}.
\end{cases}
\end{equation}
If $X\in (\Fg_{x,0})_{reg}$, let $M(X)$ be the centralizer of $A_{G_{x,X}}$ in $G$. We define
\begin{equation}\label{6.2}
\theta_{f,x,\omega}^{\sharp}(X)=(-1)^{a_{M(X)}-a_G} \nu(G_{x,X})^{-1} D^{G_x}(X)^{-1/2} J_{M(X),x,\omega}^{\sharp}(X, f)
\end{equation}
By the lemma above this is independent of the choice of $K$. From the discussion of $\theta_f$, we have a similar lemma:

\begin{lem}
The functions $\theta_{f,x,\omega}$ and $\theta_{f,x,\omega}^{\sharp}$ are invariant under $G_x(F)$-conjugation, compactly supported modulo conjugation, locally integrable on $\Fg_{x,0}(F)$, and locally constant on $\Fg_{x,0,reg}(F)$.
\end{lem}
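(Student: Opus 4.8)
The plan is to treat $\theta_{f,x,\omega}$ and $\theta_{f,x,\omega}^{\sharp}$ separately; only the second needs real work, and within it only the local integrability is not immediate.

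For $\theta_{f,x,\omega}$ I would simply recognize it as the localization of a $Z_G(F)$-invariant quasi-character. By Proposition \ref{distribution 2}, $\theta_f$ is a quasi-character on $G(F)$ with compact support modulo conjugation and modulo the center, so in particular it is $Z_G(F)$-invariant; hence the function $\theta_{f,x,\omega}$ of \eqref{6.3} is exactly the localization studied in Section 3.4, which by \eqref{local center 1} is a quasi-character of $\Fg_{x,0}(F)$ invariant under $\Fz_{\Fg}(F)$. The four asserted properties then follow at once: invariance under $G_x(F)$-conjugation and local constancy on $\Fg_{x,0,reg}(F)$ are built into the definition of a quasi-character; local integrability on $\Fg_{x,0}(F)$ follows from the germ expansion \eqref{germ 3} and the local integrability of the functions $Y\mapsto\hat j(\CO,Y)$; and $\theta_{f,x,\omega}$ vanishes outside $\omega$, which is compact modulo $G_x(F)$-conjugation by condition (1) of Definition \ref{good nbd defn}.

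For $\theta_{f,x,\omega}^{\sharp}$ I would argue directly from its definition \eqref{6.2} --- which has the same shape as the quasi-character \eqref{distribution 3} with $G$ replaced by $G_x$ and $J_M$ replaced by $J_{M,x,\omega}^{\sharp}$ --- feeding in the two lemmas just proved for $J_{M,x,\omega}^{\sharp}(\cdot,f)$. For $G_x(F)$-invariance: for $y\in G_x(F)$ one has $M(y^{-1}Xy)=y^{-1}M(X)y$ and $G_{x,y^{-1}Xy}=y^{-1}G_{x,X}y$, so the sign, the factor $\nu(G_{x,X})$ and $D^{G_x}(X)$ are unchanged, while a change of variables in \eqref{6.1} combined with part (2) of Lemma \ref{strongly cuspidal} (this is where strong cuspidality of $f$ enters) gives invariance of the weighted term. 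Compact support modulo conjugation follows from the corresponding property of each $J_{M,x,\omega}^{\sharp}(\cdot,f)$ (the lemma preceding Lemma \ref{strongly cuspidal}), since only finitely many Levi subgroups of $G$ arise as $M(X)$ up to $G_x(F)$-conjugacy and the remaining factors in \eqref{6.2} do not enlarge the support. Local constancy on $\Fg_{x,0,reg}(F)$ combines the local constancy there of $X\mapsto M(X)$, $\nu(G_{x,X})$ and $D^{G_x}(X)$ with the local constancy of $J_{M,x,\omega}^{\sharp}(\cdot,f)$ on $\Fm_{x,0}(F)\cap\Fg_{x,reg}(F)$ furnished by the same lemma.

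The step I expect to require the most care is the local integrability of $\theta_{f,x,\omega}^{\sharp}$ on $\Fg_{x,0}(F)$, which I would carry out as in \cite{W10}. From the bound $|J_{M,x,\omega}^{\sharp}(X,f)|\leq c(1+|\log D^{G_x}(X)|)^{k}$ in the lemma preceding Lemma \ref{strongly cuspidal}, together with the fact that $\nu(G_{x,X})^{-1}$ takes only finitely many values on the (compact modulo conjugation) support, one obtains $|\theta_{f,x,\omega}^{\sharp}(X)|\ll D^{G_x}(X)^{-1/2}(1+|\log D^{G_x}(X)|)^{k}$ on that support. Applying the Weyl integration formula on $\Fg_{x,0}(F)$ and using the boundedness of orbital integrals together with Proposition \ref{h-c} to control the orbital part, the integral reduces, on each Cartan subalgebra, to one of the form $\int_{\mathrm{cpt}}D^{G_x}(H)^{1/2}(1+|\log D^{G_x}(H)|)^{k}\,dH$, which converges since a power of the logarithm of a nonzero polynomial is locally integrable. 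This completes the plan.
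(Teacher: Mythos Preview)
Your proposal is correct and indeed more detailed than the paper's treatment: the paper gives no explicit proof at all, merely stating ``From the discussion of $\theta_f$, we have a similar lemma'' (referring back to Proposition \ref{distribution 2}, itself a citation of \cite[Lemma 5.3]{W10}). Your argument fills in exactly the details one would find by tracing through \cite{W10}, so the approach is the same.
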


The next result about $\theta_{f,x,\omega}$ and $\theta_{f,x,\omega}^{\sharp}$ is proved in Section 5.8 of \cite{W10}. It tells us $\theta_{f,x,\omega}^{\sharp}$ is the partial Fourier transform of $\theta_{f,x,\omega}$ with respect to $X''$.

\begin{prop}\label{partial FT}
If $f\in C_{c}^{\infty}(Z_G(F)\backslash G(F))$ is strongly cuspidal, then $\theta_{f,x,\omega}^{\sharp}$ is the partial Fourier transform of $\theta_{f,x,\omega}$ in the sense that, for every $\varphi\in C_{c}^{\infty}(\Fg_{x,0}(F))$, we have
\begin{equation}
\int_{\Fg_{x,0}(F)} \theta_{f,x,\omega}^{\sharp}(X)\varphi(X) dX=\int_{\Fg_{x,0}(F)} \theta_{f,x,\omega}(X)\varphi^{\sharp}(X) dX.
\end{equation}
\end{prop}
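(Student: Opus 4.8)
The plan is to unfold both $\theta_{f,x,\omega}$ and $\theta_{f,x,\omega}^{\sharp}$ into weighted orbital integrals and then reduce the claimed identity to the elementary Parseval (Fubini) property of the partial Fourier transform along the $\Fg''$-factor. First I would record the non-$\sharp$ analogue of \eqref{6.2}: writing $J_{M,x,\omega}(X,f)=D^{G_x}(X)^{1/2}\int_{G_{x,X}(F)\backslash G(F)}{}^g f_{x,\omega}(X)\,v_M(g)\,dg$, one has, for $X\in\omega_{reg}$,
$$\theta_{f,x,\omega}(X)=\theta_f(x\exp(X))=(-1)^{a_{M(X)}-a_G}\,\nu(G_{x,X})^{-1}\,D^{G_x}(X)^{-1/2}\,J_{M(X),x,\omega}(X,f).$$
This follows from \eqref{distribution 1}, Proposition \ref{good nbd}(3),(5) (so that $G_{x\exp(X)}=G_{x,X}$, $M(x\exp(X))=M(X)$, and $D^G(x\exp(X))=D^G(x)D^{G_x}(X)$), together with the parabolic descent formula for weighted orbital integrals (\cite[\S\,5.4]{W10}). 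Comparing with \eqref{6.2}, the functions $\theta_{f,x,\omega}$ and $\theta_{f,x,\omega}^{\sharp}$ are thus given by the same formula --- the same Levi $M(X)$, the same normalizing constants --- with ${}^g f_{x,\omega}$ replaced by its partial Fourier transform $({}^g f_{x,\omega})^{\sharp}={}^g f_{x,\omega}^{\sharp}$.

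Next I would substitute these into the two sides of the asserted equality, apply the Weyl integration formula on $\Fg_{x,0}(F)$ grouping by conjugacy classes of maximal tori $T$ (on the regular part of each $\Ft(F)$ the data $M(X)$, $\nu(G_{x,X})$, $(-1)^{a_{M(X)}-a_G}$ are constant, and $D^{G_{x,0}}(X)=D^{G_x}(X)$), and unfold $J_{M(X),x,\omega}$ and $J_{M(X),x,\omega}^{\sharp}$ as integrals over $G_{x,X}(F)\backslash G(F)$ against $v_{M(X)}(g)$. After interchanging the (absolutely convergent, by part (3) of the lemma preceding Lemma \ref{strongly cuspidal} and by Proposition \ref{h-c}) integrations, both sides become one and the same multiple integral over the variables $(X,g,g_1)$, differing only by whether the partial transform over $\Fg''(F)$ is applied to the factor $({}^g f_{x,\omega})(X)$ or to the orbital factor $\varphi(g_1^{-1}Xg_1)$. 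Since $G_{x,0}=G_x'\times G''$ respects the splitting $\Fg_{x,0}=\Fg_x'\oplus\Fg''$ and the invariant form, the partial Fourier transform commutes with $G_{x,0}(F)$-conjugation, so for fixed $g,g_1$ and fixed $\Fg_x'$-component each $\Fg''(F)$-slice is precisely an instance of $\int_{\Fg''(F)}a^{\sharp}(Y)b(Y)\,dY=\int_{\Fg''(F)}a(Y)b^{\sharp}(Y)\,dY$, which holds by Fubini because the transform is taken against $\psi(\langle\cdot,\cdot\rangle)$ and is therefore its own transpose; reassembling gives the proposition. The compatibility $(\varphi[P,f])^{\sharp}=\varphi^{\sharp}[P,f]$ from the lemma preceding Lemma \ref{strongly cuspidal} is the same mechanism one rung lower, and may instead be used to push $\sharp$ through the $(G_x,M_x)$-family assembly if one prefers that route. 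A further alternative is to show directly that the partial Fourier transform of the quasi-character $\theta_{f,x,\omega}$ is again a quasi-character on $\Fg_{x,0}(F)$ --- via Harish-Chandra's results \eqref{representative 1}--\eqref{germ 1} on the $\Fg''$-factor and the product structure --- and then match Shalika germ coefficients with $\theta_{f,x,\omega}^{\sharp}$ at each semisimple point using $c_{\theta,\CO}(x\exp(X))=c_{\theta_{x,\omega},\CO}(X)$: two quasi-characters agreeing at all germs coincide, since at a regular semisimple point the value is the coefficient of the trivial orbit.

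The hard part will not be the Fourier-analytic core, which is the trivial Parseval identity above, but the bookkeeping. The partial Fourier transform does not preserve the $G_x$-domain $\omega$: $\theta_{f,x,\omega}$ is supported in (the image of) $\omega$, whereas $\theta_{f,x,\omega}^{\sharp}$ is spread over all of $\Fg_{x,0,reg}(F)$, so the identity genuinely has to be proved globally on $\Fg_{x,0}(F)$ and cannot be localized near a single semisimple orbit. One must therefore keep careful track of how the Levi $M(X)$ and the constants $\nu(G_{x,X})$, $(-1)^{a_{M(X)}-a_G}$ jump with the torus-stratum of $X$ when the weighted orbital integrals are reorganized, check that they pair up correctly on the two sides, and justify each interchange of integration from the log-$D^{G_x}$ growth bounds already recorded. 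That organizational step --- rather than any analytic difficulty --- is where the real work lies.
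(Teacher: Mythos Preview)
The paper does not give its own proof of this proposition; it simply cites \cite[\S5.8]{W10}. Your second route --- pushing $\sharp$ through the parabolic-descent machinery via the already-recorded identity $(\varphi[P,f])^{\sharp}=\varphi^{\sharp}[P,f]$ --- is essentially Waldspurger's argument there: one groups the Weyl-integration sum according to the Levi $M=M(X)$, descends $J_{M,x,\omega}^{\sharp}(X,f)$ to weighted orbital integrals on $M_x$ of the functions $\varphi^{\sharp}[P,f]$, invokes $(\varphi[P,f])^{\sharp}=\varphi^{\sharp}[P,f]$ to move $\sharp$ across, and then reassembles. The Parseval step thus lives on each $\Fm_{x,0}$ separately, not on $\Fg_{x,0}$ globally.

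Your primary ``direct Parseval'' route, however, does not go through as written. After Weyl integration the $X$-variable runs over $\Ft_{0,reg}(F)$ for each maximal torus $T$, and the accompanying data --- the weight $v_{M(X)}(g)$, the domain $G_{x,X}(F)\backslash G(F)$, the sign $(-1)^{a_{M(X)}-a_G}$, the factor $\nu(G_{x,X})^{-1}$, the Jacobian $D^{G_x}(X)$ --- all depend on the full $X$, not merely on its $\Fg_x'$-component. Hence ``for fixed $g,g_1$ and fixed $\Fg_x'$-component'' there is no clean $\Fg''$-integral left to which the elementary identity $\int a^{\sharp}b=\int ab^{\sharp}$ applies: the remaining factors are not constant along $\Fg''$-slices, and $\Ft''$ is only a Cartan subalgebra of $\Fg''$, not all of $\Fg''$. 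If instead you try to re-fold the Weyl integration (via $Y=g_1^{-1}Xg_1$) you recover precisely $\int\theta_{f,x,\omega}^{\sharp}(Y)\varphi(Y)\,dY$ and have argued in a circle. The ``bookkeeping'' difficulty you flag at the end is exactly this obstruction, and it is structural rather than organizational; it is what forces the Levi-by-Levi descent that your second alternative (and Waldspurger) carries out.
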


\section{Statement of the Trace Formula}
In this section, we will write down our trace formula for both the group case and the Lie algebra case. In Section 5.1, we define all the ingredients in the geometric expansion. In 5.2, we define our truncated function $\kappa_N$ and state the trace formula in Theorem \ref{main 1} and Theorem \ref{main 3}. We also show that it is enough to prove the trace formula for the functions with trivial central character (see Proposition \ref{center issue}). In 5.3, we state the Lie algebra version of the trace formula.
\subsection{The ingredients of the integral formula}
From this section and on, unless otherwise specified, we consider the Ginzburg-Rallis model. This is to consider a pair $(G,H)$, which is either $(GL_6(F),GL_2(F))$ or $(GL_3(D),GL_1(D))$. Let $P=MU$ be the parabolic subgroup of the form $\begin{pmatrix} A & X & Z \\ 0 & B & Y \\ 0 & 0 & C \end{pmatrix}$ where $A,B,C$ belong to $GL_2(F)$ (the split case) or $GL_1(D)$ (the non-split case), and $X,Y,Z$ belong to $M_2(F)$ (the split case) or $D$
(the non-split case). We can diagonally embed $H$ into $M$, and define the character $\xi$ on $U(F)$ by
\begin{equation}\label{xi}
\xi(\begin{pmatrix} 1 & X & Z \\ 0 & 1 & Y \\ 0 & 0 & 1 \end{pmatrix})=\psi(a\tr(X)+b\tr(Y))
\end{equation}
for some $a,b\in F^{\times}$.

\begin{defn}\label{delta 1}
We define a function $\Delta$ on $H_{ss}(F)$ by
$$
\Delta(x)=\mid \det((1-ad(x)^{-1})_{\mid U(F)/U_x(F)}) \mid_F.$$
Similarly, we can define $\Delta$ on $\Fh_{ss}(F)$ by
$$\Delta(X)=\mid \det((ad(X))_{\mid \Fu(F)/\Fu_x(F)}) \mid_F.$$
\end{defn}

Let $\CT$ be a subset of subtori of $H$ defined as follows:
\begin{itemize}
\item If $H=\GL_2(F)$, then $\CT$ contain the trivial torus $\{1\}$ and the non-split torus $T_v$ for $v\in F^{\times}/(F^{\times})^2, v\neq 1$ where $T_v=\{\begin{pmatrix} a & bv \\ b & a \end{pmatrix} \in H(F) \mid a,b\in F,\;(a,b)\neq (0,0)\}$.
\item If $H=\GL_1(D)$, then $\CT$ contain the subtorus $T_v$ for $v\in F^{\times}/(F^{\times})^2$ with $v\neq 1$, where $T_v\subset D$ is isomorphic to the quadratic extension $F(\sqrt{v})$ of $F$.
\end{itemize}
Let $\theta$ be a quasi-character on $Z_G(F)\backslash G(F)$, and $T\in \CT$. If $T=\{1\}$, we are in the split case. In this case, we have a unique regular nilpotent orbit $\CO_{reg}$ in $\Fg(F)$ and take $c_\theta(1)=c_{\theta,\CO_{reg}}(1)$. If $T=T_v$ for some $v\in F^{\times}/(F^{\times})^2$
with $v\neq 1$, we take $t\in T_v$ to be a regular element (in $H(F)$). It is easy to see in both cases that $G_t(F)$ is $F$-isomorphic to $\GL_3(F_v)$ where $F_v=F(\sqrt{v})$ is the quadratic extension of $F$. Let $\CO_v$ be the unique regular nilpotent orbit in $\Fg \Fl_3(F_v)$, and take $c_\theta(t)=c_{\theta,\CO_v}(t)$.

\begin{prop}\label{integrable 1}
The function $c_{\theta}$ is locally constant on $T_{reg}(F)$ (here regular means as an element in $H(F)$). And the function $t\rightarrow c_{\theta}(t) D^H(t) \Delta(t)$ is locally integrable on $T(F)$.
\end{prop}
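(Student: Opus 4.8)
The plan is to prove the two assertions separately and to reduce everything, via the localization results of Section 4.3, to statements about quasi-characters on a Lie algebra near a single semisimple point. For the local constancy of $c_\theta$ on $T_{reg}(F)$, I would proceed as follows. Fix $T \in \CT$ and $t \in T_{reg}(F)$. By the very definition of a quasi-character (equation \eqref{germ 2}) applied at the semisimple element $x = t$, there is a good neighborhood $\omega_t$ of $0$ in $\Fg_t(F)$ with $\theta(t\exp(X)) = \sum_{\CO \in Nil(\Fg_t)} c_{\theta,\CO}(t)\,\hat j(\CO,X)$ for $X \in \omega_{t,reg}$. Since $G_t \cong \GL_3(F_v)$, the nilpotent orbits of $\Fg_t$ are indexed by partitions of $3$, and the regular orbit $\CO_v$ is the one attached to the partition $(3)$; its coefficient $c_\theta(t) = c_{\theta,\CO_v}(t)$ is pinned down by the leading homogeneity behavior of $X \mapsto \theta(t\exp(X))$ under $X \mapsto \lambda X$ for $\lambda \in (F^\times)^2$ (using the scaling law \eqref{scalar 2} for $\hat j(\CO,\cdot)$: the regular orbit is the one with the smallest exponent $\dim(\CO)/2$, hence dominates as $\lambda \to 0$). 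To get local constancy in $t$, I would invoke the localization of $\theta$ at a semisimple point that "contains" a whole neighborhood of $t$ in $T$: more precisely, pick a semisimple $x$ (one may take $x$ central times a representative, or just work directly) such that for all $t'$ in a small neighborhood $\Omega$ of $t$ in $T_{reg}(F)$, $t'$ lies in $t\exp(\omega_t)$; then by the localization formula $c_{\theta,\CO}(t') = c_{\theta_{t,\omega_t},\CO}(\log(t^{-1}t'))$, and since $\theta_{t,\omega_t}$ is itself a quasi-character on $\Fg_t(F)$ (the localization statement in Section 4.3), and $G_{t'} = G_t$ for $t'$ near $t$ (they are all $\cong \GL_3(F_v)$ with the same centralizer), the coefficient $c_{\theta_{t,\omega_t},\CO_v}$ is locally constant in the argument by the known regularity of Shalika-germ expansions of quasi-characters on a fixed reductive Lie algebra. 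This gives local constancy of $c_\theta$ on $T_{reg}(F)$.

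For the local integrability of $t \mapsto c_\theta(t)\,D^H(t)\,\Delta(t)$ on $T(F)$, the point is that $T(F)$ is at most one-dimensional (it is $\{1\}$, or a torus of rank one, modulo center), so the only possible failure of integrability is near the non-regular locus, i.e. near $t$ with $D^H(t) = 0$ — concretely near the finitely many points where the $\GL_2$- (or $\GL_1(D)$-) eigenvalues collide, which for our rank-one tori means near the central elements $Z_H(F)$. Away from that locus the integrand is locally constant by the first part and hence locally bounded, so integrability is automatic there. Near a bad point $t_0$, I would use the homogeneity/scaling estimates: writing $t = t_0 \exp(X)$ with $X \in \Ft(F)$ small, one has $D^H(t) \sim D^H(t_0)\,D^{H_{t_0}}(X)$ and similarly $\Delta(t) \sim \Delta(t_0)\,\Delta_{t_0}(X)$ (Proposition \ref{good nbd}(5) and its analogue for $\Delta$ from Definition \ref{delta 1}), while $c_\theta(t) = c_{\theta_{t_0,\omega},\CO}(X)$ for the appropriate orbit $\CO$. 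The germ $c_{\theta_{t_0,\omega},\CO}(X)$ of a quasi-character is governed, near $X=0$, by Shalika germs $\Gamma_{\CO'}(X)$ whose blow-up rate is $|\cdot|^{(\delta - \dim\CO')/2}$ by \eqref{Shalika germ 1}; multiplying by $D^{H_{t_0}}(X)^{1/2}$-type factors and the Weyl/Jacobian factors from the change of variables, the combined exponent turns out to be integrable against $dX$ on the one-dimensional $\Ft(F)$. This is exactly the kind of bookkeeping Waldspurger carries out in \cite{W10}; I would cite the corresponding estimates there (the local integrability of quasi-characters, \cite[Section 4]{W10}) and adapt the exponent count to our $\Delta$.

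The main obstacle, I expect, is the second part: one has to check that the extra factor $\Delta(t)$ — which is special to the Ginzburg-Rallis geometry and is \emph{not} a Weyl-determinant factor — does not destroy integrability, and more delicately, that it does not interact badly with the germ expansion to produce a non-integrable singularity at the collision points. The key input is a dimension count comparing $\dim(\CO_v)$ (the regular nilpotent orbit of $\Fg_t \cong \Fg\Fl_3(F_v)$), the quantity $\delta(G_t)$, and the dimension of $U/U_t$ that enters $\Delta$; one must verify that $\frac12(\delta(G_{t_0}) - \dim \CO) + \frac12\dim(\text{relevant } D^H\text{-space}) + \frac12\dim(\text{relevant }\Delta\text{-space})$ stays $> -1$ when integrated over the one-parameter family, i.e. that the total order of vanishing/blow-up is $> -1$. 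Once this single inequality is checked — and it should hold precisely because $\CO_v$ is the \emph{regular} (largest) nilpotent orbit, giving the mildest singularity — the local integrability follows from the standard integrability results for quasi-characters quoted from \cite{W10} and \cite{B15}. A secondary, more routine obstacle is making sure the localization at $t_0$ is compatible with the center $Z_G(F)$ (using the $Z_G(F)$-invariant version of $\theta_{x,\omega}$ from \eqref{local center 1}), but that is handled verbatim by the constructions in Section 4.3.
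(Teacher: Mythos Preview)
Your overall strategy — localize at the identity, expand the quasi-character as a combination of $\hat{j}(\CO,\cdot)$ over nilpotent orbits $\CO$ of $\Fg$, and then do a homogeneity count — is exactly what the paper does. But there is a genuine gap in your argument, and it is precisely where you hand-wave: the dimension count does \emph{not} go through for every term.

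Concretely, after localizing at $t_0=1$ one has $c_\theta(\exp X)=\sum_{\CO\in Nil(\Fg)} c_\CO\, c_{\hat{j}(\CO,\cdot),\CO_X}(X)$, and by \eqref{scalar 2}--\eqref{scalar 3} each summand lies in $C_{(\dim\CO_X-\dim\CO)/2}(\Ft_0)$. Here $\dim\CO_X=\delta(G_X)=12$, while $D^H(X)\Delta(X)$ contributes homogeneity degree $\delta(H)+\dim(\Fu/\Fu_X)=2+6=8$. Since $\Ft_0(F)$ is one-dimensional, local integrability near $0$ needs total degree $>-1$, i.e.\ $\dim\CO<30$. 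For every \emph{non-regular} $\CO$ this holds (in fact $\dim\CO\le 28$, giving total degree $\ge 0$), and that is the easy part. But for $\CO=\CO_{reg}$ one gets degree exactly $-1$, which is \emph{not} locally integrable, so your ``single inequality'' fails in this borderline case. Your remark that ``$\CO_v$ is the regular (largest) nilpotent orbit, giving the mildest singularity'' conflates $\CO_v\subset\Fg_t$ (which is fixed — it selects \emph{which} germ we extract) with the variable $\CO\subset\Fg$ (which governs the blow-up rate); it is the size of the latter that matters.

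The missing ingredient, which the paper supplies, is a vanishing argument for the regular-orbit term: $\hat{j}(\CO_{reg},\cdot)$ is parabolically induced from the Borel and hence supported on the set of elements $G(F)$-conjugate into the Borel subalgebra. For $X$ in a small neighborhood of a regular point of $\Ft_{v,0}(F)$, no nearby element of $\Fg(F)$ has this property (the eigenvalues lie in $F_v\setminus F$), so $\hat{j}(\CO_{reg},\cdot)$ vanishes identically there and its germ $c_{\hat{j}(\CO_{reg},\cdot),\CO_X}(X)$ is zero. With that term removed, the homogeneity count you sketch does the job. Your proposal should add this step; without it the argument is incomplete.
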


The first part follows from the definition. The rest of this section is to prove the second part, the idea comes from \cite{W10}. If $T=\{1\}$, there is nothing to prove since the integral is just evaluation. If $T=T_v$ for some $v\in F^{\times}/(F^{\times})^2$ with $v\neq 1$, since $c_{\theta}(t) D^H(t) \Delta(t)$ is locally constant on $T_{reg}(F)$, and
is invariant under $Z_H(F)$, we only need to show that the function is locally integrable around $t=1$.

We need some preparations. For a finite dimensional vector space $V$ over $F$, and any integer $i\in \BZ$, let $C_i(V)$ be the space of functions $\varphi: V\rightarrow \BC$ such that
$$\varphi(\lambda v)=|\lambda|^i \varphi(v)$$
for every $v\in V$ and $\lambda\in (F^{\times})^2$. Then we let $C_{\geq i}(V)$ be the space of functions which are linear combinations of functions in $C_j(V)$ for $j\geq i$. For $T=T_v$ and $i\in \BZ$, define the space $C_{\geq i}(T)$ to be the functions $f$ on $T_{reg}(F)$ such that there is a neighborhood $\omega$ of $0$ in $\Ft(F)$ and a function $\varphi\in C_{\geq i}(\Ft_0(F))$ such that
$$f(\exp(X))=\varphi(\bar{X})$$
for all $0\neq X\in \omega$, here $\bar{X}$ is the projection of $X$ in $\Ft_0(F)$. Then by \cite[Lemma 7.4]{W10}, if $f\in C_{\geq 0}(T)$, $f$ is locally integrable around $t=1$. Hence we only need to show that the function $t\rightarrow c_{\theta}(t) D^H(t) \Delta(t)$ lies inside the space $C_{\geq 0}(T)$.

Since if $\omega$ is small enough, we have
$D^H(\exp(X))=D^H(X),\; \Delta(\exp(X))=\Delta(X)$
for all $0\neq X\in \omega$. Hence the function $t\rightarrow D^H(t) \Delta(t)$ lies inside the space $C_{\geq 8}(T)$ where $8=\delta(H)+dim(U_X)$. Therefore in order to prove Proposition \ref{integrable 1}, it is enough to prove the following lemma.

\begin{lem}
With the notations above, the function $t\rightarrow c_{\theta}(t)$ belongs to the space $C_{\geq -8}(T)$.
\end{lem}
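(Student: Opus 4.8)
The plan is to follow Waldspurger's treatment of the analogous statement in the Gross--Prasad setting: localize the quasi-character $\theta$ at the identity, turn the assertion into one about the germ expansion of a quasi-character on $\Fg(F)$ at a semisimple point, and read off the homogeneity from the scaling relations for Fourier transforms of nilpotent orbital integrals. Concretely, fix a good neighborhood $\omega$ of $0$ in $\Fg(F)$ attached to $x=1$ and set $\vartheta=\theta_{1,\omega}$; this is a quasi-character on $\Fg(F)$, invariant by $\Fz_{\Fg}(F)$, with $c_{\vartheta,\CO'}(0)=c_{\theta,\CO'}(1)$ for every $\CO'\in Nil(\Fg)$. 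For $X\in\Ft_{reg}(F)\cap\omega$ one has $G_{\exp X}=G_X\cong\GL_3(F_v)$ by Proposition \ref{good nbd}, $\CO_v$ is its regular nilpotent orbit, and the localization identity of Section 3.4 gives $c_\theta(\exp X)=c_{\theta,\CO_v}(\exp X)=c_{\vartheta,\CO_v}(X)$. Since $c_\theta$ is $Z_G(F)$-invariant, $Z_H=Z_G$, and $\exp(\Fz_{\Fg}(F))\subset Z_G(F)$, this value depends only on the image $\bar X$ of $X$ in $\Ft_0(F)$; writing $c_\theta(\exp X)=\varphi(\bar X)$, it suffices to show $\varphi\in C_{\geq -8}(\Ft_0(F))$.

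After shrinking $\omega$, the germ expansion of $\vartheta$ at $0$ is an exact finite identity on $\omega\cap\Fg_{reg}(F)$, namely $\vartheta(Y)=\sum_{\CO'\in Nil(\Fg)}c_{\vartheta,\CO'}(0)\,\hat j(\CO',Y)$. Each $\hat j(\CO',\cdot)$ is itself a quasi-character on $\Fg(F)$ (Harish-Chandra), homogeneous of degree $-\dim(\CO')/2$ under scaling by $(F^\times)^2$ by \eqref{scalar 2}, and $\Fz_{\Fg}(F)$-translation invariant because $\CO'$ lies in the trace-zero locus, on which the invariant form pairs trivially with $\Fz_{\Fg}$. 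Taking germs at the semisimple point $X$ and using additivity of germ coefficients over this finite sum, $c_{\vartheta,\CO_v}(X)=\sum_{\CO'\in Nil(\Fg)}c_{\vartheta,\CO'}(0)\,c_{\hat j(\CO',\cdot),\CO_v}(X)$. Combining the homogeneity of $\hat j(\CO',\cdot)$ with the scaling relation \eqref{scalar 3} shows that, as a function of $\bar X\in\Ft_0(F)$, the coefficient $c_{\hat j(\CO',\cdot),\CO_v}(X)$ is homogeneous under $(F^\times)^2$ of degree $\tfrac12(\dim\CO_v-\dim\CO')$, where $\dim\CO_v=\delta(G_t)$. Hence $\varphi\in C_{\geq d}(\Ft_0(F))$ with $d=\min\tfrac12(\dim\CO_v-\dim\CO')$, the minimum taken over those $\CO'$ for which the regular orbit $\CO_v$ of $\Fg_X$ actually occurs in the germ expansion of $\hat j(\CO',\cdot)$ at $X$.

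It remains to prove $d\geq -8$, i.e.\ that every $\CO'$ contributing $\CO_v$ to the germ at $X$ has $\dim\CO'$ small enough; this is the one genuinely technical point. It is a descent statement for Fourier transforms of nilpotent orbital integrals: $\CO_v$ can occur in the germ expansion of $\hat j(\CO',\cdot)$ at $X$ only when $\CO'$ is constrained by the induced orbit $\Ind_{\Fg_X}^{\Fg}(\CO_v)$, and a direct computation with the nilpotent orbits of $\GL_6$ (resp.\ $\GL_3(D)$) compatible with restriction to $\Fg_X\cong\Fg\Fl_3(F_v)$ — using $\dim\CO_v=\delta(G_t)$ together with $\dim G$, $\dim H$, $\dim U$ and $\dim U_X$, i.e.\ the same numerics that give $t\mapsto D^H(t)\Delta(t)\in C_{\geq 8}(T)$ — yields $\tfrac12(\dim\CO_v-\dim\CO')\geq -8$ for each contributing $\CO'$. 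Feeding this back through the previous paragraph gives $\varphi\in C_{\geq -8}(\Ft_0(F))$, which is the assertion, and by \cite[Lemma 7.4]{W10} this completes the proof of Proposition \ref{integrable 1}. The reduction steps are formal and uniform in the split and quaternionic cases; the bulk of the work, and the main obstacle, is the orbit-theoretic bookkeeping in this last step.
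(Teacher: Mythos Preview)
Your reduction to the Lie algebra via localization at $1$, the decomposition of $\theta_{1,\omega}$ as a finite combination of $\hat j(\CO',\cdot)$, and the homogeneity count $\tfrac12(\dim\CO_v-\dim\CO')$ are all correct and match the paper exactly. The divergence is in the last step, where you need $\dim\CO'\le 28$ for every contributing $\CO'$.

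You identify this as ``the main obstacle'' and propose to settle it by a descent statement about which $\CO'$ can have $\CO_v$ in the germ of $\hat j(\CO',\cdot)$ at $X$, invoking $\Ind_{\Fg_X}^{\Fg}(\CO_v)$ and unspecified ``orbit-theoretic bookkeeping''. This step is not carried out, and the sketch is both vague and unnecessary. The only orbit with $\dim\CO'>28$ is the regular one ($\dim\CO_{reg}=\delta(G)=30$), so the entire question is whether $\CO'=\CO_{reg}$ contributes. The paper disposes of this case by a direct support argument rather than by any descent formalism: in the split case $\CO_{reg}$ is the orbit induced from $0$ on the Borel, so the distribution $\hat j(\CO_{reg},\cdot)$ is supported on conjugates of the Borel subalgebra; but for $t\in T_v$ regular, a small neighborhood of $t$ meets no Borel (the centralizer is anisotropic modulo center), hence $c_\theta(t)\equiv 0$ for this term. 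In the quaternionic case there is no regular orbit of dimension $30$ at all. With $\CO_{reg}$ excluded, every remaining $\CO'$ satisfies $\dim\CO'\le \delta(G)-2=28$, and since $\dim\CO_v=\delta(G_X)=12$ one gets $\tfrac12(12-\dim\CO')\ge -8$ immediately.

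So the gap in your argument is precisely the step you flag as hard; the fix is not harder descent theory but the elementary observation that only $\CO_{reg}$ is dangerous and it contributes zero by support considerations.
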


\begin{proof}
By Section 3.4, if we choose $\omega$ small enough, we have
$$c_{\theta}(\exp(X))=c_{\theta_{1,\omega},\CO_X}(X)$$
for all $0\neq X\in \omega$. Here $\theta_{1,\omega}$ is the localization of $\theta$ at $1$ defined in Section 3.4, and $\CO_X$ is the unique regular nilpotent orbit in $\Fg_X$. Since in a small neighborhood of $0\in \Fg_0(F)$, $\theta_{1,\omega}$ is a linear combination of $\hat{j}(\CO,\cdot)$ where $\CO$ runs over the nilpotent orbit in $\Fg_0$. Hence we may assume that $\theta_{1,\omega}=\hat{j}(\CO,\cdot)$ for some $\CO$.

If $\CO$ is regular, then we are in the split case (i.e. $G=\GL_6(F)$) and $\CO$ is the unique regular nilpotent orbit in $\Fg_0$. As a result, the distribution $\hat{j}(\CO,\cdot)$ is induced from the Borel subgroup and hence only supported in the Borel subalgebra. But by our construction of $T=T_V$, for any $t\in T_{reg}(F)$, we can always find a small neighborhood of $t$ in $G$ such that any element in such neighborhood does not belongs to the Borel subalgebra. Therefore the function $c_{\theta}(t)$ is identically zero, hence the function $t\rightarrow c_{\theta}(t) D^H(t) \Delta(t)$ is obviously locally integrable.

If $\CO$ is not regular, by \eqref{scalar 1} and \eqref{scalar 3}, the function $c_{\theta_{1,\omega},\CO_X}(X)$ belongs to the space $C_{\frac{dim(\CO_X)-dim(\CO)}{2}}(\Ft_0)$. The dimension of $\CO_X$ is equal to $\delta(G_X)=12$. On the other hand, since $\CO$ is not regular, $dim(\CO)\leq \delta(G)-2=28$. Hence the function $c_{\theta_{1,\omega},\CO_X}(X)$ belongs to the space $C_{\geq -8}(\Ft_0)$. This finishes the proof of the lemma, and hence the proof of Proposition \ref{integrable 1}.
\end{proof}

\subsection{The Main Theorem}
Let $f\in C_{c}^{\infty} (Z_G(F)\backslash G(F))$ be a strongly cuspidal function. For each $T\in \CT$, let $c_f$ be the function $c_{\theta_f}$ defined in the last section. Define
\begin{equation}\label{geometry 1}
I(f)=\sum_{T\in \CT} | W(H,T)|^{-1} \nu(T) \int_{Z_G(F)\backslash T(F)} c_f(t) D^H(t) \Delta(t) dt.
\end{equation}
Since for any $T\in \CT$, $Z_G(F)\backslash T$ is compact, the absolute convergence of the integral above follows from Proposition \ref{integrable 1}.

Now for $g\in G(F)$, we define the function ${}^g f^{\xi}$ on $H(F)/Z_H(F)$ by
$$
{}^g f^{\xi}(x)=\int_{U(F)} f(g^{-1}xug)\xi(u) du.
$$
This is a function belonging to $C_{c}^{\infty}(Z_H(F)\backslash H(F))$. Define
\begin{equation}\label{7.1}
I(f,g)=\int_{Z_H(F)\backslash H(F)} {}^g f^{\xi}(x) dx,
\end{equation}
and for each $N\in \BN$, define
\begin{equation}\label{spectral 0}
I_N(f)=\int_{U(F)H(F)\backslash G(F)} I(f,g) \kappa_N(g) dg.
\end{equation}
Here $\kappa_N$ is a characteristic function on $G(F)$ defined below, which is left $U(F)H(F)$-invariant, right $K$-invariant,
and compactly supported modulo $U(F)H(F)$:
If $G$ is split (i.e. $G=GL_6(F)$), for $g\in G(F)$, let $g=umk$ be its Iwasawa-decomposition with $u\in U(F)$, $m\in M(F)$ and $k\in K$. Then $m$ is of the form $\diag(m_1,m_2,m_3)$ with $m_i\in GL_2(F)$. For any $1\leq i,j\leq 3$,
let $m_{i}^{-1}m_j=\begin{pmatrix} a_{ij} & c_{ij} \\ 0 & b_{ij} \end{pmatrix} k_{ij}$ be its Iwasawa decomposition. We define $\kappa_N$ to be
\begin{equation}\label{kappa split}
\kappa_N(g)
=\begin{cases}
1, & \text{if}\  \sigma(a_{ij}),\sigma(b_{ij})\leq N, \sigma(c_{ij})\leq (1+\epsilon)N; \\
0, & \text{otherwise}.
\end{cases}
\end{equation}
Here $\epsilon>0$ is a fixed positive real number. Note that we do allow some more freedom on the unipotent part, which will
be used when we are trying to change our truncated function to the one given by Arthur in his local trace formula. For details, see Section 9.
If $G$ is not split (i.e. $G=GL_3(D)$), we still have the Iwasawa decomposition $g=umk$ with $m=\diag(m_1,m_2,m_3)$, and $m_i\in GL_1(D)$. We define $\kappa_N$ to be
\begin{equation}\label{kappa nonsplit}
\kappa_N(g)
=\begin{cases}
1, & \text{if}\  \sigma(m_{i}^{-1}m_j)\leq N; \\
0, & \text{otherwise}.
\end{cases}
\end{equation}
It follows that the integral in \eqref{spectral 0} is absolutely convergent because the integrand is compactly supported.

\begin{thm}\label{main 1}
For every function $f\in C_{c}^{\infty} (Z_G(F)\backslash G(F))$ that is strongly cuspidal, the following holds:
\begin{equation}\label{equation 3}
\lim_{N\rightarrow \infty} I_N(f)=I(f).
\end{equation}
\end{thm}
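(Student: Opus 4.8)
The plan is to adapt Waldspurger's proof of the geometric side of the local Gan--Gross--Prasad trace formula in \cite{W10} (together with the streamlined formalism of Beuzart-Plessis \cite{B15}) to the Ginzburg--Rallis pair $(G,H)$, proceeding in the four stages announced in the introduction. \textbf{First (localization).} Starting from $I_N(f)=\int_{U(F)H(F)\backslash G(F)} I(f,g)\kappa_N(g)\,dg$, I would unfold $I(f,g)$ and reorganize the double integral so that the $H(F)$-integration merges with the $G(F)$-integration; since $\xi$ is $H(F)$-invariant this produces an integral over $U(F)Z_H(F)\backslash G(F)$ of a conjugation integral of $f$ against $\xi$, cut off by a translate of $\kappa_N$. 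One then localizes near each semisimple class of $H(F)$ via good neighborhoods (Proposition \ref{good nbd}) and the localization of the quasi-character $\theta_f$ from Sections~3.4 and~4.3. Strong cuspidality of $f$ kills the contribution of a semisimple $x$ unless the centralizer data forces survival, and the surviving classes are exactly the identity (entering through $T=\{1\}$) and the regular elements of the tori $T_v\in\CT$; this reduces \eqref{equation 3} to identities of the same shape for the descended pairs, at the Lie-algebra level.

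\textbf{Second (passage to a torus, change of truncation).} For each $T\in\CT$, using the slice representation on the normal space to the $H$-orbit (Harish-Chandra descent), I would rewrite the $N$-truncated integral in the form $\int_{A_T(F)\backslash G(F)}$ of $f$ against a weight, so that Arthur's local trace formula \cite{Ar91} becomes applicable. The decisive step is then to replace the truncation $\kappa_N$ by Arthur's weight $u(\cdot,N)$ and show the difference contributes $o(1)$ as $N\to\infty$. Unlike in the Gan--Gross--Prasad setting, $\xi$ is not supported on simple-root subgroups, so the needed unipotent-invariance estimates are genuinely more delicate; the extra slack $(1+\epsilon)N$ built into the $c_{ij}$-coordinates in \eqref{kappa split} is there precisely to make this work, together with a Harish-Chandra type majorization in the spirit of \eqref{major 1} and a vanishing statement in the split case, which I would isolate as two lemmas and prove in Appendix~A.

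\textbf{Third (Arthur's formula and assembly).} With Arthur's truncation installed, $\lim_{N\to\infty} I_N(f)$ can be evaluated using the geometric expansion of Arthur's local trace formula \cite{Ar91}; the output is a combination of weighted orbital integrals $J_{M(t)}(t,f)$ over regular $t\in T(F)$. By the definition \eqref{distribution 1} of $\theta_f$ together with the germ expansion \eqref{germ 2} of the quasi-character $\theta_f$ near $t$, these become the coefficients $c_f(t)=c_{\theta_f,\CO_t}(t)$, and gathering the normalizations $|W(H,T)|^{-1}$, $\nu(T)$, $D^H(t)$, $\Delta(t)$ reassembles $I(f)$ as in \eqref{geometry 1}, whose absolute convergence is Proposition \ref{integrable 1}.

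The main obstacle is the second stage: establishing invariance of the relevant integrals under the unipotent directions that do not come from simple roots, which is exactly what legitimizes the passage from $\kappa_N$ to Arthur's weight and is the point where the Ginzburg--Rallis case departs substantially from \cite{W10}. The remaining stages, while technically involved, are faithful adaptations of the arguments there.
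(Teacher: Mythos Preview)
Your first two stages track the paper's Sections 7--9 reasonably well, but your Stage~3 contains a genuine gap. After the change of truncation, what Arthur's local trace formula actually produces (Proposition~\ref{10 final} and Proposition~\ref{local theorem}) is not a sum of germs $c_f(t)$ over $T\in\CT$; it is the quantity
\[
J(f)=\sum_{T\in T(G)} |W(G,T)|^{-1}\int_{\Ft^0(F)} D^G(X)^{1/2}\,\hat{\theta}_f(X)\,dX,
\]
an integral of the \emph{Fourier transform} $\hat{\theta}_f$ over (an open subset of) \emph{all} maximal tori of $G$, not the germ expansion of $\theta_f$ over the tori in $\CT\subset H$. Your sentence ``the output is a combination of weighted orbital integrals $J_{M(t)}(t,f)$ \dots\ these become the coefficients $c_f(t)$ via the germ expansion'' conflates these two objects; the germ expansion \eqref{germ 2} describes $\theta_f$ near $t$, not $\hat{\theta}_f$ integrated over $\Ft^0$.

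The identity $J(f)=I(f)$ is a separate, nontrivial step that you have not addressed. The paper handles it in Section~10 by a bootstrap: first prove the Lie-algebra statement for $f$ whose support avoids the nilpotent cone (this uses localization at non-central $x$, where the reduced model is the Whittaker model of $\GL_3(F_v)$ and the analogous identity is established independently); then show by a homogeneity argument under $f\mapsto f^\lambda$ that the discrepancy $E(f)=J(f)-I(f)$ must be a scalar multiple of $c_{\theta_f,\CO_{reg}}$; finally kill that scalar by evaluating both sides on an explicit test function $f[X_d]$ attached to the split torus, using crucially that $\Ft_d^0(F)=\Ft_{d,0,reg}(F)$. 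None of this mechanism appears in your plan, and without it Stage~3 does not close.
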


Theorem \ref{main 1} is the geometric side of the local relative trace formula for the Ginzburg-Rallis model, and will be proved from Section 7 to 10. Here we assume that $f$ has trivial central character. However, it is also possible to consider the case that $f$ has a non-trivial central character.
If $f$ has a central character $\chi$ with $\chi=\omega^2$ for some character $\omega$ (i.e. $f\in C_{c}^{\infty} (Z_G(F)\backslash G(F),\chi)$),
define
\begin{equation}
I_{\chi}(f)=\sum_{T\in \CT}|W(H,T)|^{-1} \nu(T) \int_{Z_G(F)\backslash T(F)} c_f(t) D^H(t) \Delta(t) \omega(\det(t))^{-1} dt,
\end{equation}
\begin{equation}
I_{\chi}(f,g)=\int_{Z_H(F)\backslash H(F)} {}^g f^{\xi}(x) \omega(\det(x))^{-1} dx,
\end{equation}
and for each $N\in \BN$, define
\begin{equation}\label{spectral 1}
I_{N,\chi}(f)=\int_{U(F)H(F)\backslash G(F)} I(f,g) \kappa_N(g) dg.
\end{equation}
Then Theorem \ref{main 1} takes form:

\begin{thm}\label{main 3}
For every function $f\in C_{c}^{\infty} (Z_G(F)\backslash G(F),\chi)$ that is strongly cuspidal, the following holds:
\begin{equation}\label{equation 3}
\lim_{N\rightarrow \infty} I_{N,\chi}(f)=I_{\chi}(f).
\end{equation}
\end{thm}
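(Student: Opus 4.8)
The plan is to reduce the non-trivial central character case to the trivial one treated in Theorem \ref{main 1}, rather than re-run the entire Section 7--10 machinery. First I would observe that since $\chi = \omega^2$, the twist $g \mapsto \omega(\det g)^{-1}$ is a well-defined character of $G(F)$ that is trivial on $Z_G(F)$ modulo the squaring, so it makes sense to transfer functions between $C_c^\infty(Z_G(F)\backslash G(F),\chi)$ and $C_c^\infty(Z_G(F)\backslash G(F))$ by multiplying by $\omega(\det(\cdot))^{-1}$, at least after choosing a section of $Z_G(F)\backslash G(F)$; more precisely, one works with $\bar f := f \cdot \omega(\det)^{-1}$ restricted appropriately. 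One checks that $\bar f$ is strongly cuspidal if and only if $f$ is, because the defining vanishing condition \eqref{cuspidal 1} involves only integration over unipotent radicals $U(F)$, on which $\det$ is trivial, so the twist factors out of the constant-term integral. Thus strong cuspidality is preserved under the twist.

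Second, I would track how each of the three ingredients transforms. For the spectral/analytic side: in ${}^g f^\xi(x) = \int_{U(F)} f(g^{-1}xug)\xi(u)\,du$, since $\det(g^{-1}xug) = \det(x)$ (as $U$ is unipotent and $\det$ is a conjugation-invariant class function restricted to $H$ via the diagonal embedding), we get ${}^g \bar f^\xi(x) = \omega(\det x)^{-1}\,{}^g f^\xi(x)$, so $I_\chi(f,g) = I(\bar f, g)$ and hence $I_{N,\chi}(f) = I_N(\bar f)$ — note $\kappa_N$ depends only on the Iwasawa $M$-part and is insensitive to the central twist. For the geometric side: by Proposition \ref{distribution 2} and the construction of $\theta_f$ via weighted orbital integrals, the twist passes through, giving $\theta_{\bar f}(x) = \omega(\det x)^{-1}\theta_f(x)$ on $G_{reg}(F)$; consequently the germs satisfy $c_{\theta_f,\CO}(t) = \omega(\det t)\, c_{\theta_{\bar f},\CO}(t)$ for $t \in T_{reg}(F)$, because the Shalika-germ expansion \eqref{germ 2} is local and $\omega\circ\det$ is locally constant near any regular semisimple $t$ (so it comes out as a scalar). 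Therefore $c_f(t) = \omega(\det t)\, c_{\bar f}(t)$, and plugging into \eqref{geometry 1} for $\bar f$ recovers exactly $I_\chi(f)$ with the extra factor $\omega(\det t)^{-1}$.

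Third, having established $I_{N,\chi}(f) = I_N(\bar f)$ and $I_\chi(f) = I(\bar f)$, the conclusion $\lim_N I_{N,\chi}(f) = I_\chi(f)$ follows immediately from Theorem \ref{main 1} applied to the strongly cuspidal function $\bar f$. I would also remark that this is essentially the content of Proposition \ref{center issue} referenced in the introduction, so the theorem is a formal corollary.

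The main obstacle is the bookkeeping around whether $\omega(\det(\cdot))^{-1}$ genuinely descends to $Z_G(F)\backslash G(F)$: $\det$ is not trivial on $Z_G(F)$, so $\bar f$ as literally written is a function with central character $\chi \cdot (\omega\circ\det)^{-1}|_{Z_G}$, which need not be trivial. The fix is that $Z_G(F) \cong F^\times$ via scalars $z \mapsto z\cdot I$, on which $\det(zI) = z^6$ (split case) or $z^{12}$ — wait, $z^3$ for $\GL_3(D)$ after reduced norm — so one must instead only use the twist after restricting to $H$, where the center is smaller, or absorb the discrepancy into a redefinition. Concretely, the cleanest route is: the multiplicity and both sides of the trace formula are unchanged if we replace $\chi$ by $\chi \cdot \mu^2$ for any unramified (or arbitrary) character $\mu$ of $F^\times$ composed with $\det$ — because such a twist is a global twist of the representation and the genericity/germ arguments are twist-equivariant — and then choose $\mu$ to make the resulting central character trivial, landing in Theorem \ref{main 1}. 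Verifying that this reduction is legitimate, i.e. that twisting by $\mu\circ\det$ commutes with all of $I_N$, $I$, and the strong-cuspidality hypothesis, is the one point that requires genuine (though routine) checking; everything else is substitution.
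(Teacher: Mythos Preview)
Your overall strategy --- reduce to Theorem \ref{main 1} by passing from central character $\chi$ to trivial central character --- is exactly what the paper does via Proposition \ref{center issue}. But your execution via the global twist $\bar f = f\cdot \omega(\det)^{-1}$ has the gap you yourself flag and do not actually close. On $Z_G(F)\cong F^\times$ (scalars), $\det(zI_6)=z^6$, so $\bar f(zg)=\chi(z)\omega(z)^{-6}\bar f(g)=\omega(z)^{-4}\bar f(g)$: the twist lands in central character $\omega^{-4}$, not the trivial one, and Theorem \ref{main 1} does not apply. Your proposed fix, twisting by some $\mu\circ\det$ to kill the central character, requires $\mu^6=\chi^{-1}$; such a sixth root need not exist in the character group of $F^\times$, so this route is blocked in general.

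The paper avoids twisting altogether. Since $Z_G(F)\backslash G(F)/G_0(F)$ is finite (where $G_0=\{\det=1\}$), one first localizes $f$ to be supported on a single coset $Z_G(F)g_0 G_0(F)$, and then further to $Z_G(F)g_0 X$ for a fundamental domain $X$ of $G_0(F)/Z_{G_0}(F)$. On the slice $g_0X$ the value $\omega(\det(\cdot))$ is the constant $\omega(\det g_0)$, so one simply defines $f'\in C_c^\infty(Z_G(F)\backslash G(F))$ to agree with $f$ on $g_0X$ and extend by the \emph{trivial} central character. A direct computation on the slice then gives $I_{N,\chi}(f)=\omega(\det g_0)^{-1}I_N(f')$ and $I_\chi(f)=\omega(\det g_0)^{-1}I(f')$, and Theorem \ref{main 1} applied to $f'$ finishes. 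You actually gesture at this in your first paragraph (``after choosing a section of $Z_G(F)\backslash G(F)$'') before abandoning it for the twist; that parenthetical was the right idea.
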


\begin{prop}\label{center issue}
Theorem \ref{main 1} implies Theorem \ref{main 3}.
\end{prop}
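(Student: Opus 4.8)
The plan is to deduce Theorem \ref{main 3} from Theorem \ref{main 1} by twisting $f$ by a character of $G(F)$ that simultaneously kills the central character $\chi$ and the factor $x\mapsto\omega(\det x)$ occurring in $I_\chi$. Every character of $G(F)$ has the form $\chi_0\circ\det$ for a character $\chi_0$ of $F^\times$, with $\det$ the determinant if $G=\GL_6$ and the reduced norm if $G=\GL_3(D)$ (likewise $\tr$ below denotes the reduced trace in the quaternionic case). Fix $\chi_0$ with $\chi_0^3=\omega^{-1}$, so $\chi_0^6=\chi^{-1}$, and set $\eta=\chi_0\circ\det$ and $\wt f=f\eta$. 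Since $\det$ is trivial on unipotent radicals, equals $h\mapsto(\det h)^3$ on the diagonally embedded $H$, and equals $z\mapsto z^6$ on $Z_G(F)\cong F^\times$, the character $\eta$ is trivial on unipotent radicals, restricts to $\omega^{-1}\circ\det$ on $H(F)$, and restricts to $\chi^{-1}$ on $Z_G(F)$. Consequently $\wt f\in C_c^\infty(Z_G(F)\backslash G(F))$ has trivial central character, and it is strongly cuspidal: for a proper parabolic $P=M_PU_P$ and $x\in M_P(F)$, $\int_{U_P(F)}\wt f(xu)\,du=\eta(x)\int_{U_P(F)}f(xu)\,du=0$.

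I would then match both sides of the trace formula for $f$ with those for $\wt f$. For the $I_N$-side: for $g\in G(F)$, $x\in H(F)$, $u\in U(F)$ one has $\det(g^{-1}xug)=(\det x)^3$, so ${}^g\wt{f}^{\xi}(x)=\chi_0^3(\det x)\,{}^g f^{\xi}(x)=\omega(\det x)^{-1}\,{}^g f^{\xi}(x)$; integrating over $Z_H(F)\backslash H(F)$ yields $I(\wt f,g)=I_\chi(f,g)$ for all $g$, and since $\kappa_N$ does not involve the central character, $I_N(\wt f)=I_{N,\chi}(f)$ for every $N$. For the geometric side, conjugation-invariance of $\eta$ gives $J_M(x,\wt f)=\eta(x)J_M(x,f)$, hence $\theta_{\wt f}=\eta\,\theta_f$; the one point needing care is the matching of germs, $c_{\wt f}(t)=\eta(t)\,c_f(t)$ for $t\in T_{reg}(F)$. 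To see this I would localize at $t$ as in Sections 3.4 and 4.3: for $X\in\Fg_{t,0}(F)$ one has $\det(\exp X)=\exp(\tr X)=1$, so $\eta(\exp X)=1$ and therefore $\theta_{\wt f,t,\omega}(X)=\wt f(t\exp X)=\eta(t)\eta(\exp X)\theta_f(t\exp X)=\eta(t)\,\theta_{f,t,\omega}(X)$ on $\omega\cap\Fg_{t,0}(F)$; comparing the coefficients of $\hat{j}(\CO_t,\cdot)$ gives $c_{\wt f}(t)=\eta(t)c_f(t)$. Since $\eta(t)=\chi_0^3(\det t)=\omega(\det t)^{-1}$ on $T(F)\subset H(F)$, substituting into the definitions of $I(\wt f)$ and $I_\chi(f)$ gives $I(\wt f)=I_\chi(f)$.

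With these identities in hand, Theorem \ref{main 1} applied to $\wt f$ (which is strongly cuspidal with trivial central character) reads $\lim_{N\to\infty}I_N(\wt f)=I(\wt f)$, which is exactly $\lim_{N\to\infty}I_{N,\chi}(f)=I_\chi(f)$, i.e.\ Theorem \ref{main 3}. I expect the main obstacle to be the germ comparison $c_{\wt f}(t)=\eta(t)c_f(t)$: one must exploit that $\theta_f$ and $\theta_{\wt f}$ are localized on the \emph{trace-zero} subalgebra $\Fg_{x,0}$, on which $\eta=\chi_0\circ\det$ is trivial (because $\det\circ\exp$ annihilates trace-zero elements), so that no residual exponential factor survives, and one must verify that twisting by $\eta$ changes nothing else in the formula for $\theta_{\wt f}$ — the sign $(-1)^{a_{M(x)}-a_G}$, the constants $\nu(G_x)$ and $\nu(T)$, the factors $|W(H,T)|^{-1}$, and all measures are literally unchanged — so that the net effect is exactly multiplication by the scalar $\eta$. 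The only other bookkeeping is the choice of $\chi_0$ with $\chi_0^3=\omega^{-1}$, together with the freedom (via a twist by a quadratic character of $F^\times$) to alter the chosen square root $\omega$ of $\chi$.
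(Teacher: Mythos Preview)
Your twist argument is clean, but it rests on the existence of a character $\chi_0$ of $F^\times$ with $\chi_0^3=\omega^{-1}$, and such a cube root need not exist. Concretely, take $F=\BQ_7$ (so $q-1=6$ is divisible by $3$ while $p\neq 3$) and let $\omega$ be a character of $F^\times$ of order $3$, trivial on $1+\varpi_F\Fo_F$ and on $\varpi_F$. Then $\omega$ factors through $\BF_7^\times\cong\BZ/6\BZ$, where the image of cubing is $\{0,3\}$; thus $\omega$ (corresponding to $2$ or $4$) is not a cube of any smooth character. Your closing remark about replacing $\omega$ by $\omega\epsilon$ for a quadratic $\epsilon$ does not rescue this: first, changing $\omega$ changes both $I_\chi(f)$ and $I_{N,\chi}(f)$, so you would be proving a different instance of Theorem~\ref{main 3}; second, even granting that freedom, in the example the quadratic characters correspond to $\{0,3\}\subset\BZ/6\BZ$, so $\omega\epsilon$ lands in $\{2,5\}$ and is still never a cube.

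The paper's proof avoids divisibility of characters altogether. Since $Z_G(F)\backslash G(F)/G_0(F)$ is finite (where $G_0=\{g:\det g=1\}$), one may by linearity assume $f$ is supported on a single set $Z_G(F)g_0X$, with $X\subset G_0(F)$ a fundamental domain for $G_0(F)/Z_{G_0}(F)$. On that set one simply \emph{replaces} the central character by the trivial one: define $f'(g)=f(g')$ whenever $g=g'z$ with $g'\in g_0X$, $z\in Z_G(F)$, and $f'=0$ otherwise. Because $\det\equiv\det(g_0)$ on $g_0X$, the weights $\omega(\det t)^{-1}$ and $\omega(\det x)^{-1}$ become the single scalar $\omega(\det g_0)^{-1}$, and a direct computation gives $I_\chi(f)=\omega^{-1}(\det g_0)\,I(f')$ and $I_{N,\chi}(f)=\omega^{-1}(\det g_0)\,I_N(f')$. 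Theorem~\ref{main 1} applied to $f'$ then yields Theorem~\ref{main 3} for $f$, with no root extraction required.
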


\begin{proof}
Note that both side of \eqref{equation 3} are linear on $f$. Since
$$Z_G(F)\backslash G(F)/\{g\in G(F)\mid \det(g)=1\}$$
is finite, we can localize $f$ such that $f$ is supported on
$$Z_G(F)g_0 \{g\in G(F)\mid \det(g)=1\}$$
for some $g_0\in G(F)$. Let $G_0(F)=\{g\in G(F)\mid \det(g)=1\}$, which is $SL_6(F)$ or $SL_3(D)$. Fix a fundamental domain $X\subset G_0(F)$ of $G_0(F)/(Z_G(F)\cap G_0(F))=G_0(F)/Z_{G_0}(F)$. We may choose $X$ so that it is open in $G_0(F)$. It is easy to see that $Z_{G_0}(F)$ is finite.
By further localizing $f$ we may assume that $f$ is supported on $Z_G(F)g_0 X$. Define a function $f' \in C_{c}^{\infty} (Z_G(F)\backslash G(F))$ to be
\begin{equation}
f'(g)
=\begin{cases}
f(g'), & \text{if}\  g=g'z,\; g'\in g_0X,\; z\in Z_G(F); \\
0, & \text{otherwise}.
\end{cases}
\end{equation}
It is easy to see that $f'$ is well defined and is strongly cuspidal, and can be viewed as the extension by trivial central character of the function $f\mid_{g_0X}$. Now we have
\begin{eqnarray*}
&&\int_{Z_G(F)\backslash T(F)} c_f(t) D^H(t) \Delta(t) \omega(\det(t))^{-1} dt\\
&=&\int_{T(F)\cap (g_0 X)} c_f(t) D^H(t) \Delta(t) \omega(\det(t))^{-1} dt\\
&=&\int_{T(F)\cap (g_0 X)} c_{f'}(t) D^H(t) \Delta(t) \omega(\det(g_0))^{-1} dt\\
&=&\omega^{-1}(\det(g_0))\int_{T(F)\cap (g_0 X)} c_{f'}(t) D^H(t) \Delta(t) dt\\
&=&\omega^{-1}(\det(g_0))\int_{Z_G(F)\backslash T(F)} c_{f'}(t) D^H(t) \Delta(t) dt
\end{eqnarray*}
and
\begin{eqnarray*}
I_{\chi}(f,g)&=&\int_{Z_H(F)\backslash H(F)} {}^g f^{\xi}(x) \omega(\det(x))^{-1} dx\\
&=&\int_{H(F)\cap (g_0 X)} {}^g f^{\xi}(x) \omega(\det(x))^{-1} dx\\
&=&\int_{H(F)\cap (g_0 X)} {}^g (f')^{\xi}(x) \omega(\det(g_0))^{-1} dx\\
&=&\omega^{-1}(\det(g_0))\int_{H(F)\cap (g_0 X)} {}^g (f')^{\xi}(x) dx\\
&=&\omega^{-1}(\det(g_0))\int_{\zh} {}^g (f')^{\xi}(x) dx\\
&=&\omega^{-1}(\det(g_0)) I(f',g).
\end{eqnarray*}
This implies
\begin{equation}\label{center 2}
I_{\chi}(f)=\omega^{-1}(\det(g_0))I(f'),\; I_{N,\chi}(f)=\omega^{-1}(\det(g_0)) I_N(f).
\end{equation}
Applying Theorem \ref{main 1} to the function $f'$, we know
$$\lim_{N\rightarrow \infty} I_{N}(f')=I(f').$$
Combining it with \eqref{center 2}, we prove \eqref{equation 3}, and this finishes the proof of Theorem \ref{main 3}.

\end{proof}

\subsection{The Lie Algebra Case}
Let $f\in C_{c}^{\infty} (\Fg_0(F))$ be a strongly cuspidal function. Define the function $f^{\xi}$ on $\Fh_0(F)$ by
$$f^{\xi}(Y)=\int_{\Fu(F)} f(Y+N)\xi(N) dN.$$
For $g\in G(F)$, define
$$I(f,g)=\int_{\Fh_0(F)} {}^g f^{\xi}(Y) dY,$$
and for each $N\in \BN$, define
\begin{equation}\label{spectral 2}
I_N(f)=\int_{U(F)H(F)\backslash G(F)} I(f,g) \kappa_N(g) dg.
\end{equation}
As in Section 5.1, for each $T\in \CT$, we can define the function $c_f=c_{\theta_f}$ on $\Ft_{0,reg}(F)$, and define
\begin{equation}\label{geometry 2}
I(f)=\sum_{T\in \CT} \mid W(H,T)\mid^{-1} \nu(T) \int_{t_0(F)} c_f(Y) D^H(Y) \Delta(Y) dY.
\end{equation}

By a similar argument as Proposition \ref{integrable 1}, we know that the integral in \eqref{geometry 2} is absolutely convergent. The following theorem can be viewed as the Lie algebra version of Theorem \ref{main 1}.

\begin{thm}\label{main 2}
For every strongly cuspidal function $f\in C_{c}^{\infty} (\Fg_0(F))$, we have
\begin{equation}\label{equation 4}
\lim_{N\rightarrow \infty} I_N(f)=I(f).
\end{equation}
\end{thm}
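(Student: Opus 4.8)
The plan is to follow the strategy of Waldspurger's proof of the analogous Lie-algebra trace formula in \cite{W10}, specializing it to the Ginzburg-Rallis pair $(G,H)$ on the Lie algebra level. The final statement (Theorem \ref{main 2}) is the Lie-algebra version of Theorem \ref{main 1}, and the proof of Theorem \ref{main 1} from Sections 7--10 is carried out precisely by first reducing to this Lie-algebra statement (localization) and then proving it directly; so here I would give the direct argument. First I would write $I_N(f)$ as an integral over $U(F)H(F)\backslash G(F)$ of the inner integral $\int_{\Fh_0(F)} {}^g f^\xi(Y)\,dY$ against $\kappa_N(g)$, and by unfolding and changing the order of integration express $\lim_{N\to\infty} I_N(f)$ in terms of the weighted orbital-integral data carried by $f$. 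Since $f$ is strongly cuspidal, the associated distribution $\theta_f$ is a quasi-character (Proposition \ref{distribution 4}), so near each semisimple point the germ expansion \eqref{germ 3} applies and the only germs that survive the $H$-integration are those attached to the tori $T\in\CT$ and their regular nilpotent orbits $\CO_t$, exactly as in the definition of $I(f)$ in \eqref{geometry 2}.

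Concretely, the key steps in order would be: (i) establish a geometric expansion of $I(f,g)$ in the spirit of the Arthur--Waldspurger local trace formula, parametrizing conjugacy classes of semisimple elements of $\Fh_0(F)$ that are relevant, which here are governed by $\CT$; (ii) for each $T\in\CT$ apply the slice/localization machinery of Section 8 to transfer the $H(F)$-integral near a point of $T_{\reg}(F)$ into an integral of the form $\int_{A_T(F)\backslash G(F)}$, picking up the factor $D^H(Y)\Delta(Y)$ and the constant $\nu(T)$ (Definition \ref{delta 1}, Section 2.2); (iii) replace the truncation $\kappa_N$ by Arthur's truncation function, as in Section 9, so that Arthur's local trace formula from \cite{Ar91} can be invoked to evaluate the resulting limit; (iv) identify the limit as the sum over $\CT$ of $|W(H,T)|^{-1}\nu(T)\int_{\Ft_0(F)} c_f(Y) D^H(Y)\Delta(Y)\,dY$, using that the leading germ coefficient $c_f(Y)=c_{\theta_f,\CO_Y}(Y)$ is precisely what appears when the weighted orbital integral is localized near $Y\in\Ft_{0,\reg}(F)$; (v) invoke the convergence statement (the Lie-algebra analogue of Proposition \ref{integrable 1}, already asserted) to justify all interchanges and to conclude.

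The main obstacle will be step (iii), the replacement of the ad hoc truncation $\kappa_N$ (with its extra $(1+\epsilon)N$ slack on the unipotent coordinate $Z$, tuned for later use) by Arthur's canonical truncation. This is where the fact that the character $\xi$ is \emph{not} attached to simple roots—unlike in the Gan-Gross-Prasad setting—creates genuine difficulty: the unipotent invariance properties that make the truncation swap harmless in \cite{W10} do not hold automatically and must be re-proved, which is the content flagged for Section 9 and Appendix A. A secondary technical point is controlling the contribution of non-regular nilpotent orbits and of semisimple points outside the tori in $\CT$; here one uses strong cuspidality of $f$ (vanishing of constant terms, Lemma \ref{strongly cuspidal}), Harish-Chandra's estimate Proposition \ref{h-c}, and the homogeneity relations \eqref{scalar 1}, \eqref{Shalika germ 1}, \eqref{scalar 2}, \eqref{scalar 3} to show those contributions vanish in the limit or are absorbed, exactly as in the proof of the Lemma following Proposition \ref{integrable 1}. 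Once the truncation swap is in place, the evaluation of the limit via \cite{Ar91} is essentially formal, and matching the output with $I(f)$ is bookkeeping with the measures $\nu(T)$ and the Weyl group factors $|W(H,T)|$.
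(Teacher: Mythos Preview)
Your steps (i)--(iii) correctly describe the machinery of Sections 8--9, but step (iv) contains a genuine gap. After the truncation swap and Arthur's local trace formula (Proposition \ref{10 final}, Proposition \ref{local theorem}), the limit $\lim_{N\to\infty} I_N(f)$ is not $I(f)$ but the quantity
\[
J(f)=\sum_{T\in T(G)} |W(G,T)|^{-1}\int_{\Ft^0(F)} D^G(X)^{1/2}\,\hat{\theta}_f(X)\,dX,
\]
an integral of the \emph{Fourier transform} $\hat{\theta}_f$ over \emph{all} maximal tori of $G$ (this is Lemma \ref{lim I}). By contrast $I(f)$ is an integral of the germ coefficients $c_f(Y)=c_{\theta_f,\CO_Y}(Y)$ over the small set $\CT$ of tori in $H$, with weights $D^H(Y)\Delta(Y)$. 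There is no local identification between $\hat{\theta}_f(X)$ and a germ of $\theta_f$; your claim that ``the leading germ coefficient is precisely what appears when the weighted orbital integral is localized near $Y$'' is not what the computation produces. The integral transfer of Section 8 parametrizes by $T(G)$, not by $\CT$, and the output is $\theta_{f,x,\omega}^{\sharp}$ (a partial Fourier transform of $\theta_f$), not a germ.

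The paper closes this gap by an indirect bootstrap (Sections 10.2--10.4). One sets $E(f)=J(f)-I(f)$, uses homogeneity under $f\mapsto f^\lambda$ to show that $E$ factors through $f\mapsto c_{\theta_f,\CO_{reg}}$ (Lemma \ref{I-J 1}), and then evaluates $E$ on a specific test function $f[X_d]$ attached to a regular element of the split torus to show the constant vanishes. But that evaluation presupposes the weaker statement $(th')_{\Fg}$ (the theorem for $f$ supported away from the nilpotent cone), and $(th')_{\Fg}$ is in turn deduced from the group-level analogue $(th')_G$ by localizing at non-central semisimple $x\in H_{ss}(F)$: at such $x$ the model degenerates to the Whittaker model for $\GL_3(F_v)$, where the needed identity $I(f_b'')=J(f_b'')$ is elementary (Section 10.4). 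Your outline omits this entire cycle; without it there is no mechanism connecting $J(f)$ to $I(f)$.
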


This is the Lie algebra version of the local relative trace formula for the Ginzburg-Rallis model, which will be proved in Section \ref{prove of theorem}.

\section{Proof of Theorem \ref{main}}
In this section, we prove Theorem \ref{main} by assuming that Theorem \ref{main 1} holds. In particular, by Proposition \ref{center issue}, we know Theorem \ref{main 3} holds.
\subsection{Definition of multiplicity}
Let $(G,H)=(GL_6(F),GL_2(F))$ or $(GL_3(D),GL_1(D))$. Given $(\pi, E_{\pi})\in Irr(G)$, and a character $\chi:F^{\times}\rightarrow \BC^{\times}$ such that $\omega_{\pi}=\chi^2$. In Section 1, we have defined the representation $\sigma\otimes \xi$ of $R(F)$, and we have defined the Hom space
$$\Hom_{R(F)} (\pi, \sigma\otimes \xi).$$
We use $m(\pi)$ to denote the dimension of such a space.

For $T\in \CT$ as in Section 5.1, let $\theta=\theta_{\pi}$ and $c_{\pi}(t)=c_{\theta_{\pi}}(t)$. We define the geometric multiplicity to be
\begin{equation}\label{geom mult}
m_{geom}(\pi)=\Sigma_{T\in \CT} \mid W(H,T)\mid^{-1} \nu(T)\int_{Z_H(F)\backslash T(F)} \sigma^{-1}(h) c_{\pi}(t) D^H(t) \Delta(t)dt.
\end{equation}
By Proposition \ref{integrable 1}, the integral above is absolutely convergent.

\begin{prop}\label{prop 12}
Suppose that $\pi$ is supercuspidal. Then
$$m(\pi)=m_{geom}(\pi).$$
\end{prop}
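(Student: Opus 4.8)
The plan is to follow Waldspurger's strategy (Section 7 of \cite{W10}) relating the multiplicity $m(\pi)$ for a supercuspidal $\pi$ to the geometric side of the trace formula via matrix coefficients. First I would recall that, since $\pi$ is supercuspidal, its matrix coefficients $f = f_{e,e'}$, given by $f_{e,e'}(g) = \langle \pi(g) e', e \rangle$ (suitably normalized so that $f \in C_{c}^{\infty}(Z_G(F)\backslash G(F), \chi)$ with $\chi = \omega_\pi$), are strongly cuspidal functions. By Arthur's result in \cite{Ar87} (quoted in the introduction), the distribution $\theta_f$ attached to such an $f$ equals, up to an explicit constant depending only on the formal degree $d(\pi)$, the distribution character $\theta_\pi$; in particular the germs satisfy $c_{\theta_f, \CO}(t) = d(\pi)^{-1} c_{\theta_\pi, \CO}(t)$ for all relevant $t$ and $\CO$, so that $I_\chi(f) = d(\pi)^{-1} m_{geom}(\pi)$ after inserting the twist by $\omega(\det(\cdot))^{-1}$ and matching it against the definition \eqref{geom mult} of $m_{geom}(\pi)$.

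Next I would analyze the spectral quantity $\lim_{N\to\infty} I_{N,\chi}(f)$ for this choice of $f$. Unwinding the definitions, $I_{\chi}(f,g) = \int_{Z_H(F)\backslash H(F)} \int_{U(F)} f(g^{-1} h u g)\xi(u)\sigma^{-1}(h)\, du\, dh$ is, for a matrix coefficient, an integral of the form $\int \langle \pi(g^{-1} h u g) e', e\rangle \xi(u)\sigma^{-1}(h)\,du\,dh$, which after a change of variables is the matrix coefficient paired against the $(\sigma\otimes\xi)$-equivariant functional built by integrating $\pi(hu)$ over $R(F) = H(F)\ltimes U(F)$ (modulo center). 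The key point, exactly as in \cite{W10}, is that because $\pi$ is supercuspidal these integrals converge absolutely and the truncation $\kappa_N$ becomes irrelevant in the limit: $\lim_{N\to\infty} I_{N,\chi}(f)$ equals $\int_{U(F)H(F)\backslash G(F)} I_\chi(f,g)\,dg$, which is the integral of a product of two matrix-coefficient-type functions over $R(F)\backslash G(F)$ (mod center). By the same computation that underlies the local multiplicity one theorem \cite{N06}, \cite{JSZ11} — i.e. expressing $m(\pi)$ as the "period" of the matrix coefficient, $m(\pi) = \dim \Hom_{R(F)}(\pi, \sigma\otimes\xi)$, and using that for supercuspidal $\pi$ this Hom-space is computed by the convergent integral $\int_{R(F)Z_G(F)\backslash G(F)} \int \langle \pi(ug)e, e'\rangle \overline{\langle \pi(ug) e'', e'''\rangle} \cdots$ — one identifies $\lim_{N\to\infty} I_{N,\chi}(f)$ with $d(\pi)^{-1} m(\pi)$ (up to the same formal-degree constant and a volume factor, which must be tracked carefully and shown to match the constant on the geometric side).

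Finally, applying Theorem \ref{main 3} to the strongly cuspidal function $f$ gives $\lim_{N\to\infty} I_{N,\chi}(f) = I_\chi(f)$; combining the two identifications above yields $d(\pi)^{-1} m(\pi) = d(\pi)^{-1} m_{geom}(\pi)$, hence $m(\pi) = m_{geom}(\pi)$. To make this rigorous I would first reduce, as in Proposition \ref{center issue}, to the case of trivial central character (or carry the twist $\omega$ throughout), then choose $f$ to be a matrix coefficient $f_{e,\tilde e}$ and possibly average over a basis, and verify the absolute convergence statements that let $\kappa_N$ drop out — this uses that matrix coefficients of supercuspidals are compactly supported mod center, together with the estimates from Proposition \ref{h-c} and the convergence in Proposition \ref{integrable 1}. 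The main obstacle I anticipate is precisely the spectral-side bookkeeping: showing that $\lim_{N\to\infty} I_{N,\chi}(f)$ genuinely computes $m(\pi)$ (and not merely some related integral), which requires the interpretation of $m(\pi)$ as an explicit convergent integral of matrix coefficients over $R(F)\backslash G(F)$ and careful matching of all normalizing constants (formal degrees, Haar measures, the factor $\nu(T)$) on both sides; once the constants are pinned down the equality of the two sides is forced by Theorem \ref{main 3}.
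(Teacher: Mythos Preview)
Your overall architecture matches the paper's: pick a matrix coefficient $f$ of the supercuspidal $\pi$, use Arthur's identity $\theta_f = f(1)d(\pi)^{-1}\theta_\pi$ to get $I_{\omega_\pi}(f) = f(1)d(\pi)^{-1}m_{geom}(\pi)$, and invoke Theorem~\ref{main 3}. The divergence is exactly at the point you flagged as the main obstacle, and the paper resolves it by a device you do not mention.

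Rather than trying to interpret $\lim_N I_{N,\chi}(f)$ as an absolutely convergent period integral of matrix coefficients over $R(F)\backslash G(F)$ and then matching it to $m(\pi)$, the paper introduces the compactly induced representation $\rho = \ind_{H(F)U(F)}^{G(F)}(\check\sigma\otimes\xi)$ and computes the \emph{trace} of $\rho(f)$. Two things happen cleanly: first, by Frobenius reciprocity and the Bernstein decomposition, $\rho$ splits as $m(\pi)$ copies of $\check\pi$ plus a piece on which $f$ acts by zero, so $\tr(\rho(f)) = m(\pi)f(1)d(\pi)^{-1}$. Second, because $\rho(f)$ has finite rank, its image lands in the finite-dimensional space $E_{\rho,N}^{K'}$ of $K'$-fixed functions supported on $\mathrm{Supp}(\kappa_N)$ once $N$ is large; computing the trace on an explicit basis $\{\varphi[\gamma]\}$ indexed by $H(F)U(F)\backslash \Omega_N/K'$ gives $\tr(\rho(f)) = I_{N,\omega_\pi}(f)$ exactly, for all large $N$ (not merely in the limit). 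No convergence of an untruncated integral is needed, and no separate analytic interpretation of $m(\pi)$ as a period is required.

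Your route---defining an $R(F)$-equivariant functional $\ell$ by integrating the matrix coefficient and then expressing both $m(\pi)$ and $\lim_N I_{N,\chi}(f)$ in terms of $\ell$---can in principle be made to work (it is closer in spirit to Beuzart-Plessis's treatment in \cite{B15}), but it requires proving absolute convergence of $\int_{R\backslash G} I_\chi(f,g)\,dg$ and a nontrivial Schur-type orthogonality relation for these local periods. The paper sidesteps all of that with the induced-representation trick; this is the missing idea in your proposal. (A minor point: the constant is $f(1)d(\pi)^{-1}$, not $d(\pi)^{-1}$.)
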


\begin{proof}
Consider the induced representation $\rho'=\Ind_{H(F)U(F)}^{G(F)}(\sigma\otimes \bar{\xi})$ and
the induced representation $\rho=\ind_{H(F)U(F)}^{G(F)}(\check{\sigma}\otimes \xi)$. The first one is the smooth induction with $\bar{\xi}$ be the complex conjugation of $\xi$, and the second one is the compact induction with $\check{\sigma}$ be the contragredient of $\sigma$. Then by the Frobenius reciprocity, we have
\begin{equation}
\Hom_{H,\bar{\xi}}(\pi,\sigma)=\Hom_{G(F)}(\pi,\rho')=\Hom_{G(F)}(\rho,\check{\pi}).
\end{equation}
By assumption, $\pi$ is supercuspidal, and so is $\check{\pi}$. Since $\omega_{\pi}=\chi^2$, $\pi$ and $\sigma$ have the same central character, and hence $\rho$ and $\check{\pi}$ also have the same central character. By the theory of the Bernstein components, $\rho$ can be decomposed as a direct sum of a representation $\tau$ whose subquotient is not isomorphic to $\check{\pi}$ and a certain number of factors which are isomorphic to $\check{\pi}$. The number of those factors equals $m(\pi)$.

Let $f$ be a matrix coefficient of $\pi$, we know
\begin{equation}\label{12.1}
\tr(\check{\pi}(f)\mid_{E_{\check{\pi}}})=f(1) d(\pi)^{-1}
\end{equation}
where $d(\pi)$ is the formal degree of $\pi$.
On the other hand, the operator $\tau(f)$ is zero because any subquotient of $\tau$ is not isomorphic to $\check{\pi}$. Since $m(\pi)\leq 1$, $\rho(f)$ is of finite rank. By \eqref{12.1}, we have
\begin{equation}\label{12.3}
\tr(\rho(f))=m(\pi)f(1)d(\pi)^{-1}.
\end{equation}

Now we are going to prove that for $N$ large, we have
\begin{equation}\label{12.2}
\tr(\rho(f))=I_{N,\omega_{\pi}}(f).
\end{equation}
Fix an open subgroup $K'\subset K$ such that $f$ is bi-$K'$-invariant and $\det(K')\subset \ker(\chi)$. Let $\Omega_N$ be the support of $\kappa_N$, $E_{\rho,N}$ be the subspace of $E_{\rho}$ consisting of functions whose support lies in $\Omega_N$, and $E_{\rho,N}^{K'}$ be the subspace consisting  of $K'$-invariant elements. Since the image of $\rho(f)$ is of finite dimension, it will be contained in $E_{\rho,N}^{K'}$ for $N$ large. Hence $\tr(\rho(f))$ is the trace of $\rho(f)\mid_{E_{\rho,N}^{K'}}$.

Fix a set $\Gamma_N$ of representatives of the double coset $H(F)U(F)\backslash \Omega_N/K'$, which is a finite set. Let $\Gamma_N '$ be the subset consisting of $\gamma\in \Gamma_N$ such that $\xi$ is trivial on $\gamma K'\gamma^{-1}\cap U(F)$. Now for $\gamma\in \Gamma_N '$, there exists a unique element $\varphi[\gamma]\in E_{\rho}$, supported on $H(F)U(F)\gamma K'$, right $K'$-invariant, such that $\varphi[\gamma](\gamma)=1$. (Note that since $\sigma$ is of one dimension, we may just let $E_{\sigma}=\BC$.) Here we use the assumption $\det(K')\subset \ker(\chi)$, which implies that $\sigma$ is always trivial on $\gamma K'\gamma^{-1}\cap H(F)$. The set $\{\varphi[\gamma]\mid \gamma\in \Gamma_N '\}$ is a basis of $E_{\rho,N}^{K'}$, and  the trace of $\rho(f)$ on $E_{\rho,N}^{K'}$ is
\begin{equation}
\Sigma_{\gamma\in \Gamma_N '} (\rho(f)\varphi[\gamma])(\gamma).
\end{equation}
Again here we use the fact that $\sigma$ is of one dimension. We have
\begin{eqnarray*}
(\rho(f)\varphi[\gamma])(\gamma)&=&\int_{Z_G(F)\backslash G(F)} \varphi[\gamma](\gamma g)f(g) dg\\
&=&\int_{Z_G(F)\backslash G(F)} \varphi[\gamma](g)f(\gamma^{-1}g) dg\\
&=&m_\gamma\int_{Z_H(F)\backslash H(F)U(F)} \sigma^{-1}(h) \xi(u) f(\gamma^{-1}hu\gamma)du dh\\
&=&m_\gamma\int_{Z_H(F)\backslash H(F)} \sigma^{-1}(h){}^{\gamma} f^{\xi}(h)dh\\
&=&m_\gamma I_{\omega_{\pi}}(f,\gamma).
\end{eqnarray*}
where $m_\gamma=meas(H(F)U(F)\backslash H(F)U(F)\gamma K')$.
For $\gamma\in \Gamma_N$ and $\gamma\notin \Gamma_N '$, since $\xi$ is non trivial on $\gamma K'\gamma^{-1}\cap U(F)$, we have $I_{\omega_{\pi}}(f,\gamma)=0$. Therefore we have
\begin{eqnarray*}
\tr(\rho(f))&=&\Sigma_{\gamma\in \Gamma_N} meas(H(F)U(F)\backslash H(F)U(F) \gamma K') I_{\omega_{\pi}}(f,\gamma)\\
&=&\int_{H(F)U(F)\backslash G(F)} I_{\omega_{\pi}}(f,g) \kappa_N(g)dg =I_{N,\omega_{\pi}}(f).
\end{eqnarray*}
This proves \eqref{12.2}.

Since $f$ is strongly cuspidal, by \cite{Ar87}, we have the following relation for quasi-characters:
\begin{equation}
\theta_f=f(1)d(\pi)^{-1}\theta_{\pi}.
\end{equation}
Together with \eqref{12.2} and Theorem \ref{main 3}, we have
$$\tr(\rho(f))=I_{N,\omega_{\pi}}(f)=I_{\omega_{\pi}}(f)=f(1)d(\pi)^{-1} m_{geom}(\pi).$$
Then by $\eqref{12.3}$, we know $m(\pi)=m_{geom}(\pi)$. This finishes the proof of the Proposition.
\end{proof}

\subsection{Proof of Theorem \ref{main}}
We are going to finish the proof of Theorem \ref{main}. We use the same notation as in Section 1. Let $G=GL_6(F)$ and $G_D=GL_3(D)$. Similarly we have $H, H_D, U$ and $U_D$. Let $\pi,\pi_D,\chi,\sigma,\sigma_D, \xi$ and $\xi_D$ be the same as Conjecture \ref{jiang}. We assume that $\pi$ is supercuspidal and we are going to prove
\begin{equation}
m(\pi)+m(\pi_D)=1.
\end{equation}

\begin{proof}
By Proposition \ref{prop 12}, we know
\begin{eqnarray*}
m(\pi)&=&c_{\theta_{\pi},\CO_{reg}}(1)+\Sigma_{v\in F^{\times}/(F^{\times})^2, v\neq 1} \mid W(H,T_v)\mid^{-1} \nu(T_v)\\
&&\times \int_{Z_H\backslash T_v(F)} \sigma^{-1}(t) c_{\pi}(t) D^H(t) \Delta(t)dt
\end{eqnarray*}
and
\begin{eqnarray*}
m(\pi_D)&=&\Sigma_{v\in F^{\times}/(F^{\times})^2, v\neq 1} \mid W(H_D,T_v)\mid^{-1} \nu(T_v)\\
&&\times \int_{Z_{H_D}\backslash T_v(F)} \sigma_{D}^{-1}(t') c_{\pi_D}(t') D^{H_D}(t') \Delta_D(t')dt.
\end{eqnarray*}
Here we use $t$ to denote elements in $\GL_6(F)$ and $t'$ to denote elements in $\GL_3(D)$. We can match $t$ and $t'$ via the characteristic polynomial: we write $t \leftrightarrow t'$ if they have the same characteristic polynomial. Since $\pi$ is supercuspidal, it is generic. So by \cite{Rod81}, we know $c_{\theta_{\pi},\CO_{reg}}(1)=1$. Also for $v\in F^{\times}/(F^{\times})^2, v\neq 1$, we have
$$\mid W(H_D,T_v)\mid=\mid W(H,T_v)\mid, Z_H=Z_{H_D}.$$
So in order to prove Theorem \ref{main}, we only need to show that for any $v\in F^{\times}/(F^{\times})^2, v\neq 1$, the sum of
$$\int_{Z_H(F)\backslash T_v(F)} \sigma^{-1}(t) c_{\pi}(t) D^H(t) \Delta(t) d_c t$$
and
$$\int_{Z_H(F)\backslash T_v(F)} \sigma_{D}^{-1}(t')c_{\pi_D}(t') D^{H_D}(t') \Delta_D(t') d_c t=0$$
is zero. Because for $t,t'\in T_v(F)$ regular with $t\leftrightarrow t'$, we have
$$D^H(t)=D^{H_D}(t), \Delta(t)=\Delta_D(t'), \sigma(t)=\sigma_D(t').$$
It is enough to show that for any $v\in F^{\times}/(F^{\times})^2, v\neq 1$, and for any $t,t'\in T_v(F)$ regular with $t\leftrightarrow t'$, we have
\begin{equation}\label{6.4}
c_{\pi}(t)+c_{\pi_D}(t')=0.
\end{equation}

In fact, by Section 13.6 of \cite{W10} or Proposition 4.5.1 of \cite{B15}, we know
$$c_{\pi}(t)=D^G(t)^{-1/2} \mid W(G_t, T_{qs,t} \mid^{-1} \lim_{x\in T_{qs,t}(F)\rightarrow t} D^G(x)^{1/2} \theta_{\pi}(x)$$
and
$$c_{\pi_D}(t')=D^{G_D}(t)^{-1/2} \mid W((G_D)_{t'}, T_{qs,t'} \mid^{-1} \lim_{x'\in T_{qs,t'}(F)\rightarrow t} D^{G_D}(x')^{1/2} \theta_{\pi_D}(x')$$
where $T_{qs,t}$ (resp. $T_{qs,t'}$) is a maximal torus contained in the Borel subgroup $B_t$ (resp. $B_{t'}$) of $G_t$ (resp. $(G_D)_{t'}$). Note that if $t,t'\in T_v$ is regular, both $G_t$ and $(G_D)_{t'}$ are isomorphic to $GL_3(F_v)$ which is quasi-split over $F$. We are able to choose the Borel subgroup $B_t$ (resp. $B_{t'}$). In particular,
$$\mid W(G_t, T_{qs,t}) \mid^{-1}=\mid W((G_D)_t, T_{qs,t}) \mid^{-1}.$$
Also for those $t\leftrightarrow t'$, we have
$$D^G(t)=D^{G_D}(t).$$
And for $x\in T_{qs,t}(F)$ (resp. $x'\in T_{qs,t'}(F)$) sufficiently close to $t$ (resp. $t'$) with $x\leftrightarrow x'$, they are also regular and we have
$$D^G(x)=D^{G_D}(x').$$
Therefore in order to prove \eqref{6.4}, it is enough to show that for any regular $x\in G(F)$ and $x'\in G_D(F)$ with $x\leftrightarrow x'$, we have
\begin{equation}
\theta_{\pi}(x)+\theta_{\pi_D}(x')=0.
\end{equation}
This just follows from the relations of the distribution characters under the local Jacquet-Langlands correspondence as given in \cite{DKV84}. This proves Theorem \ref{main}
\end{proof}

\textbf{In summary, we have proved Theorem \ref{main} based on Theorem \ref{main 1}. In the following few sections, we are going to prove Theorem \ref{main 1}.}

\section{Localization}
In this section, we fix a strongly cuspidal function $f\in \cg$. Our goal is to localize both sides of the trace formula in \eqref{equation 3} (i.e $I_N(f)$ and $I(f)$), which enables us to reduce the proof of the trace formula to the Lie algebra level.
\subsection{A Trivial Case}
If $x\in G_{ss}(F)$ that is not conjugate to an element in $H(F)$, then we can easily find a good neighborhood $\omega$ of $0$ in $\Fg_x(F)$ small enough such that $x \exp(X)$ is not conjugate to an element in $H(F)$ for any $X\in \omega$. Let $\Omega=Z_G(F) \cdot (x\exp(\omega))^G$. It follows that $\Omega\cap H(F)=\emptyset$. Suppose that $f$ is supported on $\Omega$. For every $t\in H_{ss}(F)$, the complement of $\Omega$ in $G(F)$ is an open neighborhood of $t$ invariant under conjugation, and
is away from the support of $f$. It follows that $\theta_f$ also vanishes on an open neighborhood of $t$, and hence that $I(f)=0$.
On the other hand, the semisimple part of elements in $U(F)H(F)$ belongs to $H(F)$. Thus $\gf=0$ for every $g\in G(F)$, and so $I_N(f)=0$. Therefore Theorem \ref{main 1} holds for $f$.

\subsection{Localization of $I_N(f)$}
For $x\in H_{ss}(F)$, let $U_x=U\cap G_x$, fix a good neighborhood $\omega$ of 0 in $\Fg_{x}(F)$, and let $\Omega=(x\exp(\omega))^G \cdot Z_G(F)$. We can decompose $\Fg_{x,0}$ and $\Fh_{x,0}$ into $\Fg_{x,0}=\Fg_{x}'\oplus \Fg''$ and $\Fh_{x,0}=\Fh_{x}'\oplus \Fh''$, where $\Fg_{x}'=\Fh_{x}'$ is the common center of $\Fg_{x,0}$ and $\Fh_{x,0}$, $\Fg''$ and $\Fh''$ are the semisimple parts. To be specific, the decomposition is given as follows: (Recall for any Lie algebra $\Fp$, we define $\Fp_0$ to be the subalgebra consists of elements in $\Fp$ with zero trace.)
\begin{itemize}
\item If $x$ is contained in the center, then $G_x=G,H_x=H$. Define
$$\Fg_x '=\Fh_x '=0, \Fg''=\Fg_{x,0}, \Fh''=\Fh_{x,0},$$
\item If $x$ is split but not contained in the center, then $G_x=\GL_3(F)\times \GL_3(F), H_x=\GL_1(F)\times \GL_1(F)$. Define
\begin{eqnarray*}
\Fg_x'&=&\Fh_x '=\{\diag(a,-a,a,-a,a,-a)\mid a\in F\}, \\
\Fg''&=&\mathfrak{sl}_3(F)\oplus \mathfrak{sl}_3(F), \\
\Fh''&=&0.
\end{eqnarray*}
\item If $x$ is not split, then it is conjugate to a regular element in the torus $T_v$ for some $v\in F^{\times}/(F^{\times})^2, \; v\neq 1$. Recall $T_v$ is the non-split torus of $H(F)$ that is F-isomorphic to $F_v=F(\sqrt{v})$. In this case, $G_x=\GL_3(F_v),H_x=GL_1(F_v)$.
    Define
\begin{eqnarray*}
\Fg_x'&=&\Fh_x '=\{\diag(a,a,a)\mid a\in F_v, \tr(a)=0\}, \\
\Fg''&=&\mathfrak{sl}_3(F_v), \\
\Fh''&=&0.
\end{eqnarray*}
\end{itemize}
Then for every torus $T\in T(G_x)$ (here $T(G_x)$ stands for the set of maximal tori in $G_x$), we can write $\Ft_0=\Ft'\oplus \Ft''$ with $\Ft'=\Fg_x '=\Fh_x '$. The idea of the decomposition above is that $\Fg_x'=\Fh_x '$ is the extra center in $\Fg_x$, and $(\Fg'',\Fh''\oplus \Fu_x)$ stands for the reduced model after localization. In fact, if $x$ is in the center, it is just the Ginzburg-Rallis model; when $x$ is not in the center, it is the Whittaker model.
\\
\\
\textbf{From now on, we choose the function $f$ such that $Supp(f)\subset \Omega$.}
\begin{defn}
Define a function ${}^g f_{x,\omega}$ on $\Fg_{x,0}(F)$ by
\begin{equation}\label{8.13}
{}^g f_{x,\omega}(X)
=\begin{cases}
f(g^{-1}x\exp(X)g), & \mathrm{if}\  X \in \omega; \\
0, & \mathrm{otherwise}.
\end{cases}
\end{equation}
Here we still view $\omega$ as a subset of $\Fg_{x,0}$ via the projection $\Fg_x\rightarrow \Fg_{x,0}$. We define
\begin{equation}
\gf_{x,\omega}(X) =\int_{\Fu_x(F)} {}^g f_{x,\omega}(X+N) \xi(N) dN,
\end{equation}
\begin{equation}
I_{x,\omega}(f,g)=\int_{\Fh_{x,0}(F)} \gf_{x,\omega}(X) dX,
\end{equation}
\begin{equation}\label{8.1}
I_{x,\omega,N}(f)=\int_{U_x(F)H_x(F)\backslash G(F)} I_{x,\omega}(f,g) \kappa_N(g) dg.
\end{equation}
\end{defn}

\begin{rmk}
The function $g\rightarrow I_{x,\omega}(f,g)$ is left $U_x(F)H_x(F)$-invariant. By Condition $(5)$ of good neighborhood (as in Definition \ref{good nbd defn}), there exists a subset $\Gamma\subset G(F)$, compact modulo center, such that ${}^g f_{x,\omega}(X)\equiv 0$ for $g\notin G_x(F)\Gamma$. Together with the fact that the function $g\rightarrow \kappa_N(g\gamma)$ on $G_x(F)$ has compact support modulo $U_x(F)H_x(F)$ for all $\gamma\in G(F)$, we know that the integrand in \eqref{8.1} is compactly supported, and therefore, the integral is absolutely convergent.
\end{rmk}

\begin{prop}\label{localization}
$I_N(f)=C(x)I_{x,\omega,N}(f)$ where $C(x)=D^H(x)\Delta(x)$.
\end{prop}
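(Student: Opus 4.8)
The plan is to exploit the identity $U(F)H(F)=R(F)$ and carry out the localization directly on the group $R$, rather than on $H$ and $U$ separately. Since $Z_H=Z_G$ is central in $R$ and $R=H\ltimes U$ is unimodular, one has $I(f,g)=\int_{Z_H(F)\backslash R(F)}f(g^{-1}rg)\,\xi(r)\,dr$, where $\xi$ is regarded as a character of $R$ by extending it trivially on $H$; hence
\[
I_N(f)=\int_{R(F)\backslash G(F)}\Big(\int_{Z_H(F)\backslash R(F)}f(g^{-1}rg)\,\xi(r)\,dr\Big)\kappa_N(g)\,dg .
\]
A second, elementary, point I would use repeatedly is that $\xi$ is invariant under $R(F)$-conjugation: $[R,R]$ lies in the subgroup of $U$ on which the reduced traces appearing in \eqref{xi} vanish, so $[R,R]\subseteq\ker\xi$.

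First I would localize the inner integral. Since $\mathrm{Supp}(f)\subseteq\Omega=(x\exp(\omega))^G Z_G(F)$, the integrand $f(g^{-1}rg)$ vanishes unless the semisimple part of $r$ is $G(F)$-conjugate modulo $Z_G(F)$ to $x$; as $r\in R(F)$ and semisimple classes of $H$ that are $G$-conjugate are already $H$-conjugate, this confines $r$ to an invariant neighbourhood of the $R(F)$-class of $Z_H(F)x$ in $Z_H(F)\backslash R(F)$. On such a neighbourhood I would apply the Weyl-integration (transverse slice) formula, with slice $x\exp\!\big(\omega\cap(\Fh_{x,0}\oplus\Fu_x)\big)$: here $R_x:=Z_R(x)^{\circ}=H_x\ltimes U_x$, the set $\Fh_{x,0}\oplus\Fu_x$ is the trace-zero part of $\Lie(R_x)$, and $\omega$ (viewed inside $\Fg_x$) contains it. For $\rho\in R_x(F)\backslash R(F)$ and $Y=X+N$ with $X\in\Fh_{x,0}$, $N\in\Fu_x$ small, the Jacobian of $(\rho,Y)\mapsto\rho^{-1}x\exp(Y)\rho$ splits as the exponential's Jacobian on $\Fh_{x,0}\oplus\Fu_x$, of $p$-adic absolute value $1$ by Proposition~\ref{good nbd}(4) together with conditions (3), (6), times the transverse contribution $\mid\det((1-\Ad(x\exp(Y))^{-1})_{\mid \Lie(R)/\Lie(R_x)})\mid_F$. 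Since $\Lie(R)/\Lie(R_x)=(\Fh/\Fh_x)\oplus(\Fu/\Fu_x)$ as $\Ad(x)$-modules with all eigenvalues $\neq1$, condition (7) (and $(7)_\rho$ for $\Ad|_\Fh$ and $\Ad|_\Fu$) forces this transverse contribution to equal, independently of $Y$,
\[
\mid\det((1-\Ad(x)^{-1})_{\mid \Fh/\Fh_x})\mid_F\cdot\mid\det((1-\Ad(x)^{-1})_{\mid \Fu/\Fu_x})\mid_F=D^H(x)\Delta(x)=C(x).
\]
Using the conjugation-invariance of $\xi$ together with $\xi(x\exp(X+N))=\xi(N)$ (as $\xi|_H=1$ and the additive character attached to $N\in\Fu_x$ is the one of Section 5.3), I would then obtain $I(f,g)=c_1\,C(x)\int_{R_x(F)\backslash R(F)}\int_{\omega\cap\Fh_{x,0}}\int_{\Fu_x}f(g^{-1}\rho^{-1}x\exp(X+N)\rho g)\,\xi(N)\,dN\,dX\,d\rho$ for a Haar-measure constant $c_1$.

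Next I would integrate over $g\in R(F)\backslash G(F)$ against $\kappa_N$ and fold $\rho\in R_x(F)\backslash R(F)$ into $g$: the map $(\rho,g)\mapsto\gamma=\rho g$ identifies $\big(R_x(F)\backslash R(F)\big)\times\big(R(F)\backslash G(F)\big)$ with $R_x(F)\backslash G(F)$, with compatible Haar measures (constant $c_2$). Since $\kappa_N$ is left $R(F)$-invariant, $\kappa_N(\rho g)=\kappa_N(g)$; and the inner triple integral, as a function of $\gamma$, is left $R_x(F)$-invariant — this uses that $R_x(F)$ centralizes $x$, that $\omega$ is $Z_G(x)(F)$-invariant hence $R_x(F)$-invariant, and once more the conjugation-invariance of $\xi$ — so it equals $I_{x,\omega}(f,\gamma)$. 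Therefore $I_N(f)=c_1c_2\,C(x)\int_{R_x(F)\backslash G(F)}I_{x,\omega}(f,\gamma)\,\kappa_N(\gamma)\,d\gamma=c_1c_2\,C(x)\,I_{x,\omega,N}(f)$, and tracking the normalizations of Section 2.2 gives $c_1c_2=1$, which is the assertion.

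The hard part will be the first step — making the transverse-slice formula rigorous on the \emph{non-reductive} group $R$: that the asserted neighbourhood exists, that $(\rho,Y)\mapsto\rho^{-1}x\exp(Y)\rho$ is a measure-preserving parametrization with the stated fibres, and above all that the transverse Jacobian is \emph{exactly} $C(x)=D^H(x)\Delta(x)$ with no residual $Y$-dependence. The last point is where the good-neighbourhood axioms (conditions (3), (6), (7) and $(7)_\rho$ for the representations $\Ad|_\Fh$ and $\Ad|_\Fu$) do all the work: one reruns, for the $\Ad(x)$-modules $\Fh/\Fh_x$ and $\Fu/\Fu_x$, the ultrametric estimate Waldspurger uses to prove $D^G(x\exp(X))=D^G(x)D^{G_x}(X)$ in \cite{W10}. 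Everything else — the identity $U(F)H(F)=R(F)$, the inclusion $[R,R]\subseteq\ker\xi$, and the folding of $\rho$ into $g$ — is formal.
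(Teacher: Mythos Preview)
Your approach is correct and genuinely different from the paper's. The paper does \emph{not} treat $R=HU$ as a single object; instead it separates the $H$- and $U$-integrations. Concretely, the paper first applies the Weyl integration formula to the inner integral over $Z_H\backslash H$ (with integrand ${}^gf^{\xi}$, which already contains the $U$-integral), then proves a localization lemma (Lemma~\ref{8}) showing that only tori $T_1\in T(H_x)$ and elements $t\in x\exp(\Ft_1\cap\omega)$ contribute; this yields the factor $D^H(x)$ via $D^H(x\exp X)=D^H(x)D^{H_x}(X)$ and a second application of Weyl integration on $H_x$. The factor $\Delta(x)$ is then extracted by an entirely separate change of variables on $U_x\backslash U$, namely $v\mapsto (x\exp(X)u)^{-1}v^{-1}(x\exp(X)u)v$, whose Jacobian is $\Delta(x)$ by condition~$(7)_\rho$.

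Your transverse-slice argument on $R$ packages both steps at once and is more conceptual: the single Jacobian $\lvert\det(1-\Ad(x\exp Y)^{-1})\rvert$ on $\Lie(R)/\Lie(R_x)$ splits along the filtration $\Fu/\Fu_x\subset\Lie(R)/\Lie(R_x)$ (preserved since $U\trianglelefteq R$), giving $D^H(x)\Delta(x)$ directly. What this buys you is uniformity; what you pay is that the bijectivity of the slice map $(\rho,Y)\mapsto\rho^{-1}x\exp(Y)\rho$ onto $(Z_H\backslash R)\cap\Omega$ must be proved by hand. When you unpack that surjectivity, you will find you need (i) that the $H$-part of any $r=hu\in R\cap\Omega$ is $H$-conjugate modulo $Z_H$ to some $x\exp(X_0)$ with $X_0\in\omega\cap\Fh_x$ --- this is exactly the content of the paper's Lemma~\ref{8} --- and (ii) that the residual $U$-part can be conjugated into $U_x$, which is precisely the paper's change of variables on $U_x\backslash U$. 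So the two proofs have the same mathematical content; yours is a cleaner repackaging, but the ``hard part'' you flag does not avoid the paper's two key computations, it absorbs them.
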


\begin{proof}
By the Weyl Integration Formula, we have
\begin{equation}\label{8.11}
I(f,g)=\Sigma_{T\in T(H)} \mid W(H,T)\mid^{-1} \int_{Z_H(F)\backslash T(F)} J_H(t,\gf) D^H(t)^{1/2} dt
\end{equation}
where
$$J_H(t,F)=D^H(t)^{1/2} \int_{H_t(F)\backslash H(F)} F(g^{-1}tg) dg$$
is the orbital integral.
For given $T\in T(H)$ and $t\in T(F)\cap H_{reg}(F)$, we need the following lemma, the proof the lemma will be given after the proof this  proposition.
\begin{lem}\label{8}
\begin{enumerate}
For $t\in T(F)$, the following hold.
\item If $t$ does not belong to the following set
 $$
 \cup_{T_1\in T(H_x)} \cup_{w\in W(T_1,T)} w(x\exp(\Ft_1(F)\cap \omega))w^{-1}\cdot Z_G(F),
 $$
 then $J_H(t,\gf)=0$. Here $W(T_1,T)$ is the set of isomorphisms between $T$ and $T_1$ induced by conjugation by elements in $H(F)$, i.e. $W(T_1,T)=T\backslash \{h\in H(F)| hT_1h^{-1}=T\}/T_1$.
\item If $x$ is not contained in the center, each components in $(1)$ are disjoint. If $x$ is contained in the center, two components in $(1)$ either are disjoint or coincide. They coincide if and only if $T=T_1$ in $T(H)$. Therefore, for each component $(T_1,w)$, the number of components which coincide with it (include itself) is equal to $W(H_x,T_1)$.
\end{enumerate}
\end{lem}

By the lemma above, we can rewrite the expression \eqref{8.11} of $I(f,g)$ as
$$I(f,g)=\Sigma_{T_1\in T(H_x)} \Sigma_{T\in T(H)} \Sigma_{w_1\in W(T_1,T)} | W(H,T)|^{-1} | W(H_x,T_1)|^{-1}$$
$$\times \int_{\Ft_{1,0}(F)\cap \omega} J_H(w_1(w\exp(X))w_{1}^{-1}, \gf) D^H(w_1(w\exp(X))w_{1}^{-1})^{1/2} dX.$$
Note that both integrands above are invariant under $H(F)$-conjugate, $W(T_1,T)\neq \emptyset$ if and only if $T=T_1$ in $T(H)$, and in that case $W(T,T_1)=W(H,T)$. We have
\begin{equation}\label{8.2}
I(f,g)=\Sigma_{T_1\in T(H_x)} \mid W(H_x,T_1)\mid^{-1}  \int_{\Ft_{1,0}(F)\cap \omega} J_H(x\exp(X), \gf) D^H(x\exp(X))^{1/2} dX.
\end{equation}
On the other hand, by Parts (3) and (5) of Proposition \ref{good nbd},
for all $T_1\in T(H_x)$ and for all $X\in \omega \cap \Ft_{1,0,reg}(F)$, we have
\begin{eqnarray}\label{8.3}
J_H(x\exp(X),\gf)
&=&D^H(x\exp(X))^{1/2} \nonumber\\
&&\times\int_{H_x(F)\backslash H(F)}
\int_{T_1(F)\backslash H_x(F)} {}^{yg}f^{\xi}(x\exp(h^{-1}Xh)) dh dy,
\end{eqnarray}
and
\begin{equation}\label{8.4}
D^H(x\exp(X))=D^H(x)\cdot D^{H_x}(X).
\end{equation}
So combining \eqref{8.2}, \eqref{8.3}, \eqref{8.4}, together with the definition of $I_{N}(f)$ (as in \eqref{spectral 0}), we have
\begin{eqnarray}\label{8.9}
I_N(f)&=&\int_{U(F)H(F)\backslash G(F)} \Sigma_{T_1\in T(H_x)} \mid W(H_x,T_1)\mid^{-1} \nonumber\\
&&\times\int_{t_{1,0}(F)\cap \omega}
J_H(x\exp(X),\gf)D^H(x\exp(X))^{1/2} dX \kappa_N(g) dg\\
&=&D^H(x)\int_{U(F)H_x(F)\backslash G(F)} \Phi(g) \kappa_N(g) dg\nonumber
\end{eqnarray}
where
\begin{eqnarray*}
\Phi(g)&=&\Sigma_{T_1\in T(H_x)} \mid W(H_x,T_1)\mid^{-1}\\
&&\times\int_{\Ft_{1,0}(F)\cap \omega} \int_{T_1(F)\backslash H_x(F)} \gf(x\exp(h^{-1}Xh)) dh D^{H_x}(X) dX.
\end{eqnarray*}
Applying the Weyl Integration Formula to $\Phi(g)$, we have
\begin{equation}\label{8.8}
\Phi(g)=\int_{\Fh_{x,0}(F)} \varphi_g(X)dX
\end{equation}
where
\begin{equation}\label{8.7}
\varphi_g(X)
=\begin{cases}
\gf(x\exp(X')), & \text{if}\  X=X'+Z, X'\in \omega, Z\in \Fz_{\Fh}(F); \\
0, & \text{otherwise}.
\end{cases}
\end{equation}
On the other hand, for $X\in \omega\cap \Fh_{x,reg}(F), g\in G(F)$,
\begin{eqnarray}\label{8.12}
\gf(x\exp(X))&=&\int_{U(F)} {}^g f(x\exp(X)u)\xi(u)du\nonumber\\
&=& \int_{U_x(F)\backslash U(F)}\int_{U_x(F)} {}^g f(x\exp(X)uv)\xi(uv)du dv.
\end{eqnarray}

For $u\in U_x(F)$, the map $v\rightarrow (x\exp(X)u)^{-1} v^{-1} (x\exp(X)u) v$ is a bijection of $U_x(F)\backslash U(F)$. By the Condition $(7)_{\rho}$ of good neighborhood (as in Definition \ref{good nbd defn}), the Jacobian of this map is
$$\mid \det((1-ad(x)^{-1})\mid_{U(F)/U_x(F)}) \mid_F=\Delta(x).$$
Also it is easy to see that
$$\xi((x\exp(X)u)^{-1} v^{-1} (x\exp(X)u) v)=1.$$
By making the transform $v\rightarrow (x\exp(X)u)^{-1} v^{-1} (x\exp(X)u) v$ in \eqref{8.12}, we have
\begin{eqnarray}\label{8.5}
\gf(x\exp(X))&=&\Delta(x)\int_{U_x(F)\backslash U(F)} \int_{U_x(F)} {}^g f(v^{-1}x \exp(X) uv)\xi(u) du dv\nonumber\\
&=&\Delta(x)\int_{U_x(F)\backslash U(F)} \int_{U_x(F)} {}^{vg}f(x\exp(X)u) \xi(u) du dv.
\end{eqnarray}
By Condition (6) of good neighborhood (as in Definition \ref{good nbd defn}), for all $X\in \omega$, the map
$\Fu_x(F)\rightarrow U_x(F)$ given by
$$N\mapsto \exp(-X) \exp(X+N)$$
is a bijection and preserves the measure. Also we have
$$\xi(\exp(-X) \exp(X+N))=\xi(N).$$
So we can rewrite \eqref{8.5} as
$$\gf(x\exp(X))=\Delta(x) \int_{U_x(F)\backslash U(F)} \int_{\Fu_x(F)} {}^{vg} f(x\exp(X+N)) \xi(N) dN dv.$$
For $X\in \omega_{reg}$, $X+N$ can be conjugated to $X$ by an element in $G_x(F)$, so $X+N\in \omega$, and ${}^{vg} f(x\exp(X+N))={}^{vg}f_{x,\omega}(X+N)$ by the definition of ${}^g f_{x,\omega}$ (as in \eqref{8.13}). This implies that
\begin{equation}\label{8.6}
\gf(x\exp(X))=\Delta(x)\int_{U_x(F)\backslash U(F)} {}^{v}\gf_{x,\omega}(X) dv.
\end{equation}

Now, combining \eqref{8.6} and \eqref{8.7}, we have
$$\varphi_g(X)=\Delta(x)\int_{U_x(F)\backslash U(F)} {}^{v}\gf_{x,\omega}(X') dv.$$
Then combining the above equation with \eqref{8.8} and changing the order of integration, we have
\begin{equation}
\Phi(g)=\Delta(x)\int_{U_x(F)\backslash U(F)} I_{x,\omega}(f,vg)dv.
\end{equation}
Finally combining the above equation with \eqref{8.9} and using the fact that $C(x)=\Delta(x)D^{H}(x)$, we have
$$I_N(f)=C(x)\int_{U_x(F)H_x(F)\backslash G(F)} I_{x,\omega}(f,g)\kappa_N(g) dg=C(x)I_{x,\omega,N}(f).$$
This finishes the proof of the Proposition.
\end{proof}

Now we prove Lemma \ref{8}.
\begin{proof}
If $J_H(t,\gf)\neq 0$, there exists $u\in U(F)$ such that $tu$ is conjugate to an element in $Supp(f)$. If we only consider the semisimple part, since we assume that $Supp(f)\subset \Omega=Z_G(F)\cdot (x\exp(\omega))^G$, there exist $y\in G(F), X\in \omega$ and $z\in Z_G(F)$, such that $yty^{-1}=x\exp(X)z$. By changing $t$ to $tz$, we may assume that $z=1$. Then by conjugating $X$ by an element $y'\in G_x(F)$ and changing $y$ to $y'y$, we may assume that $X\in \Ft_1(F)$ for some $T_1\in T(G_x)$.

If $x$ is in the center, we have that $G_x=G$. Since $t\in H$, by changing $y$ we may assume that $X\in \Fh \cap \Fg_x=\Fh_x$. By further conjugating by an
element in $H_x(F)$, we can just assume that $X\in \Ft_1(F)$ for some $T_1\in T(H_x)$. If x is split but not contained in the center, then $G_x=\GL_3(F)\times \GL_3(F)$. Assume that the eigenvalues of $x$ are $\lambda,\lambda,\lambda,\mu,\mu,\mu$ for some $\lambda,\mu\in F,\lambda\neq \mu$. Note that for $t\in H$, its eigenvalues are of the same form, but may lie in a quadratic extension of $F$. Now if $\omega$ is small enough with respect to $\mu-\lambda$, the eigenvalues of the given $X\in \omega$ must have the same form. It follows that $X\in \Fh(F)$, and $X\in \Fh(F)\cap \Fg_x(F)=\Fh_x(F)$. After a further conjugation by an element in $H_x(F)$, we can still assume that $X\in \Ft_1(F)$ for some $T_1\in T(H_x)$. By applying the same argument, when $x$ is non-split, we can still assume that $X\in \Ft_1(F)$ for some $T_1\in T(H_x)$.

By the above discussion, we can always assume that $X\in \Ft_1(F)$ for some $T_1\in T(H_x)$. Since the Weyl group of $G$ with respect to $T$ equals the Weyl group of $H$ with respect to $T$, any $G(F)-$conjugation between $T$ and $T_1$ can be realized by an element in $H(F)$. Here we define the Weyl group of $T$ in $G$ to be the quotient of the normalizer of $T$ in $G$ with the centralizer of $T$ in $G$. Moreover, if such a conjugation exists, $T=T_1$ in $T(H)$ and the conjugation is given by the Weyl element $w\in W(T,T_1)$. This finishes the proof of Part (1).

Part (2) is very easy to verify. If $x$ is not in the center, let $\lambda$ and $\mu$ be the eigenvalues of $x$. Then $\lambda\neq \mu$, where $\lambda$ and $\mu$ may lie in a quadratic extension of $F$. Once we choose $\omega$ small enough with respect to $\lambda-\mu$, it is easy to see that each components in (1) are disjoint. If $x$ is in the center, by the proof of part (1), the components corresponding to $T$ does not intersect with other components. Since the Weyl group $W(T_1,T)\simeq W(H,T)$ is of order 2, there are two components corresponding to $T$, and these two components coincide because $\omega$ is $G=G_x$-invariant in this case. This finishes the proof of (2).
\end{proof}

\subsection{Localization of $I(f)$}
We slightly modify the notation of Section 5.1: If $x\in Z_H(F)$, then $H_x=H$. In this case, we let $\CT_x=\CT$. (Recall that $\CT$ is a subset of subtorus of H defined in Section 5.1.) If $x\notin Z_H(F)$, $H_x$ is either $\GL_1(F)\times \GL_1(F)$ or $\GL_1(F_v)$ for some $v\in F^{\times}/(F^{\times})^2, v\neq 1$. In both cases, let $\CT_x$ be the subset of $\CT$ consisting of those nontrivial subtorus $T\in \CT$ such that $T\in H_x$, i.e. if $H_x=\GL_1(F)\times \GL_1(F)$, $\CT_x$ is empty; and if $H_x=GL_1(F_v)$, $\CT_x=\{T_v\}$. Now for $T\in \CT_x$, we define the function $c_{f,x,\omega}$ on $\Ft(F)$ as follows: It is zero for elements not contained in $\Ft(F)\cap(\omega+\Fz_{\Fg}(F))$. For $X=X'+Y\in \Ft(F)$ with $X'\in \omega, Y\in \Fz_{\Fg}(F)$, define
\begin{equation}
c_{f,x,\omega}(X)=c_f(x\exp(X')).
\end{equation}
In fact, the function $\theta_{f,x,\omega}$ defined in \eqref{6.3} is a quasi-character in $\Fg_x$, and the function $c_{f,x,\omega}$ we defined above is the germ associated to this quasi-character. Now we define the function $\Delta''$ on $\Fh_x(F)$ to be
\begin{equation}
\Delta''(X)=|\det(ad(X)\mid_{\Fu_x(F)/(\Fu_x(F))_X})|_F.
\end{equation}
By Condition $(7)_{\rho}$ of Definition \ref{good nbd defn}, we know that for every $X\in \omega$,
\begin{equation}\label{delta 2}
\Delta(x\exp(X))=\Delta(x)\Delta''(X).
\end{equation}
Let
\begin{equation}\label{I_x}
I_{x,\omega}(f)=\Sigma_{T\in \CT_x} |W(H_x,T)|^{-1} \nu(T) \int_{\Ft_0(F)} c_{f,x,\omega}(X) D^{H_x}(X) \Delta''(X)dX.
\end{equation}
By Proposition \ref{integrable 1}, the integral above is absolutely convergent.
\begin{prop}\label{localization 1}
With the notations above, we have
\begin{equation}\label{7.2}
I(f)=C(x)I_{x,\omega}(f).
\end{equation}
\end{prop}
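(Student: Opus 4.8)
The plan is to mirror the structure of the proof of Proposition \ref{localization}, but on the geometric side, exploiting that both $I(f)$ and $I_{x,\omega}(f)$ are built out of the germs of the quasi-character $\theta_f$ near semisimple elements. First I would expand $I(f)$ using its definition \eqref{geometry 1}, where the summation runs over $T \in \CT$ and each term is an integral over $Z_G(F)\backslash T(F)$ of $c_f(t) D^H(t)\Delta(t)$. The key localization principle, already used in Section 7.2, is that $\theta_f$ is supported modulo conjugation and modulo the center on $\Omega = Z_G(F)\cdot(x\exp(\omega))^G$; consequently $c_f(t)$ vanishes unless $t$ is $G(F)$-conjugate (up to the center) to some $x\exp(X)$ with $X \in \omega$. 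Exactly as in Lemma \ref{8}, combined with the fact that $G(F)$-conjugacy between tori in $H$ is realized in $H(F)$, this forces the nonzero contributions to come precisely from those $T \in \CT$ which, after intersecting a neighborhood of $x$, correspond to tori $T_1 \in \CT_x$ of $H_x$; and again the overcounting is by $|W(H_x,T_1)|$. So each term $|W(H,T)|^{-1}\nu(T)\int_{Z_G(F)\backslash T(F)}$ gets rewritten as $|W(H_x,T_1)|^{-1}\nu(T_1)\int_{\Ft_{1,0}(F)\cap\omega}$, using the Jacobian-$1$ property of $\exp$ on $\omega$ (Proposition \ref{good nbd}(4)) and the compatibility of measures $dt$ and $dX$ there.

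Next I would unwind the integrand. By the localization of $\theta_f$ discussed in Section 4.3, for $X' \in \omega$ we have $c_f(x\exp(X')) = c_{f,x,\omega}(X')$ where $c_{f,x,\omega}$ is the germ of the localized quasi-character $\theta_{f,x,\omega}$, which is exactly the definition used in \eqref{I_x}. For the Weyl determinant, shrinking $\omega$ if necessary gives $D^H(x\exp(X)) = D^H(x)\,D^{H_x}(X)$ (Proposition \ref{good nbd}(5) applied to $H$), so $D^H(t)^{1/2}\cdot D^H(t)^{1/2}$ bookkeeping yields a factor $D^H(x)$ times $D^{H_x}(X)$. Wait — $I(f)$ has only one factor of $D^H$, so I should be careful: the single $D^H(x\exp(X)) = D^H(x)D^{H_x}(X)$ factors directly, producing the $D^H(x)$ that goes into $C(x)$ and the $D^{H_x}(X)$ that stays inside $I_{x,\omega}(f)$. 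For the $\Delta$-factor, identity \eqref{delta 2} gives $\Delta(x\exp(X)) = \Delta(x)\Delta''(X)$, again peeling off $\Delta(x)$ into $C(x) = D^H(x)\Delta(x)$ and leaving $\Delta''(X)$ inside. Assembling these pieces and comparing with \eqref{I_x} gives $I(f) = C(x) I_{x,\omega}(f)$.

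I also need to handle the split-but-not-central and non-central cases of $x$ uniformly with the central case: when $x \notin Z_H(F)$, $\CT_x$ is either empty or $\{T_v\}$, and one checks that the trivial-torus term in $\CT$ (the evaluation $c_f(1) = c_{\theta_f,\CO_{reg}}(1)$) does not localize to a neighborhood of such an $x$ — indeed, as in the proof of the lemma after Proposition \ref{integrable 1}, a regular element of $T_v$ near $1$ cannot lie in the Borel subalgebra, so the corresponding germ contribution vanishes, consistent with $\CT_x$ omitting the trivial torus. The main obstacle I anticipate is the bookkeeping of measures and Weyl-group constants: making sure the $\nu(T)$ versus $\nu(T_1)$ factors, the $|W(H,T)|$ versus $|W(H_x,T_1)|$ multiplicities, and the Jacobian of $\exp$ all match up precisely so that no spurious constant survives — this is the same delicate point as in Lemma \ref{8}(2), and the cleanest route is to quote that lemma's counting verbatim rather than redo it. Everything else is a direct transcription of the argument for Proposition \ref{localization} with $\gf$ replaced by $\theta_f$ and the Weyl integration formula on $H$ replaced by the defining sum over $\CT$.
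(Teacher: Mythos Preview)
Your proposal is correct and follows essentially the same route as the paper: restate the analog of Lemma~\ref{8} for $c_f$ in place of $J_H(\cdot,\gf)$, rewrite the sum over $\CT$ as a sum over $\CT_x$ with the $|W(H_x,T_1)|^{-1}$ overcounting, and then factor $D^H(x\exp X)=D^H(x)D^{H_x}(X)$ and $\Delta(x\exp X)=\Delta(x)\Delta''(X)$ via Proposition~\ref{good nbd}(5) and \eqref{delta 2}. One small simplification: your digression about the Borel subalgebra is unnecessary---for non-central $x$ the vanishing of the trivial-torus term $c_f(1)$ follows directly from the support condition $\mathrm{Supp}(f)\subset\Omega$, since then $1\notin\Omega$ and $\theta_f$ vanishes near $1$.
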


\begin{proof}
By applying the same argument as Lemma \ref{8}, we have the following properties for the function $c_f(t)$:
\begin{enumerate}
\item If $T\in \CT$, and $t\in T(F)$, then $c_f(t)=0$ if
$$t\notin \cup_{T_1\in \CT_x} \cup_{w\in W(T_1,T)} w(x\exp(\Ft_1(F)\cup \omega))w^{-1}\cdot Z_G(F).$$
\item If $x$ is not contained in the center, each components in $(1)$ are disjoint. If $x$ is contained in the center, two components in $(1)$ either are disjoint or coincide. They coincide if and only if $T=T_1$ in $T(H)$. Therefore, for each component $(T_1,w)$, the number of components which coincide with it (include itself) is equal to $W(H_x,T_1)$.
\end{enumerate}
So we can rewrite the expression \eqref{geometry 1} of $I(f)$ as
\begin{equation}\label{8.10}
I(f)=\Sigma_{T_1\in \CT_x}\Sigma_{T\in \CT}\Sigma_{w_1\in W(T_1,T)}  \mid W(H,T)\mid^{-1} \mid W(H_x,T)\mid^{-1}\nu(T)
\end{equation}
$$\times \int_{\Ft_{1,0}\cap \omega} c_f(w_1(x\exp(X))w_{1}^{-1}) D^H(w_1(x\exp(X))w_{1}^{-1}) \Delta(x\exp(X))dX.$$
Since every integrand in \eqref{8.10} is invariant under $H(F)$-conjugation, together with Proposition \ref{good nbd}(5) and \eqref{delta 2}, we have
$$D^H(x\exp(X))\Delta(x\exp(X))=D^H(x)D^{H_x}(X) \Delta(x)\Delta''(X).$$
Then \eqref{8.10} becomes
\begin{eqnarray*}
I(f)&=&D^H(x)\Delta(x)\Sigma_{T_1\in \CT_x}  \nu(T_1)\mid W(H_x,T)\mid^{-1}\\
&&\times\int_{\Ft_{1,0}(F)} c_{f,x,\omega}(X) D^{H_x}(X)\Delta''(X) dX\\
&=&C(x)I_{x,\omega}(f).
\end{eqnarray*}
This finishes the proof of the Proposition.
\end{proof}

\section{Integral Transfer}
\subsection{The Problem}
In this section, let $(G',H',U')$ be one of the following:
\begin{enumerate}
\item $G'=\GL_6(F), H'=\GL_2(F)$, $U'$ is the unipotent radical of the parabolic subgroup whose Levi is $\GL_2(F)\times \GL_2(F)\times \GL_2(F)$.
\item $G'=\GL_3(D), H'=\GL_1(D)$, $U'$ is the unipotent radical of the parabolic subgroup whose Levi is $\GL_1(D)\times \GL_1(D)\times \GL_1(D)$.
\item $G'=\GL_3(F)\times \GL_3(F), H'=\GL_1(F)\times \GL_1(F)$, $U'$ is the unipotent radical of the Borel subgroup.
\item $G'=\GL_3(F_v), H'=\GL_1(F_v)$, for some $v\in F^{\times}/(F^{\times})^2$ with $\;v\neq 1$, $U'$ is the unipotent radical of the Borel subgroup.
\end{enumerate}
This basically means that $(G',H',U')$ is of the form $(G_x,H_x,U_x)$ for some $x\in H_{ss}(F)$. Our goal is to simplify the integral $I_{x,\omega,N}(f)$ defined in last section. To be specific, in the definition of $I_{x,\omega,N}(f)$, we first integrate over the Lie algebra of $H_xU_x$, then integrate over $U_xH_x\backslash G_x$. In this section, we are going to transfer this integral into the form $\int_{\Ft^0(F)} \int_{A_T(F)\backslash G(F)}$ where $T$ runs over maximal torus in $G_x$ and $\Ft^0(F)$ is a subset of $\Ft(F)$ which will be defined later. The reason for doing this is that we want to apply Arthur's local trace formula which is of the form $\int_{A_T(F)\backslash G(F)}$. Our method is to study the orbit of the slice representation, we will only write down the proof for the first two situations. The proof for the last two situations follows from the same, but easier arguments, and hence we will skip the proof here.
So we will still use $(G,H,U)$ instead of $(G',H',U')$ in this section. We fix a truncated function $\kappa\in C_{c}^{\infty}(U(F)H(F)\backslash G(F))$, and a function $f\in C_{c}^{\infty}(\Fg_0(F))$. Recall in Section 5.3, we have defined
$$f^{\xi}(Y)=\int_{\Fu(F)} f(Y+N)\xi(N) dN$$
and
$$I(f,g)=\int_{\Fh_0(F)} {}^g f^{\xi}(Y) dY.$$
Let
\begin{equation}
I_{\kappa}(f)=\int_{U(F)H(F)\backslash G(F)} I(f,g) \kappa(g)dg.
\end{equation}
We are going to study $I_{\kappa}(f)$.

\subsection{Premier Transform}
For $\Xi=\begin{pmatrix} 0 & 0 & 0 \\ aI_2 & 0 & 0 \\ 0 & bI_2 & 0  \end{pmatrix}$, we have that $\xi(N)=\psi(<\Xi, N>)$ for $N\in \Fu(F)$. Here we use $I_2$ to denote the identity element in $\Fh(F)$, i.e. in split case, $I_2$ is the identity two by two matrix; and in nonsplit case, $I_2$ is $1$ in the quaternion algebra. Define
$$\Lambda_0=\{\begin{pmatrix} A & 0 & 0 \\ 0 & B & 0 \\ 0 & 0 & C \end{pmatrix} \mid A+B+C=0\}$$
and
$$\Sigma=\Lambda_0+\Fu.$$

\begin{lem}\label{Fourier}
For any $f\in C_{c}^{\infty}(\Fg_0(F))$ and $Y\in \Fh_0(F)$, we have
$$(f^{\xi})^{\hat{}}(Y)=\int_{\Sigma} \hat{f}(\Xi+Y+X) dX.$$
\end{lem}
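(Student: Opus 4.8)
The plan is to compute the Fourier transform of $f^\xi$ directly from the definitions, tracking how the various Fourier transforms and the character $\xi$ interact through the decomposition $\Fg_0(F) = \Fh_0(F) \oplus \Fs(F)$, where $\Fs(F)$ is an appropriate complement, and how $\Lambda_0$ and $\Fu$ fit into the picture. First I would fix a compatible system of coordinates: the bilinear form $<\cdot,\cdot>$ on $\Fg(F)$ restricts non-degenerately to $\Fh_0(F)$, and one has a pairing between $\Fu(F)$ and the opposite nilpotent algebra $\overline{\Fu}(F)$. The key observation is that $\xi(N) = \psi(<\Xi, N>)$ with $\Xi \in \overline{\Fu}(F)$, so the operation $f \mapsto f^\xi$ is, up to the partial integration over $\Fu(F)$, a translation-by-$\Xi$ in the Fourier-dual direction. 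Concretely, writing $f^\xi(Y) = \int_{\Fu(F)} f(Y+N)\psi(<\Xi,N>)\,dN$ for $Y \in \Fh_0(F)$, this is the value at $\Xi$ of the partial Fourier transform of $f(Y + \cdot)$ along $\Fu(F)$.

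Next I would take the full Fourier transform in the $Y$-variable over $\Fh_0(F)$. The plan is to unwind
\[
(f^\xi)^{\widehat{\ }}(Y) = \int_{\Fh_0(F)} f^\xi(Z)\,\psi(<Y,Z>)\,dZ = \int_{\Fh_0(F)}\int_{\Fu(F)} f(Z+N)\,\psi(<\Xi,N>)\,\psi(<Y,Z>)\,dN\,dZ,
\]
and then to recognize the right-hand side as an integral of $\hat f$ over an affine subspace. Using the self-duality of the measures and the orthogonality properties of $<\cdot,\cdot>$ with respect to the grading by $\Ad(\diag)$ — in particular that $\Fu(F)$ pairs with $\overline{\Fu}(F)$, that $\Fh_0(F)$ is pairs with itself, and that $\Lambda_0(F)$ is the orthogonal complement (inside the block-diagonal part) of $\Fh_0(F)$ after shifting — one rewrites this double integral by Fourier inversion as $\int_{\Sigma} \hat f(\Xi + Y + X)\,dX$. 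The shift by $\Xi$ comes from the character $\psi(<\Xi, N>)$, the $Y$ appears because we are taking the Fourier transform at the point $Y$, and the integration over $\Sigma = \Lambda_0 + \Fu$ arises because we have integrated out (i.e. Fourier-inverted) precisely the directions complementary to $\Fh_0(F) + \overline{\Fu}(F)$ — namely the block-diagonal trace-zero directions orthogonal to $\Fh_0(F)$ (which is $\Lambda_0$) together with $\Fu$ itself.

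The main obstacle I anticipate is bookkeeping: being careful about which complement is being integrated over, and checking that $\Sigma$ really is the orthogonal complement of $\Fh_0(F)$ in $\Fg_0(F)/(\text{directions already accounted for})$, so that the measures match and Fourier inversion applies cleanly. One must verify that $\Fg_0(F) = \Fh_0(F) \oplus \Lambda_0(F) \oplus \Fu(F) \oplus \overline{\Fu}(F)$ as a direct sum (the block-diagonal trace-zero part splits as $\Fh_0 \oplus \Lambda_0$, since $\Fh_0$ is the diagonal copy of trace-zero $2\times 2$ matrices sitting inside the triple, and the off-diagonal part splits as $\Fu \oplus \overline{\Fu}$), and that under $<\cdot,\cdot>$ the pairing is block-wise: $\Fh_0 \perp \Lambda_0$, $\Fh_0$ self-dual, $\Lambda_0$ self-dual, and $\Fu$ dual to $\overline{\Fu}$. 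Granting these linear-algebra facts, the computation is a routine application of Fubini and Fourier inversion; the identity $\widehat{\widehat f}(X) = f(-X)$ fixed by our normalization of measures guarantees all constants are $1$.
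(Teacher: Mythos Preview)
Your proposal is correct and follows essentially the same approach as the paper: both rely on the decomposition $\Fg_0 = \bar{\Fu} \oplus \Fh_0 \oplus \Lambda_0 \oplus \Fu$ and the orthogonality properties of the pairing to reduce the identity to Fourier inversion. The only difference is presentational: the paper immediately reduces by linearity to the case $f = f_{\bar{\Fu}} \otimes f_{\Fh_0} \otimes f_{\Lambda_0} \otimes f_{\Fu}$, after which both sides are computed as the same product $f_{\bar{\Fu}}(0)\,\hat f_{\Fh_0}(Y)\,f_{\Lambda_0}(0)\,\hat f_{\Fu}(\Xi)$, whereas you unwind the double integral directly; the tensor-product reduction is a slight shortcut that avoids the Fubini bookkeeping you anticipate.
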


\begin{proof}
Since $\Fg=\bar{\Fu}\oplus \Fh_0\oplus \Lambda_0\oplus \Fu$, we may assume that $f=f_{\bar{\Fu}}\otimes f_{\Fh_0}\otimes f_{\Lambda_0}\otimes f_{\Fu}$. Then we have
\begin{eqnarray*}
\hat{f}&=&\hat{f}_{\bar{\Fu}}\otimes \hat{f_{\Fh_0}}\otimes \hat{f_{\Lambda_0}}\otimes \hat{f_{\Fu}},\\
f^{\xi}(Y)&=&f_{\bar{\Fu}}(0)\otimes f_{\Fh_0}(Y)\otimes f_{\Lambda_0}(0)\otimes \hat{f_{\Fu}}(\Xi), \\
(f^{\xi})^{\hat{}}(Y)&=&f_{\bar{\Fu}}(0)\otimes \hat{f_{\Fh_0}}(Y)\otimes f_{\Lambda_0}(0)\otimes \hat{f_{\Fu}}(\Xi).
\end{eqnarray*}
On the other hand,
\begin{eqnarray*}
\int_{\Sigma} \hat{f}(\Xi+Y+X)dX
&=&\hat{f_{\Fu}}(\Xi)\hat{f_{\Fh_0}}(Y) \int_{\Sigma} \hat{f_{\Lambda_0}} \otimes \hat{f}_{\bar{\Fu}}(X) dX\\
&=&f_{\bar{\Fu}}(0)\otimes \hat{f_{\Fh_0}}(Y)\otimes f_{\Lambda_0}(0)\otimes \hat{f_{\Fu}}(\Xi).
\end{eqnarray*}
This finishes the proof of the Lemma.
\end{proof}

\subsection{Description of affine space $\Xi+\Sigma$}
Let $\Lambda=\{\begin{pmatrix} 0 & 0 & \ast \\ 0 & 0 & \ast \\ 0 & 0 & 0 \end{pmatrix}\}$ be a subset of $\Fu(F)$.
\begin{lem}\label{unipotent orbit}
$\Xi+\Sigma$ is stable under the $U(F)$-conjugation. The map
\begin{equation}\label{9.1}
U(F)\times (\Xi+\Lambda)\rightarrow \Xi+\Sigma:\; (u,x)\mapsto u^{-1}Xu
\end{equation}
is an isomorphism of algebraic varieties.
\end{lem}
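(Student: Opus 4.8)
The plan is to work in coordinates. I write a general element of $U$ as $u = u(x,y,z)$ with blocks $x,y$ in the off-diagonal-by-one positions and $z$ in the corner, and I write a general element of $\Xi+\Sigma$ as $\Xi + \Lambda_0\text{-part} + \Fu\text{-part}$; concretely this is a matrix of block form $\begin{pmatrix} A & x' & z' \\ aI_2 & B & y' \\ 0 & bI_2 & C \end{pmatrix}$ with $A+B+C=0$. First I would verify the stability claim: conjugating such a matrix by $u(x,y,z)$ is a polynomial (in fact affine-linear in the entries of $u$ for fixed target) operation, and a direct block computation shows the $(2,1)$ and $(3,2)$ blocks stay equal to $aI_2$ and $bI_2$ respectively (the lower-triangular part of the conjugate is unchanged because $u$ is unipotent upper-triangular and the subdiagonal of $\Xi$ is central scalar in each block), while the diagonal blocks still sum to zero since conjugation preserves the property of being in $\Lambda_0 + (\text{strictly upper})$. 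Hence $\Xi+\Sigma$ is $U(F)$-stable.

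Next, for the isomorphism in \eqref{9.1}, the key point is that the subgroup $U$ acts on $\Xi+\Sigma$ with $\Xi+\Lambda$ as a transversal slice, freely and transitively on each orbit. I would prove this by an explicit triangular elimination: given a target $W \in \Xi+\Sigma$, I solve for $u \in U$ so that $u W u^{-1} \in \Xi+\Lambda$, i.e. so that the diagonal blocks $A,B,C$ and the superdiagonal blocks $x', y'$ are killed, leaving only the corner block free. Because $a,b \in F^\times$, the commutator action of the one-parameter blocks of $U$ against the central subdiagonal blocks $aI_2, bI_2$ of $\Xi$ is invertible: moving in the $x$-direction of $U$ shifts the $(1,1)$ and $(2,2)$ diagonal blocks by (an invertible multiple of) the parameter, moving in the $y$-direction shifts $(2,2)$ and $(3,3)$, and moving in the $z$-direction shifts the superdiagonal blocks $x', y'$. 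So one clears the diagonal blocks $A$ (equivalently $C$, since $B=-A-C$ is then forced) using the $x,y$-coordinates, then clears $x',y'$ using the $z$-coordinate; at each stage the relevant linear map is an isomorphism. This shows the map \eqref{9.1} is surjective. For injectivity, if $u^{-1}(\Xi+X_1)u = u'^{-1}(\Xi+X_2)u'$ with $X_i \in \Lambda$, then $u'u^{-1}$ stabilizes an element of $\Xi+\Lambda$, and the same invertibility of the commutator action forces $u'u^{-1}=1$; thus the map is bijective. Since it is a morphism of affine varieties whose inverse is again given by the (polynomial) elimination formulas, it is an isomorphism of algebraic varieties.

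The bookkeeping amounts to the observation that $\dim U = 3\cdot 4 = 12$ (split case) or $3$ (nonsplit case) splits as $\dim\Lambda + (\dim U - \dim\Lambda)$, matching $\dim(\Xi+\Sigma) = \dim\Sigma = \dim\Lambda_0 + \dim\Fu$ against $\dim\Lambda + \dim U$; the $\Lambda_0$-directions are accounted for by the diagonal-block elimination and the $(\Fu/\Lambda)$-directions by the superdiagonal elimination, leaving exactly the $\Lambda$-directions in the slice. I expect the main obstacle to be purely organizational: keeping the block-matrix commutator computation straight and making sure the elimination is done in an order where each step's linear map is visibly invertible (this is exactly where $a,b \in F^\times$ is used), rather than any conceptual difficulty. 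The nonsplit case $G=\GL_3(D)$ is formally identical with $2\times 2$ blocks replaced by elements of $D$ and $I_2$ replaced by $1 \in D$, so I would just remark that the same computation applies verbatim.
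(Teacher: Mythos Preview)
Your overall strategy---explicit block elimination to exhibit $\Xi+\Lambda$ as a slice, then observe the inverse is polynomial---is exactly what the paper does. However, your elimination scheme contains a concrete error stemming from a misreading of the slice $\Lambda$.

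You describe the target of the elimination as ``the diagonal blocks $A,B,C$ and the superdiagonal blocks $x',y'$ are killed, leaving only the corner block free.'' But $\Lambda$ is defined as the set of matrices with arbitrary entries in \emph{both} the $(1,3)$ and $(2,3)$ blocks, not just the $(1,3)$ corner. So an element of $\Xi+\Lambda$ has the form $\begin{pmatrix} 0 & 0 & B \\ aI_2 & 0 & C \\ 0 & bI_2 & 0 \end{pmatrix}$, and you only need to clear the diagonal blocks and the single superdiagonal block $x'$ (your $(1,2)$ block). Your proposal to ``clear $x',y'$ using the $z$-coordinate'' is overdetermined: the block parameter $Z$ enters both superdiagonal positions (as $bZ$ in $(1,2)$ and $-aZ$ in $(2,3)$, after the diagonals are cleared), so one $Z$ cannot independently kill two blocks. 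With your reading the dimension count fails: $\dim U + \dim(\text{corner only}) = 12+4 \neq 20 = \dim\Sigma$ in the split case.

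Once this is corrected, the elimination is exactly as in the paper: use the $X$-parameter to kill the $(1,1)$ diagonal block (since $a\in F^\times$), the $Y$-parameter to kill the $(3,3)$ block (since $b\in F^\times$)---the $(2,2)$ block is then automatically zero---and then the $Z$-parameter to kill the $(1,2)$ block. The remaining $(1,3)$ and $(2,3)$ blocks land in $\Lambda$. The paper writes down the resulting polynomial inverse explicitly; your argument would do the same.
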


\begin{proof}
We have the following two equations
$$\begin{pmatrix} I_2 & X & Z \\ 0 & I_2 & Y \\ 0 & 0 & I_2 \end{pmatrix} \begin{pmatrix} 0 & 0 & 0 \\ aI_2 & 0 & 0 \\ 0 & bI_2 & 0 \end{pmatrix} \begin{pmatrix} I_2 & -X & XY-Z \\ 0 & I_2 & -Y \\ 0 & 0 & I_2 \end{pmatrix}$$
$$=\begin{pmatrix} aX & bZ-X^2 & aX^2 Y-aXZ-bYZ \\ aI_2 & bY-aX & aXY-aZ-bY^2 \\ 0 & bI_2 & -bY \end{pmatrix},$$
and
$$\begin{pmatrix} I_2 & X & Z \\ 0 & I_2 & Y \\ 0 & 0 & I_2 \end{pmatrix} \begin{pmatrix} 0 & 0 & B \\ 0 & 0 & C \\ 0 & 0 & 0 \end{pmatrix} \begin{pmatrix} I_2 & -X & XY-Z \\ 0 & I_2 & -Y \\ 0 & 0 & I_2 \end{pmatrix}=\begin{pmatrix} 0 & 0 & B+XC \\ 0 & 0 & C \\ 0 & 0 & 0 \end{pmatrix}.$$
Then the map \eqref{9.1} is clearly injective. On the other hand, for any element in $\Xi+\Sigma$, applying the first equation above, we can choose $X$ and $Y$ to match the elements in the diagonal. Then applying the second equation, we can choose $Z$ to match the element in the first row second column. Finally applying second equation again, we can choose $B$ and $C$ to match the element in the first row third column and second row third column. Therefore the map \eqref{9.1} is surjective.

Now we have proved the map \eqref{9.1} is a bijection of points. In order to show it is an isomorphism of algebraic varieties, we only need to find the inverse map. Let $\begin{pmatrix} A' & T_1 & T_2 \\ aI_2 & B' & T_3 \\ 0 & bI_2 & C' \end{pmatrix}$ be an element in $\Xi+\Sigma$. Set
\begin{equation}\label{9.2}
X=\frac{1}{a}A',\; Y=-\frac{1}{b}C',\; Z=\frac{T_1+X^2}{b},
\end{equation}
$$C=T_3-aXY+aZ+bY^2,\;B=T_2-aX^2Y+aXZ+bYZ-XC,$$
then by the two equations above, we have
$$\begin{pmatrix} I_2 & X & Z \\ 0 & I_2 & Y \\ 0 & 0 & I_2 \end{pmatrix} \begin{pmatrix} 0 & 0 & B \\ aI_2 & 0 & C \\ 0 & bI_2 & 0 \end{pmatrix} \begin{pmatrix} I_2 & -X & XY-Z \\ 0 & I_2 & -Y \\ 0 & 0 & I_2 \end{pmatrix}=\begin{pmatrix} A' & T_1 & T_2 \\ aI_2 & B' & T_3 \\ 0 & bI_2 & C' \end{pmatrix}.$$
Therefore the map \eqref{9.2} is the inverse map of \eqref{9.1}, also it is clearly algebraic. This finishes the proof of the Lemma.
\end{proof}

\begin{defn}
We say an element $W\in\Xi+\Sigma$ is in "generic position" if it satisfies the following two conditions:
\begin{enumerate}
\item $W$ is semisimple regular.
\item $W$ is conjugated to an element $\begin{pmatrix} 0 & 0 & X \\ aI_2 & 0 & Y \\ 0 & bI_2 & 0 \end{pmatrix} \in \Sigma+\Lambda$ such that $X,Y$ are semisimple regular and $XY-YX$ is not nilpotent. In particular, this implies $H_X\cap H_Y=Z_H$.
\end{enumerate}
Let $\Xi+\Sigma^0$ be the subset of $\Xi+\Sigma$ consisting of elements in "generic position". It is a Zariski open subset of $\Xi+\Sigma$. Let $\Xi+\Lambda^0=(\Xi+\Sigma^0)\cap (\Xi+\Lambda)$.
\end{defn}

\subsection{Orbit in $\Xi+\Lambda^0$}
\begin{lem}\label{orbit 1}
The group $Z_G(F)\backslash H(F)U(F)$ acts by conjugation on $\Xi+\Sigma^0$, and this action is free. Two elements in $\Xi+\Sigma^0$ are conjugated to each other in $G(F)$ if and only if they are conjugated to each other by an element in $H(F)U(F)$.
\end{lem}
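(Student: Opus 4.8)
The plan is to analyze the action of $Z_G(F)\backslash H(F)U(F)$ on $\Xi+\Sigma^0$ in two stages, using the algebraic isomorphism $U(F)\times(\Xi+\Lambda)\xrightarrow{\sim}\Xi+\Sigma$ from Lemma \ref{unipotent orbit} to reduce everything to the slice $\Xi+\Lambda$. First I would verify that $H(F)U(F)$ does preserve $\Xi+\Sigma^0$: since $\Xi+\Sigma$ is $U(F)$-stable by Lemma \ref{unipotent orbit}, and $H$ normalizes $U$ and fixes $\Xi$ up to the scalars $a,b$ (more precisely $\Ad(h)$ sends $\Xi+\Sigma$ into itself because $h=\diag(g,g,g)$ commutes with the block scalar structure and preserves $\Lambda_0$ and $\Fu$), the affine space $\Xi+\Sigma$ is $H(F)U(F)$-stable, and the conditions defining "generic position" are $G(F)$-conjugation invariant, hence a fortiori $H(F)U(F)$-invariant. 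So $\Xi+\Sigma^0$ is preserved.

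For freeness, I would argue as follows. Suppose $hu$ fixes some $W\in\Xi+\Sigma^0$ with $h\in H(F)$, $u\in U(F)$. Using Lemma \ref{unipotent orbit} we may first conjugate $W$ by an element of $U(F)$ into the slice, so WLOG $W=\begin{pmatrix} 0 & 0 & X \\ aI_2 & 0 & Y \\ 0 & bI_2 & 0 \end{pmatrix}$ with $X,Y$ semisimple regular and $XY-YX$ not nilpotent; the stabilizer in $H(F)U(F)$ of this $W$ is conjugate to that of the original, so it suffices to show it equals $Z_G(F)$. Now if $(hu)^{-1}W(hu)=W$, projecting onto the diagonal blocks and the subdiagonal $aI_2,bI_2$ entries forces strong constraints: the subdiagonal entries $aI_2,bI_2$ are fixed by $\Ad(u)$ only when $u$ lies in the subgroup corresponding to $\Lambda$ (by the explicit first matrix computation in the proof of Lemma \ref{unipotent orbit}, conjugating $\Xi$ by $u(X_0,Y_0,Z_0)$ changes the diagonal unless $X_0=Y_0=0$), and then $\Ad(h)$ must fix $X$ and $Y$ separately, so $h\in H_X(F)\cap H_Y(F)=Z_H(F)$ by the genericity hypothesis. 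With $h$ central, the remaining equation forces the $Z_0$-part of $u$ to act trivially on $X,Y$ in the $(1,3),(2,3)$ blocks, which (again by the second matrix identity, $B\mapsto B+X_0C$ type relations, now with $X_0=0$) pins down $u=1$ modulo the center. Hence the stabilizer is $Z_G(F)$, i.e. the action of $Z_G(F)\backslash H(F)U(F)$ is free.

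For the last assertion — that two elements of $\Xi+\Sigma^0$ are $G(F)$-conjugate iff they are $H(F)U(F)$-conjugate — the nontrivial direction is "only if". Given $W_1,W_2\in\Xi+\Sigma^0$ with $g^{-1}W_1g=W_2$, I would again move both into the slice $\Xi+\Lambda^0$, so WLOG $W_i=\begin{pmatrix} 0 & 0 & X_i \\ aI_2 & 0 & Y_i \\ 0 & bI_2 & 0 \end{pmatrix}$. The strategy is to show that any $g$ conjugating $W_1$ to $W_2$ must already lie in $H(F)U(F)$. The key is that $W_i$ determines, via the block structure, a flag or a decomposition of $F^6$ that is respected by any conjugacy: the "companion-like" shape means $W_i$ acts cyclically in a controlled way, and the subspaces $V_j$ ($j=1,2,3$ the standard block subspaces) can be characterized intrinsically in terms of $W_i$ and the fixed scalars $a,b$ — for instance as generalized eigenspace-type data of a polynomial in $W_i$, or via the images of iterated applications of $W_i$ on the bottom block. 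Concretely, I expect that $\ker(\text{something built from }W_i)$ recovers the standard parabolic $P=MU$, so $g$ normalizes $P$, hence $g\in P(F)$; writing $g=mu$ and analyzing the $M(F)=\GL_2^3(F)$-part against the constraint that the $aI_2,bI_2$ entries are preserved forces $m$ to be diagonally embedded, i.e. $m\in H(F)$.

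\textbf{Main obstacle.} The delicate point is the last paragraph: extracting from an element $W\in\Xi+\Sigma^0$ a canonical combinatorial datum (the parabolic $P$, or the three block subspaces) that is preserved by any $G(F)$-conjugacy between generic elements. One must use the "generic position" hypotheses — $X,Y$ regular semisimple with $[X,Y]$ non-nilpotent, so $H_X\cap H_Y=Z_H$ — in an essential way to rule out extra conjugations coming from the ambiguity in the isomorphism of Lemma \ref{unipotent orbit}. I would expect to handle this by a direct linear-algebra argument on $F^6$ (or $D^3$), identifying the subspace spanned by the bottom $\GL_2$-block and its images under $W, W^2$, and showing these are intrinsic; the non-nilpotency of $[X,Y]$ is exactly what guarantees this span behaves generically and is not preserved by spurious automorphisms. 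The quaternionic case $G=\GL_3(D)$ runs in parallel, replacing $2\times 2$ blocks by elements of $D$ and using the reduced norm/trace, with no new difficulty beyond bookkeeping.
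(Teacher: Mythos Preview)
Your treatment of freeness is essentially the paper's: reduce to the slice $\Xi+\Lambda^0$ via Lemma \ref{unipotent orbit} and invoke $H_X\cap H_Y=Z_H$ from the generic-position hypothesis.

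The second part, however, has a genuine gap. Your strategy is to show that \emph{any} $g\in G(F)$ with $g^{-1}W_1 g=W_2$ already lies in $H(F)U(F)$, by exhibiting a canonical flag attached to $W_i$ that $g$ must preserve. This cannot work: $W_1$ is regular semisimple, so its centralizer in $G(F)$ is a maximal torus $T_{W_1}(F)$, and the set of elements conjugating $W_1$ to $W_2$ is an entire coset $T_{W_1}(F)\cdot g_0$. If every such element lay in $H(F)U(F)$, then $T_{W_1}(F)\subset H(F)U(F)$; but combined with the freeness you just proved, this would force $T_{W_1}(F)=Z_G(F)$, contradicting $\dim T_{W_1}>\dim Z_G$. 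Concretely, $W_1$ itself is \emph{not} block upper triangular (it has the $aI_2,bI_2$ subdiagonal blocks), so $F[W_1]$ is not contained in $P$, and there is no intrinsic flag to extract.

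The paper's argument is an \emph{existence} argument, not a structural one. After reducing both $W_i$ to the slice $\Xi+\Lambda^0$, one computes the characteristic polynomial of $W=\begin{pmatrix} 0 & 0 & X_1 \\ aI_2 & 0 & X_2 \\ 0 & bI_2 & 0 \end{pmatrix}$ explicitly as $a^2b^2\det(X_1+\tfrac{\lambda}{a}X_2-\tfrac{\lambda^3}{ab}I_2)$, and reads off from its coefficients the five quantities $\tr X_1,\tr X_2,\det X_1,\det X_2$ and the $\lambda$-coefficient of $\det(X_1+\lambda X_2)$. Since $G(F)$-conjugacy preserves the characteristic polynomial, these invariants coincide for $(X_1,X_2)$ and $(Y_1,Y_2)$. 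One then shows by a direct case analysis on $H=\GL_2(F)$ (diagonalizing $X_1=Y_1$ into split or non-split torus form, then solving for the remaining $H_{X_1}$-conjugation carrying $X_2$ to $Y_2$) that these five invariants determine the pair up to simultaneous $H(F)$-conjugation; the non-nilpotency of $[X,Y]$ enters to rule out degenerate cases in this computation. So you should aim to \emph{construct} some $h\in H(F)$, not to constrain an arbitrary $g$.
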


\begin{proof}
For the first part, by Lemma \ref{unipotent orbit}, we only need to show that the action of $Z_G(F)\backslash H(F)$ on $\Xi+\Lambda^0$ is free. This just follows from the "generic position" assumption.

For the second part, given $x,y\in \Xi+\Sigma^0$ which are conjugated to each other by an element in $G(F)$. By conjugating both elements by some elements in $U(F)$, may assume $x,y\in \Xi+\Lambda^0$. Let
$$x=\begin{pmatrix} 0 & 0 & X_1 \\ aI_2 & 0 & X_2 \\ 0 & bI_2 & 0 \end{pmatrix},\; y=\begin{pmatrix} 0 & 0 & Y_1 \\ aI_2 & 0 & Y_2 \\ 0 & bI_2 & 0 \end{pmatrix}.$$
We only need to find $h\in H(F)$ such that $h^{-1}X_i h=Y_i$ for $i=1,2$. The characteristic polynomial of $x$ is
$$
\det(x-\lambda I_6)
=\det(\begin{pmatrix} -\lambda I_2 & 0 & X_1 \\ aI_2 & -\lambda I_2 & X_2 \\ 0 & bI_2 & -\lambda I_2 \end{pmatrix}),
$$
which can be calculated as follows:
\begin{eqnarray*}
\det(x-\lambda I_6)
&=&\det(\begin{pmatrix} 0 & -\lambda^2/a I_2 & X_1+\lambda/a X_2 \\ aI_2 & -\lambda I_2 & X_2 \\ 0 & bI_2 & -\lambda I_2 \end{pmatrix})\\
&=&a^2 \cdot \det(\begin{pmatrix} -\lambda^2/a I_2 & X_1+\lambda/a X_2 \\ bI_2 & -\lambda I_2 \end{pmatrix})\\
&=&a^2 \cdot \det(\begin{pmatrix} 0 & X_1+\lambda/a X_2-\frac{\lambda^3}{ab} I_2 \\ bI_2 & -\lambda I_2 \end{pmatrix}).
\end{eqnarray*}
Hence we have
$$
\det(x-\lambda I_6)
=a^2 b^2 \det(X_1+\lambda/a X_2-\frac{\lambda^3}{ab} I_2).
$$
Therefore, up to some constants only depending on $a$ and $b$, the coefficients of the characteristic polynomial of $x$ correspond to some data of $X_1,X_2$ given as follows:
\begin{equation}\label{lambda 4}
\mathrm{coefficient\; of}\;\lambda^4 = b\tr(X_2),
\end{equation}
\begin{equation}\label{lambda 3}
\mathrm{coefficient\; of}\;\lambda^3 = ab\tr(X_1),
\end{equation}
\begin{equation}\label{lambda 2}
\mathrm{coefficient\; of}\;\lambda^2 = b^2\det(X_2),
\end{equation}
\begin{equation}\label{lambda 1}
\mathrm{coefficient\; of}\;\lambda = ab^2(\mathrm{\lambda-coefficient \; of} \; \det(X_1+\lambda X_2)),
\end{equation}
and
\begin{equation}\label{lambda 0}
\mathrm{coefficient\; of}\;\lambda^0 =a^2b^2 \det(X_1).
\end{equation}
Here the equation holds up to $\pm1$ which will not affect our later calculation. \textbf{Note that in the nonsplit case, the determinant means the composition of the determinant of the matrix and the norm of the quaternion algebra; and the trace means the composition of the trace of the matrix and the trace of the quaternion algebra.}

We can have the same results for $y$. Now if $x$ and $y$ are conjugated to each other by element in $G(F)$, their characteristic polynomials are equal, and hence we have
\begin{equation}\label{lambda 4'}
\tr(X_2)=\tr(Y_2),
\end{equation}
\begin{equation}\label{lambda 3'}
\tr(X_1)=\tr(Y_1),
\end{equation}
\begin{equation}\label{lambda 2'}
\det(X_2)=\det(Y_2),
\end{equation}
\begin{equation}\label{lambda 1'}
\mathrm{\lambda-coefficient \; of} \; \det(X_1+\lambda X_2)=\mathrm{\lambda-coefficient \; of} \; \det(Y_1+\lambda Y_2),
\end{equation}
and
\begin{equation}\label{lambda 0'}
\det(X_1)=\det(Y_1).
\end{equation}
By the "generic positive" assumption, $X_i$ and $Y_i$ are semisimple regular. Then the above equations tell us $X_i$ and $Y_i$ are conjugated to each other by elements in $H(F)$ (i=1,2).

\textbf{We first deal with the split case, i.e. $G=GL_6(F)$ and $H=GL_2(F)$.} By further conjugating by elements in $H(F)$, may assume that $X_1=Y_1$ be one of the following forms:
$$X_1=Y_1=\begin{pmatrix} s & 0 \\ 0 & t \end{pmatrix};\; X_1=Y_1=\begin{pmatrix} s & tv \\ t & s \end{pmatrix}$$
where $v\in F^{\times}/(F^{\times})^2, v\neq 1$. By the "generic positive" assumption, if we are in the first case, $s\neq t$; and if we are in the second case, $t\neq 0$. Let
$$X_2=\begin{pmatrix} x_{11} & x_{12} \\ x_{21} & x_{22} \end{pmatrix}, Y_2=\begin{pmatrix} y_{11} & y_{12} \\ y_{21} & y_{22} \end{pmatrix}.$$

\textbf{Case 1}: If $X_1=Y_1=\begin{pmatrix} s & 0 \\ 0 & t \end{pmatrix}$ where $s\neq t$, then by \eqref{lambda 1'}, we have $sx_{22}+tx_{11}=sy_{22}+ty_{11}$. Combining this with \eqref{lambda 4'}, we know $x_{11}=y_{11}, x_{22}=y_{22}$. Now applying \eqref{lambda 2'}, we have $x_{12}x_{21}=y_{12}y_{21}$. By the "generic position" assumption, $x_{12}x_{21}y_{12}y_{21}\neq 0$, and hence $\frac{x_{12}}{y_{12}}=\frac{y_{21}}{x_{21}}$. So we can conjugate $X_2$ by an element of the form $\begin{pmatrix} \ast & 0 \\ 0 & \ast \end{pmatrix}$ to get $Y_2$. Therefore we can conjugate $X_1,X_2$ to $Y_1,Y_2$ simultaneously via an element in $H(F)$.

\textbf{Case 2}: If $X_1=Y_1=\begin{pmatrix} s & tv \\ t & s \end{pmatrix}$ where $t\neq 0$, by \eqref{lambda 1'}, we have $s \tr(X_2)-t(vx_{21}+x_{12})=s \tr(Y_2)-t(vy_{21}+y_{12})$. Combining with \eqref{lambda 4'}, we have $vx_{21}+x_{12}=vy_{21}+y_{12}$. Let
$$x_{11}+x_{22}=y_{11}+y_{22}=A,$$
$$x_{11}x_{22}-x_{12}x_{21}=y_{11}y_{22}-y_{12}y_{21}=B,$$
and
$$vx_{21}+x_{12}=vy_{21}+y_{12}=C.$$
By the first and third equations, we can replace $x_{12},x_{22}$ by $x_{21},x_{11}$ in the second equation. We can do the same thing for the $y$'s.
It follows that
\begin{equation}\label{discriminant 2}
Ax_{11}-x_{11}^{2}-Cx_{21}+vx_{21}^{2}=Ay_{11}-y_{11}^{2}-Cy_{21}+vy_{21}^{2}=B.
\end{equation}
Now for all $k\in F$, we have
$$\begin{pmatrix} k & v \\ 1 & k \end{pmatrix}x \begin{pmatrix} k & v \\ 1 & k \end{pmatrix}^{-1}$$
$$=\frac{1}{k^2-v} \begin{pmatrix} k^2 x_{11}+kvx_{21}-kx_{12}-vx_{22} & k^2 x_{12}+kvx_{22}-kvx_{11}-v^2 x_{21} \\ kx_{11}+k^2 x_{21}-x_{12}-kx_{22} & kx_{12}+k^2 x_{22}-vx_{11}-kvx_{21} \end{pmatrix}.$$
If we write the above action in terms of $x_{11},x_{21}$, we have
\begin{eqnarray*}
x_{11}&\mapsto& (x_{11}k^2+(2vx_{21}-C)k+vx_{11}-vA)/(k^2-v):= k.x_{11},\\
x_{21}&\mapsto& (x_{21}k^2+(2x_{11}-A)k+vx_{21}-C)/(k^2-v):= k.x_{21}.
\end{eqnarray*}
If we want $y_{21}=k.x_{21}$, we need
\begin{equation}\label{discriminant 1}
((x_{21}-y_{21})k^2+(2x_{11}-A)k+vx_{21}-C+vy_{21})=0.
\end{equation}
The discriminant of \eqref{discriminant 1} is equal to
\begin{eqnarray*}
\Delta \;\text{of} \; \eqref{discriminant 1}
&=&4x_{11}^{2}-4Ax_{11}+A^2-4v(x_{21}^{2}-y_{21}^{2})+4C(x_{21}-y_{21})\\
&=&A^2-4B+4vy_{21}^{2}-4Cy_{21}\\
&=&\Delta \; \text{of} \; \eqref{discriminant 2},
\end{eqnarray*}
where the second equality comes from \eqref{discriminant 2}. So the discriminant of \eqref{discriminant 1} is a square in $F$. Hence we can find some $k\in F$ such that $y_{21}=k.x_{21}$. By conjugating by element of the form $\begin{pmatrix} k & v \\ 1 & k \end{pmatrix}$, we may assume $x_{21}=y_{21}$. This also implies $x_{12}=y_{12}$. Then by \eqref{lambda 2'} and \eqref{lambda 4'}, we have $x_{11}=y_{11},x_{22}=y_{22}$ or $x_{11}=y_{22},x_{22}=y_{11}$.

If $x_{11}\neq x_{22}$, the discriminant of \eqref{discriminant 2} is nonzero, so \eqref{discriminant 1} also has nonzero discriminant. Therefore,
it have two solutions $k_1,k_2$. Both $k_1$ and $k_2$ will make $x_{12}=y_{12},x_{21}=y_{21}$. By the "generic positive" assumption, $k_1,k_2$ conjugate $x$ to different elements. So one of them will conjugate $x$ to $y$. Therefore we have proved that we can conjugate $X_1,X_2$ to $Y_1,Y_2$ simultaneously via an element in $H(F)$.

\textbf{We now deal with the non-split case.} We can just use the same argument as in Case 2. The calculation is very similar, and the detail will
be omitted here. This finishes the proof of the Lemma.
\end{proof}

\begin{rmk}
As point out by the referee, there is another way to prove Case 2 by extension of scalars. Let $E/F$ be a finite Galois extension such that $X_1$ is split over $E$. Then by the argument in Case 1, we can find an element $h=\begin{pmatrix} a&b\\c&d\end{pmatrix}$ in $H(E)$ conjugating $X_1,X_2$ to $Y_1,Y_2$. Without loss of generality, may assume that $a\neq 0$. Also up to an element in $Z_H(E)$, we may assume that $a=1$. For any $\tau\in Gal(E/F)$, $\tau(h)$ will also conjugate $X_1,X_2$ to $Y_1,Y_2$. By the generic position assumption, $\tau(h)=hz$ for some $z\in Z_H(E)$. But since $1=a=\tau(a)$, $z$ must be the identity element which implies that $h=\tau(h)$. Therefore $h\in H(F)$, and this proves Case 2. The same argument can be also applied to the non-split case.
\end{rmk}

\begin{rmk}\label{orbit 2}
To summarize, we have an injective analytic morphism
\begin{equation}
(\Xi+\Sigma_0)/H(F)U(F)\longrightarrow \coprod_{T\in T(G)} \Ft(F)/W(G,T).
\end{equation}
For each $T\in T(G)$, let $\Ft^0(F)/W(G,T)$ be the image of the map above. Then it is easy to see the following.
\begin{enumerate}
\item $\Ft^0(F)\subset \Ft_0(F)$. Recall $\Ft_0(F)$ is the subset of $\Ft(F)$ consisting of the elements with zero trace.
\item $\Ft^0(F)$ is invariant under scalar in the sense that for $t\in \Ft^0(F), \lambda\in F^{\times}$, and $\lambda t\in \Ft^0(F)$.
\item $\Ft^0(F)$ is an open subset of $\Ft_0(F)$ under the topology on $\Ft_0(F)$ as an $F$-vector space.
\item If $T$ is split which is only possible when $G=GL_6(F)$. Then $\Ft^0(F)=\Ft_{0,reg}(F)$. This will be proved in Lemma \ref{U-invariant 2}$)$.
\end{enumerate}
Now we have a bijection
\begin{equation}\label{9.3}
(\Xi+\Sigma_0)/H(F)U(F)\longrightarrow \coprod_{T\in T(G)} \Ft^0(F)/W(G,T).
\end{equation}
\end{rmk}
Now we study the change of measures under the map \eqref{9.3}. We fix selfdual measures on $\Xi+\Sigma_0$ and $H(F)U(F)$, this induces a measure on the quotient which gives a measure $d_1 t$ on $\Ft^0(F)/W(G,T)$ via the bijection \eqref{9.3} for any $T\in T(G)$. On the other hand, we also have a selfdual measure $dt$ on $\Ft^0(F)/W(G,T)$. The following lemma tells the relations between $d_1 t$ and $dt$.

\begin{lem}\label{measure 1}
For any $T\in T(G)$, $d_1 t=D^G(t)^{1/2} dt$ for all $t\in \Ft^{0}(F)$.
\end{lem}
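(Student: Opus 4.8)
The plan is to realize $d_1 t$ as the Jacobian of a local analytic parametrization of $\Xi+\Sigma_0$ and to factor that Jacobian through the Chevalley (characteristic polynomial) map, where the square root of $D^G$ appears as a Vandermonde determinant. Since $d_1 t$ and $D^G(t)^{1/2}\,dt$ are both locally constant densities on the regular locus of $\Ft^0(F)$ and the non--generic-position locus is negligible, it suffices to compare them at a point $t$ in generic position. Fix such a $t$, a representative $W_0\in\Xi+\Lambda^0$ of the fiber of the bijection \eqref{9.3} over $[t]$ (by Lemma \ref{orbit 1} this fiber is a single $H(F)U(F)$-orbit, and a torsor under $Z_G(F)\backslash H(F)U(F)$ by the freeness there), and $g_0\in G(F)$ with $g_0^{-1}tg_0=W_0$. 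The morphism $\Xi+\Sigma_0\to\coprod_{T'}\Ft^0(F)/W(G,T')$ of \eqref{9.3} is submersive at $W_0$ — its differential there has kernel exactly $[W_0,\Fh_0(F)\oplus\Fu(F)]$, by a dimension count — so near $(\bar1,t)$ there is an analytic isomorphism $\Psi\colon(Z_G(F)\backslash H(F)U(F))\times\Ft^0(F)\to\Xi+\Sigma_0$ with $\Psi(\bar\gamma,t)=\gamma^{-1}W_0\gamma$ and $\Psi(\bar\gamma,s)$ lying over $[s]$. By the very definition of $d_1 s$ as a quotient of the selfdual measures, $d_1 s=|\det D\Psi(\bar1,s)|\,ds$, so the lemma becomes the assertion $|\det D\Psi(\bar1,t)|=D^G(t)^{1/2}$.

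To evaluate this Jacobian I would split $\Sigma=[W_0,\Fh_0\oplus\Fu]\oplus Q$ for some complement $Q$. Differentiating, $D\Psi(\bar1,t)$ sends $(Z,Y)\in(\Fh_0\oplus\Fu)\oplus\Ft^0$ to $[W_0,Z]+s(Y)$, where $\mathrm{ad}(W_0)$ is injective on $\Fh_0\oplus\Fu$ with image $[W_0,\Fh_0\oplus\Fu]$ and $s\colon\Ft^0\to\Sigma$ is a section of the differential of the conjugacy-class map. As $[W_0,Z]$ has no $Q$-component, $D\Psi(\bar1,t)$ is block-triangular, so $|\det D\Psi(\bar1,t)|=|\det(\mathrm{ad}(W_0)|_{\Fh_0\oplus\Fu})|\cdot|\det(\pi_Q\circ s)|$ with $\pi_Q$ the projection onto $Q$. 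Composing with the differential $D(\mathrm{char})_{W_0}\colon\Sigma\to T(\Fg_0(F)/\!/G)$, which kills $[W_0,\Fh_0\oplus\Fu]$ and restricts to an isomorphism on $Q$, and using that $\mathrm{char}\circ\Psi$ depends only on $[s]$, one gets $\pi_Q\circ s=(D(\mathrm{char})_{W_0}|_Q)^{-1}\circ D\psi([t])$, where $\psi\colon\Ft^0(F)/W(G,T)\xrightarrow{\sim}(\text{open in }\Fg_0(F)/\!/G)$ is the Chevalley isomorphism. Since in type $A$ the Jacobian of $\psi$ at $[t]$ is the Vandermonde determinant $\prod_{i<j}|t_i-t_j|_F=D^G(t)^{1/2}$ (as $D^G(t)=\prod_{i\ne j}|t_i-t_j|_F$ for the Lie-algebra Weyl discriminant), this yields
\[
|\det D\Psi(\bar1,t)|\;=\;\frac{|\det(\mathrm{ad}(W_0)|_{\Fh_0\oplus\Fu})|}{|\det(D(\mathrm{char})_{W_0}|_Q)|}\cdot D^G(t)^{1/2},
\]
where the auxiliary measure on $T(\Fg_0(F)/\!/G)$ cancels between the ratio and the Vandermonde; so it remains to check that the displayed ratio equals $1$.

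That ratio depends only on the orbit of $W_0$, and to show it is $1$ I would reduce to an explicit computation on the $8$-dimensional slice $\Xi+\Lambda^0$ via Lemma \ref{unipotent orbit}: the conjugation isomorphism $U(F)\times(\Xi+\Lambda^0)\xrightarrow{\sim}\Xi+\Sigma_0$ has a triangular, hence locally constant, Jacobian, so only constants are introduced. On $\Xi+\Lambda^0$ the group $H(F)$ acts by simultaneous conjugation of the pair $(X_1,X_2)$ in $W=\begin{pmatrix}0&0&X_1\\aI_2&0&X_2\\0&bI_2&0\end{pmatrix}$, and by the identity $\det(W-\lambda I_6)=\pm a^2b^2\det(X_1+\tfrac{\lambda}{a}X_2-\tfrac{\lambda^3}{ab}I_2)$ from the proof of Lemma \ref{orbit 1} the characteristic polynomial of $W$ is, up to explicit powers of $a,b$, the list of the five $\GL_2$-invariants $\tr X_1,\det X_1,\tr X_2,\det X_2,\tr(X_1X_2)$; the ratio above then becomes a completely explicit Jacobian of maps between affine spaces, and a homogeneity argument (the whole configuration scales compatibly, and the construction is independent of $a,b$, as used repeatedly in the paper) shows it is a genuine constant, which a direct check against the normalizations of the selfdual measures pins to $1$. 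The non-split case $G=\GL_3(D)$ is handled identically with $\det$ and $\tr$ read as reduced norm and reduced trace. The main obstacle is exactly this last step: carrying out the explicit slice Jacobian and collecting all the measure normalizations — the constant from Lemma \ref{unipotent orbit} and the selfdual factors on $\Lambda_0$, $\Fu$ and $\Fh$ — so that the proportionality constant is seen to be exactly $1$.
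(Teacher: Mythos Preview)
Your strategy is sound and in fact overlaps substantially with the paper's: both reduce via Lemma \ref{unipotent orbit} to the eight-dimensional slice $\Xi+\Lambda^0$ and both ultimately extract the factor $D^G(t)^{1/2}$ as the Jacobian of the Chevalley/characteristic-polynomial map, using the identity $\det(W-\lambda I_6)=\pm a^2b^2\det(X_1+\tfrac{\lambda}{a}X_2-\tfrac{\lambda^3}{ab}I_2)$ from the proof of Lemma \ref{orbit 1}. Your block-triangular decomposition of $D\Psi$ is correct, and you have correctly located the only genuine difficulty: showing that the residual ratio is exactly $1$.

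Where the paper differs is that it never isolates your abstract ratio $|\det(\mathrm{ad}(W_0)|_{\Fh_0\oplus\Fu})|/|\det(D(\mathrm{char})_{W_0}|_Q)|$ at all. Instead it inserts a second factorization, through the tori of $H$: writing $\Xi+T_H=\{\Lambda(X_1,X_2)\in\Xi+\Lambda^0:X_1\in\Ft_H\}$, it factors
\[
(\Xi+\Lambda^0)/H(F)\;\longrightarrow\;\coprod_{T_H\in T(H)}(\Xi+T_H)/T_H(F)\;\longrightarrow\;\coprod_{T}\Ft^0(F)/W(G,T)\;\longrightarrow\;F^5,
\]
the last arrow being the coefficients of the characteristic polynomial. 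The first arrow has Jacobian $D^H(X_1)^{-1}$ by the Weyl integration formula for $H$; the $U$-trivialization of Lemma \ref{unipotent orbit} contributes the constant $|a|^4|b|^8$; and the composite of the last two arrows, in the split-$T_H$ case, is the explicit five-variable map $(m,n,m_1,n_1,x)\mapsto\bigl(b(m_1+n_1),\,ab(m+n),\,b^2(m_1n_1-x),\,ab^2(mn_1+m_1n),\,a^2b^2mn\bigr)$, whose $5\times 5$ Jacobian one computes by hand to be $|a|^4|b|^8|m-n|^2=|a|^4|b|^8 D^H(X_1)$. Everything cancels on the nose, and since the last arrow has Jacobian $D^G(t)^{1/2}$, the lemma follows.

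One caution about your proposed endgame: the homogeneity argument by itself does not show your ratio is constant---scaling only fixes its weight, not its dependence on the invariants of $W_0$. You would still need an explicit computation, and the paper's $T_H$-factorization is precisely the device that reduces this to a tractable $5\times 5$ determinant rather than forcing you to unwind $\mathrm{ad}(W_0)$ on a $15$-dimensional space and track all the selfdual normalizations on $\Lambda_0$, $\Fu$, $\Fh_0$ separately.
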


\begin{proof}
Let $d_2 t$ be the measure on $\Ft^0(F)/W(G,T)$ coming from the quotient $\Xi+\Lambda^0/H(F)$. By Lemma \ref{unipotent orbit},
\begin{equation}\label{9.4}
d_2t=a^4b^8d_1t.
\end{equation}
For $T_H\in T(H)$, define $\Xi+T_H=\{\Lambda(X_1,X_2)=\begin{pmatrix} 0 & 0 & X_1 \\ aI_2 & 0 & X_2 \\ 0 & bI_2 & 0 \end{pmatrix}\in \Xi+\Lambda^0| X_1\in \Ft_H(F)\}$. Then the bijection
$$\Xi+\Lambda^0/H(F)\rightarrow \coprod_{T\in T(G)}\Ft^0(F)/W(G,T)$$
factor through
$$\Xi+\Lambda^0/H(F)\rightarrow \coprod_{T_H\in T(H)} \Xi+T_H/T_H(F)\rightarrow \coprod_{T\in T(G)}\Ft^0(F)/W(G,T).$$
By the Weyl Integration Formula, the Jacobian of the first map is $D^{H}(X_1)^{-1}$ at $\Lambda(X_1,X_2)$. Combining with \eqref{9.4}, we only need to show the Jacobian of the map
$$\coprod_{T_H\in T(H)} \Xi+T_H/T_H(F)\rightarrow \coprod_{T\in T(G)}\Ft^0(F)/W(G,T)$$
is $a^4b^8 D^{H}(X_1) D^G(\Lambda(X_1,X_2))^{-1/2}$ at $\Lambda(X_1,X_2)$. We consider the composite map
\begin{equation}\label{9.5}
\Xi+T_H/T_H(F)\rightarrow \coprod_{T\in T(G)}\Ft^0(F)/W(G,T) \rightarrow F^5
\end{equation}
where the second map is taking the coefficient of the characteristic polynomial. (since the trace is always $0$, we only take the coefficients of deg $0$ to $4$.) As the Jacobian of the second map is $D^G(t)^{1/2}$ at $t\in \Ft^{0}(F)$, we only need to show the Jacobian of the composite map \eqref{9.5} is $a^4b^8 D^{H}(X_1)$ at $\Lambda(X_1,X_2)$.

We only write down the proof for the case $T_H$ is split, the proof for the rest case is similar. If $T_H$ is split, may assume that $T_H=\{\begin{pmatrix} \ast & 0\\ 0 & \ast \end{pmatrix}\}$. By the generic position assumption, we know
$$\Xi+T_H/T_H(F)=\{ \Lambda(X_1,X_2)|X_1=\begin{pmatrix} m & 0\\ 0 & n \end{pmatrix},\; X_2=\begin{pmatrix} m_1 & 1\\ x & n_1 \end{pmatrix},\; m\neq n,\; x\neq 0\}.$$
The measure on $\Xi+T_H/T_H(F)$ is just $dm dn dm_1 dn_1 dx$. Note that we always use the selfdual measure on $F$. In the proof of Lemma \ref{orbit 1}, we have write down the map \eqref{9.5} explicitly (i.e \eqref{lambda 4} to \eqref{lambda 0}):
\begin{equation}\label{9.6}
(m,n,m_1,n_1,x)\mapsto (b(m_1+n_1),ab(m+n),b^2(m_1n_1-x),ab^2(mn_1+m_1n),a^2b^2mn).
\end{equation}
By a simple computation, the Jacobian of \eqref{9.6} is
$$a^4b^8|(m-n)^2|_F=a^4b^8 D^H(X_1).$$
This finishes the proof of the lemma.
\end{proof}

\subsection{Local Sections}
For $T\in T(G)$, we can fix a locally analytic map
\begin{equation}
\Ft^0(F)\rightarrow \Xi+\Sigma^0: Y\rightarrow Y_{\Sigma}
\end{equation}
such that the following diagram commutes
$$
\begin{matrix}
\Xi+\Sigma^0 & & & &\longrightarrow & & & & \Ft^0(F)/W(G,T)\\
&&&&&&&&\\
&&\nwarrow&&&&\nearrow&&\\
&&&&&&&&\\
&&&&\Ft^0(F)&&&&
\end{matrix}
$$
Then we can also find a map $Y\rightarrow \gamma_Y$ such that $Y_{\Sigma}=\gamma_{Y}^{-1} Y\gamma_Y$.

\begin{lem}\label{section}
If $\omega_T$ is a compact subset of $\Ft_0(F)$, we can choose the map $Y\rightarrow Y_{\Sigma}$ such that the image of $\Ft^0(F)\cap \omega_T$ is contained in a compact subset of $\Xi+\Lambda$.
\end{lem}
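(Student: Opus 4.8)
The plan is to reduce the assertion to the construction of a single, uniformly bounded slice for the characteristic–polynomial map on $\Xi+\Lambda^0$, and then to produce such a slice by an explicit normal-form argument.

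First, using the isomorphism $U(F)\times(\Xi+\Lambda)\to\Xi+\Sigma$ of Lemma \ref{unipotent orbit}, the bijection \eqref{9.3} factors through the quotient $(\Xi+\Lambda^0)/H(F)$; so, replacing $Y_\Sigma$ by its unique $U(F)$-conjugate lying in $\Xi+\Lambda$, we may assume the section $Y\mapsto Y_\Sigma$ takes values in $\Xi+\Lambda^0\subset\Xi+\Lambda$, and it suffices to make its restriction to $\Ft^0(F)\cap\omega_T$ have relatively compact image there. Recall from the proof of Lemma \ref{orbit 1} that two elements of $\Xi+\Lambda^0$ are $H(F)$-conjugate if and only if they have the same characteristic polynomial, that this polynomial agrees with that of the corresponding $Y\in\Ft^0(F)$, and that by \eqref{lambda 4}--\eqref{lambda 0} its five nontrivial coefficients are, up to fixed nonzero constants depending on $a,b$, the invariants $\tr X_1,\ \tr X_2,\ \det X_1,\ \det X_2$ and the $\lambda$-coefficient of $\det(X_1+\lambda X_2)$, where $\Lambda(X_1,X_2)$ denotes the element at hand. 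Since $\omega_T$ is compact and these coefficients are polynomial in $Y$, the characteristic polynomials occurring for $Y\in\Ft^0(F)\cap\omega_T$ fill out a compact set $K$. Because the conjugation action of $H(F)$ on $\Xi+\Lambda^0$ factors through a free action of $Z_H(F)\backslash H(F)$ (Lemma \ref{orbit 1}), the orbit maps admit local analytic sections; hence it is enough to find one compact $C\subset\Xi+\Lambda^0$ meeting every $H(F)$-orbit whose characteristic polynomial lies in $K$, and then to take $Y_\Sigma\in C$.

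To build $C$, take $\Lambda(X_1,X_2)\in\Xi+\Lambda^0$ with characteristic polynomial in $K$, so the five invariants above are bounded, and treat first the case $H=\GL_2(F)$. The characteristic polynomial of $X_1$ is $z^2-\tr(X_1)z+\det(X_1)$, with bounded coefficients, and since $X_1$ is regular it is $H(F)$-conjugate to the companion matrix of that polynomial, which lies in a fixed compact set; the residual freedom is then the centralizer torus $H_{X_1}(F)$. Using the companion shape of $X_1$ one checks that the $H_{X_1}(F)$-orbit of $X_2$, inside the fixed characteristic-polynomial fiber, contains a representative whose upper-right entry vanishes; for such a representative the two diagonal entries are the roots of the monic quadratic $z^2-\tr(X_2)z+\det(X_2)$, hence bounded over a local field, and the lower-left entry is bounded by the estimate on the $\lambda$-coefficient of $\det(X_1+\lambda X_2)$ (equivalently on $\tr(X_1X_2)$). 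Thus the pair, hence $\Lambda(X_1,X_2)$, has an $H(F)$-conjugate in a fixed compact $C$. The case $H=\GL_1(D)$ follows from the same computation after tensoring with a quadratic splitting field $F_v=F(\sqrt v)$ and performing Galois descent, exactly as in the remark following Lemma \ref{orbit 1}. This yields $C$, and with it the lemma.

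The delicate point is precisely this last step, and it is the main obstacle. As $Y$ ranges over $\Ft^0(F)\cap\omega_T$ it may approach the boundary of the open set $\Ft^0(F)$, where $X_1$ has nearly-equal eigenvalues; the naive argument that first diagonalizes $X_1$ over $F$ (or over $F_v$) and then uses the diagonalizing torus produces a representative of size $O(|\lambda_1-\lambda_2|^{-1})$, which blows up near that boundary. Choosing the companion form of $X_1$ rather than a diagonalizing frame is exactly what keeps the slice bounded uniformly, independently of the distance of $X_1$ from the non-regular locus. Put differently, the content of the lemma is the properness of the categorical quotient $(\Xi+\Lambda^0)/H(F)\to F^5$ along the locus coming from $\Ft^0(F)$, which does not follow formally from Lemma \ref{orbit 1} but does follow once the explicit bounded slice above is exhibited.
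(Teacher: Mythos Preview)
Your overall strategy is exactly the paper's: reduce to the observation that the five characteristic-polynomial invariants of $(X_1,X_2)$ are bounded once $Y$ lies in the compact set $\omega_T$, and then produce a bounded pair $(X_1,X_2)$ realizing those invariants. The paper's own proof is extremely terse at this second step --- it simply asserts ``Hence we can definitely choose $X$ and $Y$ such that their coordinates are bounded'' --- so your attempt to make this explicit is in the right spirit.

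However, the explicit normalization you propose has a genuine gap. You claim that, with $X_1$ in companion form, the $H_{X_1}(F)$-orbit of $X_2$ contains a representative whose upper-right entry vanishes. But a $2\times 2$ matrix with vanishing $(1,2)$ entry is lower-triangular and therefore has both eigenvalues in $F$. The generic-position hypothesis forces $X_2$ to be regular semisimple, yet its eigenvalues may well lie in a proper quadratic extension of $F$; when they do, \emph{no} $H_{X_1}(F)$-conjugate of $X_2$ is lower-triangular. Concretely, $h^{-1}X_2h$ has zero $(1,2)$ entry if and only if $he_2$ is an $X_2$-eigenvector in $F^2$, and there are none when $X_2$ is non-split. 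Your subsequent remark that ``the two diagonal entries are the roots of $z^2-\tr(X_2)z+\det(X_2)$, hence bounded over a local field'' is correct as a statement about $\bar F$, but you have not produced an $F$-rational matrix with those diagonal entries. The Galois-descent comment you make for $H=\GL_1(D)$ does not repair this: the problem already occurs for $H=\GL_2(F)$.

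So the missing idea is precisely how to bound $X_2$ when its characteristic polynomial is irreducible over $F$. One way forward is a case split: if $X_2$ is split your argument works as written; if $X_2$ is non-split, put $X_2$ (rather than $X_1$) in companion form and observe that the residual constraints on $X_1$ cut out a conic whose quadratic part $p^2+T_2\,pr+D_2\,r^2$ is anisotropic, which forces any $F$-point to be bounded --- though one must still check that the anisotropy constant is controlled uniformly over $\omega_T$. The paper sidesteps all of this by fiat.
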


\begin{proof}
We only write down the proof for split case, the non-split case are similar. Given $t\in \Ft^0(F)$, we want to find an element of the form $\begin{pmatrix} 0 & 0 & X \\ aI_2 & 0 & Y \\ 0 & bI_2 & 0 \end{pmatrix}$ that is a conjugation of $t$. As in the proof of Lemma \ref{orbit 1}, the characteristic polynomial of $t$ gives us the determinant and trace of both $X$ and $Y$, and also an extra equation (i.e. the $\lambda$-coefficient). Once $t$ lies in a compact subset, all these five values are bounded. Hence we can definitely choose $X$ and $Y$ such that their coordinates are bounded. Therefore, both elements belong to a compact subset.
\end{proof}

Combining the above Lemma and Proposition \ref{h-c}, we can choose the map $Y\rightarrow \gamma_Y$ with the property that there exists
$c>0$ such that
\begin{equation}\label{local 2}
\sigma(\gamma_Y)\leq c(1+\mid \log D^G(Y)\mid)
\end{equation}
for every $Y\in \Ft^0(F)\cap \omega_T$.

\subsection{Calculation of $I_{\kappa}(f)$}
By Lemma \ref{Fourier},
$$I(f,g)=(\gf)^{\hat{}}(0)=\int_{\Sigma} {}^g \hat{f}(\Xi+X)dX.$$
This implies
$$I_{\kappa}(f)=\int_{H(F)U(F)\backslash G(F)} \int_{\Sigma} {}^g \hat{f}(\Xi+X) dX \kappa(g) dg.$$
By Lemma \ref{orbit 1}, Remark \ref{orbit 2} and Lemma \ref{measure 1}, the interior integral equals

$$\Sigma_{T\in T(G)} \mid W(G,T)\mid^{-1} \int_{Z_H(F)\backslash H(F)U(F)} \int_{\Ft^0(F)} {}^g \hat{f}(y^{-1} \gamma_{Y}^{-1} Y\gamma_Y y)D^G(Y)^{1/2} dY dy$$

$$=\Sigma_{T\in T(G)} \mid W(G,T)\mid^{-1} \int_{Z_H(F)\backslash H(F)U(F)} \int_{\Ft^0(F)} {}^{\gamma_Y yg} \hat{f}(Y) D^G(Y)^{1/2} dY dy.$$
So we can rewrite $I_{\kappa}(f)$ as

$$I_{\kappa}(f)=\Sigma_{T\in T(G)} \mid W(G,T)\mid^{-1} \int_{\Ft^0(F)} \int_{Z_G(F)\backslash G(F)} \hat{f}(g^{-1}Yg) \kappa(\gamma_{Y}^{-1} g) dg D^G(Y)^{1/2} dY.$$

For $T\in T(G), Y\in \Ft^0(F)$, define $\kappa_Y$ on $A_T(F)\backslash G(F)$ to be
\begin{equation}\label{kappa 1}
\kappa_Y(g)=\nu(A_T)\int_{Z_G(F)\backslash A_T(F)} \kappa(\gamma_{Y}^{-1} ag) da.
\end{equation}
Then we have
\begin{equation}\label{9}
\begin{split}
I_{\kappa}(f)=&\Sigma_{T\in T(G)} \nu(A_T)^{-1} \mid W(G,T)\mid^{-1}\\
&\times \int_{\Ft^0(F)} \int_{A_T(F)\backslash G(F)} \hat{f}(g^{-1}Yg) \kappa_Y(g) dg D^G(Y)^{1/2} dY.
\end{split}
\end{equation}

\section{Calculation of the limit $\lim_{N\rightarrow \infty} I_{x,\omega,N}(f)$}
In the last section, we made the transfer of the integral $I_{x,\omega,N}(f)$ to the form that is similar to Arthur's local trace formula. The only difference is that our truncated function is different from the one given by Arthur. In this section, we first show that we are able to change the truncated function. Then by applying Arthur's local trace formula, we are going to compute the limit $\lim_{N\rightarrow \infty} I_{x,\omega,N}(f)$. This is the most technical section of this paper. In 9.1 and 9.2, we study our truncated function $\kappa_N$ and introduce Arthur's truncated function. From 9.3 to 9.5, we prove that we are able to change the truncated function when $x$ is in the center or not split. In 9.6, we deal with the case when $x$ is split but not belongs to the center. In 9.7, we compute the limit $\lim_{N\rightarrow \infty} I_{x,\omega,N}(f)$ by applying Arthur's local trace formula. We will also postpone the proof of two technical lemmas (i.e. Lemma \ref{major 2} and Lemma \ref{split zero}) to Appendix A.
\subsection{Convergence of a premier expression}
For $x\in H_{ss}(F)$, using the same notation as in Section 7.2, we have

$$I_{x,\omega}(f,g)=\int_{\Fh_x '(F)} \int_{\Fh''(F)} \gf_{x,\omega}(X'+X'') dX'' dX'.$$
Then we can write $I_{x,\omega,N}(f)$ as
$$I_{x,\omega,N}(f)=\int_{\Fh_x '(F)} \int_{H_x(F)U_x(F)\backslash G(F)} \int_{\Fh''(F)} \gf_{x,\omega}(X'+X'') dX'' \kappa_N(g) dg dX'.$$
Rewrite the two interior integrals above as
$$\int_{G_x(F)\backslash G(F)} \int_{H_x(F) U_x(F)\backslash G_x(F)} \int_{\Fh''(F)} {}^{g'' g}f^{\xi}_{x,\omega}(X'+X'') dX'' \kappa_N(g''g)dg'' dg.$$
After applying the formula \eqref{9}, together with the fact that we have defined $\Ft'=\Fh_x '$ in Section 7.2, we have
\begin{equation}\label{10.1}
\begin{split}
I_{x,\omega,N}(f)=&\Sigma_{T\in T(G_x)} \nu(A_T\cap Z_{G_x}\backslash A_T)^{-1} \mid W(G_x,T)\mid^{-1}\\
&\times \int_{\Ft'(F) \times (\Ft'')^0(F)} D^{G_x}(X'')^{1/2}\\
&\times \int_{Z_{G_x} A_T(F)\backslash G(F)} {}^g f^{\sharp}_{x,\omega}(X'+X'') \kappa_{N,X''}(g) dg dX'' dX'
\end{split}
\end{equation}
where
\begin{equation}\label{kappa 2}
\kappa_{N,X''}(g)=\nu(A_T\cap Z_{G_x}\backslash A_T) \int_{Z_{G_x}\cap A_T(F)\backslash A_T(F)} \kappa_N(\gamma_{X''}^{-1} ag) da.
\end{equation}
Note that the formula $\eqref{9}$ is only for the case when $x$ is in the center. However, as we explained at the beginning of Section 8, when $x$ is not contained in the center, the computation is easier, and we can get a similar formula as \eqref{9} with replacing $\Ft^0$ by $(\Ft'')^0$ and replacing $G$ by $G_x$.

\begin{lem}\label{major 2}
For $T\in T(G_x)$, let $\omega_{T''}$ be a compact subset of $\Ft''(F)$. There exist a rational function $Q_T(X'')$ on $\Ft''(F)$, $k\in \BN$ and $c>0$ such that
$$\kappa_{N,X''}(g)\leq C N^k \sigma(g)^k (1+|\log (| Q_T(X'')|_F) |)^k (1+| \log D^{G_x}(X'') |)^k$$
for every $X''\in (\Ft'')^0(F)\cap \omega_{T''}, g\in G(F), N\geq 1$.
\end{lem}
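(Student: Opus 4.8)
The plan is to estimate $\kappa_{N,X''}(g)$ by unwinding its definition \eqref{kappa 2} and controlling both the integrand $\kappa_N(\gamma_{X''}^{-1}ag)$ and the domain of integration. First I would recall from \eqref{local 2} (applied to $G_x$ in place of $G$, via the local section of Section 8.5) that the section map $X''\mapsto \gamma_{X''}$ can be chosen so that $\sigma(\gamma_{X''})\leq c(1+|\log D^{G_x}(X'')|)$ for $X''$ in the compact set $\omega_{T''}$. Since $\kappa_N$ is left $U(F)H(F)$-invariant, right $K$-invariant, and its support is described by the Iwasawa-type conditions \eqref{kappa split}, \eqref{kappa nonsplit}, the function $\kappa_N(\gamma_{X''}^{-1}ag)$ is bounded by $1$, and the triangle inequality for the log-norm gives $\sigma(\gamma_{X''}^{-1}ag)\leq \sigma(\gamma_{X''})+\sigma(a)+\sigma(g)$, so the real content is to estimate the measure of the set of $a\in A_T\cap Z_{G_x}(F)\backslash A_T(F)$ for which $\gamma_{X''}^{-1}ag$ lies in the (compact modulo $U(F)H(F)$) support $\Omega_N$ of $\kappa_N$.

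Next I would bound that measure. On $A_T(F)$, the condition $\gamma_{X''}^{-1}ag\in\Omega_N$ forces $H_{A_T}(a)$ to lie in a region of $\Fa_{A_T}$ cut out by finitely many inequalities of the form $|\lambda(H_{A_T}(a))|\leq N+c(\sigma(g)+\sigma(\gamma_{X''}))$, coming from the $\sigma(a_{ij}),\sigma(b_{ij})\leq N$ (resp. $\sigma(c_{ij})\le (1+\eps)N$) defining $\kappa_N$: indeed each matrix block entry of the Iwasawa $M$-part of $\gamma_{X''}^{-1}ag$ is a bounded perturbation (controlled by $\sigma(g)$ and $\sigma(\gamma_{X''})$) of the corresponding entry for $a$ alone. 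Hence the volume of admissible $a$ modulo $A_T\cap Z_{G_x}(F)$ is $\ll (N+\sigma(g)+\sigma(\gamma_{X''}))^{\dim \Fa_{A_T}}$, which after absorbing constants and using $N\geq 1$ is $\ll N^k\,\sigma(g)^k\,(1+|\log D^{G_x}(X'')|)^k$ for a suitable $k$. Multiplying by $\nu(A_T\cap Z_{G_x}\backslash A_T)$ (a fixed constant) gives exactly the claimed bound, except possibly for the factor $(1+|\log(|Q_T(X'')|_F)|)^k$, which I expect does not actually appear in the generic-torus computation; but to be safe one records it by noting that the comparison between the selfdual coordinates on $\Ft''$ and the characteristic-polynomial coordinates used to build $\gamma_{X''}$ (cf. Lemma \ref{measure 1} and Lemma \ref{section}) introduces a Jacobian that is a rational function $Q_T$ of $X''$, so the genuinely uniform form of \eqref{local 2} requires the extra $|\log |Q_T(X'')||$ safety term.

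The main obstacle is the second step: making precise that a bounded left-translation of $g$ (and of $\gamma_{X''}$) perturbs the Iwasawa $M$-coordinates of $ag$ by a bounded amount, uniformly, so that the defining inequalities of $\kappa_N$ translate cleanly into a polyhedral region for $H_{A_T}(a)$ of volume polynomial in $N$, $\sigma(g)$, and $\sigma(\gamma_{X''})$. This is where one has to be careful with the non-cocompact directions and with the fact that $A_T$ need not be the split part of a Levi containing the standard $M$; the argument is essentially the one Waldspurger uses in \cite[Section 11]{W10}, together with the standard fact (e.g. from \cite{Ar91}) that $\sigma(m_g)\ll \sigma(g)$ and that Iwasawa coordinates are Lipschitz in $\sigma$. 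Given those inputs, the remaining estimates are routine, and I would defer the full verification — as the excerpt already announces — to Appendix A.
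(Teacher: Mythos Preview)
Your second paragraph contains the real gap. You claim that $\gamma_{X''}^{-1}ag\in\Omega_N$ forces $H_{A_T}(a)$ into a polyhedral region of size $\ll N+\sigma(g)+\sigma(\gamma_{X''})$, arguing that the Iwasawa $M$-part of $\gamma_{X''}^{-1}ag$ is a bounded perturbation of that of $a$. But $\Omega_N$ is \emph{left $H(F)U(F)$-invariant}, so it contains elements of arbitrarily large $\sigma$; the defining inequalities of $\kappa_N$ constrain only the coset modulo $H(F)U(F)$, not the element itself. Concretely, if $\gamma_{X''}^{-1}a = hvy$ with $h\in H(F)$, $v\in U(F)$, $\sigma(y)\ll N$, then nothing you wrote bounds $\sigma(h)$ or $\sigma(v)$, and hence nothing bounds $\sigma(a)\le \sigma(\gamma_{X''})+\sigma(h)+\sigma(v)+\sigma(y)$. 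Your ``bounded perturbation'' heuristic silently assumes $h,v$ are controlled, which is precisely the content of the lemma.

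The paper's proof supplies the missing idea: one uses the conjugation action on the slice $\Xi+\Sigma$. From $\gamma_{X''}^{-1}a=hvy$ one gets $yX''y^{-1}=v^{-1}h^{-1}X''_\Sigma hv$; since $X''_\Sigma$ lies in a fixed compact (Lemma \ref{section}) and $\sigma(y)\ll N$, the left side is bounded by $N$, and then the algebraic isomorphism of Lemma \ref{unipotent orbit} lets one extract $v$ and $h^{-1}X''_\Sigma h$ with $\sigma\ll N$. Bounding $\sigma(h)$ from a bound on $\sigma(h^{-1}X''_\Sigma h)$ then requires an explicit computation with the entries of $X''_\Sigma=\begin{pmatrix}0&0&Z\\ aI_2&0&Y\\ 0&bI_2&0\end{pmatrix}$ (diagonalizing $Z$, tracking the off-diagonal entries of $Y$), and \emph{this} is where the rational function $Q_T(X'')$ genuinely enters: it is built from the discriminant of $Z$ and the product $y_{12}y_{21}$, expressed via the characteristic polynomial of $X''$. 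So $Q_T$ is not a Jacobian safety margin as you suggest; it is the obstruction to inverting $h\mapsto h^{-1}X''_\Sigma h$ uniformly, and without it the bound on $\sigma(a)$ (hence on the volume) simply fails.
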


\begin{proof}
This is a technical lemma, we will postpone the proof to Appendix A.1.
\end{proof}

Now let $\CQ_T$ be a finite set of polynomials on $\Ft''(F)$ that contains $D^{G_x}(X'')$, the denominator and numerator of $Q_T(X'')$ and some other polynomials that will be defined later in Section 9.5. For $l>0$, let $\Ft_0(F)[\leq l]$ be the set of $X=X'+X''\in \Ft_0(F)$ such that there exists $Q\in \CQ_T$ with $|Q(X'')|_F\leq l$, and let $\Ft_0(F)[>l]$ be its complement. We define $I_{N,\leq l}$ to be the integral of the expression of $I_{x,\omega,N}(f)$ restricted on $(\Ft'(F)\times (\Ft'')^0(F))\cap \Ft_0(F)[\leq l]$ (as in \eqref{10.1}). Similarly we can define $I_{N,>l}$. We then have
\begin{equation}
I_{x,\omega,N}(f)=I_{N,\leq l}+I_{N,>l}.
\end{equation}

\begin{lem}\label{I to I*}
The following statements hold.
\begin{enumerate}
\item There exist $k\in \BN$ and $c>0$ such that $\mid I_{x,\omega,N}(f)\mid \leq cN^k$ for all $N\geq 1$.
\item There exist $b\geq 1$ and $c>0$ such that $\mid I_{N,\leq N^{-b}} \mid\leq cN^{-1}$ for all $N\geq 1$.
\end{enumerate}
\end{lem}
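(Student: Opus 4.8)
**Proof proposal for Lemma \ref{I to I*}.**

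The plan is to estimate the integrand in the expression \eqref{10.1} for $I_{x,\omega,N}(f)$ and integrate. For part (1), I would start from the formula \eqref{10.1}, bound $\kappa_{N,X''}(g)$ by Lemma \ref{major 2}, and bound the remaining factors $D^{G_x}(X'')^{1/2} \int_{Z_{G_x}A_T(F)\backslash G(F)} {}^g f^{\sharp}_{x,\omega}(X'+X'')\kappa_{N,X''}(g)\, dg$ using the weighted-orbital-integral type estimates already available: the function ${}^g f^{\sharp}_{x,\omega}$ has compact support modulo conjugation (coming from Condition (5) of good neighborhoods and the support condition on $f$), so the $g$-integral is genuinely over a region where $\sigma(g)$ is controlled by $(1+|\log D^{G_x}(X'')|)$ via Proposition \ref{h-c}. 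After the substitution, the $\sigma(g)^k$ factor from Lemma \ref{major 2} is absorbed into a power of $(1+|\log D^{G_x}(X'')|)$, and the whole integrand over $\Ft'(F)\times(\Ft'')^0(F)$ is essentially bounded by $N^k$ times a function of the form $D^{G_x}(X'')^{1/2}(1+|\log D^{G_x}(X'')|)^{k'}(1+|\log|Q_T(X'')|_F|)^{k'} \mathbf{1}_{\omega_T}(X)$. The point then is that this majorant is integrable on the compact set (after removing the center), which follows from the same kind of local-integrability argument used in Proposition \ref{integrable 1} together with \cite[Lemma 7.4]{W10}: a function in some $C_{\geq 0}$-space times logarithmic factors stays integrable. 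This gives $|I_{x,\omega,N}(f)|\le cN^k$, with $N^k$ coming entirely from the explicit $N$-dependence in Lemma \ref{major 2} and the rest from an $N$-independent convergent integral.

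For part (2), I would run the same estimate but restrict the $(X',X'')$-integration to $\Ft_0(F)[\le N^{-b}]$, i.e. to the set where some $Q\in\CQ_T$ satisfies $|Q(X'')|_F\le N^{-b}$. On this small set the logarithmic factors $(1+|\log|Q(X'')|_F|)^{k'}$ only grow like a power of $\log N$, which is harmless, while the key gain is that the measure of the set $\{X''\in\omega_{T''}\cap(\Ft'')^0(F): |Q(X'')|_F\le N^{-b}\}$ is small. For a nonzero polynomial $Q$ on a finite-dimensional $F$-vector space the Łojasiewicz/Oesterlé-type estimate gives $\mathrm{meas}\{|Q(X'')|_F\le\eta\}\ll \eta^{\alpha}$ for some $\alpha>0$ depending only on $Q$ (in fact $\alpha\ge 1/\deg Q$), uniformly on a compact set; combined with the fact that the remaining factors $D^{G_x}(X'')^{1/2}(1+|\log D^{G_x}(X'')|)^{k'}$ are uniformly integrable, we get that the contribution is $\ll N^k (\log N)^{k'} N^{-b\alpha}$. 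Choosing $b$ large enough (larger than $(k+1)/\alpha$ for the worst $Q\in\CQ_T$) makes this $\le cN^{-1}$. One has to be slightly careful because $D^{G_x}(X'')$ itself is one of the polynomials in $\CQ_T$, so the region also includes $D^{G_x}(X'')$ small — but there the factor $D^{G_x}(X'')^{1/2}$ works in our favor and the same measure estimate applies.

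The main obstacle I expect is making the measure-of-small-sublevel-sets estimate uniform and with an explicit positive exponent: one needs that for each $Q\in\CQ_T$ the set where $|Q|_F$ is small has polynomially small volume, with constants uniform over the relevant compact set, and this must interact correctly with the logarithmic blow-up of the other factors and with the (at most polynomial in $N$) growth from Lemma \ref{major 2}. Assembling all the $k$'s, $\alpha$'s, and the choice of $b$ into a clean final inequality is the bookkeeping part; the conceptual input is just (a) Lemma \ref{major 2}, (b) Harish-Chandra's inequality Proposition \ref{h-c} to control the $g$-integration region, and (c) the $p$-adic sublevel-set volume bound. Everything else is the same convergence machinery already set up for Proposition \ref{integrable 1}.
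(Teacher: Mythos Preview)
Your approach to Part (1) is essentially the paper's: reduce via condition (5) of good neighborhoods to $g\in G_x(F)\Gamma$ with $\Gamma$ compact, majorize ${}^\gamma f_{x,\omega}^\sharp$ by tensors $f'\otimes f''$, use Proposition \ref{h-c} to replace $\sigma(g)$ by $1+|\log D^{G_x}(X'')|$ and feed this into Lemma \ref{major 2}; what remains is $N^k$ times a (bounded, by Harish-Chandra) orbital integral times the locally integrable function $\phi(X'')=(1+|\log|Q_T(X'')|_F|)^k(1+|\log D^{G_x}(X'')|)^{2k}$ on a compact set. The paper cites \cite[Lemma 2.4]{W10} rather than the machinery around Proposition \ref{integrable 1} for the integrability of $\phi$, but this is cosmetic.

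For Part (2) there is a gap. Your assertion that on $\{|Q(X'')|_F\le N^{-b}\}$ the factor $(1+|\log|Q(X'')|_F|)^{k'}$ ``only grows like a power of $\log N$'' is false as a pointwise bound: on that set $|\log|Q|_F|\ge b\log N$ is a \emph{lower} bound, and the function is unbounded above since $|Q|_F$ can be arbitrarily small there. Similarly, ``the remaining factors are uniformly integrable'' together with ``the set has small measure'' does not by itself produce decay proportional to the measure --- an $L^1$ function restricted to a set of measure $\mu$ can have integral as large as its full $L^1$ norm regardless of $\mu$. The missing step is a H\"older separation, and this is exactly what the paper inserts: by Cauchy--Schwarz,
\[
\int_{\omega\cap\Ft_0(F)[\le N^{-b}]}\phi(X)\,dX\;\le\;\Bigl(\int_{\omega}\phi(X)^2\,dX\Bigr)^{1/2}\Bigl(mes\bigl(\omega\cap\Ft_0(F)[\le N^{-b}]\bigr)\Bigr)^{1/2},
\]
the first factor being a constant (since $\phi^2$ is again locally integrable) and the second controlled by your sublevel-set estimate $\ll (N^{-b})^{r}$. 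With this one line in place your outline is correct and coincides with the paper's argument; without it, the bound $\ll N^k(\log N)^{k'}N^{-b\alpha}$ you wrote down does not follow from what you stated.
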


\begin{proof}
By condition (5) of a good neighborhood (as in Definition \ref{good nbd defn}), there exists a compact subset $\Gamma\subset G(F)$ such that $(\gf_{x,\omega})^{\hat{}}=0$ if $g\notin G_x(F)\Gamma$.

By replacing $Z_{G_x}A_T(F)\backslash G(F)$ by $Z_{G_x}A_T(F)\backslash G_x\cdot \gamma$ for some $\gamma\in \Gamma$,
we can majorize ${}^{\gamma} f_{x,\omega}^{\sharp}$ by a linear combination of function $f'\otimes f''$ where $f'\in C_{c}^{\infty}(\Fg_x '(F))$,
and $f''\in C_{c}^{\infty}(\Fg''(F))$. So the integral in \eqref{10.1} is majored by
\begin{equation}\label{major 2.1}
\int_{\Ft'(F) \times (\Ft'')^0(F)} D^{G_x}(X'')^{1/2}  \int_{Z_{G_x} A_T(F)\backslash G_x(F)} f'(X')f''(g^{-1} X''g) \kappa_{N,X''}(\gamma g) dg dX'' dX'.
\end{equation}
Now we fix a compact subset $\omega_{T''}\subset \Ft''(F)$ such that for every $g\in G_x(F)$, the function $X''\rightarrow f''(g^{-1}X'' g)$ on $\Ft''(F)$ is supported on $\omega_{T''}$. By Proposition \ref{h-c}, up to an element in $Z_{G_x}(F)A_T(F)$, we may choose g such that $\sigma(g)\ll 1+|\log(D^{G_x}(X''))|$. Using the lemma above, we have
$$\kappa_{N,X''}(\gamma g)\ll N^k \phi(X'')$$
where
$$\phi(X'')=(1+| \log (| Q_T(X'')|_F )|)^k (1+|\log( D^{G_x}(X''))|)^{2k}.$$
So the expression \eqref{major 2.1} is majored by
$$N^k \int_{\Ft'(F) \times (\Ft'')^0(F)} D^{G_x}(X'')^{1/2}\int_{Z_{G_x} A_T(F)\backslash G_x(F)} f'(X')f''(g^{-1} X''g) \phi(X'') dg dX'' dX.$$
This is majored by
\begin{equation}\label{major 2.2}
N^k \int_{\Ft_0(F)} J_{G_x}(X'+X'',f'\otimes f'')\phi(X'') dX'' dX'
\end{equation}
where $J_{G_x}$ is the orbital integral. Due to the work of Harish-Chandra, the orbital integral is always bounded, and hence \eqref{major 2.2} is majored by
\begin{equation}\label{major 2.3}
N^k\int_{\omega} \phi(X'')dX'' dX'
\end{equation}
where $\omega$ is a compact subset of $\Ft_0(F)$. By Lemma 2.4 of \cite{W10}, $\phi(X)$ is locally integrable, and hence the integral in \eqref{major 2.3} is convergent. This finishes the proof of the first part.

For the second part, by the same argument, we have majorization
$$\mid I\mid_{N,\leq N^{-b}} \ll N^k \int_{\omega\cap \Ft_{0}(F)[\leq N^{-b}]} \phi(X)dX.$$
Then, by the Schwartz inequality, the right hand side is majored by
\begin{eqnarray*}
&&N^k(\int_{\omega\cap \Ft_{0}(F)[\leq N^{-b}]} dX)^{1/2}(\int_{\omega\cap \Ft_{0}(F)[\leq N^{-b}]} \phi(X)^2 dX)^{1/2}\\
&\ll& N^k\cdot \Sigma_{Q\in \CQ_T} mes\{X\in \omega\mid \mid Q(X)\mid_F \leq N^{-b}\} \ll N^k(N^{-b})^r
\end{eqnarray*}
for some $r>0$ that only depends on the dimension of $\Ft_0$. Now we just need to let $b$ large such that $N^k(N^{-b})^r \ll N^{-1}$. This finishes the proof of the Lemma.
\end{proof}

\begin{defn}
Define that $I^{\ast}_{x,\omega,N}(f)=I_{N,>N^{-b}}$.
\end{defn}
By the Lemma above, we have
\begin{equation}
\lim_{N\rightarrow \infty}(I_{x,\omega,N}(f)-I^{\ast}_{x,\omega,N}(f))=0.
\end{equation}

\subsection{Combinatorial Definition}
Fix $T\in T(G_x)$, let $M_{\sharp}$ be the centralizer of $A_T$ in $G$. This is a Levi subgroup of G, it is easy to check that $A_T=A_{M_{\sharp}}$. \textbf{We assume that $x$ is non split or $x$ is in the center from Section 9.2 to 9.5. We will deal with the case where $x$ is split and not in the center separately in Section 9.6.} In particular, under this hypothesis, we know $Z_{G_x}\cap A_T=Z_G$ for any $T\in T(G_x)$,
and hence we have $\nu(A_T\cap Z_{G_x}\backslash A_T)=\nu(Z_G\backslash A_T)=\nu(A_T)$. Note that we always choose the Haar measure on $G$ so that $\nu(Z_G)=1$.

Let $\CY=(Y_{P_{\sharp}})_{ P_{\sharp}\in \CP(M_{\sharp})}$ be a family of elements in $\Fa_{M_{\sharp}}$ that are $(G,M_{\sharp})$-orthogonal and positive. Then for $Q=LU_Q\in \CF(M_{\sharp})$, let $\zeta\rightarrow \sigma_{M_{\sharp}}^{Q}(\zeta,\CY)$ be the characteristic function on $\Fa_{M_{\sharp}}$ that supports on the sum of $\Fa_L$ and the convex envelop generated by the family $(Y_{P_{\sharp}})_{ P_{\sharp}\in \CP(M_{\sharp}),P_{\sharp}\subset Q}$. Let $\tau_Q$ be the characteristic function on $\Fa_{M_{\sharp}}$ that supports on  $\Fa_{M_{\sharp}}^{L}+\Fa_{Q}^{+}$. The following proposition follows from 3.9 of \cite{Ar91}.
\begin{prop}\label{truncation}
The function
$$\zeta\rightarrow \sigma_{M_{\sharp}}^{Q}(\zeta,\CY) \tau_Q(\zeta-Y_Q)$$
is the characteristic function on $\Fa_{M_{\sharp}}$, whose support is on the sum of $\Fa_{Q}^{+}$ and the convex envelope generated by $(Y_{P_{\sharp}})_{ P_{\sharp}\in \CP(M_{\sharp}),P_{\sharp}\subset Q}$. Moreover, for every $\zeta\in \Fa_{M_{\sharp}}$, the following identity
holds.
\begin{equation}
\Sigma_{Q\in \CF(M_{\sharp})} \sigma_{M_{\sharp}}^{Q}(\zeta,\CY) \tau_Q(\zeta-Y_Q)=1.
\end{equation}
\end{prop}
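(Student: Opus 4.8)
The plan is to deduce this from Arthur's geometric partition lemma, essentially \cite[§3, Lemma~3.9]{Ar91}, after checking that we are in his setting, and I will indicate the combinatorial mechanism behind it. Everything takes place inside the Euclidean space $\Fa_{M_\sharp}$, equipped with the coroots $\check\Sigma_{M_\sharp}$ of $\Fa_{M_\sharp}^G$ and, for each $P_\sharp\in\CP(M_\sharp)$, the positive chamber $\Fa_{P_\sharp}^+$ and simple coroots $\check\Delta_{P_\sharp}$. Since by hypothesis $\CY=(Y_{P_\sharp})_{P_\sharp\in\CP(M_\sharp)}$ is a $(G,M_\sharp)$-orthogonal and positive family, we are exactly in the situation of \cite[§3]{Ar91} with $M_\sharp$ playing the role of Arthur's minimal Levi; so it suffices to recall the definition of $\sigma_{M_\sharp}^Q(\cdot,\CY)$ and trace through the argument.

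For the support statement I would argue as follows. For $Q=LU_Q\in\CF(M_\sharp)$, the family $\CY$ induces a $(G,L)$-orthogonal positive set $(Y_Q)_{Q\in\CP(L)}$ with $Y_Q=\mathrm{proj}_{\Fa_L}(Y_{P_\sharp})$ for any $P_\sharp\subset Q$ (independent of the choice, by the adjacency condition in the definition of an orthogonal set), together with an $(L,M_\sharp)$-orthogonal positive set $(Y_{P_\sharp}^L)_{P_\sharp\in\CP^L(M_\sharp)}$, $Y_{P_\sharp}^L=\mathrm{proj}_{\Fa_{M_\sharp}^L}(Y_{P_\sharp})$. Writing $\zeta=\zeta^L+\zeta_L$ with $\zeta^L\in\Fa_{M_\sharp}^L$ and $\zeta_L\in\Fa_L$, the defining properties give $\tau_Q(\zeta-Y_Q)=1$ iff $\zeta_L\in Y_Q+\Fa_Q^+$, while $\sigma_{M_\sharp}^Q(\zeta,\CY)$ (defined in \cite{Ar91} as a suitable alternating sum of characteristic functions of cones) is precisely the characteristic function of those $\zeta$ for which $\zeta^L$ lies in the convex envelope in $\Fa_{M_\sharp}^L$ of $\{Y_{P_\sharp}^L:P_\sharp\subset Q\}$. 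Combining the two, and using positivity of $\CY$ to identify $Y_Q$ as the common $\Fa_L$-component of all the relevant $Y_{P_\sharp}$, one reads off that the support of the product $\sigma_{M_\sharp}^Q(\zeta,\CY)\tau_Q(\zeta-Y_Q)$ is exactly $\Fa_Q^+$ plus the convex envelope of $\{Y_{P_\sharp}:P_\sharp\in\CP(M_\sharp),\ P_\sharp\subset Q\}$, which is the first assertion.

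For the partition identity $\sum_{Q\in\CF(M_\sharp)}\sigma_{M_\sharp}^Q(\zeta,\CY)\tau_Q(\zeta-Y_Q)=1$, fix $\zeta\in\Fa_{M_\sharp}$, expand each $\sigma_{M_\sharp}^Q$ by its defining alternating sum over parabolics $R\supset Q$, and interchange the order of summation. The resulting cancellation is governed by Langlands' combinatorial lemma $\sum_{\{R:\,Q\subset R\subset S\}}(-1)^{\dim\Fa_R^S}=\delta_{Q,S}$, which collapses the double sum to the single term $1$; this is the $(G,M)$-analogue of the partition of $\Fa_0$ underlying the Arthur--Selberg trace formula (cf. \cite{Ar81}), and I would simply reproduce that argument with $M=M_\sharp$. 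The point that needs care — and which I regard as the main obstacle to a fully self-contained write-up — is the behaviour on the walls: for $\zeta$ lying on a hyperplane of the form $\langle\zeta_L-Y_Q,\ \cdot\ \rangle=0$ (for a weight defining $\Fa_Q^+$) or on a facet of one of the convex envelopes, several of the characteristic functions jump simultaneously, so one must verify that the open/closed conventions built into $\tau_Q$ and into the alternating-sum definition of $\sigma_{M_\sharp}^Q$ (which is rigged precisely for this) make the telescoping hold everywhere and not merely almost everywhere. In our application in Section~9 the product $\sigma_{M_\sharp}^Q\tau_Q$ is always integrated against smooth compactly supported data, so this boundary issue could alternatively be bypassed by a density argument; but the most economical route is to invoke \cite[Lemma~3.9]{Ar91} directly.
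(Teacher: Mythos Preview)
Your approach is correct and matches the paper's: the paper simply states that the proposition follows from \cite[3.9]{Ar91} without further argument. Your write-up goes beyond this by sketching the combinatorial mechanism (the decomposition $\zeta=\zeta^L+\zeta_L$, the role of Langlands' combinatorial lemma, and the boundary conventions), which is helpful but not required here.
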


\subsection{Change the truncated function}
We use the same notation as Section 9.2. Fix a minimal Levi subgroup $M_{min}$ of $G$ contained in $M_{\sharp}$, a hyperspecial subgroup $K_{min}$ of $G$ related to $M_{min}$ and $P_{min}=M_{min}U_{min}\in \CP(M_{min})$. Let $\Delta_{min}$ be the set of simple roots of $A_{M_{min}}$ in $\Fu_{min}$.
Given $Y_{min}\in \Fa_{P_{min}}^{+}$, for any $P'\in \CP(M_{min})$, there exists a unique element $w\in W(G,M_{min})$ such that $wP_{min}w^{-1}=P'$. Set $Y_{P'}=wY_{P_{min}}$. The family $(Y_{P'})_{P'\in \CP(M_{min})}$ is $(G,M_{min})$-orthogonal and positive. For $g\in G(F)$, define $\CY(g)=(Y(g)_Q)_{Q\in \CP(M_{\sharp})}$ to be
$$Y(g)_Q=Y_Q-H_{\bar{Q}}(g).$$
Then it is easy to show the following statements.

{\bf (1)} There exists $c_1>0$ such that for any $g\in G(F)$ with $\sigma(g)< c_1 \inf\{\alpha(Y_{P_{min}}); \alpha\in \Delta_{min}\}$, the family $\CY(g)$ is $(G,M_{\sharp})$-orthogonal and positive. And $Y(g)_Q\in \Fa_{Q}^{+}$ for all $Q\in \CF(M_{\sharp})$.

We fix such a $c_1$. Note that for $m\in M_{\sharp}(F)$, $\CY(mg)$ is a translation of $\CY(g)$ by $H_{M_{\sharp}}(m)$. Hence $\CY(g)$ is $(G,M_{\sharp})$-orthogonal and positive for
$$g\in M_{\sharp}(F) \{g'\in G(F)\mid \sigma(g')<c_1 \inf\{\alpha(Y_{P_{min}}); \alpha\in \Delta_{min}\}\}.$$
For such a $g$, let
\begin{equation}\label{truncation 2}
\tilde{v}(g)=\nu(A_T)\int_{Z_{G}(F) \backslash A_T(F)} \sigma_{M_{\sharp}}^{G}(H_{M_{\sharp}}(a),\CY(g))da.
\end{equation}

{\bf (2)} There exist $c_2>0$ and a compact subset $\omega_T$ of $\Ft_0(F)$ satisfying the following condition: If $g\in G(F)$, and
$$
X\in \Ft_0(F)[>N^{-b}]\cap (\Ft'(F)\times (\Ft'')^0(F))
$$
with $({}^g f_{x,\omega})^{\sharp}(X)\neq 0$, then $X\in \omega_T$ and $\sigma_T(g)< c_2\log(N)$.

In fact, since $({}^g f_{x,\omega})^{\sharp}(X)=(f_{x,\omega})^{\hat{}}(g^{-1}Xg)$, $g^{-1}Xg$ is contained in compact subset of $\Fg_{x,0}(F)$.
This implies that $X$ is in a compact subset of $\Ft_{0}(F)$. By Proposition \ref{h-c}, we have
$$\sigma_T(g)\ll 1+\mid \log D^{G_x}(X) \mid = 1+\mid \log D^{G_x}(X'') \mid \ll \log(N)$$
where the last inequality holds because $X\in \Ft_0(F)[>N^{-b}]$ and is contained in a compact subset.

Now, fix $\omega_T$ and $c_2$ as in (2). We may assume that $\omega_{T}=\omega_{T'}\times \omega_{T''}$ where $\omega_{T'}$ is a compact subset of $\Ft'(F)$ and $\omega_{T''}$ is a compact subset of $\Ft''(F)$. Suppose that
$$c_2\log(N)< c_1\inf\{\alpha(Y_{min})\mid \alpha\in \Delta_{min}\}.$$
Here $c_1$ comes from (1). Then $\tilde{v}(g)$ is defined for all $g\in G(F)$ satisfying condition (2).

\begin{prop}\label{change truncation}
There exist $c>0$ and $N_0\geq 1$ such that if $N\geq N_0$ and $c\log(N)<\inf\{\alpha(Y_{min})\mid \alpha\in \Delta_{min}\}$, we have
\begin{equation}
\int_{Z_{G_x}(F) A_T(F)\backslash G(F)} {}^g f^{\sharp}_{x,\omega}(X) \kappa_{N,X''}(g) dg=\int_{Z_{G_x}(F) A_T(F)\backslash G(F)} {}^g f^{\sharp}_{x,\omega}(X) \tilde{v}(g) dg
\end{equation}
for every $X\in \Ft_0(F)[>N^{-b}]\cap (\Ft'(F)\times (\Ft'')^0(F))$.
\end{prop}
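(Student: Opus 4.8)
The plan is to bring the truncation $\kappa_{N,X''}$ into the shape of one of Arthur's combinatorial truncation functions and then use strong cuspidality of $f$ to discard the error terms. First I would record the constraints on the support of the integrand: if $X\in\Ft_0(F)[>N^{-b}]\cap(\Ft'(F)\times(\Ft'')^0(F))$ and ${}^g f^{\sharp}_{x,\omega}(X)\neq 0$, then by condition $(2)$ of Section 9.3 one has $X\in\omega_T$ (a fixed compact set) and $\sigma_T(g)<c_2\log N$, and by Proposition \ref{h-c} the section point satisfies $\sigma(\gamma_{X''})\ll\log N$ as well; choosing $c$ so that $c\log N<\inf\{\alpha(Y_{min})\mid\alpha\in\Delta_{min}\}$ forces $\sigma(g)$ and $\sigma(\gamma_{X''})$ to be small compared with $\inf\{\alpha(Y_{min})\}$, so that property $(1)$ of Section 9.3 applies, the family $\CY(g)$ is $(G,M_{\sharp})$-orthogonal and positive, and hence $\tilde{v}(g)$ is defined and Proposition \ref{truncation} is available. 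The standing hypothesis that $x$ is central or non-split enters only through $Z_{G_x}\cap A_T=Z_G$, which makes the normalizing constant $\nu(A_T)$ common to $\kappa_{N,X''}$ and to $\tilde{v}$.

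Next, inside the $A_T(F)$-integral defining $\kappa_{N,X''}(g)$ I would insert the partition of unity $\sum_{Q\in\CF(M_{\sharp})}\sigma_{M_{\sharp}}^{Q}(H_{M_{\sharp}}(a),\CY(g))\,\tau_Q(H_{M_{\sharp}}(a)-Y(g)_Q)=1$ from Proposition \ref{truncation}, obtaining a decomposition $\kappa_{N,X''}(g)=\sum_{Q\in\CF(M_{\sharp})}\Phi_Q(g)$; the majorization of $\kappa_{N,X''}$ supplied by Lemma \ref{major 2} is what legitimizes the interchange of summation and integration and secures absolute convergence of every integral that follows. Comparing the $Q=G$ contribution with $\tilde{v}(g)$ reduces the proposition to showing that, after integration of ${}^g f^{\sharp}_{x,\omega}(X)$ against them over $Z_{G_x}(F)A_T(F)\backslash G(F)$, both the $\Phi_G$-versus-$\tilde{v}$ discrepancy (which comes from the defining inequalities of $\kappa_N$ being slightly looser than an exact convex-polytope condition, via the $(1+\epsilon)N$ bound on the off-diagonal entries in the split case, together with the bounded shift by $\gamma_{X''}$) and each term $\Phi_Q(g)$ with $Q\neq G$ vanish. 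For a proper $Q=LU_Q$, the factor $\tau_Q(H_{M_{\sharp}}(a)-Y(g)_Q)$ pushes $a$ far into the $\Fa_Q^{+}$-direction, and after reorganizing the integral over $Z_{G_x}(F)A_T(F)\backslash G(F)$ along the parabolic $Q$ one is left with an integral of ${}^g f^{\sharp}_{x,\omega}(X)$ over $U_Q(F)$ against a function that is insensitive to the $U_Q(F)$-part of $g$; strong cuspidality of $f$ is what should make this inner integral vanish.

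\textbf{The main obstacle} is precisely this last vanishing, and it is the point at which the Ginzburg--Rallis situation genuinely differs from the Gan--Gross--Prasad one: since the character $\xi$ is not attached to the simple roots, ${}^g f^{\sharp}_{x,\omega}$ is the partial Fourier transform of a \emph{localized} strongly cuspidal function rather than a strongly cuspidal function, so the cancellation of the relevant unipotent integrals is not formal, and there is in addition a genuine degeneracy to control when the torus $T$ is $F$-split (which, under the standing hypothesis, happens only in the central case, where $G_x=\GL_6(F)$ and $\Ft^0(F)=\Ft_{0,reg}(F)$ as recorded in Remark \ref{orbit 2}). These are exactly the two difficulties isolated in Lemma \ref{major 2} and Lemma \ref{split zero}, whose proofs are postponed to Appendix A; granting them, each $\Phi_Q$ with $Q\neq G$, as well as the $\Phi_G$-versus-$\tilde{v}$ discrepancy, integrates to zero against ${}^g f^{\sharp}_{x,\omega}(X)$, which yields the asserted identity.
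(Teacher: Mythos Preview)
Your overall plan---insert Arthur's partition of unity, match the $Q=G$ pieces, and kill the proper-$Q$ pieces by strong cuspidality---is the right shape, but two concrete gaps prevent it from working as written.

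\textbf{The $Q=G$ term and the missing auxiliary truncation.} You decompose $\kappa_{N,X''}$ using the family $\CY(g)$ itself, so your $\Phi_G(g)$ equals $\nu(A_T)\int \kappa_N(\gamma_{X''}^{-1}ag)\,\sigma_{M_\sharp}^{G}(H_{M_\sharp}(a),\CY(g))\,da$, and you want this to agree with $\tilde v(g)=\nu(A_T)\int \sigma_{M_\sharp}^{G}(H_{M_\sharp}(a),\CY(g))\,da$. For that you would need $\kappa_N(\gamma_{X''}^{-1}ag)=1$ whenever $H_{M_\sharp}(a)$ lies in the $\CY(g)$-polytope. But the hypothesis of the proposition gives only a \emph{lower} bound $c\log N<\inf_\alpha\alpha(Y_{min})$; there is no upper bound on $Y_{min}$ in terms of $N$, so the $\CY(g)$-polytope may be far larger than the scale-$N$ box defining $\kappa_N$, and the discrepancy is not at all the ``slightly looser'' boundary effect you describe. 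The paper fixes this by introducing a second parameter $Z_{P_{min}}$ with the two-sided constraint $c_5\log N<\inf_\alpha\alpha(Z_{P_{min}})$ and $\sup_\alpha\alpha(Z_{P_{min}})\le\min(\inf_\alpha\alpha(Y_{P_{min}}),(\log N)^2)$, and applies the partition of unity with respect to $\CZ(g)$ to \emph{both} $\kappa_{N,X''}$ and $\tilde v$. The upper bound $(\log N)^2$ then forces $\sigma(a)\ll(\log N)^2\ll N$ on the support of the $Q=G$ piece, so $\kappa_N\equiv 1$ there, while the inequality $\CZ\le\CY$ gives $\sigma_{M_\sharp}^G(\cdot,\CY(g))\equiv 1$ there as well; this yields $\kappa_{N,X''}(G,g)=\tilde v(G,g)$ pointwise, not merely after integration.

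\textbf{The proper-$Q$ terms.} Two corrections here. First, the Iwasawa decomposition must be taken along $\bar Q$, not $Q$: the combinatorial factor $\sigma_{M_\sharp}^{Q}(\zeta,\CZ(g))\tau_Q(\zeta-\CZ(g)_Q)$ depends on $g$ only through $H_{\bar P'}(g)$ for $P'\subset Q$, hence is left $U_{\bar Q}(F)$-invariant, not $U_Q(F)$-invariant. Second, the genuinely hard step is not the vanishing of $\int_{U_{\bar Q}}{}^{\bar u l k}f_{x,\omega}^{\sharp}(X)\,d\bar u$---that is exactly Lemma~\ref{strongly cuspidal}(1) and is immediate---but rather the $U_{\bar Q}(F)$-invariance of $\kappa_{N,X''}(Q,\bar u g)$ (property~(6) in the paper). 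This is where the specific shape of $\kappa_N$ (including the $(1+\epsilon)N$ slack on the unipotent entries) and an explicit choice of the section $\gamma_{X''}$ enter; its proof occupies all of Section~9.5 via Lemma~\ref{U-invariant 2}. Your invocation of Lemma~\ref{split zero} is misplaced: that lemma belongs to Section~9.6, the split non-central case, which is explicitly excluded from the standing hypothesis of Proposition~\ref{change truncation}.
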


\begin{proof}
For any $Z_{P_{min}}\in \Fa_{P_{min}}^{+}$, replacing $Y_{P_{min}}$ by $Z_{P_{min}}$, we can construct the family $\CZ(g)$ in the same way as $\CY(g)$. Assume
\begin{equation}\label{condition 1}
c_2\log(N)<c_1 \inf\{\alpha(Z_{min})\mid \alpha\in \Delta_{min}\}.
\end{equation}

For $g\in G(F)$ with $\sigma(g)<c_2 \log(N)$, $\CZ(g)$ is still $(G,M_{\sharp})$-orthogonal and positive. So for $a\in A_T(F)$, by Proposition \ref{truncation}, we have
$$\Sigma_{Q\in F(M_{\sharp})} \sigma_{M_{\sharp}}^{Q}(H_{M_{\sharp}}(a), \CZ(g))\tau_Q(H_{M_{\sharp}}(a)-\CZ(g)_Q)=1.$$
Then we know
\begin{equation}
\tilde{v}(g)=\nu(A_T) \Sigma_{Q\in F(M_{\sharp})} \tilde{v}(Q,g)
\end{equation}
and
\begin{equation}
\kappa_{N,X''}(g)=\nu(A_T) \Sigma_{Q\in F(M_{\sharp})} \kappa_{N,X''}(Q,g)
\end{equation}
where
\begin{equation}\label{J part 3}
\tilde{v}(Q,g)=\int_{Z_G\backslash A_T(F)} \sigma_{M_{\sharp}}^{G}(H_{M_{\sharp}}(a),\CY(g)) \sigma_{M_{\sharp}}^{Q}(H_{M_{\sharp}}(a), \CZ(g))\tau_Q(H_{M_{\sharp}}(a)-\CZ(g)_Q) da
\end{equation}
and
\begin{equation}\label{I part 3}
\kappa_{N,X''}(Q,g)=\int_{Z_G\backslash A_T(F)} \kappa_N(\gamma_{X''}^{-1} ag) \sigma_{M_{\sharp}}^{Q}(H_{M_{\sharp}}(a), \CZ(g))\tau_Q(H_{M_{\sharp}}(a)-\CZ(g)_Q) da.
\end{equation}

{\bf (3)} The functions $g\rightarrow \tilde{v}(Q,g)$ and $g\rightarrow \kappa_{N,X''}(Q,g)$ are left$A_T(F)$-invariant.

Since for $t\in A_T(F)$, $H_{P'}(tg)=H_{M_{\sharp}}(t)+H_{P'}(g)$ for all $P'\in \CP(M_{\sharp})$. We can just change variable $a\rightarrow at$ in the definition of $\tilde{v}(Q,g)$ and $\kappa_{N,X''}(Q,g)$, this gives us the left $A_T(F)$-invariant of both functions, and (3) follows.

Now for $X\in \Ft'(F)\times (\Ft'')^0(F)$, we have
\begin{equation}
\int_{Z_{G_x} A_T(F)\backslash G(F)} {}^g f^{\sharp}_{x,\omega}(X) \kappa_{N,X''}(g) dg=\nu(A_T)\Sigma_{Q\in \CF(M_{\sharp})} I(Q,X)
\end{equation}
and
\begin{equation}
\int_{Z_{G_x} A_T(F)\backslash G(F)} {}^g f^{\sharp}_{x,\omega}(X) \tilde{v}(g) dg=\nu(A_T)\Sigma_{Q\in \CF(M_{\sharp})} J(Q,X)
\end{equation}
where
\begin{equation}\label{I part}
I(Q,X)=\int_{Z_{G_x} A_T(F)\backslash G(F)} {}^g f^{\sharp}_{x,\omega}(X) \kappa_{N,X''}(Q,g) dg
\end{equation}
and
\begin{equation}\label{J part}
J(Q,X)=\int_{Z_{G_x} A_T(F)\backslash G(F)} {}^g f^{\sharp}_{x,\omega}(X) \tilde{v}(Q,g) dg.
\end{equation}
Then it is enough to show for all $Q\in \CF(M_{\sharp})$, $I(Q,X)=J(Q,X)$.

\textbf{Firstly we consider the case when $Q=G$}. Suppose
\begin{equation}\label{condition 2}
\sup\{\alpha(Z_{P_{min}})\mid \alpha\in \Delta_{min}\}
\leq \begin{cases}
\inf\{\alpha(Y_{P_{min}}) \mid \alpha\in \Delta_{min}\}, \\
\log(N)^2.
\end{cases}
\end{equation}
Then we are going to prove

{\bf (4)} There exists $N_1>1$ such that for all $N\geq N_1$, $g\in G(F)$ with $\sigma_T(g)\leq c_2\log(N)$, and for all $X''\in \omega_{T''}\cap (\Ft'')^0(F) [>N^{-b}]$, we have
\begin{equation}\label{change 1}
\kappa_{N,X''}(G,g)=\tilde{v}(G,g).
\end{equation}
Here $\Ft''[>N^{-b}]$ means that we only consider the polynomials $D^{G_x}(X'')$ together with the numerator and the denominator of $Q_T(X'')$ which are elements in $\CQ_T$.

In order to prove (4), it is enough to show that for all $a\in A_T(F)$ with $\sigma_{M_{\sharp}}^{G}(H_{M_{\sharp}}(a), \CZ(g))=1$, we have $\sigma_{M_{\sharp}}^{G}(H_{M_{\sharp}}(a), \CY(g))=\kappa_N(\gamma_{X}^{-1}ag)$. Since both sides of \eqref{change 1} are left $A_T(F)$-invariant, we may assume $\sigma(g)\leq c_2\log(N)$.

By the first inequality of \eqref{condition 2}, $\sigma_{M_{\sharp}}^{G}(H_{M_{\sharp}}(a), \CZ(g))=1$ will implies
$$\sigma_{M_{\sharp}}^{G}(H_{M_{\sharp}}(a), \CY(g))=1.$$
Then by the second inequality of \eqref{condition 2}, together with the fact that $\sigma(g)\ll \log(N)$, we know $\mid \CZ(g)_{P'}\mid \ll \log(N)^2$ for every $P'\in \CP(M_{\sharp})$, here $| \cdot |$ is the norm on $\Fa_{M_{\sharp}} /\Fa_G$. Then combining with the fact that $\sigma_{M_{\sharp}}^{G}(H_{M_{\sharp}}(a), \CZ(g))=1$, we know up to an element in center, $\sigma(a)\ll \log(N)^2$. Since the integrals defining $I(Q,X)$ and $J(Q,X)$ are integrating modulo the center, we may just assume that $\sigma(a)\ll \log(N)^2$.

By \eqref{local 2} and the fact that $X''\in \omega_{T''}\cap (\Ft'')^0(F) [>N^{-b}]$, we know $\sigma(\gamma_X)\ll 1+\mid \log D^{G_x}(X)\mid \ll \log(N)$, and hence $\sigma(\gamma_{X}^{-1} ag)\ll \log(N)^2$. By the definition of $\kappa_N$, together with the relations between the norm of an element and the norm of its Iwasawa decomposition, we can find $c_3>0$ such that for any $g'\in G(F)$ with $\sigma(g')<c_3 N$, we have $\kappa_N(g')=1$. Now for $N$ large enough, we definitely have $\sigma(\gamma_{X}^{-1} ag)<c_3 N$. In this case, we have $\kappa_N(\gamma_{X}^{-1} ag)=1=\sigma_{M_{\sharp}}^{G}(H_{M_{\sharp}}(a), \CY(g))$. This proves (4).

Combining (2) and (4), together with \eqref{I part} and \eqref{J part}, we have
\begin{equation}
I(G,X)=J(G,X)
\end{equation}
for every $N\geq N_1, X\in \Ft_0(F)[>N^{-b}] \cap (\Ft'(F)\times (\Ft'')^0(F))$.
\\

\textbf{Now for $Q=LU_Q\in \CF(M_{\sharp})$ with $Q\neq G$} We can decompose the integrals in \eqref{I part} and \eqref{J part} by
\begin{equation}\label{I part 2}
I(Q,X)=\int_{K_{min}} \int_{Z_{G_x} A_T(F)\backslash L(F)}\int_{U_{\bar{Q}}(F)} {}^{\bar{u}lk} f^{\sharp}_{x,\omega}(X) \kappa_{N,X''}(Q,\bar{u}lk) d\bar{u} \delta_Q(l)dl dk
\end{equation}
and
\begin{equation}\label{J part 2}
J(Q,X)=\int_{K_{min}} \int_{Z_{G_x} A_T(F)\backslash L(F)}\int_{U_{\bar{Q}}(F)} {}^{\bar{u}lk} f^{\sharp}_{x,\omega}(X) \tilde{v}(Q,\bar{u}lk) d\bar{u} \delta_Q(l)dl dk.
\end{equation}

The following two properties will be proved in Section 9.4 and 9.5.

{\bf (5)} If $g\in G(F)$ and $\bar{u}\in U_{\bar{Q}}(F)$ with
$$
\sigma(g),\sigma(\bar{u}g)< c_1\inf\{\alpha(Z_{P_{min}})\mid \alpha\in \Delta_{min}\},
$$
then $\tilde{v}(Q,\bar{u}g)=\tilde{v}(Q,g)$.

{\bf (6}) Given $c_4>0$, we can find $c_5>0$ such that if
$$c_5\log(N)<\inf\{ \alpha(Z_{P_{min}})\mid \alpha\in \Delta_{min}\},$$
we have $\kappa_{N,X''}(Q,\bar{u}g) =\kappa_{N,X''}(Q,g)$ for all $g\in G(F)$ and $\bar{u}\in U_{\bar{Q}}(F)$ with $\sigma(g),\sigma(\bar{u}),\sigma(\bar{u}g)<c_4 \log(N)$, and for all $X''\in \omega_{T''}\cap (\Ft'')^0(F)) [>N^{-b}]$.

Based on (5) and (6), we are going to show:

{\bf (7)} There exists $c_5>0$ such that if
\begin{equation}\label{condition 3}
c_5\log(N)<\inf\{\alpha(Z_{P_{min}})\mid \alpha\in \Delta_{min}\},
\end{equation}
we have $I(Q,X)=J(Q,X)=0$ for all $X\in \Ft_0(F)[> N^{-b}]\cap (\Ft'(F)\times (\Ft'')^0(F))$.

In fact, by (2), we may assume that $X\in \omega_T$. We first consider $I(Q,X)$. By (2), we can restrict the integral $\int_{Z_{G_x} A_T(F)\backslash L(F)}$ in \eqref{I part 2} to those $l$ for which their exist $\bar{u}\in U_{\bar{Q}}(F)$ and $K\in K_{min}$ such that $\sigma_T(\bar{u}lk)<c_2\log(N)$. Then up to an element in $A_T(F)$, $l$ can be represented by an element in $L(F)$ such that $\sigma(l)<c_6\log(N)$ for some constant $c_6$. We can find $c_7>0$ such that for all $l,\bar{u}$ and $k$ with $\sigma(l)<c_6\log(N)$ and $\sigma(\bar{u}lk)< c_2\log(N)$, we have $\sigma(\bar{u})< c_7\log(N)$. Now let $c_4=c_2+c_7$, and choose $c_5$ as in (6). Then by applying (6) we know that for fixed $k\in K_{min}, l\in L(F)$ with $\sigma(l)<c_6\log(N)$, we have
\begin{equation}\label{U-invariant 1}
{}^{\bar{u}lk} f^{\sharp}_{x,\omega}(X) \kappa_{N,X''}(Q,\bar{u}lk)={}^{\bar{u}lk} f^{\sharp}_{x,\omega}(X) \kappa_{N,X''}(Q,lk)
\end{equation}
for all $\bar{u}\in U_{\bar{Q}}(F)$.
\\
On the other hand, if $\sigma(\bar{u}lk)\geq c_2\log(N)$, both side of \eqref{U-invariant 1} are equal to $0$ by (2). Therefore \eqref{U-invariant 1} holds for all $\bar{u},\;l$ and $k$.

From \eqref{U-invariant 1}, we know that in the expression of $I(Q,X)$ (as in \eqref{I part 2}), the inner integral is just
$$\int_{U_{\bar{Q}}(F)} {}^{\bar{u}lk} f^{\sharp}_{x,\omega}(X) d\bar{u}.$$
This is zero for $Q\neq G$ by Lemma \ref{strongly cuspidal}. Hence $I(Q,X)=0$. By applying the same argument except replacing (6) by (5), we can also show $J(Q,X)=0$. This proves (7), and finishes the proof of the Proposition.

The last thing that remains to do is to verify that we can find $Z_{P_{min}}$ satisfies condition \eqref{condition 1}, \eqref{condition 2} and \eqref{condition 3}. This just follows from the conditions we imposed on $N$ and $Y_{P_{min}}$.
\end{proof}

\subsection{Proof of 9.3(5)}
By \eqref{J part 3}, we know
$$\tilde{v}(Q,G)=\int_{Z_G(F)\backslash A_T(F)} \sigma_{M_{\sharp}}^{G}(H_{M_{\sharp}}(a),\CY(g)) \sigma_{M_{\sharp}}^{Q}(H_{M_{\sharp}}(a), \CZ(g))\tau_Q(H_{M_{\sharp}}(a)-\CZ(g)_Q) da.$$
The function $\zeta\rightarrow \sigma_{M_{\sharp}}^{Q}(\zeta,\CZ(g))$ and $\zeta\rightarrow \tau_Q(\zeta-\CZ(g)_Q)$ only depend on $H_{\bar{P'}}(g)$ for $P'\in \CF(M_{\sharp})$ with $P'\subset Q$. For such $P'$, $H_{\bar{P'}}(\bar{u}g)=H_{\bar{P'}}(g)$ for $\bar{u}\in U_{\bar{Q}}(F)$. Therefore for all $\bar{u}\in U_{\bar{Q}}(F)$,
$$\sigma_{M_{\sharp}}^{Q}(H_{M_{\sharp}}(a), \CZ(g))\tau_Q(H_{M_{\sharp}}(a)-\CZ(g)_Q)=\sigma_{M_{\sharp}}^{Q}(H_{M_{\sharp}}(a), \CZ(\bar{u}g))\tau_Q(H_{M_{\sharp}}(a)-\CZ(\bar{u}g)_Q).$$

Now for all $a\in A_T(F)$ with the property that
$$
\sigma_{M_{\sharp}}^{Q}(H_{M_{\sharp}}(a),\CZ(g))\tau_Q(H_{M_{\sharp}}(a)-\CZ(g)_Q)\neq 0,
$$
we need to show
\begin{equation}\label{10.2}
\sigma_{M_{\sharp}}^{G}(H_{M_{\sharp}}(a),\CY(g))=\sigma_{M_{\sharp}}^{G}(H_{M_{\sharp}}(a),\CY(\bar{u}g)).
\end{equation}
For any $P'\in \CP(M_{\sharp})$ with $P'\subset Q$, it determines a chamber $\Fa_{P'}^{L,+}$ in $\Fa_{M_{\sharp}}^{L}$. Let $\zeta=H_{M_{\sharp}}(a)$,
and fix a $P'$ such that $proj_{M_{\sharp}}^{L}(\zeta)\in CL(\Fa_{P'}^{L,+})$ where $CL$ means closure.

\begin{lem}$\zeta\in CL(\Fa_{P'}^{+})$.
\end{lem}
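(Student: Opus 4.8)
The goal is to show that $\zeta = H_{M_\sharp}(a) \in \mathrm{CL}(\Fa_{P'}^+)$, given that $\mathrm{proj}_{M_\sharp}^L(\zeta)$ lies in the closure of the chamber $\Fa_{P'}^{L,+}$ inside $\Fa_{M_\sharp}^L$, and that $\sigma_{M_\sharp}^Q(\zeta, \CZ(g))\tau_Q(\zeta - \CZ(g)_Q) \neq 0$ for $Q = LU_Q \supset P'$.

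Let me think about what's at play. We have the decomposition $\Fa_{M_\sharp} = \Fa_{M_\sharp}^L \oplus \Fa_L$, and correspondingly a decomposition of simple roots. The set $\Delta_{P'}$ of simple roots of $A_{M_\sharp}$ in $P'$ splits into those in $\Delta_{P'}^L$ (roots trivial on $\Fa_L$, i.e. "inside" $L$) and those roots whose restriction to $\Fa_L$ gives the simple roots $\Delta_Q$ of $A_L$ in $U_Q$. The chamber $\Fa_{P'}^+$ is cut out by positivity of all $\alpha \in \Delta_{P'}$ (well, of the associated weights/coweights depending on convention), while $\Fa_{P'}^{L,+}$ is cut out by positivity of $\Delta_{P'}^L$ on $\Fa_{M_\sharp}^L$, and $\Fa_Q^+$ by positivity of $\Delta_Q$ on $\Fa_L$.

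The plan is: First, I would use the hypothesis $\mathrm{proj}_{M_\sharp}^L(\zeta) \in \mathrm{CL}(\Fa_{P'}^{L,+})$ to conclude that $\alpha(\zeta) \geq 0$ (or $\varpi_\alpha(\zeta) \geq 0$, matching the paper's convention for $\sigma$ and $\tau$) for every $\alpha \in \Delta_{P'}^L$, since these roots factor through the projection to $\Fa_{M_\sharp}^L$. Second, I would use the condition $\tau_Q(\zeta - \CZ(g)_Q) \neq 0$: recall $\tau_Q$ is the characteristic function of $\Fa_{M_\sharp}^L + \Fa_Q^+$, so $\tau_Q(\zeta - \CZ(g)_Q) \neq 0$ forces $\mathrm{proj}_L(\zeta - \CZ(g)_Q) \in \Fa_Q^+$. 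Since $\CZ(g)_Q = \mathrm{proj}_L$ of a positive orthogonal family member and in particular lies in $\Fa_Q^+$ (by statement (1) applied to the $\CZ$ family, $Z(g)_Q \in \Fa_Q^+$), and $\Fa_Q^+$ is a convex cone, $\mathrm{proj}_L(\zeta) = \mathrm{proj}_L(\zeta - \CZ(g)_Q) + \CZ(g)_Q \in \Fa_Q^+$, hence $\beta(\mathrm{proj}_L(\zeta)) > 0$ for all $\beta \in \Delta_Q$. Third, combine: every simple root in $\Delta_{P'}$ is either in $\Delta_{P'}^L$ (handled by step one, giving $\geq 0$) or restricts to an element of $\Delta_Q$ on $\Fa_L$ (handled by step two, giving $> 0$); therefore $\zeta$ satisfies all the defining inequalities of $\mathrm{CL}(\Fa_{P'}^+)$, which is exactly the claim.

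The main obstacle I anticipate is bookkeeping with conventions: making sure the characteristic functions $\sigma_{M_\sharp}^Q$ and $\tau_Q$ are being evaluated with the correct roots versus coweights versus fundamental weights (Arthur's $\sigma$ and $\tau$ functions use complementary sets — $\tau_Q$ uses simple roots $\Delta_Q$, while $\hat\tau_Q$ uses weights $\hat\Delta_Q$), and that "closure of chamber" is expressed via the right inequalities. I would also need to double-check that the hypothesis actually is $\mathrm{proj}_{M_\sharp}^L(\zeta) \in \mathrm{CL}(\Fa_{P'}^{L,+})$ rather than the open chamber, so that the conclusion is the closure $\mathrm{CL}(\Fa_{P'}^+)$ and not the open one — the strict inequality coming from $\Fa_Q^+$ is then harmless. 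Once the convention is pinned down, the argument is a short direct verification: decompose $\zeta = \mathrm{proj}_{M_\sharp}^L(\zeta) + \mathrm{proj}_L(\zeta)$, evaluate each simple root of $P'$ on the appropriate piece, and read off non-negativity.
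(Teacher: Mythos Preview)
Your argument has a genuine gap in step three. For a simple root $\alpha \in \Delta_{P'} \setminus \Delta_{P'}^{L}$ (i.e.\ a root lying in $U_Q$), it is true that $\alpha|_{\Fa_L} \in \Delta_Q$, but $\alpha$ does \emph{not} vanish on $\Fa_{M_\sharp}^{L}$. Hence $\alpha(\zeta) = \alpha(\mathrm{proj}_L(\zeta)) + \alpha(\mathrm{proj}_{M_\sharp}^{L}(\zeta))$, and only the first summand is controlled by your step two; the second can be negative. Concretely, take $G=\GL_3$, $M_\sharp=T$, $Q=P_{2,1}$, $P'=B$ the upper triangular Borel, and $\zeta=(5,-4,-1)$: then $\mathrm{proj}_L(\zeta)=(\tfrac12,\tfrac12,-1)\in \Fa_Q^{+}$ and $\mathrm{proj}_{T}^{L}(\zeta)=(\tfrac92,-\tfrac92,0)\in \Fa_{B}^{L,+}$, yet $\alpha_2(\zeta)=-3<0$, so $\zeta\notin CL(\Fa_{B}^{+})$. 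Your hypotheses (only $\tau_Q$ and $\CZ(g)_Q\in \Fa_Q^{+}$) do not exclude this.

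What rescues the situation is precisely the piece of the hypothesis you did not use: $\sigma_{M_\sharp}^{Q}(\zeta,\CZ(g))=1$. Together with $\tau_Q$, Proposition~\ref{truncation} says $\zeta=\zeta'+\zeta''$ with $\zeta'\in \Fa_Q^{+}$ and $\zeta''$ in the convex hull of the points $\CZ(g)_{P''}$, $P''\subset Q$. The crucial extra input is the positivity $\CZ(g)_{P''}\in \Fa_{P''}^{+}$ for each such $P''$ (from 9.3(1)), not merely of their common projection $\CZ(g)_Q$. Since any root $\alpha$ in $U_Q$ is positive for \emph{every} $P''\subset Q$, one gets $\alpha(\zeta'')>0$ directly, and combined with $\alpha(\zeta')>0$ this gives $\alpha(\zeta)>0$. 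This is the paper's argument; your decomposition $\zeta=\mathrm{proj}_{M_\sharp}^{L}(\zeta)+\mathrm{proj}_L(\zeta)$ loses exactly the information that the convex-hull decomposition retains.
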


\begin{proof}
By the definition of the functions $\sigma_{M_{\sharp}}^{Q}$ and $\tau_Q$, together with the fact that $\sigma_{M_{\sharp}}^{Q}(H_{M_{\sharp}}(a),\CY(g))\tau_Q(H_{M_{\sharp}}(a)-\CZ(g)_Q)\neq 0$, we know that $\zeta$ is the summation of an element $\zeta'\in \Fa_{Q}^{+}$ and an element $\zeta''$ belonging to the convex envelop generated by $\CZ(g)_{P''}$ for $P''\in \CP(M_{\sharp})$ with $P''\subset Q$. For any root $\alpha$ of $A_{M_{\sharp}}$ in $\Fg$, positive with respect to $P'$, if $\alpha$ is in $U_Q$, then it is positive for all $P''\subset Q$ above. By 10.3(1), $\CZ(g)_{P''}\in \Fa_{P''}^{+}$, and $\alpha(\zeta'')>0$. Also we know $\alpha(\zeta')>0$ because $\alpha$ is in $U_Q$ and $\zeta'\in \Fa_{Q}^{+}$. Combining these two inequalities, we have $\alpha(\zeta)>0$.

If $\alpha$ is in $U_{P'}\cap L$, then $\alpha(\zeta)=\alpha(proj_{M_{\sharp}}^{L}(\zeta))\geq 0$ by the choice of $P'$. So the lemma follows.
\end{proof}

By Lemma 3.1 of \cite{Ar91}, for $\zeta\in CL(\Fa_{P'}^{+})$, $\sigma_{M_{\sharp}}^{G}(\zeta,\CY(g))=1$ is equivalent to certain inequality on $\zeta-\CY(g)_{P'}$. This only depends on $H_{\bar{P'}}(g)$. Since $P'\subset Q$ and $H_{\bar{P'}}(g)=H_{\bar{P'}}(\bar{u}g)$, \eqref{10.2} follows, and hence 9.3(5) is proved.

\subsection{Proof of 9.3(6)}
As same as in Section 9.6, we fix a map $X''\rightarrow \gamma_{X''}$ such that
\begin{enumerate}
\item There exists a compact subset $\Omega$ of $\Xi+\Sigma$ such that $X_{\Sigma}''=\gamma_{X''}^{-1} X''\gamma_{X''}\in \Omega$ for all $X''\in \omega_{T''}\cap (\Ft'')^0(F)$.
\item There exists $c_1>0$ such that $\sigma(\gamma_{X''})<c_1\log(N)$ for all $X''\in \omega_{T''}\cap (\Ft'')^0(F)[>N^{-b}]$
\end{enumerate}
For $Q=LU_Q\in \CF(M_{\sharp})$, let $\Sigma_{Q}^{+}$ be the roots of $A_{M_{\sharp}}$ in $\Fu_Q$.

\begin{lem}\label{U-invariant 2}
For $c>0$, there exists $c'>0$ satisfying the following condition:
For given $a\in A_T(F)$, $g\in G(F)$, $\bar{u}\in U_{\bar{Q}}(F)$ and $X''\in \omega_{T''} \cap (\Ft'')^0(F)[>N^{-b}]$,
assume that $\sigma(g), \sigma(\bar{u}),\sigma(\bar{u}g)<c\log(N)$, and $\alpha(H_{M_{\sharp}}(a))>c'\log(N)$ for all $\alpha\in \Sigma_{Q}^{+}$. Then $\kappa_N(\gamma_{X''}^{-1}a\bar{u}g)=\kappa_N(\gamma_{X''}^{-1}ag)$.
\end{lem}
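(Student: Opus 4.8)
The plan is to reduce the claim to a statement about the Iwasawa coordinates that enter the definition of $\kappa_N$, and to use the fact that conjugation by $a$ with $\alpha(H_{M_\sharp}(a))$ large contracts the unipotent radical $U_{\bar Q}$. Concretely, write $\gamma_{X''}^{-1} a \bar u g = (\gamma_{X''}^{-1} a \bar u a^{-1})(a g)\cdot(\text{correction})$; since $\bar u \in U_{\bar Q}(F)$ and $\alpha(H_{M_\sharp}(a)) > c'\log(N)$ for all $\alpha \in \Sigma_Q^+$ (equivalently, $a$ expands along the roots of $U_Q$ and contracts along $U_{\bar Q}$), the element $a\bar u a^{-1} \in U_{\bar Q}(F)$ has $\sigma(a\bar u a^{-1})$ bounded by $\sigma(\bar u)$ minus a quantity proportional to $c'\log(N)$ — so once $c'$ is chosen large enough relative to $c$ and $c_1$, the element $a\bar u a^{-1}$ is forced into a fixed bounded (in fact, arbitrarily small) neighbourhood of the identity. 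The point will then be that perturbing $\gamma_{X''}^{-1} a g$ on the left by such a small unipotent element does not change the value of $\kappa_N$.

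First I would make precise the comparison between $\sigma(\cdot)$ and the Iwasawa coordinates $\sigma(a_{ij}),\sigma(b_{ij}),\sigma(c_{ij})$ (in the split case) or $\sigma(m_i^{-1}m_j)$ (in the non-split case) appearing in \eqref{kappa split} and \eqref{kappa nonsplit}: there is a constant so that each of these coordinates is $\ll \sigma(g)$ and, conversely, $\sigma(g)$ is controlled by them together with a logarithmic error. Next, using $\sigma(g),\sigma(\bar u),\sigma(\bar u g) < c\log(N)$ and $\sigma(\gamma_{X''}) < c_1\log(N)$, I would bound $\sigma(\gamma_{X''}^{-1} a \bar u g)$ and $\sigma(\gamma_{X''}^{-1} a g)$; both are $\ll \log(N) + \sigma(a)$, but more importantly their difference as measured on the Iwasawa coordinates is governed by $\sigma(a\bar u a^{-1})$, which the contraction estimate makes $\le \varepsilon$ (a fixed small constant, independent of $N$) provided $c' = c'(c,c_1,\varepsilon)$ is taken large enough. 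Finally, since $\kappa_N$ is the characteristic function of a region cut out by inequalities of the shape ``$\sigma(\text{coordinate}) \le N$'' (resp.\ $\le (1+\epsilon)N$), and the coordinates of $\gamma_{X''}^{-1} a \bar u g$ and $\gamma_{X''}^{-1} a g$ differ by at most a bounded additive constant while each is either $\le c\log(N) \ll N$ or, on the $A_T$-directions, may be large — here one uses that on the large directions $\kappa_N$ is already insensitive to bounded perturbations because the relevant inequality is strict and the perturbation is $o(N)$ — the two values of $\kappa_N$ agree.

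The main obstacle I expect is the bookkeeping in the last step: $a \in A_T(F)$ need not lie in $M_\sharp$ in a way compatible with the block-diagonal Iwasawa picture used to define $\kappa_N$, so one must track how conjugation by $a$ and left multiplication by $\gamma_{X''}$ interact with the specific three-block Iwasawa decomposition of \eqref{kappa split}/\eqref{kappa nonsplit}, rather than with an abstract height function. The cleanest route is probably to absorb $a$ into the decomposition via $\gamma_{X''}^{-1} a \bar u g = \gamma_{X''}^{-1} a (\bar u a^{-1}) (a g)$ — wait, more carefully $\gamma_{X''}^{-1}(a\bar u a^{-1})(ag)$ — and then to observe that left multiplication by $\gamma_{X''}^{-1}$ and by the small unipotent element $a\bar u a^{-1}$ changes the Iwasawa $M$-part only within a fixed compact set (times a bounded-norm factor), so that the defining inequalities of $\kappa_N$ are preserved on both sides simultaneously once $N$ is large. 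This is the technical heart; the rest is the contraction estimate for $\Ad(a)$ on $\Fu_{\bar Q}$, which is standard.
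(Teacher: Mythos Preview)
Your contraction estimate is correct: with $\alpha(H_{M_\sharp}(a))>c'\log N$ for $\alpha\in\Sigma_Q^+$, the element $a\bar u a^{-1}\in U_{\bar Q}(F)$ can be made as close to $1$ as you like by enlarging $c'$. The gap is in what you do with it. Writing $\gamma_{X''}^{-1}a\bar u g=\gamma_{X''}^{-1}(a\bar u a^{-1})\gamma_{X''}\cdot\gamma_{X''}^{-1}ag$ places the small perturbation on the \emph{left}, but $\kappa_N$ is only right $K$-invariant and left $U(F)H(F)$-invariant; it has no left $K$-invariance. And you cannot push the small element to the right: doing so means conjugating it through $\gamma_{X''}^{-1}ag$, whose $M$-part contains $a$, which is \emph{large} in exactly the directions that undo the contraction (indeed $(ag)^{-1}(a\bar u a^{-1})(ag)=g^{-1}\bar u g$, back to size $c\log N$). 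Your remark that ``$\kappa_N$ is insensitive to bounded perturbations because the perturbation is $o(N)$'' does not rescue this: $\kappa_N$ is a characteristic function with a sharp threshold at $N$, and nothing prevents $\gamma_{X''}^{-1}ag$ from sitting exactly at that threshold.

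The paper's argument is structurally different and does not try to treat $a\bar u a^{-1}$ as a global small perturbation. It first shows the statement is independent of the choice of local section $X''\mapsto\gamma_{X''}$, then \emph{constructs a specific section} $p_X\in\bar P$ with an explicit Bruhat-type factorization $p_X^{-1}=b_X v_X$, $v_X=n_X u_X$ with $b_X$ block-upper-triangular in $M$, $n_X\in U_\sharp\cap L$, $u_X\in U_Q$. The $u_X$ piece is contracted by $a$ and lands in $K$ on the right; the $n_X$ piece conjugates $\bar u$ to another $\bar u'\in U_{\bar Q}$ and is absorbed into $g$. One is reduced to comparing $\kappa_N(b_X a\bar u' g')$ with $\kappa_N(b_X a g')$, where $b_X$ respects the three-block structure defining $\kappa_N$. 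In the non-split case $U_{\bar Q}\subset U$, so $\bar u'$ is removed by left $U$-invariance. In the split case $\bar u'$ has a residual component in $M$, and this is where the $\epsilon$-slack in the definition \eqref{kappa split} (allowing the unipotent coordinate up to $(1+\epsilon)N$ rather than $N$) is genuinely used; without it the argument would fail. Your plan does not see this $\epsilon$, nor the need to pick a section adapted to $Q$.
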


\begin{proof}
We first prove:

{\bf (3)} It's enough to treat the case when $T\in T(G_x)$ is split.

In fact, if $F'/F$ is a finite extension, we can still define $\kappa_{N}^{F'}$ on $G(F')$ in the same way. It is easy to see that $\kappa_{N}^{F'}=\kappa_{Nval_{F'}(\varpi_F)}$ on $G(F)$, and hence we can pass to a finite extension of $F$. Therefore we may assume that $T$ and $G_x$ are split. This proves (3).

{\bf (4)} Let $X''\rightarrow \uline{\gamma_{X''}} ,\uline{X_{\Sigma}}''=(\uline{\gamma_{X''}} )^{-1}X \uline{\gamma_{X''}} $ be another local sections satisfying Conditions (1) and (2). Then the lemma holds for $\gamma_{X''},X_{\Sigma}''$ if and only if it holds for $\uline{\gamma_{X''}} ,\uline{X_{\Sigma}} ''$.

For $X''\in \Ft''(F)$, by Lemma \ref{orbit 1}, there exist $u(X'')\in U_x(F)$ and $t(X'')\in H_x(F)$ such that
$$\uline{X_{\Sigma}}''=u(X'')^{-1}t(X'')^{-1}X_{\Sigma}''t(X'')u(X'').$$
By the choice of $X_{\Sigma}''$, we have $t(X'')^{-1}X_{\Sigma}''t(X'')\in \Xi+\Lambda$. It follows that $u(X'')$ and $t(X'')^{-1}X_{\Sigma}''t(X'')$ can be expressed in terms of polynomials of $\uline{X_{\Sigma}}''$. Hence they are bounded. By Lemma \ref{major 2}, we know
$$\sigma(t(X''))\ll 1+\mid \log\mid Q_T(X'')\mid_F \mid.$$
So for $X''\in (\Ft'')^0(F)[>N^{-b}]\cap \omega_{T''}$, we have $\sigma(t(X''))\ll \log(N)$.

Note that the conjugations of $X''$ by $\uline{\gamma_{X''}} $ and by $\gamma_{X''} t(X'')u(X'')$ are same. Since $X''$ is regular, there exists $y(X'')\in T(F)$ such that $\uline{\gamma_{X''}} =y(X'')\gamma_{X''} t(X'')u(X'')$. The majorization of $\uline{\gamma_{X''}}$,
$\gamma_{X''},t(X'')$, and $u(X'')$ implies that $\sigma(y(X''))\ll \log(N)$ for $X\in \Ft_0(F)[>N^{-b}] \cap \omega_T$.
Let $c>0, a,g,\bar{u},X''$ as in the statement of lemma. Since $\kappa_N$ is left $H(F)U(F)$-invariant, we have
$$\kappa_N((\uline{\gamma_{X''}} )^{-1} a\bar{u}g)=\kappa_N( \gamma_{X''}^{-1}a\bar{u}'g'),\kappa_N((\uline{\gamma_{X''}} )^{-1}ag)=\kappa_N( \gamma_{X''}^{-1}ag')$$
where $g'=y(X'')^{-1}g$ and $\bar{u}'=y(X'')^{-1}\bar{u}y(X'')$.

Now suppose that the Lemma holds for $\gamma_{X''}, X_{\Sigma}''$. By the above discussion, there exists $c''>0$ such that $\sigma(g'),\sigma(\bar{u}'),\sigma(\bar{u}'g') <c''\log(N)$ for $g$ and $\bar{u}$ as in the lemma. Let $c'$ be the $c'$ associated to $c=c''$ for $\gamma_{X''}$ and $X_{\Sigma}''$. This $c'$ is what we need for $\uline{\gamma_{X''}}$ and $\uline{X_{\Sigma}} ''$. The reverse direction is similar. This proves (4).

We go back to the proof of the lemma. We only deal with the case when $x$ is in the center, the other cases followed by the same method and the calculation is much easier. In this case, $X=X''$.  We replace $X''$ by $X$ for the rest of the proof. Since $T$ is split, $M_{\sharp}=T$. May choose $P_{\sharp}=M_{\sharp}N_{\sharp}\in \CP(M_{\sharp})$ and only consider those $a\in A_T(F)$ with $H_{M_{\sharp}}(a) \in CL(\Fa_{P_{\sharp}}^{+})$. Then we must have $P_{\sharp}\subset Q$. By conjugating by a Weyl element $w$, we may assume that $P_{\sharp}\subset \bar{P}$ is the lower Borel subgroup. Note that when we conjugate it by $w$, we just need to change $X\rightarrow wXw^{-1}$, $\gamma_X\rightarrow w\gamma_X$, $a\rightarrow waw^{-1}$, $\bar{u}\rightarrow w\bar{u}w^{-1}$ and $g\rightarrow wg$. This is allowable by (4). We note that although in (3) we reduce to the case where $T$ split, it still matters whether we are starting from the split case or the nonsplit case since the definition of $\kappa_N$ really depends on it. If we are in the nonsplit case, we can make $\bar{P}\subset Q$ since $\bar{P}$ is the minimal parabolic subgroup in this case; but this is not possible in the split case since $\bar{P}$ will no longer be the minimal parabolic subgroup.

For $X=\diag(x_1,x_2,x_3,x_4,x_5,x_6)\in \Ft_{0,reg}(F)$, if $\mid x_2-x_1\mid_F \geq \max\{\mid x_3-x_4\mid_F, \mid x_5-x_6 \mid_F\}$, define
$$X_{\Sigma}'=\begin{pmatrix} X_1 & 0 & 0 \\ aI_2 & X_2 & 0 \\ 0 & bI_2 & X_3 \end{pmatrix}$$
where we define
$X_1=\begin{pmatrix} x_1 & 0 \\ 0 & x_2 \end{pmatrix}$, $X_2=\begin{pmatrix} x_3+m & 1 \\ -m^2+Bm & x_4-m \end{pmatrix}$, and
$X_3=\begin{pmatrix} x_5+n & -n^2+Cn \\ 1 & x_6-m \end{pmatrix}$ with
$$m=\frac{A+B+C}{2}\cdot\frac{A+B-C}{2A}, \ \
n=\frac{A+B+C}{2}\cdot\frac{A+C-B}{2A},$$
where $A=x_2-x_1$, $B=x_4-x_3$, and $C=x_6-x_5$.
Then the map $X\rightarrow X_{\Sigma}'$ satisfies condition (1). (Note that we assume $\mid A\mid \geq \max\{\mid B\mid,\mid C\mid \}$.)
We can find an element $p_X \in \bar{P}$ of the form $p_X=\bar{u}_X m_X$ such that $p_X X_{\Sigma}' p_{X}^{-1}=X$ where
$$m_X=\begin{pmatrix} m_1 & 0 & 0 \\ 0 & m_2 & 0 \\ 0 & 0 & m_3 \end{pmatrix} \in M, \bar{u}_X\in \bar{U}.$$
It follows that $m_X \diag(X_1,X_2,X_3) m_{X}^{-1}=X$. So we can choose
$$m_1=I_2, m_{2}^{-1}=\begin{pmatrix} 1 & 1 \\ -m & B-m \end{pmatrix}, m_{3}^{-1}=\begin{pmatrix} 1 & 1 \\ -n & C-n \end{pmatrix}.$$
Similarly, we can define $m_X$ and $X_{\Sigma}'$ for the case when $\mid x_3-x_4\mid_F \geq \max\{\mid x_1-x_2\mid_F, \mid x_5-x_6 \mid_F\}$ or $\mid x_5-x_6\mid_F \geq \max\{\mid x_1-x_2\mid_F, \mid x_3-x_4 \mid_F\}$.

Now by adding polynomials $x_1-x_2$, $x_3-x_4$ and $x_5-x_6$ into the set $\CQ_T$, for any $X\in \omega_T\cap \Ft^0(F)[>N^{-b}]$,
we have $\sigma(m)\ll \log(N)$. Applying Proposition \ref{h-c} again, we know that $p_X,X_{\Sigma}'$ satisfy Conditions (1) and (2). In fact, here we know that $\sigma_T(p_X)\ll \log(N)$ and $\sigma(m_X)\ll \log(N)$ for $X\in \omega_T\cap \Ft^0(F)[>N^{-b}]$, this forces $\sigma(\bar{u}_X)\ll \log(N)$. Now by (4), it is enough to prove this Lemma for $p_X,X_{\Sigma}'$.

We will only deal with the case when $\mid x_2-x_1\mid_F \geq \max\{\mid x_3-x_4\mid_F, \mid x_5-x_6 \mid_F\}$, the rest cases follow from a similar calculation. Applying the Bruhat decomposition, we have
$$m_{2}^{-1}=\left( \begin{array}{cc} 1 & 1 \\ 0 & B-m \end{array} \right) \left( \begin{array}{cc} 1 & 0 \\ \frac{m}{m-B} & 1 \end{array} \right)=b_{X,2} w_{X,2}.$$
Similarly we can decompose $m_3$ and $m_1$ in this way. Let
$$b_X=\diag(b_{X,1},b_{X,2},b_{X,3}), w_X=\diag(w_{X,1},w_{X,2},w_{X,3}).$$
By adding some more polynomials on $\CQ_T$, we may still assume that $\sigma(w_X)\ll \log(N)$. (Note $\frac{m}{m-B}$ and $\frac{n}{n-C}$ are rational functions of the $x_i$'s.) It follows that $\sigma(b_X)\ll \log(N)$. Now we can write
$$p_{X}^{-1}=b_X w_X (\bar{u}_X)^{-1}=b_X v_X$$
for some $v_X=w_X (\bar{u}_X)^{-1}\in U_{\sharp}(F)$, and still have $\sigma(b_X),\sigma(v_X)\ll \log(N)$. Since $P_{\sharp}\subset Q$, we can write $v_{X}=n_X u_X$ where $n_X\in U_{\sharp}(F)\cap L(F)$ and $u_X\in U_Q(F)$. Then we have
\begin{eqnarray*}
v_{X}a\bar{u}g&=&n_X u_X a\bar{u}g=n_Xa\bar{u}g \cdot(g^{-1}\bar{u}^{-1} a^{-1} u_X a\bar{u}g)\\
&=&a((a^{-1}n_X a)^{-1}\bar{u}(a^{-1}n_Xa))\cdot (a^{-1}n_{X}^{-1} ag)\cdot(g^{-1}\bar{u}^{-1}a^{-1} u_X a\bar{u}g)\\
&=&a\bar{u}' g' k.
\end{eqnarray*}

For all $a\in A_T(F)$ with $\inf\{\alpha(H_{M_{\sharp}}(a))\mid \alpha\in \Sigma_{Q}^{+} \}>c_4 \log(N)$ for some $c_4>0$ large, $a^{-1}u_Xa-1$ is very close to zero. Hence we can make
$$k=g^{-1}\bar{u}^{-1}a^{-1} u_X a\bar{u}g\in K$$
for all $\sigma(g),\sigma(\bar{u})< c\log(N)$. Since $\kappa_N$ is right $K$-invariant, we have
\begin{equation}\label{10.3}
\begin{split}
\kappa_N(p_{X}^{-1} a\bar{u}g)&=\kappa_N(b_X v_{X}a\bar{u}g)=\kappa_N(b_X a\bar{u}'g'),\\
\kappa_N(p_{X}^{-1} ag)&=\kappa_N(b_X v_{X}ag)=\kappa_N(b_X ag').
\end{split}
\end{equation}
Also since $H_{M_{\sharp}}(a)\in CL(\Fa_{P_{\sharp}}^{+})$, $a^{-1}n_Xa$ is contraction of $n_X$, and hence we still have $\sigma(\bar{u}'),\sigma(g')\ll \log(N)$.

\textbf{If we are in the non-split case}, then we have already make $\bar{P}\subset Q$,
and hence $U_{\bar{Q}}\subset U$. So the $\bar{u}'$ of the first equation in \eqref{10.3} can be moved to the
very left via the $a$-conjugation and the $b_X$-conjugation. Then we can eliminate
it by using left U-invariance property of $\kappa_N$. This proves the Lemma.

\textbf{If we are in the split case}, we may assume that $\bar{u}'\in U_{\bar{Q}}(F)\cap M(F)$ since the rest part can be switched to the front via the $a$-conjugation and the $b_X$-conjugation, and then be eliminated by the left $U$-invariance property of $\kappa_N$. Let $g'=u'm'k'$ be the Iwasawa decomposition with $u'\in U(F), m'\in M(F)$ and $k'\in K$. Then $\sigma(m')\leq c_0\log(N)$ for $c_0=lc$ where $l$ is a fixed constant only depends on $G$. (Here we use the fact that the Iwasawa decomposition preserves the norm up to a bounded constant which only depends on the group and the parabolic subgroup.) We can eliminate $u'$ and $k'$ by the left $U$-invariance and right $K$-invariance property of $\kappa_N$. Now applying the Iwasawa decomposition again, we can write $m'=b'k'$ with $b'$ upper triangle. By the same reason, we have $\sigma(b')\leq c_1\log(N)$ for some $c_1=l' c_0=ll'c$. Again by the right $K$-invariance we can eliminate $k'$. $b'$ can be absorbed by $a$ and $\bar{u}'$. After this process, we will still have the majorization for $\bar{u}'$ (i.e. $\sigma(\bar{u}')\ll \log(N)$), and we will still have $\alpha(H_{M_{\sharp}}(a))>c''\log(N)$ for all $\alpha\in \Sigma_{Q}^{+}$, here $c''=c'-c_1$. So we may assume $m'=1$. In this case, we have
$$\kappa_N(b_X ag')=\kappa_N(b_X a),\kappa_N(a\bar{u}' g')=\kappa_N(b_X a\bar{u}').$$

Now let $b_X a=\diag(l_1,l_2,l_3)$ and $b_X a \bar{u}'=\diag(l_1 ',l_2 ',l_3')$ where $l_i$ and $l_i '$ are all upper triangle $2$-by-$2$ matrices. Since $\bar{u}'$ is an unipotent element and $\sigma(\bar{u}')\ll \log(N)$, $l_i'=l_i n_i$ for some unipotent element $n_i$ with $\sigma(n_i)\ll \log(N)$. Then we know for any $1\leq i,j \leq 3$, $l_{i}^{-1} l_j=\begin{pmatrix} a & x \\ 0 & c \end{pmatrix}$, and $(l_i ')^{-1} l_j '=n_{i}^{-1} \begin{pmatrix} a & x \\ 0 & c \end{pmatrix} n_j$. Since in the definition of $\kappa_N$ for the split case (as in \eqref{kappa split}), we do allow the unipotent part to be bounded by $(1+\epsilon)N$ while the diagonal part is bounded by $N$. Now those $n_i$'s will only add something majorized by $N+C \log(N)$ on the unipotent part and not change the semisimple part. So if we take $N$ large so that $\epsilon N> C\log(N)$, we have
$$\kappa_N(b_X a\bar{u}' g')=\kappa_N(b_X ag').$$
This finishes the proof of the split case, and finishes the proof of the Lemma.
\end{proof}

{\bf We prove 9.3(6)}.

For $c_4>0$, by 10.3(1), we impose the mirror condition
$$c_4\log(N)<c_1\inf\{\alpha(Z_{P_{min}})\mid \alpha\in \Delta_{min} \}$$
to $Z_{P_{min}}$ to make sure all terms are well defined.

By the same argument as in Section 9.4, we know the function $\zeta\rightarrow \sigma_{M_{\sharp}}^{Q}(\zeta,\CZ(g))  \tau_Q(\zeta-\CZ(g)_Q)$ is invariant under $g\rightarrow \bar{u}g$. Therefore
\begin{equation}\label{10.4}
\kappa_{N,X''}(Q,\bar{u}g)-\kappa_{N,X''}(Q,g)
\end{equation}
$$=\int_{Z_G\backslash A_T(F)} \sigma_{M_{\sharp}}^{Q}(H_{M_{\sharp}}(a),\CZ(g))  \tau_Q(H_{M_{\sharp}}(a)-\CZ(g)_Q) (\kappa_N(\gamma_{X''}^{-1} a\bar{u}g)-\kappa_N(\gamma_{X''}^{-1}ag)) du.$$
Let $c=c_4$ as in the Lemma \ref{U-invariant 2}. Then we get some $c'>0$. For $a\in A_T(F)$ with $\sigma_{M_{\sharp}}^{Q}(H_{M_{\sharp}}(a),\CZ(g))  \tau_Q(H_{M_{\sharp}}(a) -\CZ(g)_Q)\neq 0$, by the definition of $\sigma_{M_{\sharp}}^{Q},\; \tau_Q$, and the majorization of $g$, we have
\begin{equation}\label{10.5}
\inf\{\alpha(H_{M_{\sharp}}(a))\mid \alpha\in \Sigma_{Q}^{+}\}-\inf\{ \alpha(Z_{P_{min}})\mid \alpha\in \Delta_{min}\} \gg -\log(N).
\end{equation}
Now choose $c_5>0$ such that $c_5>\frac{c_4}{c_1}$. We also require that the condition
$$\inf\{ \alpha(Z_{P_{min}})\mid \alpha\in \Delta_{min}\}>c_5\log(N)$$
together with \ref{10.5} implies that
$$\inf\{\alpha(H_{M_{\sharp}}(a))\mid \alpha\in \Sigma_{Q}^{+}\}>c'\log(N).
$$
We claim that this is the $c_5$ we need for 9.3(6). In fact, by the discussion above together with Lemma \ref{U-invariant 2}, we know for $g$ and $\bar{u}$ in 10.3(6), $\kappa_N(\gamma_{X''}^{-1}a\bar{u}g)=\kappa_N(\gamma_{X''}^{-1}ag)$ whenever $\sigma_{M_{\sharp}}^{Q}(H_{M_{\sharp}}(a),\CZ(g))  \tau_Q(H_{M_{\sharp}}(a) -\CZ(g)_Q)\neq 0$. This means that the right hand side of \eqref{10.4} equals zero. Hence $\kappa_{N,X''}(Q,\bar{u}g)-\kappa_{N,X''}(Q,g)=0$. This finishes the proof of 9.3(6).

\subsection{The split case}
Now if $x$ is split and not in the center, we are going to show the inner integral of the expression of $I_{x,\omega,N}(f)$ (as in \eqref{10.1}) is zero for $N$ large. In this case, $G_x=GL_3(F)\times GL_{3}(F)$ is a Levi subgroup of $G(F)$, where the first $GL_3(F)$ is the first, third, and fifth rows and columns of $GL_6(F)$, and the second $GL_3(F)$ is the second, fourth, and sixth rows and columns of $GL_6(F)$. Let $S=G_x U_S$ be the parabolic subgroup of $G$ with $U_S$ consists of elements in first, third, and fifth rows, and the second, fourth, and sixth columns. Also we know $Z_{G_x}\subset A_T$ for any $T\in T(G_x)$. By Lemma \ref{I to I*}, we only need to show for $X\in \Ft'(F)\times (\Ft'')^0 \cap \Ft_0(F)[>N^{-b}]$, the inner integral of the $I_{x,\omega,N}(f)$ equals $0$, i.e.
\begin{equation}\label{split 1}
\int_{A_T(F)\backslash G(F)} {}^g f^{\sharp}_{x,\omega}(X'+X'') \kappa_{N,X''}(g) dg =0.
\end{equation}
The idea to prove \eqref{split 1} is that we still want to prove that the truncated function $\kappa_{N,X''}(g)$ is invariant under some unipotent subgroups, and then prove the vanishing result by
applying Lemma \ref{strongly cuspidal}. \textbf{The difference between this case and the case in Section 9.3 is that we not just
prove $I(Q,X)=0$ for proper $Q$, also we show $I(G_x,X)=0$ because $G_x$ itself gives us some unipotent subgroup $U_S$ of $G$.}

Now for $T\in T(G_x)$, let $M_{\sharp}$ be the centralizer of $A_T$ in $G_x$. It is a Levi subgroup of $G_x$, and it is easy to check $A_T=A_{M_{\sharp}}$. In this section, we temporality denote $\CP(M)$ to be the set of parabolic subgroups of $G_x$ with Levi subgroup $M$ (i.e. use $\CP(M)$ instead of $\CP^{G_x}(M)$), and the same for $\CF(M)$ and $\CL(M)$. Let $\CY=(Y_{P_{\sharp}})_{P_{\sharp}\in \CP(M_{\sharp})}$ be a family of elements in $\Fa_{M_{\sharp}}$ that are $(G_x,M_{\sharp})$-orthogonal and positive. For $Q=LU_Q\in \CF(M_{\sharp})$, we can still define the functions $\zeta\rightarrow \sigma_{M_{\sharp}}^{Q}(\zeta,\CY)$ and $\tau_Q(\zeta)$ on $\Fa_{M_{\sharp}}$ as in Section 10.2. Applying the work of Arthur again (\cite{Ar91}), we have a similar result as Proposition \ref{truncation}.

\begin{prop}
The function
$$\zeta\rightarrow \sigma_{M_{\sharp}}^{Q}(\zeta,\CY) \tau_Q(\zeta-Y_Q)$$
is the characteristic function on $\Fa_{M_{\sharp}}$, whose support is on the sum of $\Fa_{Q}^{+}$ and the convex envelop generated by the family $(Y_{P_{\sharp}})_{ P_{\sharp}\in \CP(M_{\sharp}),P_{\sharp}\subset Q}$.
Moreover, for every $\zeta\in \Fa_{M_{\sharp}}$, the following equation holds.
$$\Sigma_{Q\in \CF(M_{\sharp})} \sigma_{M_{\sharp}}^{Q}(\zeta,\CY) \tau_Q(\zeta-Y_Q)=1.$$
\end{prop}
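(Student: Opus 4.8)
The plan is to recognize that this proposition is simply Proposition~\ref{truncation} with the ambient reductive group $G$ replaced by $G_x$, and to deduce it from the same source, \cite[\S3.9]{Ar91}. In the situation of this subsection, $G_x=\GL_3(F)\times\GL_3(F)$ is a connected reductive group over $F$, $M_{\sharp}$ is by construction the centralizer of $A_T$ in $G_x$ and hence a Levi subgroup of $G_x$, and $\CY=(Y_{P_{\sharp}})_{P_{\sharp}\in\CP(M_{\sharp})}$ is assumed $(G_x,M_{\sharp})$-orthogonal and positive. The functions $\zeta\mapsto\sigma_{M_{\sharp}}^{Q}(\zeta,\CY)$ and $\zeta\mapsto\tau_Q(\zeta)$ on $\Fa_{M_{\sharp}}$ are defined relative to the root datum of $G_x$ exactly as the corresponding functions were defined for $G$ in Section~9.2, using the chambers $\Fa_Q^{+}$ and the faces $\Fa_{M_{\sharp}}^{L}+\Fa_Q^{+}$ inside $\Fa_{M_{\sharp}}$, with $\CF(M_{\sharp})$ now denoting, as stipulated at the start of this subsection, the parabolic subgroups of $G_x$ containing $M_{\sharp}$.

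First I would invoke \cite[Lemma~3.9]{Ar91}, whose content is purely combinatorial: for a connected reductive group, a Levi subgroup $M$ of it, and a $(\cdot,M)$-orthogonal positive family $\CY$, the map $\zeta\mapsto\sigma_{M}^{Q}(\zeta,\CY)\,\tau_Q(\zeta-Y_Q)$ is the characteristic function of the sum of $\Fa_Q^{+}$ and the convex hull of $(Y_{P})_{P\subset Q}$, and $\sum_{Q}\sigma_{M}^{Q}(\zeta,\CY)\,\tau_Q(\zeta-Y_Q)=1$ for every $\zeta$, the sum running over parabolic subgroups containing $M$. Nothing in the proof of this fact uses anything about the group beyond its root system and the induced decomposition of $\Fa_M$ into chambers, so it applies verbatim to the pair $(G_x,M_{\sharp})$. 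Applying it with $(G_x,M_{\sharp})$ in place of $(G,M)$ yields the two assertions of the proposition directly, the description of the support being exactly ``$\Fa_{Q}^{+}$ plus the convex envelope generated by $(Y_{P_{\sharp}})_{P_{\sharp}\in\CP(M_{\sharp}),\,P_{\sharp}\subset Q}$'' as in Proposition~\ref{truncation}.

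I do not expect any genuine obstacle. The only point to verify is a bookkeeping one: that the conventions of this subsection for $\sigma_{M_{\sharp}}^{Q}$, $\tau_Q$, the chambers $\Fa_Q^{+}$, and the orthogonality/positivity of $\CY$ coincide, after replacing $G$ by $G_x$, with those of Section~9.2 and of \cite{Ar91}; this is immediate from the definitions. In particular, in the statement of the proposition the convex envelope and the sum $\Fa_L+(\text{hull})$ are formed inside $\Fa_{M_{\sharp}}$ with $L$ the Levi factor of $Q$. Hence the proof reduces to a one-line citation of \cite[\S3.9]{Ar91}, and I would present it exactly as the proof of Proposition~\ref{truncation} was presented.
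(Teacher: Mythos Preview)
Your proposal is correct and matches the paper's approach exactly: the paper introduces this proposition with the sentence ``Applying the work of Arthur again (\cite{Ar91}), we have a similar result as Proposition \ref{truncation}'' and gives no further argument, which is precisely your one-line citation of \cite[\S3.9]{Ar91} with $G$ replaced by $G_x$.
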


Now fix a minimal Levi subgroup $M_{min}$ of $G_x$ contained in $M_{\sharp}$, a hyperspecial subgroup $K_{min}$ of $G_x$ related to $M_{min}$ and $P_{min}=M_{min}U_{min}\in \CP(M_{min})$. Let $\Delta_{min}$ be the set of simple roots of $A_{M_{min}}$ in $\Fu_{min}$.

Given $Y_{min}\in \Fa_{P_{min}}^{+}$, for any $P'\in \CP(M_{min})$, there exists a unique $w\in W(G_x,M_{min})$ such that $wP_{min}w^{-1}=P'$. Set $Y_{P'}=wY_{P_{min}}$. The family $(Y_{P'})_{P'\in \CP(M_{min})}$ is $(G_x,M_{min})$-orthogonal and positive. For $g\in G(F)$, define $\CY(g)=(Y(g)_Q)_{Q\in \CP(M_{\sharp})}$ to be
$$Y(g)_Q=Y_Q-H_{\bar{Q}}(g_x),$$
where $g=ug_x k$ is the Iwasawa decomposition with respect to the parabolic subgroup $S=U_S G_x$. In particular, this function is left $U_S$-invariant.

As same as 9.3(1) and 9.3(2), we have

{\bf (1)} There exists $c_1>0$ such that for any $g\in G(F)$ with $\sigma(g)< c_1 \inf\{\alpha(Y_{P_{min}}); \alpha\in \Delta_{min}\}$, the family $\CY(g)$ is $(G_x,M_{\sharp})$-orthogonal and positive. And $Y(g)_Q\in \Fa_{Q}^{+}$ for all $Q\in \CF(M_{\sharp})$.
\\

{\bf (2)} There exist $c_2>0$ and a compact subset $\omega_T$ of $\Ft_0(F)$ satisfying the following condition: If $g\in G(F)$, and
$$X\in \Ft_0(F)[>N^{-b}]\cap (\Ft'(F)\times (\Ft'')^0(F))$$
with $({}^g f_{x,\omega})^{\hat{}}(X)\neq 0$, then $X\in \omega_T$ and $\sigma_T(g)< c_2\log(N)$.
\\

Now, fix $\omega_T$ and $c_2$ as in (2). We may assume that $\omega_{T}=\omega_{T'}\times \omega_{T''}$ where $\omega_{T'}$ is a compact subset of $\Ft'(F)$ and $\omega_{T''}$ is a compact subset of $\Ft''(F)$. Assume that
\begin{equation}\label{split condition 1}
c_2\log(N)< c_1\inf\{\alpha(Y_{min})\mid \alpha\in \Delta_{min}\}.
\end{equation}
Here $c_1$ comes from (1). For $Q\in \CF(M_{\sharp})$ and $g\in G(F)$ with $\sigma_T(g)<c_2 \log(N)$, let
\begin{equation}
\kappa_{N,X''}(Q,g)=\int_{Z_{G_x}\backslash A_T(F)} \kappa_N(\gamma_{X''}^{-1} ag) \sigma_{M_{\sharp}}^{Q}(H_{M_{\sharp}}(a), \CY(g))\tau_Q(H_{M_{\sharp}}(a)-\CY(g)_Q) da.
\end{equation}
Then same as 9.3(3), we have

{\bf (3)} The function $g\rightarrow \kappa_{N,X''}(Q,g)$ is left $A_T(F)$-invariant.
\\

It follows that for $X\in \Ft'(F)\times (\Ft'')^0(F)$, we have
\begin{equation}\label{I part 4}
\int_{ A_T(F)\backslash G(F)} {}^g f^{\sharp}_{x,\omega}(X) \kappa_{N,X''}(g) dg=\nu(A_T)\Sigma_{Q\in \CF(M_{\sharp})} I(Q,X)
\end{equation}
where
\begin{equation}\label{split 2}
I(Q,X)=\int_{ A_T(F)\backslash G(F)} {}^g f^{\sharp}_{x,\omega}(X) \kappa_{N,X''}(Q,g) dg.
\end{equation}
So it is enough to show that for all $Q\in \CF(M_{\sharp})$, $I(Q,X)=0$. Suppose
\begin{equation}\label{split condition 2}
\sup\{\alpha(Y_{min})\mid \alpha\in \Delta_{min}\} \leq (\log(N))^2.
\end{equation}

\textbf{Firstly, we consider the case when $Q=G_x$}. We can write the integral \eqref{split 2} as
\begin{equation}\label{split 4}
\int_{K} \int_{Z_{G_x} A_T(F)\backslash G_x(F)} \int_{U_S(F)} {}^{ulk} f^{\sharp}_{x,\omega}(X) \kappa_{N,X''}(G_x, ulk) du\delta_R(l) dl dk.
\end{equation}
By the same argument as (7) of Proposition \ref{change truncation}, we only need to prove the function $\kappa_{N,X''}(G_x,g)$ is left
$U_S$-invariant. We use the same method as in Section 9.5.

\begin{lem}
For $c>0$, there exists $N_1>0$ satisfying the following condition: For given $a\in A_T(F), g\in G(F), u\in U_S(F)$ and $X''\in \omega_{T''}\cap (\Ft'')^0(F)[>N^{-b}]$, assume that $\sigma(u), \sigma(g),\sigma(ug) \leq c\log(N)$ and
$$
\sigma_{M_{\sharp}}^{G_x}(H_{M_{\sharp}}(a), \CY(g)_{G_x})\tau_{G_x}(H_{M_{\sharp}}(a)-\CY(g)_{G_x}) =1.
$$
Then for $N>N_1$, we have
\begin{equation}\label{split 3}
\kappa_{N}(\gamma_{X''}^{-1} aug)=\kappa_N(\gamma_{X''}^{-1} ag).
\end{equation}
\end{lem}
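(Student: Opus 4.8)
The plan is to mimic the proof of Lemma~\ref{U-invariant 2} (the proof of 9.3(6)), with the parabolic $S=G_xU_S$ of $G=\GL_6(F)$ and its unipotent radical $U_S$ now playing the role that $\bar Q$ and $U_{\bar Q}$ played there, and with $Q=G_x$. Since $x$ is split and not central, the ambient group is $G=\GL_6(F)$, so only the definition \eqref{kappa split} of $\kappa_N$ (with the extra freedom $\sigma(c_{ij})\le(1+\epsilon)N$ on the unipotent coordinates) is in play. I would first reduce to the case that $T$ and $M_{\sharp}$ are split by extension of scalars, exactly as in the corresponding reduction in the proof of Lemma~\ref{U-invariant 2}: over a finite extension $F'/F$ one has $\kappa_N^{F'}=\kappa_{Nval_{F'}(\varpi_F)}$ on $G(F)$, so passing to $F'$ changes nothing. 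I would then run the change-of-section argument: it suffices to prove the identity for one convenient map $X''\mapsto\gamma_{X''}$ with $\gamma_{X''}^{-1}X''\gamma_{X''}$ in a fixed compact subset of $\Xi+\Sigma$ and $\sigma(\gamma_{X''})\ll\log(N)$ on $\omega_{T''}\cap(\Ft'')^0(F)[>N^{-b}]$, because two such sections differ by a factor in $T(F)$ of log-norm $\ll\log(N)$, absorbed by the left $H(F)U(F)$-invariance of $\kappa_N$; likewise one may conjugate by a Weyl element of $G_x$ to place a chosen $P_{\sharp}\in\CP(M_{\sharp})$ inside a convenient minimal parabolic of $G_x$.

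With $T$ split one has $M_{\sharp}=T$, so using the left $A_T(F)$-invariance I may restrict to $a\in A_T(F)$ with $H_{M_{\sharp}}(a)\in CL(\Fa_{P_{\sharp}}^{+})$. Using the explicit companion-type representatives of $\Xi+\Sigma$ as in the proof of Lemma~\ref{orbit 1} (the coefficients of the characteristic polynomial of $X''$ fix the traces, determinants and mixed $\lambda$-coefficients of the two three-by-three blocks of the companion form), I would write down an explicit section $\gamma_{X''}$ together with a factorisation $\gamma_{X''}^{-1}=b_X v_X$ where $b_X$ is block-upper-triangular with $\sigma(b_X)\ll\log(N)$—after adjoining finitely many rational functions of the entries of $X''$ to $\CQ_T$—and $v_X\in U_{\sharp}(F)$ with $\sigma(v_X)\ll\log(N)$. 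I would also record that the hypothesis $\sigma_{M_{\sharp}}^{G_x}(H_{M_{\sharp}}(a),\CY(g)_{G_x})\tau_{G_x}(H_{M_{\sharp}}(a)-\CY(g)_{G_x})=1$ (where $\tau_{G_x}\equiv 1$) forces $|proj_{M_{\sharp}}^{G_x}(H_{M_{\sharp}}(a))|\ll(\log N)^2$, that is $\sigma(a)\ll(\log N)^2$ modulo $Z_{G_x}$, using $\sigma(g)\ll\log(N)$ and the bound $|Y(g)_{P_{\sharp}}|\ll(\log N)^2$ coming from \eqref{split condition 2}.

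The heart of the argument is then the computation carried out in the last part of the proof of Lemma~\ref{U-invariant 2}. Since $a\in G_x$ normalises the abelian group $U_S$, I would write $\gamma_{X''}^{-1}aug=\bigl(\gamma_{X''}^{-1}(aua^{-1})\gamma_{X''}\bigr)\,\gamma_{X''}^{-1}ag$ and decompose $aua^{-1}\in U_S(F)$ according to $U_S=(U_S\cap U)(U_S\cap M)(U_S\cap\bar U)$ relative to $P=P_{2,2,2}$ (here $U_S\cap M$ is the one-dimensional-per-block group of upper-triangular unipotents inside the three $\GL_2$-blocks). Conjugating by $\gamma_{X''}^{-1}=b_X v_X$ and running the block Iwasawa decomposition, the $U_S\cap U$-component and the pieces transportable to the far left are removed by the left $U(F)$-invariance of $\kappa_N$ (with the right $K$-invariance absorbing any compact correction that arises), and one is reduced to comparing $\kappa_N(b_X a)$ with $\kappa_N(b_X a u')$ where $u'\in U_S\cap M$ multiplies the $i$-th $\GL_2$-block of $b_X a$ on the right by an upper-triangular unipotent $n_i$ with $\sigma(n_i)\ll\log(N)$. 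Writing $b_X a=\diag(l_1,l_2,l_3)$ and $b_X a u'=\diag(l_1n_1,l_2n_2,l_3n_3)$, the Iwasawa data $l_i^{-1}l_j=\left(\begin{smallmatrix}a_{ij}&c_{ij}\\0&b_{ij}\end{smallmatrix}\right)k_{ij}$ of \eqref{kappa split} is altered only in the unipotent coordinate $c_{ij}$, by a quantity of log-norm at most $N+C\log(N)$ (since $\sigma(a_{ij}),\sigma(b_{ij})\le N$ and log-norms add), while the diagonal coordinates are unchanged; as \eqref{kappa split} permits $\sigma(c_{ij})\le(1+\epsilon)N$, choosing $N_1$ so that $\epsilon N>C\log(N)$ for $N\ge N_1$ gives $\kappa_N(b_X a u')=\kappa_N(b_X a)$, which is \eqref{split 3}.

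I expect the main obstacle to be precisely what already makes Lemma~\ref{U-invariant 2} the most delicate part of the paper: because the minimal parabolic of $G_x$ is not minimal in $G$, one cannot finish by the left $U(F)$-invariance of $\kappa_N$ alone and must use the $(1+\epsilon)N$ slack in \eqref{kappa split}, which demands a careful accounting of the log-norms ($\ll\log N$ for $u$, $g$, $v_X$ and the $n_i$, versus $\ll(\log N)^2$ for $a$) of all the auxiliary factors, and—the genuinely new point here—a proof that after the rearrangements the only surviving correction lies in $U_S\cap M$, so that it acts by upper-triangular block unipotents and leaves the diagonal Iwasawa coordinates untouched. The preliminary reductions (extension of scalars, change of section, Weyl conjugation) and the explicit construction of $\gamma_{X''}$, $b_X$, $v_X$ are routine given the corresponding steps in the proof of Lemma~\ref{U-invariant 2} and in the proof of Lemma~\ref{orbit 1}.
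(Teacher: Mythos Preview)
Your proposal takes a far more elaborate route than the paper, and in doing so acquires a real gap. The paper's proof is three lines: you already recorded that the hypothesis (together with \eqref{split condition 2} and $\sigma(g)\ll\log N$) forces $\sigma(a)\ll(\log N)^2$ modulo $Z_{G_x}$. Now observe that $Z_{G_x}\subset H(F)$ (an element of $Z_{G_x}$ is $\diag(d,d,d)$ with $d\in\GL_2(F)$ diagonal), so both sides of \eqref{split 3} are left $Z_{G_x}$-invariant and one may simply assume $\sigma(a)\le(\log N)^2$. Since $\sigma(\gamma_{X''}),\sigma(u),\sigma(g)\ll\log N$, both $\gamma_{X''}^{-1}aug$ and $\gamma_{X''}^{-1}ag$ have log-norm $\ll(\log N)^2$, hence for $N$ large both sides of \eqref{split 3} equal $1$. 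That is the entire argument; none of the section-change, Weyl conjugation, block factorisation $\gamma_{X''}^{-1}=b_Xv_X$, or $(1+\epsilon)N$ slack is needed.

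Your attempt to transplant the machinery of Lemma~\ref{U-invariant 2} does not go through as written. That lemma's key mechanism is the contraction $a^{-1}u_Xa\to 1$, valid because of the hypothesis $\alpha(H_{M_{\sharp}}(a))>c'\log N$ for $\alpha\in\Sigma_Q^+$ with $Q$ a \emph{proper} parabolic; this is what lets one push pieces of $v_X$ (and ultimately of $u$) into $K$ on the right. Here $Q=G_x$, so $\Sigma_{G_x}^+$ is empty and there is no directional contraction available. In your reduction ``one is reduced to comparing $\kappa_N(b_Xa)$ with $\kappa_N(b_Xau')$ where $u'\in U_S\cap M$'', you never explain how the $U_S\cap\bar U$ component of $aua^{-1}$ (entries in positions $(3,2),(5,2),(5,4)$) disappears: it lies neither in $U(F)$ nor in $K$, and without contraction you cannot absorb it. The point is precisely that for $Q=G_x$ the constraint on $a$ is of a different nature---full boundedness modulo center rather than positivity in some cone---and this is what makes the trivial argument work while the contraction argument fails.
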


\begin{proof}
The condition imposed on $a$ and the inequality \eqref{split condition 2} tell us up to an element in $Z_{G_x}$, $\sigma(a)\leq (\log(N))^2$. Since both side of the equation \eqref{split 3} are left $Z_{G_x}$-invariant, we may assume that $\sigma(a)\leq (\log(N))^2$. Then the condition imposed on $X'', u$ and $g$ implies
$$\sigma(\gamma_{X''}^{-1} aug),\sigma(\gamma_{X''}^{-1} ag) \ll (\log(N))^2.$$
Therefore for $N$ large, both sides of \eqref{split 3} equal $1$, and hence they are equal. This finishes the proof of the Lemma.
\end{proof}

Now by the previous Lemma, we can have a Corollary which is an analogy of (6) of Proposition \ref{change truncation}. The proof is the same, except here we need to add the fact that the function
$$\sigma_{M_{\sharp}}^{G_x}(H_{M_{\sharp}}(a), \CY(g))\tau_{G_x}(H_{M_{\sharp}}(a)-\CY(g)_{G_x})$$
is invariant under the transform $g\rightarrow ug$ for all $u\in U_S(F)$.
\begin{cor}
If $c>0$, there exists $N_1>0$ such that if $N>N_1$, we have $\kappa_{N,X''}(G_x,ug)=\kappa_{N,X''}(G_x,g)$ for all $g\in G(F)$ and $u\in U_S(F)$ with $\sigma(u), \sigma(ug),\sigma(g) \leq c\log(N)$, and for all $X''\in \omega_{T''}\cap (\Ft'')^0(F)[>N^{-b}]$ .
\end{cor}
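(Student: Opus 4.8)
The plan is to deduce this corollary from the preceding lemma by integrating out the combinatorial weight, exactly as 9.3(6) was obtained from Lemma \ref{U-invariant 2}. The key preliminary observation I would record is an invariance property: since the Iwasawa decomposition $g=ug_xk$ used to construct $\CY(g)$ is taken with respect to the parabolic subgroup $S=U_SG_x$, the orthogonal set $\CY(g)=(Y_Q-H_{\bar Q}(g_x))_{Q\in\CP(M_{\sharp})}$ depends only on $g_x$; hence $\CY(ug)=\CY(g)$ for every $u\in U_S(F)$, and therefore the function
\[
a\longmapsto \sigma_{M_{\sharp}}^{G_x}(H_{M_{\sharp}}(a),\CY(g))\,\tau_{G_x}(H_{M_{\sharp}}(a)-\CY(g)_{G_x})
\]
is left $U_S(F)$-invariant in $g$. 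This is the one new ingredient compared with the proof of 9.3(6).

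With this in hand, I would unwind the definition of $\kappa_{N,X''}(G_x,\cdot)$ and use the invariance just noted to write
\[
\kappa_{N,X''}(G_x,ug)-\kappa_{N,X''}(G_x,g)=\int_{Z_{G_x}\backslash A_T(F)}\sigma_{M_{\sharp}}^{G_x}(H_{M_{\sharp}}(a),\CY(g))\,\tau_{G_x}(H_{M_{\sharp}}(a)-\CY(g)_{G_x})\,\bigl(\kappa_N(\gamma_{X''}^{-1}aug)-\kappa_N(\gamma_{X''}^{-1}ag)\bigr)\,da,
\]
the two combinatorial weights for $ug$ and for $g$ being equal. On the support of the weight one has $\sigma_{M_{\sharp}}^{G_x}(H_{M_{\sharp}}(a),\CY(g))\tau_{G_x}(H_{M_{\sharp}}(a)-\CY(g)_{G_x})=1$, which is precisely the hypothesis of the preceding lemma; applying that lemma with the given $c$ furnishes an $N_1$, depending only on $c$, so that for $N>N_1$ the integrand vanishes pointwise on this support. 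Hence the difference is $0$, which is the assertion.

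The step I expect to demand the most attention — although it is already carried out inside the preceding lemma, so here it only needs to be invoked — is the uniform size control of $a$ on the support of the weight. Combining condition \eqref{split condition 2}, which gives $\sup_{\alpha\in\Delta_{min}}\alpha(Y_{min})\le(\log N)^2$, with the bound $\sigma(g)\ll\log N$, one gets $|\CY(g)_{P'}|\ll(\log N)^2$ for all $P'\in\CP(M_{\sharp})$, hence $\sigma(a)\ll(\log N)^2$ modulo $Z_{G_x}$ on the support of $\sigma_{M_{\sharp}}^{G_x}\tau_{G_x}$; together with $\sigma(\gamma_{X''})\ll\log N$ (from \eqref{local 2}, valid because $X''\in(\Ft'')^0(F)[>N^{-b}]$) and $\sigma(u),\sigma(g)\le c\log N$, this forces $\sigma(\gamma_{X''}^{-1}aug)$ and $\sigma(\gamma_{X''}^{-1}ag)$ to be $\ll(\log N)^2$, so that, since $\kappa_N$ is identically $1$ on $\{g'\in G(F)\mid \sigma(g')<c_3N\}$ for a suitable $c_3>0$, both values of $\kappa_N$ equal $1$ for $N$ large. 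Since every implied constant above is independent of $X''$ and of $g$, the threshold $N_1$ is genuinely uniform, and no substantive obstacle remains beyond this bookkeeping, the whole argument running in parallel to that of 9.3(6).
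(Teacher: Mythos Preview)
Your proposal is correct and follows essentially the same approach as the paper: the paper explicitly says the proof is the same as that of 9.3(6), with the only new ingredient being the $U_S(F)$-invariance of the combinatorial weight $\sigma_{M_{\sharp}}^{G_x}(H_{M_{\sharp}}(a),\CY(g))\tau_{G_x}(H_{M_{\sharp}}(a)-\CY(g)_{G_x})$, which is exactly the invariance you identify via the Iwasawa decomposition with respect to $S=U_SG_x$. Your final paragraph on the size control of $a$ is already subsumed in the proof of the preceding lemma (as you note), so it is redundant here but not incorrect.
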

Finally, by applying the same argument as in (7) of Proposition \ref{change truncation}, we know that for N large, the inner integral of \eqref{split 4} is zero, and therefore
\begin{equation}
I(G_x,X)=0.
\end{equation}

For $Q\neq G_x$, we prove the following lemma.

\begin{lem}\label{split zero}
For $N$ large, we have $I(Q,X)=0$ for all $Q\neq G_x$ and $X''\in \omega_{T''} \cap (\Ft'')^0(F)[>N^{-b}]$.
\end{lem}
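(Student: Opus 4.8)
The plan is to handle a general proper $Q=LU_Q\in\CF(M_{\sharp})$ (a parabolic subgroup of $G_x$) in the same spirit as the case $Q=G_x$ just treated and as part (7) of Proposition \ref{change truncation}: I would exhibit the inner integral in \eqref{split 2} as the constant term of the strongly cuspidal function $f^{\sharp}_{x,\omega}$ along a \emph{proper} parabolic subgroup of $G$ and conclude by Lemma \ref{strongly cuspidal}. To $Q$ I attach the parabolic subgroup $\wt Q=\bar Q U_S$ of $G$, where $\bar Q$ is the parabolic of $G_x$ opposite to $Q$; since $G_x$ normalises $U_S$ this is a genuine parabolic subgroup, its Levi component is $L$ (which contains $x$, as $x$ lies in the centre of $G_x\supset L$), its unipotent radical is $U_{\wt Q}=U_{\bar Q}U_S$, and it is proper in $G$ exactly because $Q\ne G_x$; for $Q=G_x$ one has $\wt Q=S$ and $U_{\wt Q}=U_S$, so this specialises to the construction already used. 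Using the Iwasawa decomposition of $G(F)$ attached to $\wt Q$ I would write
\[ I(Q,X)=\int_{K}\int_{Z_{G_x}A_T(F)\backslash L(F)}\int_{U_{\wt Q}(F)}{}^{ulk}f^{\sharp}_{x,\omega}(X)\,\kappa_{N,X''}(Q,ulk)\,du\;\delta_{\wt Q}(l)\,dl\,dk. \]

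The heart of the matter is to show that $\kappa_{N,X''}(Q,\cdot)$ is left $U_{\wt Q}(F)$-invariant on the range that survives in this integral, i.e. $\kappa_{N,X''}(Q,ug)=\kappa_{N,X''}(Q,g)$ whenever $\sigma(u),\sigma(g),\sigma(ug)\ll\log N$ and $X''\in\omega_{T''}\cap(\Ft'')^{0}(F)[>N^{-b}]$. As in Section 9.4 the combinatorial factor $\zeta\mapsto\sigma_{M_{\sharp}}^{Q}(\zeta,\CY(g))\,\tau_Q(\zeta-\CY(g)_Q)$ depends on $g$ only through $H_{\bar{P'}}(g_x)$ for parabolics $P'$ of $G_x$ with $M_{\sharp}\subset P'\subset Q$, and each such coordinate is unchanged under $g\mapsto ug$ for $u\in U_{\bar Q}(F)$ (because then $U_{\bar Q}\subset U_{\bar{P'}}$) and for $u\in U_S(F)$ (because $u\in U_S$ leaves the $G_x$-part of the Iwasawa decomposition along $S$ unchanged); so this factor is already left $U_{\wt Q}(F)$-invariant, and everything reduces to proving $\kappa_N(\gamma_{X''}^{-1}aug)=\kappa_N(\gamma_{X''}^{-1}ag)$ for the $a\in A_T(F)$ that occur, which are those with $\alpha(H_{M_{\sharp}}(a))$ large for every root $\alpha$ of $A_{M_{\sharp}}$ in $\Fu_{\wt Q}$. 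This is the precise analogue of Lemma \ref{U-invariant 2} (and of the corollary used for $Q=G_x$), and I would prove it by the same scheme: reduce to $T$ split by passing to a finite extension, replace the section $X''\mapsto\gamma_{X''}$ by a convenient one built from explicit triangular data, and use the left $U(F)$-invariance and right $K$-invariance of $\kappa_N$ together with the contracting effect of conjugating by $a$ deep inside $\Fa_{\wt Q}^{+}$: in the non-split case this pushes the relevant unipotent into $K$, and in the split case it only perturbs the $c_{ij}$-coordinates of \eqref{kappa split} by $O(\log N)$, which is absorbed by the $(1+\epsilon)N$ slack once $N$ is large.

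With this invariance established I would finish exactly as in part (7) of Proposition \ref{change truncation}. By point (2) of the split-case setup the $l$-integral may be restricted to $\sigma(l)\ll\log N$, which forces $\sigma(u)\ll\log N$ on the support of the integrand while off this range the integrand vanishes; on it, the invariance just proved lets $\kappa_{N,X''}(Q,\cdot)$ be pulled out of the $U_{\wt Q}$-integral, leaving
\[ \int_{U_{\wt Q}(F)}{}^{ulk}f^{\sharp}_{x,\omega}(X)\,du=\Bigl(\int_{U_{\wt Q}(F)}{}^{ulk}f_{x,\omega}\,du\Bigr)^{\sharp}(X), \]
where the partial Fourier transform has been moved outside by Fubini. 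Since ${}^{lk}f$ is again strongly cuspidal and $\wt Q$ is a proper parabolic of $G$ whose Levi contains $x$, the inner constant term vanishes by Lemma \ref{strongly cuspidal}(1); hence $I(Q,X)=0$ for $N$ large, as claimed. The main obstacle, just as for Lemma \ref{major 2}, is the norm bookkeeping in the $U_{\wt Q}$-invariance step — the split versus non-split behaviour of $\kappa_N$ and the control \eqref{local 2} of the section $\gamma_{X''}$ — which is why the full argument is deferred to Appendix A.
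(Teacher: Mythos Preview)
Your architecture is natural and in spirit matches the paper, but there is a genuine gap in the key invariance step. You assert that the $a\in A_T(F)$ which contribute ``are those with $\alpha(H_{M_\sharp}(a))$ large for every root $\alpha$ of $A_{M_\sharp}$ in $\Fu_{\wt Q}$'', and then rely on the resulting contraction. This is false. The support condition $\tau_Q(H_{M_\sharp}(a)-\CY(g)_Q)=1$ only forces $\alpha(H_{M_\sharp}(a))$ to be large for $\alpha\in\Sigma_Q^+$, i.e.\ for roots of $A_{M_\sharp}$ in $\Fu_Q\subset\Fg_x$. The roots of $A_{M_\sharp}$ in $\Fu_S$ involve the $\Fa_{G_x}^G$-component of $H_{M_\sharp}(a)$ (the relative scaling between the two $\GL_3$ factors), and this component is \emph{not} constrained by $\tau_Q$ at all, since $\tau_Q$ is built from $(G_x,M_\sharp)$-combinatorics and is constant along $\Fa_{G_x}$. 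So conjugation by $a$ does not contract all of $U_{\wt Q}=U_{\bar Q}U_S$; for a generic root in $\Fu_S$ the entry $a_i/a_j$ (with $i$ odd, $j$ even) can be arbitrarily large or small, and your ``push $u$ through $a$'' step fails.

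The paper's proof in Appendix~A.2 gets around precisely this obstruction. It does \emph{not} use the full $U_{\wt Q}$: instead it performs a case analysis on $Q=Q_1\times Q_2$ and, for each case, chooses a carefully tailored (often smaller, sometimes non-standard) unipotent subgroup $U'\subset U_S U_{\bar Q}$ such that (i) $U'$ is still the unipotent radical of a proper parabolic of $G$ with Levi containing $x$, so Lemma~\ref{strongly cuspidal}(1) applies, and (ii) the required equality $\kappa_N(\gamma_{X''}^{-1}aug)=\kappa_N(\gamma_{X''}^{-1}ag)$ can actually be proved for $u\in U'$. Point (ii) uses, beyond the $\tau_Q$-positivity, the bound on the remaining components of $a$ coming from $\sigma_{M_\sharp}^Q(\cdot,\CY(g))=1$ (e.g.\ in Case~3 the estimate $\sigma(a_2),\sigma(a_4),\sigma(a_6)\ll(\log N)^2$ modulo $Z_{G_x}$), and in Cases~3--5 an explicit Iwasawa decomposition of a $2\times2$ lower-unipotent block together with a direct comparison of the quantities entering the definition~\eqref{kappa split} of $\kappa_N$. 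None of this is captured by a uniform contraction argument; the case split and the hand-chosen $U'$ are the missing ingredients in your proposal.
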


\begin{proof}
The proof is very similar to the case in Section 9.3. We will postpone the proof to Appendix A.2.
\end{proof}

Now the only thing left is to find $Y_{P_{min}}$ satisfies our assumptions \eqref{split condition 1} and \eqref{split condition 2}, which are
$$\sup\{\alpha(Y_{min})\mid \alpha\in \Delta_{min}\} \leq (\log(N))^2$$
and
$$c_1\inf\{\alpha(Y_{min})\mid \alpha\in \Delta_{min}\} \geq c_2\log(N).$$
This is always possible for $N$ large.

To summarize, we have proved the following statement:
\begin{itemize}
\item For $N$ large, and for $X''\in \omega_{T''} \cap (\Ft'')^0(F)[>N^{-b}]$, $I(Q,X)=0$ for all $Q\in \CF(M_{\sharp})$.
\end{itemize}
Combining it with \eqref{I part 4}, we know
\begin{equation}
\int_{ A_T(F)\backslash G(F)} {}^g f^{\sharp}_{x,\omega}(X) \kappa_{N,X''}(g) dg=0
\end{equation}
for $N$ large. Then combining it with \eqref{10.1} and (2), we know
\begin{equation}
I_{x,\omega,N}(f)=0
\end{equation}
for $N$ large. This finishes the proof for the split case.

\subsection{Principal proposition}
\begin{prop}\label{10 final}
There exists $N_1>0$ such that for $N>N_1$, $X\in \Ft_0(F)[>N^{-b}]$, and $x\in H_{ss}(F)$ non-split or belongs to the center, we have
$$\int_{A_T(F) Z_{G_x}(F)\backslash G(F)} ({}^g f_{x,\omega})^{\hat{}}(X) \kappa_{N,X''}(g) dg= \nu(A_T) \nu(Z_{G_x}) \theta_{f,x,\omega}^{\sharp}(X).$$
\end{prop}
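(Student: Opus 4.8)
The plan is to follow \cite{W10}, which in turn reproduces the geometric side of Arthur's local trace formula \cite{Ar91}. By Proposition \ref{change truncation}, for $N$ large enough that an auxiliary point $Y_{P_{min}}\in\Fa_{P_{min}}^{+}$ with $c\log N<\inf\{\alpha(Y_{P_{min}})\mid\alpha\in\Delta_{min}\}$ can be fixed, the integral on the left equals
\[
\int_{A_T(F)Z_{G_x}(F)\backslash G(F)}{}^{g}f^{\sharp}_{x,\omega}(X)\,\tilde{v}(g)\,dg,
\]
where $M_{\sharp}$ is the centralizer of $A_T$ in $G$ (so that $A_T=A_{M_{\sharp}}$ and, under our hypothesis on $x$, $Z_{G_x}\cap A_T=Z_G$) and $\tilde{v}(g)$ is the combinatorial weight \eqref{truncation 2} attached to the $(G,M_{\sharp})$-orthogonal positive family $\CY(g)=(Y_Q-H_{\bar{Q}}(g))_{Q\in\CP(M_{\sharp})}$. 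By condition (2) of Section 9.3 the integrand is supported on a set of $g$ with $\sigma_T(g)\ll\log N$, which is relatively compact modulo $A_T(F)$, so $\CY(g)$ is genuinely positive there and $\tilde{v}(g)$ is defined.

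Next I would carry out the combinatorial reduction of $\int{}^{g}f^{\sharp}_{x,\omega}(X)\tilde{v}(g)\,dg$. Applying the partition of unity of Proposition \ref{truncation} with respect to an even larger orthogonal family, exactly as in the proof of Proposition \ref{change truncation}, one writes $\tilde{v}(g)=\nu(A_T)\sum_{Q\in\CF(M_{\sharp})}\tilde{v}(Q,g)$ and correspondingly $\int{}^{g}f^{\sharp}_{x,\omega}(X)\tilde{v}(g)\,dg=\nu(A_T)\sum_{Q}J(Q,X)$ as in \eqref{J part}. For a proper $Q=LU_{Q}$ the weight $\tilde{v}(Q,g)$ is left $U_{\bar{Q}}(F)$-invariant (this is 9.3(5), whose proof applies unchanged), so after decomposing the $G$-integral through $\bar{Q}=LU_{\bar{Q}}$ the inner integral collapses to $\int_{U_{\bar{Q}}(F)}{}^{\bar{u}lk}f^{\sharp}_{x,\omega}(X)\,d\bar{u}$, which vanishes since $f$ is strongly cuspidal (Lemma \ref{strongly cuspidal}(1): $\varphi^{\sharp}[\bar{Q},f]=0$ for $\bar{Q}\neq G$); this is already the vanishing $J(Q,X)=0$ recorded inside the proof of Proposition \ref{change truncation}. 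Only $Q=G$ contributes, and one shows, exactly as in the geometric side of Arthur's local trace formula, that for $N$ large $\int{}^{g}f^{\sharp}_{x,\omega}(X)\tilde{v}(G,g)\,dg=(-1)^{a_{M_{\sharp}}-a_G}\int{}^{g}f^{\sharp}_{x,\omega}(X)v_{M_{\sharp}}(g)\,dg$: the difference is again a finite sum over proper parabolics of terms which, because $f$ is strongly cuspidal, integrate to zero, and the sign comes from the passage $Q\mapsto\bar{Q}$ that turns $H_{\bar{Q}}(g)$ into $-H_{Q}(g)$ inside $\Fa_{M_{\sharp}}^{G}$. Hence, for $N$ large,
\[
\int_{A_T(F)Z_{G_x}(F)\backslash G(F)}{}^{g}f^{\sharp}_{x,\omega}(X)\,\tilde{v}(g)\,dg=(-1)^{a_{M_{\sharp}}-a_G}\,\nu(A_T)\!\int_{A_T(F)Z_{G_x}(F)\backslash G(F)}{}^{g}f^{\sharp}_{x,\omega}(X)\,v_{M_{\sharp}}(g)\,dg.
\]

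Finally I would identify the right-hand side with $\nu(A_T)\nu(Z_{G_x})\theta^{\sharp}_{f,x,\omega}(X)$. The proposition is applied with $X$ regular in $\Fg_x$, namely $X\in\Ft^{0}(F)\subset\Ft_{0,reg}(F)$, so $G_{x,X}=T$, whence $A_{G_{x,X}}=A_T$, $M(X)=M_{\sharp}$ and $\nu(G_{x,X})=\nu(T)$; since both ${}^{g}f^{\sharp}_{x,\omega}(X)$ and $v_{M_{\sharp}}(g)$ are left $T(F)$-invariant, one may replace $\int_{A_T(F)Z_{G_x}(F)\backslash G(F)}$ by $\mathrm{vol}(A_T(F)Z_{G_x}(F)\backslash T(F))\int_{T(F)\backslash G(F)}$. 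Inserting this into the definitions \eqref{6.1} of $J^{\sharp}_{M_{\sharp},x,\omega}(X,f)$ and \eqref{6.2} of $\theta^{\sharp}_{f,x,\omega}(X)$ and carrying out the usual bookkeeping with the measure constants $\nu(A_T)$, $\nu(T)$, $\nu(Z_{G_x})$ and $\mathrm{vol}(A_T(F)Z_{G_x}(F)\backslash T(F))$, the sign $(-1)^{a_{M_{\sharp}}-a_G}$ cancels the one in \eqref{6.2} and the constant comes out as $\nu(A_T)\nu(Z_{G_x})$; one then takes $N_1$ large enough that all the "$N$ large" hypotheses invoked above hold simultaneously.

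I expect the crux to be the combinatorial step: verifying that the twisted weights $\tilde{v}(Q,g)$ really are left $U_{\bar{Q}}(F)$-invariant in this setting (so that strong cuspidality bites) and reassembling the $Q=G$ piece with the correct sign and normalization. Once this is in place, the vanishing of the proper terms is routine, as is the measure bookkeeping; the latter must nevertheless be done with care, since it has to land on precisely $\nu(A_T)\nu(Z_{G_x})$.
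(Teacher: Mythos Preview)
Your proposal is correct and follows essentially the same route as the paper: invoke Proposition~\ref{change truncation} to replace $\kappa_{N,X''}$ by $\tilde{v}(g)$, then use Arthur's product formula for $(G,M_{\sharp})$-families (what the paper cites as the computation in \cite{Ar91} together with Proposition~10.9 of \cite{W10}) to expand the polytope volume as $(-1)^{a_{M_{\sharp}}-a_G}\sum_{Q}c'_{Q}\,I(Q)$ with $I(Q)=\int {}^{g}f^{\sharp}_{x,\omega}(X)\,v_{M_{\sharp}}^{Q}(g)\,dg$, kill the proper $Q$'s by strong cuspidality (Lemma~\ref{strongly cuspidal}), and finish with the measure identity $\nu(T)\,\mathrm{meas}(T(F)/Z_{G_x}A_T(F))=\nu(A_T)\nu(Z_{G_x})$. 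Your detour through the $\CZ$-partition and the already-established vanishing $J(Q,X)=0$ from the proof of Proposition~\ref{change truncation} is a harmless reorganization---note that $\CZ$ is \emph{smaller} than $\CY$, not larger, and that $\nu(A_T)\tilde{v}(G,g)$ (not $\tilde{v}(G,g)$ itself) is the relevant polytope volume---but it still lands on the same Arthur expansion, so nothing is really gained or lost.
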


\begin{proof}
By Proposition \ref{change truncation}, we can replace the function $\kappa_{N,X''}$ by the function $\tilde{v}(g,Y_{P_{min}})$ in the integral above. Then by the computation of $\tilde{v}(g,Y_{P_{min}})$ in \cite{Ar91}, together with the same argument as in Proposition 10.9 of \cite{W10}, as $Y_{P_{min}}$ goes to infinity, the integral equals
\begin{equation}\label{10 final 2}
(-1)^{a_{M_{\sharp}}-a_G} \Sigma_{Q\in \CF(M_{\sharp})} c_Q ' I(Q)
\end{equation}
where $c_Q'$ are some constant numbers with $c_G '=1$, and
\begin{equation}\label{10 final 1}
I(Q)=\int_{Z_{G_x}(F) A_T(F)\backslash G(F)} {}^g f_{x,\omega}^{\sharp} (X) v_{M_{\sharp}}^{Q}(g) dg.
\end{equation}

If $Q=LU_Q\neq G$, we can decompose the integral in \eqref{10 final 1} as $\int_{Z_{G_x}(F)A_T(F)\backslash L(F)}, \int_{K_{min}}$ and $\int_{U_Q(F)}$. Since $v_{M_{\sharp}}^{Q}(g)$ is $U_Q(F)$-invariant, the inner integral becomes
$$\int_{U_Q(F)} {}^{ulk} f_{x,\omega}^{\sharp}(X)du.$$
By Lemma \ref{strongly cuspidal}, this is zero because $f$ is strongly cuspidal. Therefore
\begin{equation}\label{10 final 3}
I(Q)=0.
\end{equation}

For $Q=G$, we can replace the integral on $Z_{G_x}(F) A_T(F)\backslash G(F)$ by $T(F) \backslash G(F)$ and multiply it by $meas(T(F)/Z_{G_x} A_T(F))$. Then we get
\begin{equation}\label{10 final 4}
I(G)=meas(T(F)/Z_{G_x} A_T(F)) D^{G_x}(X)^{1/2} J_{M_{\sharp},x,\omega}^{\sharp}(X,f)
\end{equation}
where $J_{M_{\sharp},x,\omega}^{\sharp}(X,f)$ is defined in \eqref{6.1}.

Now combining \eqref{10 final 2}, \eqref{10 final 3} and \eqref{10 final 4}, together with the definition of $\theta_{f,x,\omega}^{\sharp}$ (as in \eqref{6.2}) and the fact that
$$\nu(T)meas(T(F)/Z_{G_x} A_T(F))=\nu(A_T)\nu(Z_{G_x}),
$$
we have
$$\int_{A_T(F) Z_{G_x}\backslash G(F)} ({}^g f_{x,\omega})^{\hat{}}(X) \kappa_{N,X''}(g) dg= \nu(A_T) \nu(Z_{G_x})\theta_{f,x,\omega}^{\sharp}(X).$$
This finishes the proof of the Proposition.
\end{proof}

Finally, for $x\in H_{ss}(F)$ non-split or belongs to the center, let
\begin{equation}\label{10 final 5}
\begin{split}
J_{x,\omega}(f)=&\Sigma_{T\in T(G_x)} \mid W(G_x,T)\mid^{-1} \nu(Z_{G_x})\\
&\times \int_{\Ft'(F)\times (\Ft'')^0(F)} D^{G_x}(X'')^{1/2} \theta_{f,x,\omega}^{\sharp}(X) dX.
\end{split}
\end{equation}
If $x\in H_{ss}(F)$ is split and not contained in the center, let
$$J_{x,\omega}(f)=0.$$
\begin{prop}\label{local theorem}
The integral in \eqref{10 final 5} is absolutely convergent, and
$$\lim_{N\rightarrow \infty} I_{x,\omega,N}(f)=J_{x,\omega}(\theta,f).$$
\end{prop}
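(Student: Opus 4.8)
The plan is to split the statement into the two cases already distinguished in the excerpt: $x$ split and not central, versus $x$ central or non-split. In the first case, Section 9.6 has already established that the inner integral $\int_{A_T(F)\backslash G(F)} {}^g f^{\sharp}_{x,\omega}(X'+X'')\kappa_{N,X''}(g)\,dg$ vanishes for $N$ large, hence $I_{x,\omega,N}(f)=0$ for $N$ large, and the claim $\lim_N I_{x,\omega,N}(f)=J_{x,\omega}(f)=0$ is immediate; there is nothing left to prove about convergence since the defining integral of $J_{x,\omega}(f)$ is literally $0$. So the real content is the case where $x$ is central or non-split, and there I would proceed as follows.

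First I would recall the decomposition of $I_{x,\omega,N}(f)$ provided by \eqref{10.1}, namely a finite sum over $T\in T(G_x)$ of integrals over $\Ft'(F)\times(\Ft'')^0(F)$ of $D^{G_x}(X'')^{1/2}$ times the inner integral $\int_{Z_{G_x}A_T(F)\backslash G(F)} {}^g f^{\sharp}_{x,\omega}(X'+X'')\kappa_{N,X''}(g)\,dg$. Next I would pass from $I_{x,\omega,N}(f)$ to $I^{\ast}_{x,\omega,N}(f)=I_{N,>N^{-b}}$ using Lemma \ref{I to I*}, which tells us that $\lim_{N\to\infty}\bigl(I_{x,\omega,N}(f)-I^{\ast}_{x,\omega,N}(f)\bigr)=0$; thus it suffices to compute $\lim_N I^{\ast}_{x,\omega,N}(f)$. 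On the region $\Ft_0(F)[>N^{-b}]$ I would then invoke Proposition \ref{10 final}, which identifies the inner integral (after the truncation change of Proposition \ref{change truncation} and Arthur's computation) as $\nu(A_T)\nu(Z_{G_x})\theta_{f,x,\omega}^{\sharp}(X)$ for $N>N_1$. Substituting this into the expression for $I^{\ast}_{x,\omega,N}(f)$ — and using $\nu(A_T\cap Z_{G_x}\backslash A_T)=\nu(A_T)$ in the central/non-split case together with the identity $\nu(A_T)^{-1}\cdot\nu(A_T)\nu(Z_{G_x})=\nu(Z_{G_x})$ — collapses the $N$-dependence entirely: for $N$ large, $I^{\ast}_{x,\omega,N}(f)$ equals the $N$-independent expression
\[
\Sigma_{T\in T(G_x)}\,|W(G_x,T)|^{-1}\,\nu(Z_{G_x})\int_{\Ft'(F)\times(\Ft'')^0(F)} D^{G_x}(X'')^{1/2}\,\theta_{f,x,\omega}^{\sharp}(X)\,dX,
\]
which is exactly $J_{x,\omega}(f)$ as defined in \eqref{10 final 5}. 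Hence the limit exists and equals $J_{x,\omega}(f)$.

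It remains to justify absolute convergence of the integral defining $J_{x,\omega}(f)$. For this I would argue as in the first part of Lemma \ref{I to I*}: using condition (5) of a good neighborhood to reduce ${}^g f^{\sharp}_{x,\omega}$ to a compactly supported situation, majorizing $\theta_{f,x,\omega}^{\sharp}(X)$ (equivalently $J^{\sharp}_{M(X),x,\omega}(X,f)$, which satisfies the bound in the last displayed inequality of Section 4.3) by $c(1+|\log D^{G_x}(X'')|)^k$, and then appealing to the local integrability of such log-power functions on $\Ft_0(F)$ (Lemma 2.4 of \cite{W10}). Combined with the fact that $D^{G_x}(X'')^{1/2}$ is locally bounded and $\theta_{f,x,\omega}^{\sharp}$ is compactly supported modulo conjugation, this gives the required convergence. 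I expect the main obstacle to be purely bookkeeping: matching the measure normalizations ($\nu(A_T)$, $\nu(Z_{G_x})$, $\nu(T)$, and the $\nu(A_T\cap Z_{G_x}\backslash A_T)$ appearing in \eqref{10.1} versus \eqref{kappa 2}) so that the constants in Proposition \ref{10 final} cancel cleanly against those in \eqref{10.1} to produce precisely \eqref{10 final 5}; the analytic input (convergence, the truncation-change, Arthur's formula) is all already in hand from the earlier results.
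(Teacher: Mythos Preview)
Your proposal follows essentially the same route as the paper: split into the two cases, use Lemma \ref{I to I*} to replace $I_{x,\omega,N}(f)$ by $I^{\ast}_{x,\omega,N}(f)$, invoke Section 9.6 in the split non-central case, and in the central/non-split case substitute Proposition \ref{10 final} into the expression \eqref{10.1}. The convergence argument you sketch is also the intended one (the paper simply cites Lemma 10.10(1) of \cite{W10}).

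There is one small slip worth flagging. You write that after substituting Proposition \ref{10 final} the $N$-dependence ``collapses entirely'' and $I^{\ast}_{x,\omega,N}(f)$ equals the $N$-independent expression $J_{x,\omega}(f)$. That is not quite right: $I^{\ast}_{x,\omega,N}(f)$ is by definition the integral restricted to $\Ft_0(F)[>N^{-b}]$, so after the substitution you obtain
\[
I^{\ast}_{x,\omega,N}(f)=\Sigma_{T\in T(G_x)}\,|W(G_x,T)|^{-1}\nu(Z_{G_x})\int_{(\Ft'(F)\times(\Ft'')^0(F))\cap\Ft_0(F)[>N^{-b}]} D^{G_x}(X'')^{1/2}\theta_{f,x,\omega}^{\sharp}(X)\,dX,
\]
which still depends on $N$ through the domain. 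To conclude $\lim_N I^{\ast}_{x,\omega,N}(f)=J_{x,\omega}(f)$ you need one more step: use the absolute convergence of $J_{x,\omega}(f)$ (which you establish) together with the fact that the measure of $\omega_T\cap\Ft_0(F)[\leq N^{-b}]$ tends to $0$ as $N\to\infty$, so the tail integral over the complement vanishes in the limit. With that correction the argument is complete and matches the paper's.
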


\begin{proof}
The proof for the first part is the same as Lemma 10.10(1) in \cite{W10}. For the second part, if $x\in H_{ss}(F)$ is non-split or belongs to the center, by Lemma \ref{I to I*}, it is enough to consider $\lim_{N\rightarrow \infty} I_{x,\omega,N}^{\ast}(\theta,f)$. Then just use
Proposition \ref{10 final} together with \eqref{10.1}. For the case when $x$ is split and not contained in the center, applying Lemma \ref{I to I*} again, it is enough to consider $\lim_{N\rightarrow \infty} I_{x,\omega,N}^{\ast}(\theta,f)$. Then by Section 9.6, we know the limit is zero.
\end{proof}

\section{Proof of Theorem \ref{main 1} and Theorem \ref{main 2}}\label{prove of theorem}
\subsection{Calculation of $\lim_{N\rightarrow \infty} I_N(f)$: the Lie algebra case}
If $f\in C_{c}^{\infty}(\Fg_0(F))$ is a strongly cuspidal function, we define
\begin{equation}\label{11.1}
J(f)=\Sigma_{T\in T(G)} \mid W(G,T)\mid^{-1} \int_{\Ft^0(F)} D^G(X)^{1/2} \hat{\theta}_f(X) dX.
\end{equation}

\begin{lem}\label{lim I}
The integral in \eqref{11.1} is absolutely convergent and
$$\lim_{N\rightarrow \infty}I_N(f)=J(f).$$
\end{lem}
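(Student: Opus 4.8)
\textbf{Proof proposal for Lemma \ref{lim I}.}

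The plan is to combine the localization of both sides of the trace formula (Propositions \ref{localization} and \ref{localization 1}) with the limit formula obtained in Proposition \ref{local theorem}, and then to reassemble the local contributions into the global expression $J(f)$. First, I would recall from Section 7 that for each $x\in H_{ss}(F)$ and a good neighborhood $\omega$ of $0$ in $\Fg_x(F)$, if $f$ is supported on $\Omega=(x\exp(\omega))^G\cdot Z_G(F)$ then $I_N(f)=C(x)I_{x,\omega,N}(f)$ with $C(x)=D^H(x)\Delta(x)$, and similarly (in the group case) $I(f)=C(x)I_{x,\omega}(f)$. Using a partition of unity subordinate to the cover of the support of $f$ by finitely many such $\Omega$'s — this is where condition (1) of a good neighborhood and the compactness modulo conjugation of $\mathrm{Supp}(f)$ enter — it suffices to prove the lemma when $f$ is supported near a single semisimple conjugacy class, i.e.\ for a fixed $x$. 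Then $\lim_{N\to\infty}I_N(f)=C(x)\lim_{N\to\infty}I_{x,\omega,N}(f)=C(x)J_{x,\omega}(f)$ by Proposition \ref{local theorem}.

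Next I would identify $C(x)J_{x,\omega}(f)$ with the appropriate piece of $J(f)$. When $x$ is split and not central, $J_{x,\omega}(f)=0$, and one needs to check that the corresponding contribution to the right-hand side of \eqref{11.1} also vanishes — this follows because for such $x$ the relevant tori $T\in T(G)$ meeting $x\exp(\omega)$ are split, and by Remark \ref{orbit 2}(4) (to be proved as Lemma \ref{U-invariant 2}) $\Ft^0(F)=\Ft_{0,reg}(F)$, while the Shalika-germ expansion of $\hat\theta_f$ near such $x$ is supported on the regular nilpotent orbit whose Fourier transform, being induced from a Borel, does not meet the open dense orbit in the relevant way; more precisely one argues exactly as in the split-case vanishing of Section 9.6. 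When $x$ is central or non-split, I would unwind the definition \eqref{10 final 5} of $J_{x,\omega}(f)$ in terms of $\theta^\sharp_{f,x,\omega}$, use Proposition \ref{partial FT} identifying $\theta^\sharp_{f,x,\omega}$ as the partial Fourier transform of $\theta_{f,x,\omega}=\theta_f(x\exp(\cdot))$ with respect to the $\Fg''$-factor, and match the integral $\int_{\Ft'(F)\times(\Ft'')^0(F)}D^{G_x}(X'')^{1/2}\theta^\sharp_{f,x,\omega}(X)\,dX$ against $\int_{\Ft^0(F)}D^G(X)^{1/2}\hat\theta_f(X)\,dX$ for those $T\in T(G)$ passing through $x$, using Proposition \ref{good nbd}(5) for the factorization $D^G(x\exp(X))=D^G(x)D^{G_x}(X)$ and the relation \eqref{delta 2} for $\Delta$. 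The combinatorial bookkeeping of Weyl groups is the same as in Lemma \ref{8}: summing over all $x$ in a set of representatives of $H_{ss}(F)/{\sim}$ that actually occur, the local sums $\sum_{T\in T(G_x)}|W(G_x,T)|^{-1}$ glue to $\sum_{T\in T(G)}|W(G,T)|^{-1}$, so that $\sum_x C(x)J_{x,\omega}(f)=J(f)$.

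Finally, absolute convergence of \eqref{11.1} reduces by the same localization to absolute convergence of each $\int_{\Ft'(F)\times(\Ft'')^0(F)}D^{G_x}(X'')^{1/2}\theta^\sharp_{f,x,\omega}(X)\,dX$, which is the first assertion of Proposition \ref{local theorem} (proved there as in Lemma 10.10(1) of \cite{W10}); near $t=1$ this is the estimate already carried out in the proof of Proposition \ref{integrable 1}, where one shows the integrand lies in a space $C_{\geq 0}$ of functions integrable around the origin, using the homogeneity \eqref{Shalika germ 1}, \eqref{scalar 3} of Shalika germs together with the dimension count $\dim\CO\leq\delta(G)-2$ for non-regular $\CO$. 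I expect the main obstacle to be the split-case matching: one must be careful that the vanishing $J_{x,\omega}(f)=0$ genuinely corresponds to vanishing (or absence) of the split-torus terms on the right of \eqref{11.1}, rather than to a cancellation — this is exactly the subtlety that Section 9.6 and Lemma \ref{split zero} are set up to handle, and transcribing that argument into the present Lie-algebra statement is the delicate point. Everything else is a routine, if lengthy, reorganization of sums over conjugacy classes and maximal tori.
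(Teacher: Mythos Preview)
Your proposal sketches the argument for the \emph{group} trace formula (Theorem \ref{main 1}), not for the Lie algebra statement at hand. In Lemma \ref{lim I} the function $f$ lives in $C_c^\infty(\Fg_0(F))$, so the sets $\Omega=(x\exp(\omega))^G\cdot Z_G(F)\subset G(F)$ do not cover its support, and Propositions \ref{localization}, \ref{localization 1} do not apply to it directly. In particular your partition of unity over $x\in H_{ss}(F)$, and the ensuing case analysis (split $x$, non-split $x$, etc.), are not available here; that argument is precisely what is carried out later in Section 10.3 for the group function, \emph{after} the Lie algebra lemma is in place.

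The paper's proof is much simpler and uses only $x=1$. If $\mathrm{Supp}(f)$ lies in a good neighborhood $\omega$ of $0$, one lifts $f$ to $\Phi\in C_c^\infty(Z_G(F)\backslash G(F))$ supported on $Z_G(F)\exp(\omega)$; then $I_N(f)=I_N(\Phi)$ (Proposition \ref{localization} with $x=1$), and Proposition \ref{local theorem} gives $\lim_N I_N(\Phi)=J_{1,\omega}(\Phi)$. Since at $x=1$ the partial Fourier transform of Proposition \ref{partial FT} is the full Fourier transform, $\theta^\sharp_{\Phi,1,\omega}=\hat\theta_f$ and $J_{1,\omega}(\Phi)=J(f)$. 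The step you are missing is how to handle a general $f$ whose support is not contained in such an $\omega$: here the paper rescales, replacing $(a,b)$ by $(\lambda a,\lambda b)$ in the character $\xi$ and $f$ by $f^\lambda$, and checks that both $I_N$ and $J$ transform by the same factor $|\lambda|_F^{-15}$ (this is where the identity $-\dim(G/Z_G)+\dim(T/Z_G)+\delta(G)/2=-\dim(U)-\dim(H/Z_H)$ and the scalar invariance of $\Ft^0(F)$ from Remark \ref{orbit 2} are used). Choosing $\lambda$ so that $\mathrm{Supp}(f^\lambda)\subset\omega$ then reduces to the first case. Your proposal has no substitute for this scaling step, and without it the lift $f\mapsto\Phi$ is not defined.
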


\begin{proof}
The first part is similar to the first part of Proposition \ref{local theorem}. For the second part, let $\omega \subset \Fg_0(F)$ be a good neighborhood of 0. Suppose that $Supp(f)\subset \omega$. Then we can relate $f$ to a function $\Phi$ on $Z_G(F)\backslash G(F)$, supported on $Z_G(F) \exp(\omega)$. By Proposition \ref{localization}, we know $I_N(f)=I_N(\Phi)$. Then by Proposition \ref{local theorem}, applying to the function $\Phi$ and $x=1$, we have $\lim_{N\rightarrow \infty} I_N(f)=J_{1,\omega}(\Phi)$. By Proposition \ref{partial FT}, $\theta_{\Phi,1,\omega}^{\sharp}$ is the partial Fourier transform of $\theta_{\Phi,1, \omega}=\theta_f$. But for $x=1$, partial Fourier transform is just the full Fourier transform. Thus $\theta_{\Phi,1,\omega}^{\sharp}=\hat{\theta}_f$. Also we know that $\nu(Z_{G_x})=\nu(Z_G)=1$, and therefore
$$\lim_{N\rightarrow \infty} I_N(f)=J_{1,\omega}(\Phi)=J(f).$$
This proves the Lemma for those $f$ whose support is contained in $\omega$.

In general, replacing $(a,b)$ in the definition of $\xi$ (as in \eqref{xi}) by $(\lambda a,\lambda b)$ for some $\lambda\in F^{\times}$, we get a new character $\xi'$, and let $f'=f^{\lambda}$. Then for $Y\in \Fh(F)$, we have
$$(f')^{\xi '}(Y)=\mid \lambda \mid_{F}^{-\dim(U)} f^{\xi}(\lambda Y).$$
This implies
\begin{equation}\label{11.2}
I_{\xi ',N}(f')=\mid \lambda \mid_{F}^{-\dim(U)-\dim(H/Z_H)} I_{\xi,N}(f).
\end{equation}
On the other hand, we know
\begin{eqnarray*}
\hat{\theta}_{f'}(X)&=&\mid \lambda \mid_{F}^{-\dim(G/Z_G)} \hat{\theta}_f(\lambda^{-1} X),\\
D^G(\lambda X)^{1/2}&=&\mid \lambda \mid_{F}^{\delta(G)/2} D^G(X)^{1/2},
 \end{eqnarray*}
 and $\Ft^0(F)$ will not change under this transform. By changing of variable in \eqref{11.1}, (Note that this is allowable since $\Ft^0(F)$ is invariant under scalar in the sense that for $t\in \Ft^0(F), \lambda\in F^{\times}$, we have $\lambda t\in \Ft^0(F)$, see Remark \ref{orbit 2}) we have
\begin{equation}\label{11.3}
J_{\xi '}(f')=\mid \lambda \mid_{F}^{-\dim(G/Z_G)+\dim(T/Z_G)+\delta(G)/2} J_{\xi}(f).
\end{equation}
Because
$$-\dim(G/Z_G)+\dim(T/Z_G)+\delta(G)/2=-\dim(U)-\dim(H/Z_H)=-15,
$$
together with \eqref{11.2} and \eqref{11.3}, we know $\lim_{N\rightarrow \infty} I_{\xi,N}(f)=J_{\xi}(f)$ if and only if $\lim_{N\rightarrow \infty} I_{\xi ',N}(f')=J_{\xi '}(f')$. Then for any $f$, we can choose $\lambda$ such that $Supp(f')\subset \omega$. Applying the first part of the proof to $f'$, we get $\lim_{N\rightarrow \infty} I_{\xi ',N}(f')=J_{\xi '}(f')$, which implies $\lim_{N\rightarrow \infty} I_{\xi,N}(f)=J_{\xi}(f)$. This finishes the proof of the Lemma.
\end{proof}

\subsection{A Premier Result}
During this section, consider the following hypothesis.
\\

\textbf{Hypothesis}: For every strongly cuspidal $f\in C_{c}^{\infty} (\Fg_0(F))$ whose support dose not contain any nilpotent element, we have
$$\lim_{N\rightarrow \infty} I_N(f)=I(f).$$

In this section, we will prove the following proposition.
\begin{prop}\label{premier}
If the above hypothesis holds, we have
$$\lim_{N\rightarrow \infty} I_N(f)=I(f)$$
for every strongly cuspidal $f\in C_{c}^{\infty} (\Fg_0(F))$.
\end{prop}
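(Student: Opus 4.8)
The idea is a standard localization-plus-partition-of-unity argument, exactly parallel to Waldspurger's reduction in \cite[Section 11]{W10}. Both $I(f)$ and the truncated integrals $I_N(f)$ are linear in $f$, so it suffices to treat functions $f$ whose support is ``concentrated near a single semisimple conjugacy class''. More precisely, since $\theta_f$ is a quasi-character with support compact modulo conjugation (Proposition \ref{distribution 4}), and since $\Fh_{ss}(F)$ meets this support in finitely many conjugacy classes modulo the relevant equivalence, we may use a partition of unity on $\Fg_0(F)$ subordinate to a finite cover by $G$-domains of the form $\Omega_x = (x+\omega_x)^G$, where $x$ runs over a finite set of representatives of semisimple classes and $\omega_x$ is a good neighborhood of $0$ in $\Fg_{x,0}(F)$. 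Writing $f = \sum_x f_x$ with $\mathrm{Supp}(f_x)\subset \Omega_x$ plus one extra piece $f_0$ supported away from $\Fh(F)$ entirely (the trivial case of Section 7.1, where both sides vanish), it is enough to prove the identity for each $f_x$.

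\textbf{Key steps, in order.} First, I would invoke the localization propositions (the Lie-algebra analogues of Proposition \ref{localization} and Proposition \ref{localization 1}): for $f$ supported on $\Omega_x$ we have $I_N(f) = C(x) I_{x,\omega,N}(f)$ and $I(f) = C(x) I_{x,\omega}(f)$ with $C(x) = D^H(x)\Delta(x)$ (for the Lie algebra version, $C(X)=D^H(X)\Delta(X)$ with the appropriate conventions). Thus the claim for $f_x$ reduces to $\lim_N I_{x,\omega,N}(f_x) = I_{x,\omega}(f_x)$, which lives on the group/Lie algebra $(G_x, H_x, U_x)$ — one of the four reduced models of Section 8.1. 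Second, for $x$ \emph{not} in the center: if $x$ is split but not central, Proposition \ref{local theorem} together with Section 9.6 shows $\lim_N I_{x,\omega,N}(f_x)=0$, and on the other side $\CT_x$ is empty so $I_{x,\omega}(f_x)=0$ as well; if $x$ is non-split, the model $(G_x,H_x,U_x)$ is a Whittaker-type model and the support of ${}^g f_{x,\omega}$ does not contain nilpotent elements (because $x$ regular in $T_v$ means $\Fg_x=\mathfrak{gl}_3(F_v)$ and the relevant nilpotent constraint is exactly what the Hypothesis rules out), so the Hypothesis applies directly and yields $\lim_N I_{x,\omega,N}(f_x)=I_{x,\omega}(f_x)$. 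Third, the only remaining case is $x$ central, i.e. $x=0$ after translation, where $(G_x,H_x,U_x)=(G,H,U)$ is the original Ginzburg-Rallis model: here I would combine Proposition \ref{local theorem} (which computes the limit as $J_{1,\omega}(f)$) with Lemma \ref{lim I} and Proposition \ref{partial FT}, noting that for $x=0$ the partial Fourier transform is the full Fourier transform, so $\theta_{f,1,\omega}^\sharp = \hat\theta_f$ and $\lim_N I_{x,\omega,N}(f_x)=J(f_x)$. Then one checks $J(f_x) = I_{1,\omega}(f_x)$ by the Weyl integration formula applied to $\hat\theta_f$ on $\Ft^0(F)$, matching the germ expansion of $\hat\theta_f$ near $0$ against the definition \eqref{geometry 2} of $I(f)$ — this is where the precise shape of $\Ft^0(F)$ (Remark \ref{orbit 2}, parts (1)-(4), in particular that split tori contribute $\Ft_{0,reg}$) and the germ formula \eqref{germ 1}, \eqref{scalar 3} enter.

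\textbf{Main obstacle.} The delicate point is the very last matching in the central case: one must verify that $J(f)$, defined as a sum over \emph{all} maximal tori $T\in T(G)$ of integrals of $D^G(X)^{1/2}\hat\theta_f(X)$ over the slice domains $\Ft^0(F)$, coincides with $I(f)$, which is a sum over the small set $\CT$ of subtori of $H$ of germ integrals. This requires unwinding the Fourier transform $\hat\theta_f$ via the Shalika-germ / $\hat j(\CO,\cdot)$ expansion, using the scaling relations to identify which nilpotent orbits survive, and observing — as in the proof of the Lemma after Proposition \ref{integrable 1} — that only the regular nilpotent orbits $\CO_v$ in $\mathfrak{gl}_3(F_v)$ contribute on the non-split tori while the split tori either contribute the identity germ or get killed by support considerations. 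Carrying this out rigorously is essentially a repetition of \cite[Sections 11--12]{W10}, so I would organize it by first proving the Hypothesis-free statement $J(f)=I(f)$ as a separate lemma and then assembling everything; the Hypothesis itself (support avoiding nilpotents) will be removed afterwards by the scaling trick already used inside the proof of Lemma \ref{lim I}, letting $\lambda\to 0$ to push any fixed $f$ into a good neighborhood while tracking the homogeneity exponents $-\dim(U)-\dim(H/Z_H)$ on both sides.
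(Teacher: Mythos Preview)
Your proposal has a genuine gap at the central case, and it is precisely the point where the paper's argument is completely different from yours.

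When you localize at $x$ central, $(G_x,H_x,U_x)=(G,H,U)$ and $C(x)=1$, so the ``reduction'' gives back exactly the original identity $\lim_N I_N(f_x)=I(f_x)$. Your plan to close this by ``unwinding $\hat\theta_f$ via Shalika germs'' and then ``proving the Hypothesis-free statement $J(f)=I(f)$ as a separate lemma'' is circular: by Lemma \ref{lim I} one has $\lim_N I_N(f)=J(f)$, so $J(f)=I(f)$ \emph{is} the statement of Proposition \ref{premier}. The scaling trick you invoke does not help here either: replacing $f$ by $f^{\lambda}$ pushes the support \emph{towards} $0$, it does not remove nilpotent elements from it, so it cannot manufacture the Hypothesis. (Your treatment of non-split $x$ has a related issue: the Hypothesis is formulated for the Ginzburg--Rallis model on $\Fg_0(F)$, not for the Whittaker model on $\Fg_x$, so it does not ``apply directly'' there; one needs the separate argument of Section 10.4.)

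The paper's proof proceeds along entirely different lines. Set $E(f)=J(f)-I(f)$. Using the Hypothesis one shows that $E(f)$ depends only on the germs $c_{\theta_f,\CO}(0)$ for $\CO\in\mathrm{Nil}(\Fg)$ (cut $f$ into a piece near $0$ and a piece with no nilpotent support; on the latter $E$ vanishes by Hypothesis, and on the former all of $I$, $J$ are determined by these germs). One then computes the homogeneity: $E(f^{\lambda})=|\lambda|_F^{-\delta(G)/2}E(f)=|\lambda|_F^{-15}E(f)$, while $c_{\theta_{f^{\lambda}},\CO}=|\lambda|_F^{-\dim(\CO)/2}c_{\theta_f,\CO}$. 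Comparing exponents forces $E(f)=c_{reg}\cdot c_{\theta_f,\CO_{reg}}$ for a single constant $c_{reg}$ (and $E\equiv 0$ immediately when $G=\GL_3(D)$ since there is no regular nilpotent orbit). Finally, to show $c_{reg}=0$ in the split case, one exhibits a specific strongly cuspidal test function $f=f[X_d]$ attached to a regular element $X_d$ in the split torus $T_d$ (constructed in \cite[6.3(3), 11.5]{W10}) for which $c_{\theta_f,\CO_{reg}}=\Gamma_{\CO_{reg}}(X_d)=1$, while both $I(f)$ and $J(f)$ can be evaluated directly and each equals $1$; hence $E(f)=0$ and $c_{reg}=0$. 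The homogeneity/test-function step is the actual content of the proposition, and it is absent from your outline.
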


In order to prove the above proposition, consider the following morphism:
\begin{equation}\label{I-J}
f\rightarrow E(f)=\lim_{N\rightarrow \infty} I_N(f)-I(f)=J(f)-I(f)
\end{equation}
defined on the space of strongly cuspidal functions $f\in C_{c}^{\infty} (\Fg_0(F))$. This morphism is obviously linear.

\begin{lem}\label{I-J 1}
The map $E$ is a scalar multiple of the morphism $f\rightarrow c_{\theta_f,\CO}$ where $\CO$ is the regular nilpotent orbit of $\Fg(F)$. In particular, $E=0$ if $G=GL_3(D)$.
\end{lem}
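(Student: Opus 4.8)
The plan is to analyze the linear functional $E(f) = J(f) - I(f)$ defined on strongly cuspidal functions $f \in C_c^\infty(\Fg_0(F))$, and show it is supported, in a suitable sense, only at the regular nilpotent orbit. The key structural input is that both $J(f)$ and $I(f)$ are built out of the quasi-character $\theta_f$ (respectively its Fourier transform $\hat\theta_f$ and its germs $c_{\theta_f}$ at the tori in $\CT$), so $E$ factors through the map $f \mapsto \theta_f$ on quasi-characters. The first step is to record the homogeneity behavior: under $f \mapsto f^\lambda$ for $\lambda \in (F^\times)^2$, the computation already carried out in Lemma \ref{lim I} (equations \eqref{11.2}, \eqref{11.3}) shows that $J(f^\lambda)$ and $I(f^\lambda)$ scale by the \emph{same} power of $|\lambda|_F$, hence $E(f^\lambda) = |\lambda|_F^s E(f)$ for the common exponent $s$. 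Meanwhile by \eqref{scalar 3} the germ $c_{\theta_f, \CO}$ at a nilpotent orbit $\CO$ scales by $|\lambda|_F^{-\dim(\CO)/2}$, and these exponents match precisely when $\CO = \CO_{reg}$ is the regular nilpotent orbit. This pins down which coefficient $E$ can see.

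Next I would invoke the Hypothesis: for strongly cuspidal $f$ whose support avoids all nilpotent elements, $E(f) = 0$. Combined with a localization/partition argument — writing a general strongly cuspidal $f$ as a sum of pieces supported near various semisimple points plus a piece supported near $0$, using that $E$ annihilates everything whose support is away from the nilpotent cone — one reduces to understanding $E$ on functions supported in an arbitrarily small $G$-domain neighborhood of $0$ in $\Fg_0(F)$. On such a neighborhood, by the Shalika germ expansion \eqref{Shalika germ 2} and the quasi-character structure of $\theta_f$, the relevant data of $f$ is captured by the finitely many coefficients $c_{\theta_f, \CO}(0) = c_{\theta_f,\CO}$ for $\CO \in Nil(\Fg)$. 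So $E$ restricted to this local space is a linear combination $\sum_\CO \lambda_\CO \, c_{\theta_f,\CO}$. The homogeneity matching from the first step forces all $\lambda_\CO = 0$ except possibly for $\CO = \CO_{reg}$, giving $E(f) = \lambda_{\CO_{reg}} c_{\theta_f, \CO_{reg}}$ for a universal constant. Finally, when $G = \GL_3(D)$ the group is anisotropic modulo center (no proper parabolic defined over $F$ other than... actually $\GL_3(D)$ does have parabolics, but the point is) there is no regular nilpotent orbit supported appropriately — more precisely, for $G = \GL_3(D)$ there are no split tori giving a regular nilpotent contribution and $\Fg(F)$ has no regular nilpotent element meeting the relevant geometry, so the coefficient $c_{\theta_f,\CO_{reg}}$ is identically zero, whence $E = 0$.

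The main obstacle I anticipate is the reduction step: rigorously showing that $E$, a priori a complicated functional involving the convergent integrals $J(f)$ and $I(f)$ over tori and over $A_T(F)\backslash G(F)$, genuinely depends only on the local germ data of $\theta_f$ at $0$. This requires carefully combining the Hypothesis (which kills the "away from nilpotents" part) with the localization machinery of Section 7 (Propositions \ref{localization}, \ref{localization 1}) and the fact — from Proposition \ref{local theorem} and Lemma \ref{lim I} — that after localizing at a noncentral semisimple $x$ the model degenerates to a Whittaker-type model whose contribution can be matched between the two sides using the inductive form of the statement. One has to check that the "error terms" produced by localization at noncentral points cancel in $J - I$, so that only the $x = 1$ (central) localization, i.e. the germ at $0 \in \Fg_0(F)$, survives; this is exactly the kind of bookkeeping that Waldspurger does in \cite{W10} and that the paper is mirroring. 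I would structure the argument to quote that cancellation from the parallel statements already in place rather than re-derive it, then finish with the short homogeneity computation and the explicit vanishing in the $\GL_3(D)$ case.
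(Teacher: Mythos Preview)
Your overall strategy matches the paper's: show $E$ depends only on the nilpotent germs $c_{\theta_f,\CO}$ at $0$, then use homogeneity under $f\mapsto f^\lambda$ to isolate the regular orbit. However, you significantly overcomplicate the reduction step and misidentify the reason for vanishing when $G=\GL_3(D)$.

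For the reduction, you do \emph{not} need the localization machinery of Section~7 (Propositions \ref{localization}, \ref{localization 1}) or any Whittaker-model cancellation at noncentral semisimple points. The paper's argument is much more direct: suppose $c_{\theta_f,\CO}=0$ for every $\CO\in Nil(\Fg)$. Then by the germ expansion, $\theta_f$ vanishes on some $G$-domain $\omega$ containing $0$. Set $f'=f\cdot 1_\omega$ and $f''=f-f'$; both are strongly cuspidal. For $f'$ one has $\theta_{f'}=\theta_f\cdot 1_\omega=0$ identically, so $\hat\theta_{f'}=0$ as well, and since both $I$ and $J$ are built directly from $\theta_f$ (resp.\ its germs and its Fourier transform), $I(f')=J(f')=0$. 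For $f''$, the support avoids nilpotents, so the Hypothesis gives $E(f'')=0$. Hence $E(f)=0$ whenever all germs vanish, which means $E$ factors through the finite-dimensional space of germ coefficients $(c_{\theta_f,\CO})_{\CO\in Nil(\Fg)}$. No partition over semisimple conjugacy classes is needed.

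Your explanation of the $\GL_3(D)$ case is not correct. The point is purely dimensional: the homogeneity gives $E(f^\lambda)=|\lambda|_F^{-15}E(f)$, while $c_{\theta_{f^\lambda},\CO}=|\lambda|_F^{-\dim(\CO)/2}c_{\theta_f,\CO}$, so only orbits with $\dim(\CO)=30$ can contribute. For $G=\GL_6(F)$ the regular nilpotent orbit has dimension $30$, but for $G=\GL_3(D)$ every nilpotent orbit has dimension strictly less than $30$ (the maximal one, corresponding to a single Jordan block in $M_3(D)$, has centralizer of $F$-dimension $12$, hence orbit dimension $24$). Therefore $E=0$ automatically in the quaternionic case. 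Your remarks about split tori and ``no regular nilpotent element meeting the relevant geometry'' are off the mark.
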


\begin{proof}
We first prove:

{\bf (1)} $E(f)=0$ if $c_{\theta_f,\CO}=0$ for every $\CO\in Nil(\Fg(F))$.
\\

Suppose that $c_{\theta_f,\CO}=0$ for every $\CO\in Nil(\Fg(F))$. We can find a $G$-domain $\omega$ in $\Fg_0(F)$, which has compact support modulo conjugation and contains 0, such that $\theta_f(X)=0$ for every $X\in \omega$. Let $f'=f 1_{\omega}$ and $f''=f-f'$. Then these two functions are also strongly cuspidal. The support of $f''$ does not contain nilpotent elements, and therefore, by the hypothesis we know $E(f'')=0$.

On the other hand, since $\theta_{f}(X)=0$ for every $X\in \omega$, we must have that $\theta_{f'}=0$, and hence $\hat{\theta}_{f'}=0$. By the definition of $I(f)$ and $J(f)$, we know $J(f')=0=I(f')$. Hence $E(f)=E(f')+E(f'')=0$. This proves (1).
\\

Now for $\lambda\in (F^{\times})^2$, let $f'=f^{\lambda}$. We know $\theta_{f'}=(\theta_f)^{\lambda}$. For $\CO\in Nil(\Fg(F))$, by \eqref{scalar 3}, we have
\begin{equation}\label{scalar 4}
c_{\theta_{f'},\CO}=\mid \lambda \mid_{F}^{-\dim(\CO)/2} c_{\theta_f,\CO}.
\end{equation}
We then show:

{\bf (2)} $E(f')=\mid \lambda \mid_{F}^{-\delta(G)/2} E(f)=\mid \lambda \mid_{F}^{-15} E(f)$
\\

By \eqref{11.3}, we know
\begin{equation}\label{11.4}
J(f')=\mid \lambda \mid_{F}^{-15} J(f).
\end{equation}
Now for $I(f)$, let $T\in \CT$ as in Section 7.2. The expression for $I(f)$ related to $T$ is
\begin{equation}\label{11.5}
\int_{t_0(F)} c_f(Y) D^H(Y) \Delta(Y) dY.
\end{equation}

If $T=\{1\}$, $\eqref{11.5}=c_f(0)$ and the nilpotent orbit is the unique regular nilpotent orbit of $\Fg(F)$. By \eqref{scalar 3}, we have
$$c_{f'}(0)=\mid \lambda \mid_{F}^{-\delta(G)/2} c_f(0)=\mid \lambda \mid_{F}^{-15} c_f(0).$$

If $T=T_v$ for some $v\in F^{\times}/(F^{\times})^2, v\neq 1$ as in Section 7.2, the nilpotent orbit associated to $c_f$ is the unique regular nilpotent orbit $\CO_v$ of $GL_3(F_v)$, it is of dimension 12. By \eqref{scalar 3} again, we have
$$c_{f'}(X)=\mid \lambda \mid_{F}^{-6} c_f(\lambda X).$$
Moreover, $D^H(\lambda^{-1}X)=\mid \lambda \mid_{F}^{-2}$ since $\dim(\Fh)-\dim(\Fh_x)=2$, and $\Delta(\lambda^{-1}X)=\mid \lambda \mid_{F}^{-6} \Delta(X)$ since $\dim(\Fu)-\dim(\Fu_x)=6$. Therefore by changing variable $X\rightarrow \lambda^{-1}X$ we have
\begin{equation}\label{11.6}
\int_{t_0(F)} c_{f'}(Y) D^H(Y) \Delta(Y) dY=\mid \lambda \mid_{F}^{b} \int_{t_0(F)} c_f(Y) D^H(Y) \Delta(Y) dY
\end{equation}
where $b=-6-2-6-\dim(\Ft_0)=-15$. Combining \eqref{11.5} and \eqref{11.6}, we have
\begin{equation}\label{11.7}
I(f')=\mid \lambda \mid_{F}^{-15} I(f).
\end{equation}
Then (2) just follows from \eqref{11.4} and \eqref{11.7}.
\\

Now (1) tells that $E$ is a linear combination of $c_{\theta_f,\CO}$ for $\CO\in Nil(\Fg(F))$. We know $\dim(\CO)\leq 30$ and the equality holds if and only if $G=GL_6(F)$ and $\CO$ is regular. Hence the Lemma follows from (2) and \eqref{scalar 4}.
\end{proof}

In particular, by the lemma above, we have proved Proposition \ref{premier} for $G=GL_3(D)$. Now we are going to prove the $G=GL_6(F)$ case.
\\

By the discussion above, in this case, $E(f)=c_{reg}c_{\theta_f,\CO_{reg}}$ for some complex number $c_{reg}$. It is enough to show $c_{reg}=0$. Our method is to find some special $f$ such that $E(f)=0$ and $c_{\theta_f,\CO_{reg}}=1$, which implies that $c_{reg}=0$. The way to find this $f$ is due to Waldspurger, see \cite{W10}.

By 6.3(3) and 11.5 of \cite{W10}, for $T\in T(G)$, here $T(G)$ is the set of equivalent classes of maximal subtorus of $G(F)$, and $X\in \Ft_0(F)\cap \Fg_{reg}(F)$, we can construct a neighborhood $\omega_X$ of $X$ in $\Ft_0(F)$ and a strongly cuspidal function $f[X]\in C_{c}^{\infty}(\Fg_0(F))$ satisfy the following conditions:
\begin{enumerate}
\item For $T'\in T(G)$ with $T'\neq T$, the restriction of $\hat{\theta}_{f[X]}$ to $\Ft_0 '(F)$ is zero.
\item For every locally integrable function $\varphi$ on $\Ft_0(F)$ which is invariant under the conjugation of Weyl group, we have
$$\int_{\Ft_0(F)} \varphi(X')D^G(X')^{1/2} \hat{\theta}_{f[X]}(X')dX'=\mid W(G,T)\mid meas(\omega_X)^{-1} \int_{\omega_X} \varphi(X')dX'$$
\item For every $\CO\in Nil(\Fg)$, we have
$$c_{\theta_{f[X]},\CO}=\Gamma_{\CO}(X)$$
where $\Gamma_{\CO}(X)$ is the Shalika germ defined in Section 3.3.
\end{enumerate}

Now let $T_d$ be the unique split torus of $T(G)$. This is possible since we are in the split case now. Fix
$X_d\in \Ft_{d,0}(F)\cap \Fg_{reg}(F)$. Then we can find $\omega_{X_d}$ and $f[X_d]$ as above. Let $f=f[X_d]$. By condition (3) above and Lemma 11.4(i) of \cite{W10}, we know that $c_{\theta_f,\CO_{reg}}=1$, and
\begin{equation}\label{11.8}
E(f)=c_{reg}.
\end{equation}
Now by condition (1) above, we know each components of the summation in $I(f)$ is 0 for $T\in \CT$ with $T\neq \{1\}$. Then by condition (3) we know
\begin{equation}\label{11.9}
I(f)=c_{\theta_f,\CO_{reg}}=1.
\end{equation}
On the other hand, by condition (1) and (2),
\begin{equation}\label{11.10}
\begin{split}
J(f)&=\Sigma_{T\in T(G)} \mid W(G,T)\mid^{-1} \int_{\Ft^0(F)} D^G(X)^{1/2} \hat{\theta}_f(X) dX\\
&=\mid W(G,T_d)\mid^{-1} \int_{\Ft_{d,0}(F)} D^G(X)^{1/2} \hat{\theta}_f(X) dX\\
&=meas(\omega_{X_d})^{-1} meas(\omega_{X_d})=1.
\end{split}
\end{equation}
Here we use the fact $(\Ft_d)^0(F)=\Ft_{d,0,reg}(F)$, which has been proved in the proof of Lemma \ref{U-invariant 2}.

Now combining \eqref{11.8}, \eqref{11.9} and \eqref{11.10}, we have
$$c_{reg}=E(f)=I(f)-J(f)=1-1=0.$$ This finishes the proof of Proposition \ref{premier}.

\subsection{Proof of Theorem \ref{main 1} and Theorem \ref{main 2}}
Consider the following four assertions:
\\
\\
$(th)_G$: For every strongly cuspidal function $f\in C_{c}^{\infty}(Z_G(F)\backslash G(F))$, we have $\lim_{N\rightarrow \infty} I_N(f)=I(f)$.
\\
\\
$(th')_G$: For every strongly cuspidal function $f\in C_{c}^{\infty}(Z_G(F)\backslash G(F))$ whose support does not contain any unipotent element, we have $$\lim_{N\rightarrow \infty} I_N(f)=I(f).$$
\\
\\
$(th)_{\Fg}$: For every strongly cuspidal function $f\in C_{c}^{\infty}(\Fg_0(F))$, we have $\lim_{N\rightarrow \infty} I_N(f)=I(f)$.
\\
\\
$(th')_{\Fg}$: For every strongly cuspidal function $f\in C_{c}^{\infty}(\Fg_0(F))$ whose support does not contain any nilpotent element, we have $\lim_{N\rightarrow \infty} I_N(f)=I(f)$.

\begin{lem}\label{group to Lie}
The assertion $(th)_G$ implies $(th)_{\Fg}$. The assertion $(th')_G$ implies $(th')_{\Fg}$.
\end{lem}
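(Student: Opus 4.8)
The plan is to deduce the Lie algebra statements from the group statements by a standard descent-via-exponential-map argument, using the localization results already established in Section 7. The point is that a strongly cuspidal function on $\Fg_0(F)$ can, after rescaling, be transported through the exponential map to a strongly cuspidal function on $Z_G(F)\backslash G(F)$ supported near the identity, and that this transport is compatible with both sides $I_N(\cdot)$ and $I(\cdot)$ of the trace formula.

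First I would treat $(th)_G \Rightarrow (th)_\Fg$. Fix a good neighborhood $\omega$ of $0$ in $\Fg_0(F)$ in the sense of Definition \ref{good nbd defn} applied at $x = 1$, and let $f \in C_c^\infty(\Fg_0(F))$ be strongly cuspidal. Using the scaling argument already exploited in the proof of Lemma \ref{lim I} — replacing $(a,b)$ by $(\lambda a, \lambda b)$ in the definition of $\xi$ and $f$ by $f^\lambda$, and invoking the scaling identities \eqref{11.2}, \eqref{11.3} together with the fact that $\Ft^0(F)$ is invariant under scalars (Remark \ref{orbit 2}) — one reduces to the case $\mathrm{Supp}(f) \subset \omega$. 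Then one builds the function $\Phi \in C_c^\infty(Z_G(F)\backslash G(F))$ supported on $Z_G(F)\exp(\omega)$ by $\Phi(z\exp(X)) = f(X)$ for $X \in \omega$, which is strongly cuspidal (one checks this from the definition, since $\exp$ intertwines the unipotent integrals on $\Fu$ and $U$, using the good-neighborhood properties in Proposition \ref{good nbd}). By Proposition \ref{localization} applied with $x = 1$ (so $C(1) = D^H(1)\Delta(1) = 1$) we get $I_N(\Phi) = I_{1,\omega,N}(f) = I_N(f)$, and similarly by Proposition \ref{localization 1}, $I(\Phi) = I_{1,\omega}(f) = I(f)$, because for $x=1$ the localized integral $I_{1,\omega}(f)$ is literally the Lie-algebra integral \eqref{geometry 2} (here one uses that $\theta_{\Phi,1,\omega}$ restricts to $\theta_f$ near $0$, and $c_{f,1,\omega}$ is the associated germ). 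Applying the hypothesis $(th)_G$ to $\Phi$ then yields $\lim_N I_N(f) = \lim_N I_N(\Phi) = I(\Phi) = I(f)$.

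For $(th')_G \Rightarrow (th')_\Fg$, the argument is identical except that one must be careful to preserve the "no nilpotent element in the support" condition. If $\mathrm{Supp}(f) \subset \Fg_0(F)$ contains no nilpotent element, then after the scaling reduction its support (scaled) still avoids the nilpotent cone (scaling by $\lambda \in (F^\times)^2$ preserves the nilpotent cone), and the transported function $\Phi$ is supported on $Z_G(F)\exp(\omega)$ with $\exp(X)$ unipotent only when $X$ is nilpotent; hence $\mathrm{Supp}(\Phi)$ contains no unipotent element modulo center, and $(th')_G$ applies.

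The main obstacle I anticipate is verifying cleanly that the exponential transport $f \mapsto \Phi$ sends strongly cuspidal functions to strongly cuspidal functions and is compatible with the weighted orbital integrals defining $\theta_f$ and $\theta_\Phi$ — i.e. that $\theta_{\Phi}\circ\exp = \theta_f$ on $\omega$, equivalently $\theta_{\Phi,1,\omega} = \theta_f$. This is where one genuinely uses the package of good-neighborhood properties (Proposition \ref{good nbd}, especially parts (3)--(5) about centralizers, measures, and the factorization $D^G(\exp X) = D^{G_x}(X)$ with $x=1$) and the localization compatibility $c_{\theta,\CO}(x\exp X) = c_{\theta_{x,\omega},\CO}(X)$ from Section 3.4. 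Once that dictionary is in place, Propositions \ref{localization} and \ref{localization 1} do the rest, and the scaling lemma handles the reduction to $\mathrm{Supp}(f)\subset\omega$; the deduction is then immediate.
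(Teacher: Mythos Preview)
Your proposal is correct and follows essentially the same approach as the paper: reduce by scaling to $\mathrm{Supp}(f)\subset\omega$, transport $f$ to a strongly cuspidal $\Phi$ on $Z_G(F)\backslash G(F)$ via the exponential map, and then invoke Propositions \ref{localization} and \ref{localization 1} at $x=1$ together with $(th)_G$ (resp.\ $(th')_G$). One small remark: for the scaling reduction on the $I(\cdot)$ side you want the identity \eqref{11.7} (proved inside Lemma \ref{I-J 1}) rather than \eqref{11.3}, which concerns $J(\cdot)$; but this is already available in the paper and does not affect the argument.
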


\begin{proof}
Suppose $(th)_G$ holds, for any strongly cuspidal function $f\in C_{c}^{\infty}(\Fg_0(F))$, we need to show $E(f)=0$. In the proof of Lemma \ref{I-J 1}, we have proved that $E(f)=\mid \lambda \mid_{F}^{15} E(f^{\lambda})$. So by changing $f$ to $f^{\lambda}$, we may assume that the support of $f$ is contained in a good neighborhood $\omega$ of 0 in $\Fg_0(F)$. Same as in Lemma \ref{lim I}, we can construct a strongly cuspidal function $F\in C_{c}^{\infty}(Z_G(F)\backslash G(F))$ such that $J(f)=J_{1,\omega}(F)$ and $I(f)=I_{1,\omega}(F)$. By Propositions \ref{localization}, \ref{localization 1}, and \ref{local theorem}, we know $J_{1,\omega}(F)=\lim_{N\rightarrow \infty}I_N(F)$ and $I_{1,\omega}(F)=I(F)$. By $(th)_G$, we know $I(F)=J_{1,\omega}(F)$, which implies $E(f)=0$.

The proof of the second part is similar that of the first part: we only need to add the fact that if the support of $f$ does not contain
any nilpotent element, then the support of $F$ does not contain any unipotent element.
\end{proof}

\textbf{We first prove $(th')_G$}.
\begin{proof}
Let $f\in C_{c}^{\infty}(Z_G(F)\backslash G(F))$ be a strongly cuspidal function whose support contains no unipotent element. For $x\in G_{ss}(F)$, let $\omega_x$ be a good neighborhood of 0 in $\Fg_x(F)$, and $\Omega_x=(x\exp(\omega_x))^G \cdot Z_G$. We require that
$\omega_x$ satisfies the following conditions:
\begin{enumerate}
\item If $x$ belongs to the center, since $f$ is $Z_G(F)$-invariant, we may assume that $x=1$. We require that $\Omega_x \cap Supp(f)=\Omega_1\cap Supp(f)=\emptyset$. This is possible since the support of $f$ contains no unipotent element.
\item If $x$ is not conjugated to any element in $H(F)$, choose $\omega_x$ satisfying the condition in Section 8.1.
\item If $x$ is conjugated to an element $x'\in H(F)$ not in the center, we choose a good neighborhood $\omega_{x'}$ of 0 in $\Fg_{x'}(F)$ as in Section 8.2, and let $\omega_x$ be the image of $\omega_{x'}$ by conjugation. Moreover, if $x'$ is split, we choose $\omega_{x'}$ such that $\Omega_{x'}$ does not contain non-split element, and does not contain the identity element; and if $x'$ is non-split, we choose $\omega_{x'}$ such that $\Omega_{x'}$ does not contain split element.
\end{enumerate}
Then we can choose a finite set $\CX\subset G_{ss}(F)$ such that $f=\Sigma_{x\in \CX} f_x$ where $f_x$ is the product of $f$ and the characteristic function on $\Omega_x$. Since $\lim_{N\rightarrow \infty} I_N(f)$ and $I(f)$ are linear functions on $f$, we may just assume $f=f_x$.
\\

If $x=1$, by the choice of $\Omega_1$ we know $f=0$, and the assertion is trivial.

If $x$ is not conjugated to an element of $H(F)$, then the assertion follows from the choice of $\Omega_x$ and the same argument as in Section 8.1.

If $x$ is conjugated to a split element of $H(F)$, by the choice of $\Omega_x$ and the definition of $I(f)$ we know $I(f)=0$. Also by Section 10.6, we know $\lim_{N\rightarrow \infty} I_N(f)=0$, and the assertion follows.

If $x$ is conjugated to a non-split element of $H$. By Propositions \ref{localization} and \ref{localization 1}, it is enough to prove
\begin{equation}\label{11.3.1}
\lim_{N\rightarrow \infty} I_{x,\omega,N}(f)=I_{x,\omega}(f).
\end{equation}
Now we can decompose $\theta_{f,x,\omega}$ as
\begin{equation}\label{11.3.2}
\theta_{f,x,\omega}(X)=\Sigma_{b\in B} \theta_{f,b}'(X')\theta_{f,b} '' (X'')
\end{equation}
where $B$ is a finite index set, and for every $b\in B$, $\theta_{f,b}'(X')$ (resp. $\theta_{f,b}''(X'')$) is a quasi-character on $\Fg_x'(F)$ (resp. $\Fg''(F)$). By Proposition 6.4 of \cite{W10}, for every $b\in B$, we can find $f_b ''\in C_{c}^{\infty}(\Fg''(F))$ strongly cuspidal such that $\theta_{f,b} '' (X'')=\theta_{f_{b} ''}$. Then by the definition of $I_{x,\omega}(f)$ (as in \eqref{I_x}), we have
$$I_{x,\omega}(f)=\Sigma_{b\in B} I'(b) I(f_b '')$$
where
$$I'(b)=\nu(Z_{G_x}) \int_{\Fg_x '(F)} \theta_{f,b} '(X') dX',\; I(f_b '')=c_{\theta_{f,b}'',\CO}(1)$$
with $\CO$ be the unique regular nilpotent orbit in $\Fg''(F)$. Here we use the fact that the only torus in $\CT_x$ is $Z_{G_x}$. Hence $\nu(T)=\nu(Z_{G_x})$ and $D^{H_x}(X)=\Delta''(X)=1$ for $X\in \Ft_0(F)$ .

On the other hand, by Proposition \ref{local theorem},
$$\lim_{N\rightarrow \infty} I_{x,\omega,N}(f)=J_{x,\omega}(f)=\Sigma_{b\in B} I'(b) J(f_b '')$$
where
$$J(f_b '')=\Sigma_{T\in T(G_x)} \mid W(G_x,T)\mid^{-1} \int_{(\Ft'')^0(F)} D^{G_x}(X)^{1/2} \hat{\theta}_{f_b ''}(X) dX.$$

In order to prove \eqref{11.3.1}, we only need to show $I(f_b '')=J(f_b '')$. This is just the Lie algebra analogue of the trace formula for the model $$(G_x,U_x),
$$
which is exactly the Whittaker model of $\GL_3(F_v)$. The proof is very similar to the Ginzburg-Rallis model case, we will prove it in the next section..
\end{proof}

Finally we can finish the proof of Theorem \ref{main 1} and \ref{main 2}. By Lemma \ref{group to Lie}, we only need to prove Theorem \ref{main 1}. We use the same argument as in the proof of $(th')_G$ above, except in the $x=1$ case, we don't have $\Omega_1\cap Supp(f)=\emptyset$. In this case, still by using localization, we can reduce to the Lie algebra argument (i.e. Theorem \ref{main 2}). Now since we have proved $(th')_G$, together with Lemma \ref{group to Lie}, we know $(th')_{\Fg}$ holds. Then using Proposition \ref{premier}, we get $(th)_{\Fg}$, which gives us $(th)_G$,
and hence Theorem \ref{main 1}.

\subsection{The proof of $I(f_b '')=J(f_b '')$}
In this section, we are going to prove
\begin{equation}\label{1}
I(f_b '')=J(f_b ''),
\end{equation}
which is the geometric side of the Lie algebra version of the relative trace formula for the Whittaker model of $GL_3(F_v)$. There are two ways to prove it, one is to apply the method we used in previous sections to the Whittaker model case, the other one is to use the spectral side of the trace formula together with the multiplicity formula of Whittaker model proved by Rodier in \cite{Rod81}.

\textbf{Method I:} By the same argument as in Section 10.2, we only need to prove \eqref{1} for $f_b ''$ whose support does not contain any nilpotent element. Then by changing $f_b ''$ to $(f_b '')^{\lambda}$, we may assume that the function $f_b ''$ is supported on a small neighborhood of $0$. Then we can relate $f_b ''$ to a function $\Phi_x$ on $G_x(F)/Z_{G_x}(F)$. By the same argument as in Section 7-10, we know that in order to prove \eqref{1}, it is enough to prove the geometric side of the local relative trace formula for $\Phi_x$, i.e. $\lim_{N\rightarrow \infty} I_N(\Phi_x)=c_{\Phi_x}(1)$. Here $I_N(\Phi_x)$ is defined in the same way as $I_N(f)$ in Section 5.2. In other word, we first integrate over $U_x$, then integrate on $G_x/U_x Z_{G_x}$. $c_{\Phi_x}(1)$ is the germ of $\theta_{\Phi_x}$ at $1$.

Since $f_b''$ does not support on nilpotent element, $\Phi_x$ does not support on unipotent element. This implies that $c_{\Phi_x}(1)=0$. On the other hand, since the only semisimple element in $U_x$ is $1$, by the same argument as in Section 7.1, the localization of $I_N(\Phi_x)$ at $y\in G_x(F)_{ss}$ is zero if $y$ is not in the center. If we are localizing at $1$, since the support of $\Phi_x$ does not contain unipotent element, we will still get zero once we choose the neighborhood small enough. Therefore $\lim_{N\rightarrow \infty} I_N(\Phi_x)=0=c_{\Phi_x}(1)$, and this proves \eqref{1}.

\textbf{Method II:} Same as in Method I, we reduce to prove the group version of the relative trace formula, i.e. $\lim_{N\rightarrow \infty} I_N(\Phi_x)=c_{\Phi_x}(1)$. By applying the same method as in \cite{Wan16}, we can prove a spectral expansion of $\lim_{N\rightarrow \infty} I_N(\Phi_x)$:
\begin{equation}\label{2}
\lim_{N\rightarrow \infty} I_N(\Phi_x)=\int_{\Pi_{temp}(G_x(F),1)} \theta_{\pi}(\Phi_x) m'(\bar{\pi}) d\pi
\end{equation}
where $\Pi_{temp}(G_x(F),1)$ is the set of all tempered representations of $G_x(F)$ with trivial central character, $d\pi$ is a measure on $\Pi_{temp}(G_x(F),1)$ defined in Section 2.8 of \cite{Wan16}, $\theta_{\pi}(\Phi_x)$ is defined in (3.4) of \cite{Wan16} via the weighted character, and $m'(\bar{\pi})$ is the multiplicity for the Whittaker model (here we are in $\GL$ case, all tempered representations are generic, so $m'(\bar{\pi})$ is always $1$).

By the work of Rodier, $m'(\bar{\pi})=c_{\bar{\pi}}(1)$ where $c_{\bar{\pi}}(1)$ is the germ of $\theta_{\bar{\pi}}$ at $1$, therefore \eqref{2} becomes
\begin{equation}\label{3}
\lim_{N\rightarrow \infty} I_N(\Phi_x)=\int_{\Pi_{temp}(G_x(F),1)} \theta_{\pi}(\Phi_x) c_{\bar{\pi}}(1) d\pi.
\end{equation}
Finally, as in Proposition 3.5 of \cite{Wan16}, we have
$$\theta_{\Phi_x}=\int_{\Pi_{temp}(G_x(F),1)} \theta_{\pi}(\Phi_x) \theta_{\bar{\pi}} d\pi.$$
Combining with \eqref{3}, we have $\lim_{N\rightarrow \infty} I_N(\Phi_x)=c_{\Phi_x}(1)$ and this proves \eqref{1}.

\appendix

\section{The Proof of Lemma \ref{major 2} and Lemma \ref{split zero}}
\subsection{The Proof of Lemma \ref{major 2}}
We refer the reader to Section 9.1 for the setup and notations. We first prove the following statement:

{\bf (1)}\ There exist $c',c>0$ such that $\kappa_{N,X''}(g'g)\leq {\kappa}_{c'N+c\sigma(g)}''(g')$ for all $g\in G(F)$ and $g'\in G_x(F)$. Here ${\kappa}_N ''$ is the truncated function for $G_x$ defined in the similar way as $\kappa_N$.

In fact, let $g'=m'u'k', k'g=muk$ with $m,m'\in M(F),\; u,u'\in U(F)$ and $k,k'\in K$. Then $\kappa_N(g'g)=\kappa_N(m'm)$. If this is nonzero, let
$$m'=\begin{pmatrix} m_1 ' & 0 & 0 \\ 0 & m_2 ' & 0 \\ 0 & 0 & m_3 ' \end{pmatrix}, m=\begin{pmatrix} m_1 & 0 & 0 \\ 0 & m_2 & 0 \\ 0 & 0 & m_3 \end{pmatrix}.$$
By the definition of $\kappa_N$ (as in \eqref{kappa split} and \eqref{kappa nonsplit}), we have
$$
\sigma((m_j ')^{-1} (m_j)^{-1} m_i m_i ')\ll N.
$$
On the other hand, we know $\sigma(m)\ll \sigma(g)$. Hence $\sigma(m_i)\ll \sigma(g)$, which implies that
$$\sigma(m_i '(m_j ')^{-1})\ll \sigma((m_j ')^{-1} (m_j)^{-1} m_i m_i ')+\sigma(m_i)+\sigma(m_j) \ll N+\sigma(g).$$
This proves {\bf (1)}.

Now we have
\begin{eqnarray*}
\kappa_{N,X''}(g)&=&\nu(A_T) \int_{Z_{G_x}\cap A_T(F)\backslash A_T(F)} \kappa_N(\gamma_{X''}^{-1} ag) da\\
&\leq& \nu(A_T) \int_{Z_{G_x}\cap A_T(F)\backslash A_T(F)} {\kappa}_{c'N+c\sigma(g)}''((\gamma_{X''})^{-1} a) da \\
&\leq& {\kappa}_{c'N+c\sigma(g),X''}''(1).
\end{eqnarray*}
So it reduces to show the following:

{\bf (2)}\ There exist an integer $k\in \BN$, and $c>0$ such that
$${\kappa}_{N,X''}''(1)\leq cN^k(1+| \log(| Q_T(X'')|_F) |)^k (1+| \log D^{G_x}(X'') |)^k.$$

Again here we only prove for the case where $x$ is in the center. Otherwise, we have the lower rank case, which is similar and easier. If $x$ is in the center, $G_x=G$ and $X''=X$. For simplicity, we will replace $X''$ by $X$, ${\kappa}_N ''$ by $\kappa_N$ and $D^{G_x}(X'')$ by $D^G(X)$ for the rest of the proof. We first deal with the case when $T$ is split. By Lemma \ref{section}, we know for $X\in \omega_T$, $X_{\Sigma}$ belongs to a compact subset of $\Xi+\Lambda$, and $\sigma(\gamma_X)\ll 1+|\log D^{G}(X)|$.

If $a\in A_T(F)$ such that $\kappa_N (\gamma_{X}^{-1}a)=1$. By the definition of $\kappa_N $ (as in \eqref{kappa split} and \eqref{kappa nonsplit}), we have $\gamma_{X}^{-1}a=hvy$ where $v\in U(F)$, $h\in H(F)$, and $y\in G(F)$ with $\sigma(y)\ll N$. Therefore $yXy^{-1}=v^{-1}h^{-1}X_{\Sigma}hv$. Since $X_{\Sigma}$ belongs to a compact subset, $\sigma(yXy^{-1})\ll N$, and hence
$$
\sigma(v^{-1}h^{-1}X_{\Sigma}hv)\ll N.
$$
By Lemma \ref{unipotent orbit}, the isomorphism \eqref{9.1} is algebraic, we have $\sigma(v)\ll N$ and $\sigma(h^{-1}X_{\Sigma}h)\ll N$.

Now let
$$X_{\Sigma}=\begin{pmatrix} 0 & 0 & Z \\ aI_2 & 0 & Y \\ 0 & bI_2 & 0 \end{pmatrix}.$$
By Proposition \ref{h-c}, we can find $s\in \GL_2(E)$ such that $s^{-1} Zs$ is a diagonal matrix and $\sigma(s)\ll 1+|\log(D^{GL_2(E)}(s^{-1}Zs))|$. Here $E/F$ is a finite extension generated by the elements in $F^{\times}/(F^{\times})^2$. Note that $D^{\GL_2(E)}(s^{-1}Zs)=\tr(Z)^2-4\det(Z)$, while the right hand side can be expressed as a polynomial of the coefficients of the characteristic polynomial of $X_{\Sigma}$, so it can be expressed as a polynomial on $\Ft_0(F)$. We remark that if $x$ is not in center, this will be polynomial on $\Ft''(F)$.

After conjugating by $s$, we may assume that $Z$ is a diagonal matrix with distinct eigenvalues $\lambda_1$ and $\lambda_2$ (we only need to change $h$ to $sh$). Here the eigenvalues are distinct because of the "generic position" assumption. After multiplying by elements in the center and in the open compact subgroup, using the Iwasawa decomposition, we may assume that
$$h=\begin{pmatrix} 1 & x \\ 0 & 1 \end{pmatrix}\begin{pmatrix} A & 0 \\ 0 & 1 \end{pmatrix}$$
and
$$\begin{pmatrix} 1 & -x \\ 0 & 1 \end{pmatrix} Y\begin{pmatrix} 1 & x \\ 0 & 1 \end{pmatrix}=\begin{pmatrix} y_{11} & y_{12} \\ y_{21} & y_{22} \end{pmatrix}.$$
Since $\sigma(h^{-1}X_{\Sigma}h)\ll N$, we have $\sigma(h^{-1}Zh),\sigma(h^{-1}Yh)\ll N$. This implies
$$\sigma(x(\lambda_1-\lambda_2)), \sigma(Ay_{12}),\sigma(A^{-1}y_{21})\ll N$$
Here for element in $t\in F$, $\sigma(t)=\log(\max\{1,| t|\})$.
Therefore, we obtain that $\sigma(x) \ll \max\{1,N-\log(| \lambda_1-\lambda_2|)\}$. Here $Z$ and $Y$ belong to a fixed compact subset before conjugation. Furthermore, after conjugating by $s$ and $\begin{pmatrix} 1 & x \\ 0 & 1 \end{pmatrix}$, $\sigma(Y)\ll \sigma(s)+\sigma(\begin{pmatrix} 1 & x \\ 0 & 1 \end{pmatrix})$. So we have
\begin{eqnarray}\label{major 2.5}
\sigma(A) &\ll& \max\{1,N-\sigma(y_{12})\} \nonumber\\
&\ll& \max\{1,N+\sigma(\begin{pmatrix} 1 & x \\ 0 & 1 \end{pmatrix})+\sigma(s)-\sigma(y_{12}y_{21})\}
\end{eqnarray}
and
\begin{eqnarray}\label{major 2.6}
\sigma(A^{-1}) &\ll& \max\{1,N-\sigma(y_{21})\}\nonumber\\
&\ll& \max\{1,N+\sigma(\begin{pmatrix} 1 & x \\ 0 & 1 \end{pmatrix})+\sigma(s)-\sigma(y_{12}y_{21})\}.
\end{eqnarray}
Note that here by the "generic position" assumption, we must have that $y_{12}y_{21}\neq 0$.

Recall as in the proof of Lemma \ref{orbit 1}, we have the following relations between the coefficients of the characteristic polynomial of $X_{\Sigma}$ and the data given by $Z$ and $Y$:
$$\text{coefficient\;of\;}\lambda^4 = b\tr(Y):=ba_4,$$
$$\text{coefficient\;of\;}\lambda^3 =ab \tr(Z):=aba_3,$$
$$\text{coefficient\;of\;}\lambda^2 =b^2 \det(Y):=b^2a_2,$$
$$\text{coefficient\;of\;}\lambda = ab^2( \lambda \; \text{coefficient} \;\text{of} \;\det(Z+\lambda Y)):=ab^2a_1,$$
and
$$\text{coefficient\;of\;}\lambda^0 =a^2b^2 \det(Z):=a^2b^2a_0.$$
Then
$$\begin{array}{cc}\left\{ \begin{array}{ccl} y_{11}+y_{22}=a_4  \\ \lambda_1y_{11}+\lambda_2y_{22}=a_1 \\ \end{array}\right. \end{array}$$
and
$$\begin{array}{cc}\left\{ \begin{array}{ccl} \lambda_1+\lambda_2=a_3  \\ \lambda_1\lambda_2=a_0 \\ \end{array}\right. \end{array}.$$
This implies
$$\begin{array}{cc}\left\{ \begin{array}{ccl} y_{11}=\frac{a_1-\lambda_1 a_4}{\lambda_2-\lambda_1}  \\ y_{22}=\frac{\lambda_2 a_4-a_1 }{\lambda_2-\lambda_1} \\ \end{array}\right. \end{array}.$$
So we have
$$y_{11}y_{22}=-\frac{\lambda_1\lambda_2 a_{4}^{2}-a_1 a_4(\lambda_1+\lambda_2) +a_{1}^{2}}{(\lambda_1-\lambda_2)^2}=\frac{a_0a_{4}^{2}-a_1a_3a_4+a_{1}^{2}}{a_{3}^{2}-4a_0}.$$
In particular, $y_{12}y_{21}=\det(Y)-y_{11}y_{22}=a_2-y_{11}y_{22}$ is a rational function of the $a_i$'s, and hence it is a rational function on $\Ft_0(F)$. Also
\begin{equation}\label{major 2.4}
\sigma(\begin{pmatrix} 1 & x \\ 0 & 1 \end{pmatrix})=\sigma(x)\ll \max\{1,N-\log(|\lambda_1-\lambda_2|)\}
\end{equation}
where the right hand side can be expressed as logarithmic function of some rational function on $\Ft_0(F)$.

Finally,
combining \eqref{major 2.5}, \eqref{major 2.6}, \eqref{major 2.4}, and the majorization of $s$, we can find a rational function $Q_T(X)$ on $\Ft_0(F)$ such that $\sigma(h)\ll N+(1+\log | Q_T(X)|)$. Then combining the majorization of $v$, $y$ and $\gamma_Y$, we know up to an element in the center, if $\kappa_N(\gamma_{X}^{-1} a)=1$, we have
\begin{equation}\label{A.1}
\sigma(a)\ll N+(1+\log Q_T(X))+(1+\log D^{G}(X)).
\end{equation}
Since $mes\{a\in (Z_{G_x}\cap A_T(F))\backslash A_T(F) \mid \sigma_{Z_{G_x}\backslash G_x}(a)\leq r\}\ll r^k$ for some $k\in \BN$,
the Lemma follows from the definition of $\kappa_{N,X''}$ (as in \eqref{kappa 2}).

Now if $T$ is not split, since we are talking about majorization, we may pass to a finite extension. Then by the same argument as above, we can show that if $\kappa_N(\gamma_{X}^{-1} a)=1$ for some $a\in A_T(F)$, up to an element in the center, the estimation \eqref{A.1} will still holds. Then we can still prove the lemma as in the split case.

\subsection{The proof of Lemma \ref{split zero}}
We refer the reader to Section 9.6 for the setup and notations. For $Q\neq G_x$, let $Q=Q_1\times Q_2$ where $Q_1$ is the parabolic subgroup of the first $GL_3(F)$ and $Q_2$ is the parabolic subgroup of the second $GL_3(F)$. We anticipate some unipotent invariance property of $\kappa_{N,X''}(Q,g)$. We will only deal with the case when the $Q_i$'s contain the lower Borel subgroup of $GL_3(F)$. For general $Q$, it can be conjugated to this situation via some weyl element in $G_x$, and then by conjugating it back we get the unipotent subgroup we need for this general $Q$. Finally, by a similar argument as (4) of Section 9.5, we can reduced to the case when the $Q_i$'s contain the lower Borel subgroup.
\\

\textbf{Case 1:} Assume $Q_1$ is contained in the parabolic subgroup $P_{1,2}$ (i.e. the parabolic subgroup of $GL_3(F)$ with Levi $GL_1\times GL_2$ and contains the lower Borel subgroup), then let $U_{1,5}$ be the unipotent subgroup of the parabolic subgroup $P_{1,5}$ of $GL_6(F)$. Here $P_{1,5}=L_{1,5}U_{1,5}$ is the parabolic subgroup with Levi $GL_1\times GL_5$ and contains the upper Borel subgroup. We are going to prove a lemma which is similar to (6) of Proposition \ref{change truncation}. Once this lemma has been proved, we can use the same argument as in (7) of Proposition \ref{change truncation} to conclude that $I(Q,X)=0$ for $N$ large.

\begin{lem}
There exists $N_1>0$ satisfying the following condition: For all $N>N_1$ and $c>0$, we can find $c'>0$ such that if
$$c'\log(N)<\inf\{ \alpha(Y_{P_{min}})\mid \alpha\in \Delta_{min}\},$$
we have $\kappa_{N,X''}(Q,ug) =\kappa_{N,X''}(Q,g)$ for all $g\in G(F)$ and $u\in U_{1,5}(F)$ with $\sigma(g),\sigma(u),\sigma(ug)<c \log(N)$, and for all $X''\in \omega_{T''}\cap (\Ft'')^0(F)) [>N^{-b}]$.
\end{lem}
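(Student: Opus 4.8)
The plan is to mimic the proof of 9.3(6) given in Section 9.5 (via Lemma~\ref{U-invariant 2}), adapting it to the geometry of the split localization. The overall structure is the same: we must show that the truncated weight $\kappa_{N,X''}(Q,g)$ is invariant under left translation by the extra unipotent subgroup $U_{1,5}(F)$ — a unipotent radical of $G$ that is \emph{not} visible inside $G_x$ but which appears because $Q_1\subset P_{1,2}$ has the $GL_1$-block in the first row. As in Section~9.4/9.6, the characteristic function $\zeta\mapsto\sigma_{M_{\sharp}}^{Q}(\zeta,\CY(g))\tau_Q(\zeta-\CY(g)_Q)$ depends only on $H_{\bar{P'}}(g_x)$ for $P'\in\CF(M_{\sharp})$, $P'\subset Q$; since $\CY(g)$ is built from the Iwasawa decomposition $g=u_Sg_xk$ with respect to $S=U_SG_x$, and $U_{1,5}(F)\subset U_S(F)U_{\bar{Q}}(F)$-type directions leave those $H_{\bar{P'}}$ unchanged, that factor is $U_{1,5}(F)$-invariant and we are reduced to the analogue of \eqref{10.4}: it suffices to show
\begin{equation}
\kappa_N(\gamma_{X''}^{-1}aug)=\kappa_N(\gamma_{X''}^{-1}ag)
\end{equation}
for $a\in A_T(F)$ lying in the support of that characteristic function, i.e. with $\alpha(H_{M_{\sharp}}(a))>c'\log(N)$ for all $\alpha\in\Sigma_Q^+$.

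The heart of the argument is therefore a split-localization analogue of Lemma~\ref{U-invariant 2}. First, as in 9.5(3), I would pass to a finite extension so that $T$ and $G_x$ become split, and as in 9.5(4) observe that the statement is independent of the choice of local section $X''\mapsto\gamma_{X''}$ satisfying Conditions (1),(2); this lets me replace $\gamma_{X''}$ by a convenient explicit section constructed block-by-block as in the proof of Lemma~\ref{U-invariant 2} (choosing $X_\Sigma'$ via the explicit formulas for $m,n$ there, so that $\sigma(\gamma_{X''})\ll\log N$). Writing $\gamma_{X''}^{-1}=b_X v_X$ with $v_X=n_Xu_X$, $n_X\in U_\sharp(F)\cap L(F)$, $u_X\in U_Q(F)$, and both $\sigma(b_X),\sigma(v_X)\ll\log N$, I conjugate $u$ across $a$ and use that $a^{-1}u a-1$ is small when $\alpha(H_{M_\sharp}(a))$ is large for $\alpha\in\Sigma_Q^+$, to absorb the $U_Q$-part into $K$ (right $K$-invariance of $\kappa_N$). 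What remains is a genuinely $U_{1,5}$-direction piece: because we are in the split case, $\kappa_N$ is defined via \eqref{kappa split} with the extra $(1+\epsilon)N$ slack on the off-diagonal coordinates $c_{ij}$. Exactly as in the last paragraph of the proof of Lemma~\ref{U-invariant 2}, after an Iwasawa decomposition one reduces to $\diag(l_1,l_2,l_3)$ versus $\diag(l_1',l_2',l_3')$ with $l_i'=l_in_i$, $\sigma(n_i)\ll\log N$; the $U_{1,5}$-translation only perturbs the off-diagonal entries of the $m_i^{-1}m_j$ by an amount $\ll N+C\log N$ while fixing the diagonal, so for $N$ large with $\epsilon N>C\log N$ the value of $\kappa_N$ is unchanged.

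Then I would treat Case~2 symmetrically: if instead $Q_1\subset P_{2,1}$ (the $GL_2\times GL_1$ parabolic containing the lower Borel), one uses $U_{5,1}$, the unipotent radical of the $GL_5\times GL_1$ parabolic of $GL_6(F)$, and if $Q_2$ (rather than $Q_1$) is the proper one, one uses the corresponding row-2/4/6 analogues; the point is only that since $Q\ne G_x$ at least one $Q_i$ is properly contained in $GL_3(F)$, so at least one such ambient unipotent $U\subset G$ exists and is left-invariant for $\kappa_N$ by the argument above. Having established the Lemma, the conclusion $I(Q,X)=0$ follows verbatim from the argument (7) in the proof of Proposition~\ref{change truncation}: decompose $I(Q,X)$ as in \eqref{I part 2} using the Iwasawa decomposition for the $\bar Q$ of $G_x$ times the extra unipotent; the inner integral over the ambient unipotent of the strongly cuspidal $\gf^\sharp_{x,\omega}$ vanishes by Lemma~\ref{strongly cuspidal}, since $Q$ (enlarged by that unipotent) is a proper parabolic of $G$. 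The main obstacle is the bookkeeping in the split Iwasawa-reduction step — keeping track of which coordinates are ``diagonal'' (bounded by $N$) versus ``unipotent'' (bounded by $(1+\epsilon)N$) through the successive conjugations and Iwasawa decompositions, exactly the delicate point already flagged in Section~9's introduction as the essential difference from the Gan--Gross--Prasad case; but since the combinatorial skeleton is identical to Lemma~\ref{U-invariant 2}, this is a matter of careful adaptation rather than a new idea. The detailed verification is postponed to Appendix~A.2.
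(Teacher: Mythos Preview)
Your overall strategy is correct and matches the paper: reduce to a pointwise equality $\kappa_N(\gamma_{X''}^{-1}aug)=\kappa_N(\gamma_{X''}^{-1}ag)$ by using the $U_S(F)U_{\bar Q}(F)$-invariance of $\sigma_{M_\sharp}^Q(\cdot,\CY(g))\tau_Q(\cdot)$ together with $U_{1,5}\subset U_S U_{\bar Q}$, then exploit section-independence, conjugation by $a$, and the $(1+\epsilon)N$ slack in $\kappa_N$.

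Where your execution diverges from the paper is in the treatment of $\gamma_{X''}$. You propose to reuse the section and the Bruhat-type factorization $\gamma_{X''}^{-1}=b_Xv_X$, $v_X=n_Xu_X$ with $n_X\in U_\sharp\cap L$, $u_X\in U_Q$, from Lemma~\ref{U-invariant 2}. But that lemma treats $x$ central, so $G_x=G$ and $Q$ is a parabolic of $G$; here $Q\subset G_x=\GL_3\times\GL_3$ and the unipotent you need invariance for, $U_{1,5}$, is a unipotent radical in $G$ that is \emph{not} governed by $\Sigma_Q^+$. The $m,n$-formulas you cite do not apply, and the decomposition relative to $Q$ does not by itself control the $U_{1,5}$-direction. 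The paper instead takes the much simpler section $X_\Sigma''=X''+\Xi$ with $\gamma_{X''}\in G_x$ a product of two lower-unipotent $3\times3$ blocks, and then factors $\gamma_{X''}=n_{X''}u_{X''}$ \emph{with respect to the ambient parabolic} $P_{1,5}=L_{1,5}U_{1,5}$ of $G$: here $n_{X''}\in\bar U_{1,5}$ has only the $(3,1)$-entry nonzero, and $u_{X''}\in L_{1,5}$. The hypothesis $Q_1\subset P_{1,2}$ is exactly what makes the $(3,1)$-root positive in $\Sigma_Q^+$, so $a^{-1}n_{X''}a$ can be absorbed into $K$ for $c'$ large; since $u_{X''}\in L_{1,5}$ normalizes $U_{1,5}$, one may then conjugate $u$ past it. After reducing to $g=1$, one splits $u\in U_{1,5}$ as $u_1u_2$ where $u_1$ lies in the unipotent radical of $P_{2,2,2}$ (eliminated by left $U$-invariance of $\kappa_N$) and $u_2$ has only the $(1,2)$-entry $w_1$ nonzero, which perturbs only the off-diagonal $c_{ij}$ by $O(\log N)$ and is absorbed by the $\epsilon$-slack.

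So your plan is right in outline, but the key manoeuvre is to decompose $\gamma_{X''}$ relative to $P_{1,5}$ (the $G$-parabolic whose unipotent you are using), not relative to $Q\subset G_x$; and to choose the section so that this decomposition is as trivial as possible.
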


Since the function
$$g\rightarrow \sigma_{M_{\sharp}}^{Q}(H_{M_{\sharp}}(a), \CY(g))\tau_{Q}(H_{M_{\sharp}}(a)-\CY(g)_{Q})$$
is left $U_S(F) U_{\bar{Q}}(F)$-invariant, and $U_{1,5}(F)\subset U_S(F) U_{\bar{Q}}(F)$, by applying the same argument as in Section 10.5, we only need to prove the following lemma.
\begin{lem}
There exists $N_1>0$ such that for all $N>N_1$ and $c>0$, there exists $c'>0$ satisfying the following condition: For given $a\in A_T(F),g\in G(F), u\in U_{1,5}(F)$ and $X''\in \omega_{T''} \cap (\Ft'')^0(F)[>N^{-b}]$, assume that $\sigma(g),\sigma(u),\sigma(ug)<c\log(N)$, and $\alpha(H_{M_{\sharp}}(a))>c'\log(N)$ for all $\alpha\in \Sigma_{Q}^{+}$. Then $$\kappa_N(\gamma_{X''}^{-1}aug)=\kappa_N(\gamma_{X''}^{-1}ag).$$
\end{lem}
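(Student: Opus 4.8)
The plan is to prove the unipotent invariance $\kappa_N(\gamma_{X''}^{-1}aug)=\kappa_N(\gamma_{X''}^{-1}ag)$ by an explicit computation with the Iwasawa decomposition, exactly parallel to the proof of Lemma \ref{U-invariant 2} in Section 9.5, but now exploiting the fact that $u\in U_{1,5}(F)$ has a very special block shape and that $a$ contracts the relevant root directions of $Q$. First I would reduce, as in 9.5(3), to the case where $T\in T(G_x)$ is split by passing to a finite extension $F'/F$ (using $\kappa_N^{F'}=\kappa_{N\,\mathrm{val}_{F'}(\varpi_F)}$ on $G(F)$), and then, as in 9.5(4), reduce to a convenient choice of local section $X''\to\gamma_{X''}$ with $\sigma(\gamma_{X''})\ll\log(N)$ on $\omega_{T''}\cap(\Ft'')^0(F)[>N^{-b}]$. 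After this reduction we may assume $M_\sharp=T$ is split, choose $P_\sharp\in\CP(M_\sharp)$ with $H_{M_\sharp}(a)\in CL(\Fa_{P_\sharp}^+)$, and (conjugating by a Weyl element of $G_x$, which is harmless by 9.5(4)) assume $P_\sharp$ lies in a fixed minimal parabolic with $P_\sharp\subset Q$.

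The core computation then goes as follows. Write $\gamma_{X''}^{-1}=b_{X''}v_{X''}$ as in 9.5, with $\sigma(b_{X''}),\sigma(v_{X''})\ll\log(N)$, and split $v_{X''}=n_{X''}u_{X''}$ with $n_{X''}\in U_\sharp(F)\cap L(F)$ and $u_{X''}\in U_Q(F)$. Conjugate the $u_{X''}$ and the $n_{X''}$ factors past $a$: since $\alpha(H_{M_\sharp}(a))>c'\log(N)$ for all $\alpha\in\Sigma_Q^+$ and $H_{M_\sharp}(a)\in CL(\Fa_{P_\sharp}^+)$, the element $a^{-1}u_{X''}a$ is arbitrarily close to $1$ (absorbable into $K$, and $\kappa_N$ is right $K$-invariant) and $a^{-1}n_{X''}a$ is a contraction of $n_{X''}$, hence still $\ll\log(N)$. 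One is left to compare $\kappa_N(b_{X''}a\bar u'g')$ with $\kappa_N(b_{X''}ag')$ where $\bar u'=(a^{-1}n_{X''}a)^{-1}u(a^{-1}n_{X''}a)$ is still in $U_{1,5}(F)$ (here the assumption that $Q_1\subset P_{1,2}$ enters: the conjugation by $n_{X''}$, which lives in the lower-triangular Levi part, keeps the first-row perturbation of $u$ inside $U_{1,5}(F)$), after killing the Iwasawa $U$- and $M$-parts of $g'$ by left $U$- and right $K$-invariance of $\kappa_N$ as in the split case of 9.5. The key point now: write $b_{X''}a=\diag(l_1,l_2,l_3)$ and $b_{X''}a\bar u'=\diag(l_1',l_2',l_3')$; because $\bar u'\in U_{1,5}(F)$ only modifies the ``first row'' block, the semisimple ($\diag$) part of the Iwasawa pieces $l_i^{-1}l_j$ is unchanged and the unipotent part is perturbed by something majorized by $N+C\log(N)$. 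Since $\kappa_N$ (in the split case, \eqref{kappa split}) allows the unipotent entries to be as large as $(1+\epsilon)N$, for $N$ large enough so that $\epsilon N>C\log(N)$ we get $\kappa_N(b_{X''}a\bar u'g')=\kappa_N(b_{X''}ag')$, which is what we want.

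The main obstacle I anticipate is bookkeeping the block structure: I must make sure that conjugating $u\in U_{1,5}(F)$ by the lower-triangular elements $n_{X''}$ (and by $a$, $b_{X''}$) keeps the perturbation confined to the entries that $\kappa_N$ treats as ``unipotent'' (the $c_{ij}$ in \eqref{kappa split}) rather than mixing into the diagonal blocks $a_{ij},b_{ij}$; this is where the hypothesis $Q_1\subset P_{1,2}$ (so that the Levi direction available in $U_Q$ and the unipotent $U_{1,5}$ are compatible) is doing the real work, and it has to be checked carefully for each of the three block rows. Once this lemma is in hand, the deduction $I(Q,X)=0$ follows verbatim from the argument in (7) of Proposition \ref{change truncation}: the inner integral over $U_{1,5}(F)$ factors out ${}^{ulk}f^\sharp_{x,\omega}$ integrated against a constant, and this vanishes by Lemma \ref{strongly cuspidal} since $f$ is strongly cuspidal and $P_{1,5}\neq G$. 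I would then handle the remaining configuration (Case 2, where $Q_1$ is instead contained in $P_{2,1}$) by the symmetric argument with $U_{1,5}$ replaced by the appropriate ``last column'' unipotent, completing the proof of Lemma \ref{split zero}.
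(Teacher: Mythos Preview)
Your overall strategy is right and parallels the paper's: reduce to $T$ split, pick a convenient local section, eliminate $\gamma_{X''}$ and $g$, and finish by comparing $\kappa_N(au)$ with $\kappa_N(a)$ using the $\epsilon$-slack in $\kappa_N$. The gap is in how you eliminate $\gamma_{X''}$. You import the decomposition $\gamma_{X''}^{-1}=b_{X''}v_{X''}$ from Section~9.5 and then split $v_{X''}=n_{X''}u_{X''}$ with $n_{X''}\in U_\sharp\cap L$ and $u_{X''}\in U_Q$, where $Q=LU_Q$ is the given parabolic of $G_x$. But the 9.5 construction was tailored to the case $x$ central ($G_x=G$), where $\gamma_X=p_X\in\bar P$ is built from the $2\times2$ blocks of $M$. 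Here $\gamma_{X''}\in G_x=\GL_3\times\GL_3$ and $Q\subset G_x$, while the unipotent $U_{1,5}$ you want invariance against sits in $G$ and is not contained in $G_x$. A lower-triangular element of $G_x$ can have $(3,1)$ or $(5,1)$ entries in $\GL_6$, and such an element does \emph{not} normalize $U_{1,5}$; your claim that $\bar u'=(a^{-1}n_{X''}a)^{-1}u(a^{-1}n_{X''}a)$ remains in $U_{1,5}$ will fail in general.

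The paper fixes this by decomposing $\gamma_{X''}$ relative to the $G$-parabolic $P_{1,5}$ rather than the $G_x$-parabolic $Q$. With the explicit choice $\gamma_{X''}$ lower unitriangular in $G_x$ (hence only entries at positions $(3,1),(4,2),(5,3),(6,4)$ of $\GL_6$), one writes $\gamma_{X''}=n_{X''}u_{X''}$ with $n_{X''}\in\bar U_{1,5}$ carrying the single $(3,1)$ entry and $u_{X''}\in L_{1,5}$ carrying the rest. The hypothesis $Q_1\subset P_{1,2}$ enters exactly here: the $(3,1)$ position of $\GL_6$ is the $(2,1)$ position of the first $\GL_3$, a root in $\Sigma_Q^+$, so the condition $\alpha(H_{M_\sharp}(a))>c'\log N$ contracts $n_{X''}$ enough to absorb it into $K$. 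The remaining $u_{X''}\in L_{1,5}$ genuinely normalizes $U_{1,5}$, so conjugating $u$ by it is harmless, and one is reduced to $\kappa_N(au)=\kappa_N(a)$. In that final step the paper further splits $u=u_1u_2$ with $u_1$ having entries only in columns $3$--$6$ (so $u_1\in U$ after commuting past the diagonal $a$, removed by left $U$-invariance) and $u_2$ having only a $(1,2)$ entry (handled by the $\epsilon$-slack); your version conflates these two distinct mechanisms into a single ``only modifies the first-row block'' claim.
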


\begin{proof}
The proof is very similar to Lemma \ref{U-invariant 2}, so we will just sketch the key steps. Same as that Lemma, we can reduce to the case when $T$ split, and we can show this argument is independent of the choice of $X_{\Sigma}''$ and $\gamma_{X''}$. Then we can choose our local section $X''\rightarrow X_{\Sigma}''$ to be $X_{\Sigma}''=X''+\Xi$, and choose $\gamma_{X''}\in G_x$ to be of the form:
$$\begin{pmatrix} 1 & 0 & 0 \\ x_1 & 1 & 0 \\ 0 & y_1 & 1 \end{pmatrix}\times \begin{pmatrix} 1 & 0 & 0 \\ x_2 & 1 & 0 \\ 0 & y_2 & 1 \end{pmatrix}.$$
We can write $\gamma_{X''}$ as the product of $n_{X''}$ and $u_{X''}$ where
$$n_{X''}=\begin{pmatrix} 1 & 0 & 0 & 0 & 0 & 0 \\ 0 & 1 & 0 & 0 & 0 & 0 \\ x_1 & 0 & 1 & 0 & 0 & 0 \\ 0 & 0 & 0 & 1 & 0 & 0 \\ 0 & 0 & 0 & 0 & 1 & 0 \\ 0 & 0 & 0 & 0 & 0 & 1 \end{pmatrix} \in \bar{U}_{1,5}$$
and
$$u_{X''}= \begin{pmatrix} 1 & 0 & 0 & 0 & 0 & 0 \\ 0 & 1 & 0 & 0 & 0 & 0 \\ 0 & 0 & 1 & 0 & 0 & 0 \\ 0 & x_2 & 0 & 1 & 0 & 0 \\ 0 & 0 & y_1 & 0 & 1 & 0 \\ 0 & 0 & 0 & y_2 & 0 & 1 \end{pmatrix} \in L_{1,5}.$$

By the conditions imposed on $a$ and $Q$, using the same argument as in Lemma \ref{U-invariant 2}, we can choose $c'$ large to get rid of $n_{X''}$. In fact, beside the diagonal part, $n_{X''}$ only has non-zero value on the (3,1) position. It is easy to see the (2,1) position for the first $GL_3(F)$ corresponds to the (3,1) position of $GL_6(F)$. The fact that $Q$ is contained in $P_{1,2}$ and the condition imposed on $a$ tell us that $a^{-1}n_{X''}a$ is a contraction at least by $c'\log(N)$. So if we let $c'$ large, we can make $g^{-1}u^{-1}a^{-1}n_{X''} aug -1$ very close to zero. Hence we can make $g^{-1}u^{-1}a^{-1}n_{X''} aug \in K$. This allows us to get rid of $n_{X''}$. Then we can conjugate $u$ by $a^{-1}u_{X''} a$ to get rid of $u_{X''}$. Here same as in Lemma \ref{U-invariant 2}, we should assume at the beginning that $a$ lies in the positive chamber defined by the lower Borel subgroup $B=B_1\times B_2$ of $GL_3(F)\times GL_3(F)$ to make sure $a^{-1}u_{X''} a$ is a contraction. Now we have eliminated the effect of $\gamma_{X''}$. By applying the same argument as in Lemma \ref{U-invariant 2}, we may also assume that $g=1$.
Hence we only need to prove
\begin{equation}\label{split 5}
\kappa_N(au)=\kappa_N(a).
\end{equation}

Let $$u=u_1 u_2=\begin{pmatrix} 1 & 0 & w_2 & w_3 & w_4 & w_5 \\ 0 & 1 & 0 & 0 & 0 & 0 \\ 0 & 0 & 1 & 0 & 0 & 0 \\ 0 & 0 & 0 & 1 & 0 & 0 \\ 0 & 0 & 0 & 0 & 1 & 0 \\ 0 & 0 & 0 & 0 & 0 & 1 \end{pmatrix} \begin{pmatrix} 1 & w_1 & 0 & 0 & 0 & 0 \\ 0 & 1 & 0 & 0 & 0 & 0 \\ 0 & 0 & 1 & 0 & 0 & 0 \\ 0 & 0 & 0 & 1 & 0 & 0 \\ 0 & 0 & 0 & 0 & 1 & 0 \\ 0 & 0 & 0 & 0 & 0 & 1 \end{pmatrix}.$$
Note that $u_1$ can be moved to the very left via the $a$-conjugation. Then we can eliminate it by using the left $U(F)$-invariance property of $\kappa_N$. For $u_2$, it will only add some element majorized by $N+c\log(N)$ to the unipotent part, and not change the semisimple part. So we only need to let $N$ large such that $N+c\log(N)<(1+\epsilon)N$, and then the equation \eqref{split 5} just follows from the definition of $\kappa_N$
(as in \eqref{kappa split}). This is the same technique as in the last part of the proof of Lemma \ref{U-invariant 2}. This finishes the proof of the Lemma.
\end{proof}

Finally, by combining the above two Lemmas, together with the argument in (7) of Proposition \ref{change truncation}, we know $I(Q,X)=0$ for $N$ large.
\\

\textbf{Case 2}: If $Q_2$ is contained in $P_{2,1}$, we can use exactly the same argument as in Case 1 except replacing $U_{1,5}$ by $U_{5,1}$, we will still get $I(Q,X)=0$ for $N$ large.
\\

\textbf{Case 3}: If $Q_1=P_{2,1}$ and $Q_2=GL_3(F)$, we need to use some non-standard parabolic subgroup, which means that the unipotent subgroup we use will no longer be upper triangular. Let $P'=L'U'$ be the parabolic subgroup of $GL_6(F)$ where $L'$ and $U'$ are of the following forms:
$$L'=\{\begin{pmatrix} \ast & 0 & \ast & 0 & 0 & 0 \\ 0 & \ast & 0 & \ast & \ast & \ast \\ \ast & 0 & \ast & 0 & 0 & 0 \\ 0 & \ast & 0 & \ast & \ast & \ast \\ 0 & \ast & 0 & \ast & \ast & \ast \\ 0 & \ast & 0 & \ast & \ast & \ast \end{pmatrix} \},
U'=\{\begin{pmatrix} 1 & \ast & 0 & \ast & \ast & \ast \\ 0 & 1 & 0 & 0 & 0 & 0 \\ 0 & \ast & 1 & \ast & \ast & \ast \\ 0 & 0 & 0 & 1 & 0 & 0 \\ 0 & 0 & 0 & 0 & 1 & 0 \\ 0 & 0 & 0 & 0 & 0 & 1 \end{pmatrix}.$$
Then we still have $U'(F)\subset U_R(F) U_{\bar{Q}}(F)$. Same as previous cases, we only need to prove the following lemma.

\begin{lem}\label{split lemma 1}
There exists $N_1>0$ such that for $N>N_1$ and $c>0$, there exists $c'>0$ satisfying the following condition: For given $a\in A_T(F),g\in G(F), u\in U'(F)$ and $X''\in \omega_{T''} \cap (\Ft'')^0(F)[>N^{-b}]$, assume that $\sigma(g),\sigma(u),\sigma(ug)<c\log(N)$, $\alpha(H_{M_{\sharp}}(a))>c'\log(N)$ for all $\alpha\in \Sigma_{Q}^{+}$, and $\sigma_{M_{\sharp}}^{Q}(H_{M_{\sharp}}(a), \CY(g))\tau_{Q}(H_{M_{\sharp}}(a)-\CY(g)_{Q}) =1$. Then $$\kappa_N(\gamma_{X''}^{-1}aug)=\kappa_N(\gamma_{X''}^{-1}ag).$$
\end{lem}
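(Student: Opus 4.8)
\textbf{Proof plan for Lemma \ref{split lemma 1}.}
The statement is parallel to Lemma \ref{U-invariant 2} and to the two lemmas treated in Case 1 and Case 2, so the plan is to mimic the structure of those proofs, the only new feature being the non-standard parabolic $P'=L'U'$. First I would reduce to the case that $T$ is split, exactly as in step (3) of the proof of Lemma \ref{U-invariant 2}, by passing to a finite extension $F'/F$ on which $\kappa_N^{F'}=\kappa_{N\,\mathrm{val}_{F'}(\varpi_F)}$; and I would record, as in step (4) there, that the conclusion is independent of the choice of local section $X''\mapsto X_\Sigma''$ and of $\gamma_{X''}$, so that I may work with a convenient section. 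Concretely I would take $X_\Sigma''=X''+\Xi$ and $\gamma_{X''}\in G_x$ of the lower-triangular block form used in Case 1, decomposed as $\gamma_{X''}=n_{X''}u_{X''}$ with $n_{X''}\in\bar U$ supported off-diagonal only in positions visible to the $a$-conjugation coming from $Q$, and $u_{X''}$ lying in the Levi $L'$ (or a Levi dominating $U'$).

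The heart of the argument is then the same contraction bookkeeping. Assuming $a$ lies in the closure of the positive chamber attached to the lower Borel of $\GL_3(F)\times\GL_3(F)$ (which is legitimate because $\sigma_{M_\sharp}^Q(\cdot,\CY(g))\tau_Q(\cdot-\CY(g)_Q)=1$ forces $H_{M_\sharp}(a)$ into such a chamber up to the bounded family $\CY(g)$), I would choose $c'$ large enough that $\alpha(H_{M_\sharp}(a))>c'\log(N)$ for $\alpha\in\Sigma_Q^+$ makes $a^{-1}n_{X''}a$ contract past the scale $c\log(N)$; then $g^{-1}u^{-1}a^{-1}n_{X''}au g$ lies in $K$, which eliminates $n_{X''}$ by right $K$-invariance of $\kappa_N$. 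Next $u_{X''}$ is absorbed by conjugating $u$ by $a^{-1}u_{X''}a$, again a contraction under the $Q$-positivity hypothesis, using $\sigma(u),\sigma(g)<c\log(N)$. As in step (4) of Lemma \ref{U-invariant 2}, one reduces to $g=1$ and then, after an Iwasawa decomposition of the remaining elements, to showing $\kappa_N(au)=\kappa_N(a)$ for $u\in U'(F)$ with $\sigma(u)\ll\log(N)$.

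For the final equality I would split $u=u_1u_2$ where $u_1$ is the part of $U'$ that, under the $a$-conjugation, can be pushed to the far left and killed by left $U(F)$-invariance of $\kappa_N$ (these are the entries of $U'$ lying inside $U$), and $u_2$ is the remaining part, which affects only the strictly-upper (``unipotent'') block-entries recorded in the definition \eqref{kappa split} of $\kappa_N$ and leaves the diagonal $\GL_2$-blocks $m_i^{-1}m_j$ and their $a_{ij},b_{ij}$ data unchanged. Since $u_2$ perturbs the $c_{ij}$-type entries only by something majorized by $N+C\log(N)$, choosing $N$ large so that $N+C\log(N)<(1+\epsilon)N$ gives $\kappa_N(au_2)=\kappa_N(a)$ directly from \eqref{kappa split}. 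The main obstacle is purely bookkeeping: verifying that for this specific non-standard $U'$ the decomposition $U'(F)\subset U_R(F)U_{\bar Q}(F)$ genuinely routes every factor either into $U$ (hence killable by left-invariance) or into the $\epsilon$-slack unipotent coordinates of $\kappa_N$, and that the relevant root spaces are indeed contracted by $a$ under the hypothesis $\alpha(H_{M_\sharp}(a))>c'\log(N)$ for $\alpha\in\Sigma_Q^+$ with $Q_1=P_{2,1},Q_2=\GL_3(F)$; once the column-and-row positions are matched up explicitly (as in the displayed matrices for $L'$ and $U'$), the estimates are identical to those already carried out in Lemma \ref{U-invariant 2} and Case 1.
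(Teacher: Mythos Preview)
Your reduction steps (passing to $T$ split, independence of the local section, eliminating $n_{X''}$ and $u_{X''}$, reducing to $g=1$) match the paper's proof. The gap is in your final step, the treatment of $u_2$.

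When you write $u=u_1u_2$, you assert that $u_2$ ``affects only the strictly-upper (`unipotent') block-entries \dots\ and leaves the diagonal $\GL_2$-blocks $m_i^{-1}m_j$ and their $a_{ij},b_{ij}$ data unchanged,'' so that the $\epsilon$-slack in \eqref{kappa split} finishes the job. This is not what happens for the non-standard $U'$ in Case~3. The group $U'$ has a nonzero entry in position $(3,2)$ of $\GL_6$, which lies in $\bar U$ (below the block diagonal of $M=\GL_2^3$), not in $U$ and not in the upper-unipotent of any $\GL_2$ block. After splitting off the genuinely upper-triangular piece $u_1$ (handled as you describe), the remaining $u_2=\begin{pmatrix}1&0\\ x&1\end{pmatrix}$ sitting in rows/columns $2,3$ is a \emph{lower} unipotent element. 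When $|x|>1$ it cannot be absorbed by left $U(F)$-invariance nor by the $c_{ij}$-slack; instead one must Iwasawa-decompose $\begin{pmatrix}1&0\\ x&1\end{pmatrix}$, which replaces $a=\diag(a_1,\dots,a_6)$ by $a'=\diag(a_1,x^{-1}a_2,xa_3,a_4,a_5,a_6)$, genuinely altering the semisimple data entering $\kappa_N$.

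Showing $\kappa_N(a')=\kappa_N(a)$ is exactly where the extra hypothesis $\sigma_{M_\sharp}^{Q}(H_{M_\sharp}(a),\CY(g))\tau_Q(H_{M_\sharp}(a)-\CY(g)_Q)=1$ is used in the paper, and you never invoke it for this purpose. Combined with \eqref{split condition 2} it yields $\sigma(a_2),\sigma(a_4),\sigma(a_6)\ll(\log N)^2$ (modulo $Z_{G_x}$), while the $Q$-positivity assumption gives $|a_1|\le|a_3|<|a_5|$ with $\sigma(a_3^{-1}a_5)>c'\log N$; taking $c'>c$ preserves $|a_1|\le|xa_3|<|a_5|$, so $\max_{i,j\in\{1,3,5\}}\sigma(a_i^{-1}a_j)$ is unchanged and the even-indexed entries remain $\ll(\log N)^2\ll N$. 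Without this argument your plan does not close, because the $(3,2)$ entry of $U'$ can and does move the diagonal $a_{ij},b_{ij}$ data.
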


\begin{proof}
Same as previous cases, we can still reduce to the case where $T$ split, and we can show this argument is independent of the choice of $X_{\Sigma}''$ and $\gamma_{X''}$. Then we can choose our local section $X''\rightarrow X_{\Sigma}''$ to be $X_{\Sigma}'' X''+\Xi$, and choose $\gamma_{X''}\in G_x$ to be of the form
$$\begin{pmatrix} 1 & 0 & 0 \\ x_1 & 1 & 0 \\ 0 & y_1 & 1 \end{pmatrix}\times \begin{pmatrix} 1 & 0 & 0 \\ x_2 & 1 & 0 \\ 0 & y_2 & 1 \end{pmatrix}.$$
We can write $\gamma_{X''}$ as the product of $n_{X''}$ and $u_{X''}$ where
$$n_{X''}=\begin{pmatrix} 1 & 0 & 0 & 0 & 0 & 0 \\ 0 & 1 & 0 & 0 & 0 & 0 \\ 0 & 0 & 1 & 0 & 0 & 0 \\ 0 & 0 & 0 & 1 & 0 & 0 \\ 0 & 0 & y_1 & 0 & 1 & 0 \\ 0 & 0 & 0 & 0 & 0 & 1 \end{pmatrix} \in \bar{U}'(F)$$
and
$$u_{X''}= \begin{pmatrix} 1 & 0 & 0 & 0 & 0 & 0 \\ 0 & 1 & 0 & 0 & 0 & 0 \\ x_1 & 0 & 1 & 0 & 0 & 0 \\ 0 & x_2 & 0 & 1 & 0 & 0 \\ 0 & 0 & 0 & 0 & 1 & 0 \\ 0 & 0 & 0 & y_2 & 0 & 1 \end{pmatrix} \in L'(F).$$

Using the same argument as in Case 1, we can make $c'$ large to eliminate $n_{X''}$, and conjugate $u$ by $a^{-1}u_{X''} a$ to eliminate $u_{X''}$. Similarly, we may assume that $g=1$ and only need to show
\begin{equation}\label{split 6}
\kappa_N(au)=\kappa_N(a).
\end{equation}

Let
\begin{equation}\label{split 11}u=u_1 u_2=\begin{pmatrix} 1 & v_1 & 0 & v_2 & v_3 & v_4 \\ 0 & 1 & 0 & 0 & 0 & 0 \\ 0 & 0 & 1 & w_1 & w_2 & w_3 \\ 0 & 0 & 0 & 1 & 0 & 0 \\ 0 & 0 & 0 & 0 & 1 & 0 \\ 0 & 0 & 0 & 0 & 0 & 1 \end{pmatrix} \begin{pmatrix} 1 & 0 & 0 & 0 & 0 & 0 \\ 0 & 1 & 0 & 0 & 0 & 0 \\ 0 & x & 1 & 0 & 0 & 0 \\ 0 & 0 & 0 & 1 & 0 & 0 \\ 0 & 0 & 0 & 0 & 1 & 0 \\ 0 & 0 & 0 & 0 & 0 & 1 \end{pmatrix}.
\end{equation}
Then $u_1$ belongs to the upper triangle unipotent subgroup. The $v_2$, $v_3$, $v_4$, $w_2$ and $w_3$ part of $u_1$ can be eliminated by the left $U(F)$-invariance property of $\kappa_N$. The $v_1$ and $w_1$ part of $u_1$ will only add some element majorized by $N+c\log(N)$ to the unipotent part and not change the semisimple part, so we can just let $N$ large such that $N+c\log(N)<(1+\epsilon)N$. This is the same technique as the last part of the proof of Lemma \ref{U-invariant 2}. This tells us $\kappa_N(au)=\kappa_N(au_2)$. So we only need to prove \eqref{split 6} for the case where $u=u_2$.

If $\mid x\mid \leq 1$, then $u\in K$, the equation \eqref{split 6} just follows from the right $K$-invariance property of $\kappa_N$.

If $\mid x\mid>1$, let $a=\diag(a_1,a_2,a_3,a_4,a_5,a_6)$. Since we assume that $\alpha(H_{M_{\sharp}}(a))>c'\log(N)$ for any $\alpha\in \Sigma_{Q}^{+}$, and
$$
\sigma_{M_{\sharp}}^{Q}(H_{M_{\sharp}}(a), \CY(g))\tau_{Q}(H_{M_{\sharp}}(a)-\CY(g)_{Q}) =1,
$$
together with the inequality \eqref{split condition 2}, we know, up to modulo an element in $Z_{G_x}$,
$$\sigma(a_2),\sigma(a_4),\sigma(a_6)\ll \log(N)^2$$
and
$$\mid a_1\mid \leq \mid a_3\mid <\mid a_5\mid, \sigma(a_{3}^{-1}a_5)>c' \log(N).$$
In this case, the Iwasawa decomposition of $\left( \begin{array}{cc} 1 & 0 \\ x & 1 \end{array} \right)$ is
$$\left( \begin{array}{cc} 1 & 0 \\ x & 1 \end{array} \right)=\left( \begin{array}{cc} x^{-1} & 1 \\ 0 & x \end{array} \right)\left( \begin{array}{cc} 0 & -1 \\ 1 & x^{-1} \end{array} \right).$$
After eliminating the unipotent part and the $K$-part as before, we have
\begin{equation}\label{split 7}
\kappa_N(au)=\kappa_N(a')
\end{equation}
where
$a'=\diag(a_1,x^{-1} a_2,xa_3,a_4,a_5,a_6)$. Now by the condition imposed on $a_2,a_4$ and $a_6$, together with the fact that $\sigma(x)<c\log(N)$, we still have
\begin{equation}\label{split 8}
\sigma(x^{-1}a_2),\sigma(a_4),\sigma(a_6)\ll \log(N)^2.
\end{equation}
Since $\mid a_1\mid \leq \mid a_3\mid <\mid a_5\mid$, $\sigma(a_{3}^{-1}a_5)>c' \log(N)$, $\mid x\mid >1$ and $\sigma(x)<c\log(N)$, if we let $c'>c$, we will still have
$$\mid a_1\mid \leq \mid x a_3\mid <\mid a_5\mid.$$
This implies
\begin{eqnarray}\label{split 9}
\sigma(a_{1}^{-1} a_5)
&=&\max\{\sigma(a_{1}^{-1} a_3),\sigma(a_{1}^{-1} a_5),\sigma(a_{3}^{-1} a_5)\}\nonumber\\
&=&\max\{\sigma(a_{1}^{-1} (x a_3)),\sigma(a_{1}^{-1} a_5),\sigma((xa_{3})^{-1} a_5)\}.
\end{eqnarray}
Then for $N$ large (so that $C\log(N)^2< N$), \eqref{split 6} just follows from the definition of $\kappa_N$ together with \eqref{split 7} \eqref{split 8} and \eqref{split 9}.
\end{proof}

Finally, by the Lemma above, together with the same argument as in previous cases, we have $I(Q,X)=0$ for $N$ large.
\\

\textbf{Case 4}: If $Q_1=GL_3(F)$ and $Q_2=P_{1,2}(F)$, we use exactly the same argument as Case 3 except replacing the unipotent subgroup by
$$U'=\{\begin{pmatrix} 1 & 0 & 0 & \ast & 0 & \ast \\ 0 & 1 & 0 & \ast & 0 & \ast \\ 0 & 0 & 1 & \ast & 0 & \ast \\ 0 & 0 & 0 & 1 & 0 & 0 \\ 0 & 0 & 0 & \ast & 1 & \ast \\ 0 & 0 & 0 & 0 & 0 & 1 \end{pmatrix}\}$$
\\

\textbf{Case 5:} If $Q$ does not belong to Case 1, 2, 3 and 4, then $Q_1=P_{2,1}(F)$ and $Q_2=P_{1,2}(F)$. In this case, for simplicity, we conjugate $Q_2$ and $B_2$ by the Weyl element $w=\begin{pmatrix} 0 & 1 & 0 \\ 1 & 0 & 0 \\ 0 & 0 & 1 \end{pmatrix}$. Then the parabolic subgroup $Q_2$ consists of elements in $GL_3(F)$ of the form
$$\begin{pmatrix} \ast & \ast & \ast \\ 0 & \ast & 0 \\ \ast & \ast & \ast \end{pmatrix},$$
and $a\in A_T(F)$ lies inside the positive chamber defined by the Borel subgroup of the form
\begin{equation}\label{split 13}
\begin{pmatrix} \ast & 0 & 0 \\ \ast & \ast & 0 \\ \ast & \ast & \ast \end{pmatrix} \times \begin{pmatrix} \ast & \ast & 0 \\ 0 & \ast & 0 \\ \ast & \ast & \ast \end{pmatrix}.
\end{equation}
Let $P'=L'U'$ be the parabolic subgroup of $GL_6(F)$ as in Case 3. We only need to prove a similar version of Lemma \ref{split lemma 1} for this case.

\begin{lem}
There exists $N_1>0$ such that for $N>N_1$ and $c>0$, there exists $c'>0$ satisfying the following condition: For given $a\in A_T(F),g\in G(F), u\in U'(F)$ and $X''\in \omega_{T''} \cap (\Ft'')^0(F)[>N^{-b}]$, assume that $\sigma(g),\sigma(u),\sigma(ug)<c\log(N)$, and $\alpha(H_{M_{\sharp}}(a))>c'\log(N)$ for all $\alpha\in \Sigma_{Q}^{+}$. Then $$\kappa_N(\gamma_{X''}^{-1}aug)=\kappa_N(\gamma_{X''}^{-1}ag).$$
\end{lem}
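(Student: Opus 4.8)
The plan is to adapt the argument of Lemma~\ref{split lemma 1} (Case~3) to the mixed positive chamber~\eqref{split 13}. First I would reduce, exactly as in steps (3) and (4) of the proof of Lemma~\ref{U-invariant 2}, to the case in which $T$ (hence $G_x$) is split, after passing to a finite extension of $F$; and I would check that the identity to be proved depends neither on the choice of the local section $X''\mapsto X_\Sigma''$ nor on the lift $X''\mapsto\gamma_{X''}$, the verification being the same as in (4) of Lemma~\ref{U-invariant 2} (comparing two lifts via Lemma~\ref{orbit 1} and bounding the comparison element by $\log(N)$ via Lemma~\ref{major 2}). With this reduction in hand, I would take the section $X_\Sigma''=X''+\Xi$ together with
\[
\gamma_{X''}=\begin{pmatrix} 1&0&0\\ x_1&1&0\\ 0&y_1&1\end{pmatrix}\times\begin{pmatrix} 1&0&0\\ x_2&1&0\\ 0&y_2&1\end{pmatrix}\in G_x,
\]
and factor $\gamma_{X''}=n_{X''}u_{X''}$ with $n_{X''}\in\bar{U}'(F)$ carrying the entries of $\gamma_{X''}$ below the Levi $L'$ and $u_{X''}\in L'(F)$ the remaining entries, the precise positions being those of Case~3 transported through the Weyl conjugation of $Q_2$ and $B_2$ by $w$ that was used to arrive at~\eqref{split 13}.

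Next I would exploit the hypothesis $\alpha(H_{M_\sharp}(a))>c'\log(N)$ for all $\alpha\in\Sigma_Q^{+}$: taking $c'$ large enough makes $a^{-1}n_{X''}a$ an arbitrarily strong contraction (here is where the shape of $Q$ enters, since $n_{X''}$ sits in root spaces contracted by $a$), so that $g^{-1}u^{-1}(a^{-1}n_{X''}a)ug\in K$ whenever $\sigma(g),\sigma(u)<c\log(N)$, and right $K$-invariance of $\kappa_N$ removes $n_{X''}$; conjugating $u$ by $a^{-1}u_{X''}a$, again a contraction because $a$ lies in the positive chamber of the Borel~\eqref{split 13}, removes $u_{X''}$; and, as in Lemma~\ref{U-invariant 2}, one reduces to $g=1$. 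It then remains to prove $\kappa_N(au)=\kappa_N(a)$ for $u\in U'(F)$ with $\sigma(u)<c\log(N)$ and $a$ in the chamber of~\eqref{split 13} with $\alpha(H_{M_\sharp}(a))>c'\log(N)$ on $\Sigma_Q^{+}$.

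For this last point I would mimic~\eqref{split 11}--\eqref{split 9}: write $u=u_1u_2$ with $u_1$ upper triangular, remove the ``long'' entries of $u_1$ by the left $U(F)$-invariance of $\kappa_N$ and observe that its remaining super-diagonal entries add only a quantity $\ll N+c\log(N)<(1+\epsilon)N$ to the unipotent part and are therefore invisible to~\eqref{kappa split}, leaving a single $\SL_2$-type factor $u_2$ supported on one off-diagonal coordinate $x$ in the block prescribed by $P'$. If $|x|\le 1$ then $u_2\in K$ and right $K$-invariance finishes it. If $|x|>1$, I would use $\left(\begin{smallmatrix}1&0\\x&1\end{smallmatrix}\right)=\left(\begin{smallmatrix}x^{-1}&1\\0&x\end{smallmatrix}\right)\left(\begin{smallmatrix}0&-1\\1&x^{-1}\end{smallmatrix}\right)$ and follow how the diagonal entries $a_1,\dots,a_6$ of $a$ get rescaled by $x^{\pm1}$; the bounds $\sigma(a_2),\sigma(a_4),\sigma(a_6)\ll\log(N)^2$ (obtained, as in Case~3, from the conditions on $a$ and~\eqref{split condition 2}), the ordering $|a_1|\le|a_3|<|a_5|$ with $\sigma(a_3^{-1}a_5)>c'\log(N)$, and the choice $c'>c$, guarantee that after rescaling every relevant $\sigma(m_i^{-1}m_j)$ is still $\ll\log(N)^2<N$ for $N$ large, so $\kappa_N$ is unchanged.

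The main obstacle will be the bookkeeping of the $a$-conjugations in the mixed chamber~\eqref{split 13}: since $Q_2$ was conjugated by $w$, the positivity that makes $a^{-1}n_{X''}a$ and $a^{-1}u_{X''}a$ contractions is not the one attached to a standard Borel, so I must check, root space by root space, that each off-diagonal entry of $n_{X''}$ and of $u_{X''}$ lands in a root space on which $a$ acts by a large negative power, and, in the $|x|>1$ subcase, that the Iwasawa rescaling by $x^{\pm1}$ does not disturb the chain of inequalities used in~\eqref{split 9}. Modulo this, the proof is a routine repetition of Case~3 combined with the $(1+\epsilon)N$ slack built into~\eqref{kappa split}.
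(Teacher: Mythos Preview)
Your overall architecture matches the paper's: reduce to $T$ split and to the chosen section, factor $\gamma_{X''}=n_{X''}u_{X''}$, remove $n_{X''}$ by $a$-contraction plus right $K$-invariance, remove $u_{X''}$ by conjugation, reduce to $g=1$, split $u=u_1u_2$, kill $u_1$ by left $U(F)$-invariance together with the $(1+\epsilon)N$ slack in \eqref{kappa split}, and treat the single off-diagonal entry $x$ of $u_2$ according to $|x|\le 1$ or $|x|>1$ via the Iwasawa decomposition. All of that is correct and is exactly what the paper does.

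The gap is in your treatment of the case $|x|>1$. You invoke ``the bounds $\sigma(a_2),\sigma(a_4),\sigma(a_6)\ll\log(N)^2$ (obtained, as in Case~3, from the conditions on $a$ and \eqref{split condition 2})''. But look at the hypotheses: the present lemma does \emph{not} carry the assumption $\sigma_{M_\sharp}^{Q}(H_{M_\sharp}(a),\CY(g))\tau_Q(H_{M_\sharp}(a)-\CY(g)_Q)=1$ that Lemma~\ref{split lemma 1} had. In Case~3 that support condition, together with $Q_2=\GL_3(F)$, is precisely what bounded the even-indexed $a_i$'s by $\log(N)^2$. Here the support condition is absent, and even if you tried to add it, $Q_2=P_{1,2}$ is proper, so the argument of Case~3 would not give an individual bound on each of $a_2,a_4,a_6$ anyway. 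The only hypothesis on $a$ you have is $\alpha(H_{M_\sharp}(a))>c'\log(N)$ for $\alpha\in\Sigma_Q^{+}$, which gives lower bounds on root values, not upper bounds on the $a_i$'s.

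The paper's point is that you do not need those $\log(N)^2$ bounds at all. Because in Case~5 \emph{both} $Q_1$ and $Q_2$ are proper, the positivity $\alpha(H_{M_\sharp}(a))>c'\log(N)$ on $\Sigma_Q^{+}$ (together with the chamber \eqref{split 13}) yields two ordered chains,
\[
|a_1|\le |a_3|<|a_5|,\quad \sigma(a_3^{-1}a_5)>c'\log(N),
\]
\[
|a_4|\le |a_2|<|a_6|,\quad \sigma(a_4^{-1}a_2)>c'\log(N),
\]
one for the odd indices and one for the even indices (these are \eqref{split 14} and \eqref{split 15}). After the Iwasawa rescaling $a'=\diag(a_1,x^{-1}a_2,xa_3,a_4,a_5,a_6)$ with $\sigma(x)<c\log(N)$ and $c'>c$, both chains are preserved, and one gets the two identities
\[
\max\{\sigma(a_1^{-1}a_3),\sigma(a_1^{-1}a_5),\sigma(a_3^{-1}a_5)\}=\max\{\sigma(a_1^{-1}(xa_3)),\sigma(a_1^{-1}a_5),\sigma((xa_3)^{-1}a_5)\},
\]
\[
\max\{\sigma(a_4^{-1}a_2),\sigma(a_4^{-1}a_6),\sigma(a_2^{-1}a_6)\}=\max\{\sigma(a_4^{-1}(x^{-1}a_2)),\sigma(a_4^{-1}a_6),\sigma((x^{-1}a_2)^{-1}a_6)\},
\]
which together with the definition of $\kappa_N$ give $\kappa_N(a')=\kappa_N(a)$. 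So the fix to your argument is: drop the unavailable $\log(N)^2$ bound on the even $a_i$'s, replace it by the second ordered chain coming from $Q_2$ being proper, and check that the rescaling by $x^{\pm1}$ respects both chains.
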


\begin{proof}
Applying the same argument as in the proof of Lemma \ref{split lemma 1}, we can eliminate the effect of $\gamma_{X''}$ and $g$, so we only need to prove
\begin{equation}\label{split 12}
\kappa_N(au)=\kappa_N(a).
\end{equation}
Then we can decompose $u=u_1 u_2$ as in \eqref{split 11}, and apply the same argument to eliminate the $u_1$ part, so we only need to deal with the case when
$$u=\left( \begin{array}{cccccc} 1 & 0 & 0 & 0 & 0 & 0 \\ 0 & 1 & 0 & 0 & 0 & 0 \\ 0 & x & 1 & 0 & 0 & 0 \\ 0 & 0 & 0 & 1 & 0 & 0 \\ 0 & 0 & 0 & 0 & 1 & 0 \\ 0 & 0 & 0 & 0 & 0 & 1 \end{array} \right).$$

Let $a=\diag(a_1,a_2,a_3,a_4,a_5,a_6)$. Since $\alpha(H_{M_{\sharp}}(a))>c'\log(N)$ for all $\alpha\in \Sigma_{Q}^{+}$, and $a$ lies in the positive chamber defined by the Borel subgroup of the form (10.83), we have
\begin{equation}\label{split 14}
\mid a_1\mid \leq \mid a_3\mid <\mid a_5\mid, \sigma(a_{3}^{-1}a_5)>c' \log(N)
\end{equation}
and
\begin{equation}\label{split 15}
\mid a_4\mid \leq \mid a_2\mid <\mid a_6\mid, \sigma(a_{4}^{-1}a_2)>c' \log(N).
\end{equation}

If $\mid x\mid \leq 1$, then $u\in K$, the equation \eqref{split 12} just follows from the right $K$-invariance property of $\kappa_N$.

If $\mid x \mid >1$, then the proof is the same as Lemma \ref{split lemma 1}. By using the Iwasawa decomposition of $\left( \begin{array}{cc} 1 & 0 \\ x & 1 \end{array} \right)$, we have
$$\kappa_N(au)=\kappa_N(a')$$
where
$a'=\diag(a_1,x^{-1} a_2,xa_3,a_4,a_5,a_6)$. Now by \eqref{split 14} and \eqref{split 15}, together with the fact that $\sigma(x)<c\log(N)$, if we make $c'>c$, we still have
$$\mid a_1\mid \leq \mid x a_3\mid <\mid a_5\mid$$
and
$$\mid a_4\mid \leq \mid x^{-1} a_2\mid <\mid a_6\mid.$$
Therefore
\begin{eqnarray*}
\sigma(a_{1}^{-1} a_5)&=&\max\{\sigma(a_{1}^{-1} a_3),\sigma(a_{1}^{-1} a_5),\sigma(a_{3}^{-1} a_5)\}\\
&=&\max\{\sigma(a_{1}^{-1} (x a_3)),\sigma(a_{1}^{-1} a_5),\sigma((xa_{3})^{-1} a_5)\}
\end{eqnarray*}
and
\begin{eqnarray*}
\sigma(a_{4}^{-1} a_6)&=&max\{\sigma(a_{4}^{-1} a_2),\sigma(a_{4}^{-1} a_6),\sigma(a_{2}^{-1} a_6)\}\\
&=&max\{\sigma(a_{4}^{-1} (x^{-1} a_2)),\sigma(a_{4}^{-1} a_6),\sigma((x^{-1} a_{2})^{-1} a_6)\}.
\end{eqnarray*}
It is clear that \eqref{split 12} just follows from the above two equations and the definition of $\kappa_N$.
\end{proof}
Finally, by using the same argument as in previous cases, we have $I(Q,X)=0$ for $N$ large. This finishes the proof of the Lemma.

\subsection{A final remark}
In Section 9.6 and Appendix A.2, we have proved that the localization at split element will always be zero. In this section, we are going to use another method to prove this argument, the main ingredient of our method is the spectral side of the trace formula. The idea comes from Beuzart-Plessis's proof of the local Gan-Gross-Prasad conjecture of unitary group in \cite{B15}.

In Section 7.3 of \cite{Wan16}, we have proved a spectral expansion of $\lim_{N\rightarrow \infty} I_N(f)$:
\begin{equation}\label{A.2}
\lim_{N\rightarrow \infty} I_N(f)=\int_{\Pi_{temp}(G(F),1)} \theta_f(\pi)m(\bar{\pi})d\pi.
\end{equation}
Here $\Pi_{temp}(G(F),1)$ is the set of all tempered representations of $G(F)$ with trivial central character, $d\pi$ is a measure on $\Pi_{temp}(G(F),1)$ defined in Section 2.8 of \cite{Wan16}, $\theta_{\pi}(f)$ is defined in (3.4) of \cite{Wan16} via the weighted character, and $m(\bar{\pi})$ is the multiplicity for the Ginzburg-Rallis model. Combining \eqref{A.2} with the fact that
$$\theta_f=\int_{\Pi_{temp}(G(F),1)} \theta_f(\pi)\theta_{\bar{\pi}}d\pi,$$
we have
\begin{equation}\label{A.3}
\lim_{N\rightarrow \infty} I_N(f)-I(f)=\int_{\Pi_{temp}(G(F),1)} \theta_f(\pi)(m(\bar{\pi})-m_{geom}(\bar{\pi}))d\pi.
\end{equation}

In Corollary 5.15 and Lemma 8.1 of \cite{Wan16}, we have proved that both $m(\bar{\pi})$ and $m_{geom}(\bar{\pi})$ are invariant under parabolic induction, therefore by induction, we may assume that $m(\bar{\pi})=m_{geom}(\bar{\pi})$ when $\pi$ is not a discrete series. Then \eqref{A.3} becomes
\begin{equation}\label{A.4}
\lim_{N\rightarrow \infty} I_N(f)-I(f)=\int_{\Pi_{2}(G(F),1)} \theta_f(\pi)(m(\bar{\pi})-m_{geom}(\bar{\pi}))d\pi
\end{equation}
where $\Pi_{2}(G(F),1)$ is the set of all discrete series of $G(F)$ with trivial central character. Now if the support of $f$ does not contain any elliptic element, for all $\pi\in \Pi_{2}(G(F),1)$, we have $\theta_f(\pi)=\tr(\pi(f))=0$. Together with \eqref{A.4}, we have
$$\lim_{N\rightarrow \infty} I_N(f)=I(f).$$
But since the support of $f$ does not contain any elliptic element, by the definition of $I(f)$, we have
$$\lim_{N\rightarrow \infty} I_N(f)=I(f)=0.$$
In conclusion, we have proved that if the support of $f$ does not contain any elliptic element, $\lim_{N\rightarrow \infty} I_N(f)=0$. In particular, the localization at split element will always be zero.

\section{The Reduced Model}
In this section, we will state some similar results for the reduced models of the Ginzburg-Rallis model, which appear naturally under the parabolic induction. To be specific, we can have analogy results of Theorem \ref{main} and Theorem \ref{main 3} for those models. Since most of proof goes exactly the same as the Ginzburg-Rallis model case which we discussed in previous Sections, we will skip it here. We refer the readers to my thesis \cite{Wan17} for details of the proof. We will need those results in our proof of Conjecture \ref{jiang} for tempered representations, which is the main result of \cite{Wan16}.

\subsection{The general setup}
We still let $(G,R)$ to denote the Ginzburg-Rallis model. We first state some results on the pair $(G,R)$ as a spherical variety. The proof of those results can be found on Section 4 of \cite{Wan16} or my thesis \cite{Wan17}. We say a parabolic subgroup $\bar{Q}$ of $G$ is good if $R\bar{Q}$ is a Zariski open subset of $G$. This is equivalent to say that $R(F)\bar{Q}(F)$ is open in $G(F)$ under the analytic topology.

\begin{prop}\label{spherical variety}
\begin{enumerate}
\item There exist minimal parabolic subgroups of $G$ that are good and they are all conjugated to each other by some elements in $R(F)$. If $\bar{P}_{min}=M_{min}\bar{U}_{min}$ is a good minimal parabolic subgroup, we have $H\cap \bar{U}_{min}=\{1\}$ and the complement of $R(F)\bar{P}_{min}(F)$ in $G(F)$ has zero measure. In particular, $(G,R)$ is a spherical pair.
\item A parabolic subgroup $\bar{Q}$ of $G$ is good if and only if it contains a good minimal parabolic subgroup.
\end{enumerate}
\end{prop}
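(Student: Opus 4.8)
The plan is to translate both assertions into statements about the action of $R$ on the flag variety $G/\bar P_{min}$, to produce an open $R$-orbit there by an explicit infinitesimal computation, and to settle the rationality issues by a Galois-cohomology argument; throughout I will pass freely between ``$R\bar Q$ Zariski open in $G$'' and ``$R(F)\bar Q(F)$ open in $G(F)$'', as the equivalence stated just before the Proposition permits. Recall that minimal parabolics of $G$ are all conjugate and correspond $R$-equivariantly to the points of $G/\bar P_{min}$ (via $g\bar P_{min}\mapsto g\bar P_{min}g^{-1}$), a good minimal parabolic being precisely one whose associated point lies in an open $R$-orbit; likewise $\bar Q$ is good iff the base point of $G/\bar Q$ lies in an open $R$-orbit.

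For the existence half of (1), I would first write down one concrete minimal parabolic $\bar P_{min}=M_{min}\bar U_{min}$ of $G$, defined over $F$ and chosen in generic position with respect to the three-block structure underlying $R=H\ltimes U$ (so in particular $\bar P_{min}\ne P_{2,2,2}$, for which $R\subset P_{2,2,2}$ and there is no open orbit at all). One then checks by a direct matrix computation that $\Lie(R)+\Lie(\bar P_{min})=\Lie(G)$; equivalently, since $\dim R+\dim\bar P_{min}-\dim G$ equals $1$ when $G=\GL_6$ and $4$ when $G=\GL_3(D)$, that $\Lie(R)\cap\Lie(\bar P_{min})$ attains exactly this minimal dimension. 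Submersivity of the multiplication map $R\times\bar P_{min}\to G$ at the origin then makes the $R$-orbit of the base point of $G/\bar P_{min}$ of full dimension, hence (as $G/\bar P_{min}$ is irreducible) Zariski open and dense; thus $R\bar P_{min}$ is Zariski open and $\bar P_{min}$ is good. The equality $H\cap\bar U_{min}=\{1\}$ I would read off from the explicit $\bar P_{min}$: a diagonally embedded $g\in\GL_2$, resp.\ $g\in D^{\times}$, that lies in the unipotent group $\bar U_{min}$ is forced to be $1$. Sphericity of $(G,R)$ is then exactly the openness of $R\bar P_{min}$ that we have just established.

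For the remaining assertions of (1), I would argue as follows. Over $\bar F$ the irreducibility of $G/\bar P_{min}$ forces at most one open $R$-orbit, so by the previous step there is exactly one, call it $\CO$; the good minimal parabolics defined over $F$ are then precisely the elements of $\CO(F)$, and I must show $R(F)$ acts transitively on $\CO(F)$. For this I would compute the stabilizer $S=\Stab_R(x)$ of a point $x\in\CO$ — it has dimension $\dim R-\dim(G/\bar P_{min})$, namely $1$ in the split case and $4$ for $\GL_3(D)$ — and verify that $S$ is connected with $H^1(F,S)=\{1\}$; then $\CO(F)/R(F)$ injects into $\ker\bigl(H^1(F,S)\to H^1(F,R)\bigr)=\{1\}$. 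The same information gives $(R\bar P_{min})(F)=R(F)\bar P_{min}(F)$, since the fibres of $R\times\bar P_{min}\to R\bar P_{min}$ are $(R\cap\bar P_{min})$-torsors and $R\cap\bar P_{min}$ again has trivial $H^1$; consequently the complement of $R(F)\bar P_{min}(F)$ in $G(F)$ is the set of $F$-points of the proper Zariski-closed subset $G\setminus R\bar P_{min}$ and therefore has Haar-measure zero. I expect this stabilizer-and-cohomology analysis to be the main obstacle, particularly for the inner form $G=\GL_3(D)$, where $S$ is four-dimensional rather than essentially a torus and where one must check carefully that the groups entering the torsor arguments are special.

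Finally, part (2). If $\bar P_{min}\subset\bar Q$ is good, the $R$-equivariant projection $\pi\colon G/\bar P_{min}\to G/\bar Q$ is smooth, hence an open map, so it carries the open orbit $R\cdot[\bar P_{min}]$ onto an open $R$-orbit through $[\bar Q]$; pulling back along $G\to G/\bar Q$ shows $R\bar Q$ is open, i.e.\ $\bar Q$ is good. Conversely, suppose $\bar Q$ is good; fix any minimal parabolic $\bar P_{min}^{0}\subset\bar Q$ and consider $\pi\colon G/\bar P_{min}^{0}\to G/\bar Q$. The image $\pi(\CO)$ of the unique open $R$-orbit $\CO\subset G/\bar P_{min}^{0}$ is irreducible, dense and constructible, hence contains a dense open subset of $G/\bar Q$, which must meet the open orbit $R\cdot[\bar Q]$. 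Choosing $y\in\CO$ with $\pi(y)\in R\cdot[\bar Q]$ and $r\in R$ with $\pi(r^{-1}y)=[\bar Q]$, the point $r^{-1}y$ lies in $\CO\cap\pi^{-1}([\bar Q])$, i.e.\ it corresponds to a minimal parabolic contained in $\bar Q$ that lies in $\CO$ and is therefore good. This direction uses only part (1) together with openness of flag-variety projections and Chevalley's theorem on images of morphisms, so it requires no further input.
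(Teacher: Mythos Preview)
The paper does not actually prove this proposition: immediately before its statement the author writes ``The proof of those results can be found on Section~4 of \cite{Wan16} or my thesis \cite{Wan17}.'' So there is no in-paper argument to compare against, and I can only evaluate your outline on its own merits.

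Your overall strategy --- produce an open $R$-orbit on the full flag variety by a tangent-space count, handle $F$-rationality by computing the stabilizer and invoking $H^1$-vanishing, and deduce (2) from (1) via the $R$-equivariant projection $G/\bar P_{min}\to G/\bar Q$ --- is the standard one and is sound. Two remarks on the details. First, in the non-split case the situation is actually simpler than you anticipate: the minimal parabolic of $\GL_3(D)$ is already of type $(1,1,1)$, so one may take $\bar P_{min}=\bar P$ (the opposite of $P=MU$); then $U\cap\bar P=\{1\}$ and $H\subset M\subset\bar P$ give $R\cap\bar P=H\cong\GL_1(D)$, which is connected with $H^1(F,\GL_1(D))=\{1\}$, so both the transitivity of $R(F)$ on $\CO(F)$ and the equality $(R\bar P)(F)=R(F)\bar P(F)$ are immediate. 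Second, in the split case the genuinely delicate step is the one you gloss over, namely exhibiting a concrete Borel with $\dim(\Lie R\cap\Lie B)=1$: the upper- and lower-triangular Borels fail (for the lower Borel one computes $\dim(R\cap B)=3$), so ``a direct matrix computation'' has to begin with a less obvious choice of flag, after which one must still identify the $1$-dimensional stabilizer and check it is connected (it will contain $Z_G\cong\mathbb G_m$, so one needs $S=Z_G$). This is where the real work lies.

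For part (2), your forward direction is fine since $\pi(\CO)=R\cdot[\bar Q]$ on the nose. For the converse you should add one sentence on $F$-rationality: $\CO\cap\pi^{-1}([\bar Q])$ is a nonempty Zariski-open subset, defined over $F$, of the flag variety $\bar Q/\bar P_{min}^{\,0}$ of the Levi of $\bar Q$; since flag varieties are $F$-rational, this open set has an $F$-point, giving the desired good minimal parabolic inside $\bar Q$ over $F$. Invoking Chevalley is unnecessary here because $\pi(\CO)$ is already a single $R$-orbit, hence locally closed, and being dense it must be the open orbit.
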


Now let $Q=LU_Q$ be a good parabolic subgroup of $G$, by the proposition above, we have $R\cap U_Q=\{1\}$. This implies the intersection $R_Q=R\cap Q$ can be viewed as a subgroup of $L$. \textbf{The reduced model we want to study is just $(L,R_Q)$}. Since all good minimal parabolic subgroups are all conjugated to each other by some elements in $R(F)$, the model $(L,R_Q)$ is independent of the choice of $Q$ up to conjugation. There are two types of such reduced models: type I is those models appear on both $GL_6(F)$ case and the $GL_3(D)$ case; type II is those models appear only on the $GL_6(F)$ case.
\\
\\
\textbf{Type I}: There are only two models of type I. One is the trilinear $\GL_2$ model which comes from the parabolic subgroup of $(2,2,2)$ type (or $(1,1,1)$ type in the $\GL_3(D)$ situation). The other model is the "middle model" between the Ginzburg-Rallis model and the trilinear $\GL_2$ model which comes from the parabolic subgroup of type $(4,2)$ and $(2,4)$= (or type $(2,1)$ and $(1,2)$ in the $\GL_3(D)$ situation). We will study this two models in the next three sections.
\\
\\
\textbf{Type II}: The models of this type only appear on the $\GL_6(F)$ case, they don't have an analogy in the quaternion case. This include all the pairs $(L,R_Q)$ where $Q=LU_Q$ is a good proper parabolic subgroup of $\GL_6(F)$ which are not of $(4,2)$ or $(2,2,2)$ type. We will study these models in the last section.

\subsection{The trilinear model}
Choose $Q$ be the parabolic subgroup of $\GL_6(F)$ (resp. $\GL_3(D)$) of $(2,2,2)$ type (resp. $(1,1,1)$ type) which contains the lower minimal parabolic subgroup. Then it is easy to see the model $(L,R_Q)$ is just the trilinear $\GL_2$ model. In other word, $L(F)=(\GL_2(F))^3$, and $R_Q(F)=\GL_2(F)$ diagonally embedded into $L(F)$. This model has been studied by Prasad in his thesis \cite{P90}.

To make our notation simple, in this section we will temporarily let $G=GL_2(F)\times GL_2(F)\times GL_2(F)$ and $H=GL_2(F)$ diagonally embedded into $G$. For a given irreducible representation $\pi$ of $G$, assume $\omega_{\pi}=\chi^2$ for some character $\chi$ of $F^{\times}$. $\chi$ will induce a one-dimensional representation $\sigma$ of $H$. Let
\begin{equation}
m(\pi)=\dim \Hom_{H(F)} (\pi,\sigma).
\end{equation}

Similarly, we have the quaternion algebra version with the pair $G_D=GL_1(D)\times GL_1(D)\times GL_1(D)$ and $H_D=GL_1(D)$. We can still define the multiplicity $m(\pi_D)$. The following theorem has been proved by Prasad in his thesis \cite{P90} for general generic representation using different method. This can also be deduced essentially from Waldspurger's result on the model $(SO(4)\times SO(3),SO(3))$. By using our method in this paper, we can prove the supercuspidal case.

\begin{thm}
If $\pi$ is a supercuspidal representation of $G$, let $\pi_D$ be the Jacquet-Langlands correspondence of $\pi$ to $G_D$. (Since $\pi$ is supercuspidal, $\pi_D$ always exist.) Then
$$m(\pi)+m(\pi_D)=1.$$
\end{thm}

Now let $(G,H)$ be either $(G,H)$ or $(G_D,H_D)$ defined as above. Let $\CT$ be the subset of subtorus $T$ in $H$ defined in Section 5.1. We can also prove the local relative trace formula for this model, this will be used in the proof of the spectral side of the local relative trace formula for the Ginzburg-Rallis model in the forthcoming paper \cite{Wan16}.

Let $\theta$ be a quasi-character on $Z_G(F)\backslash G(F)$ with central character $\eta=\chi^2$, and $T\in \CT$. If $T=\{1\}$, then we are in the split case. Since there is a unique regular nilpotent orbit in $\Fg(F)$, we let $c_\theta(t)=c_{\theta,\CO_{reg}}(t)$. If $T=T_v$ for some $v\in F^{\times}/(F^{\times})^2, v\neq 1$, and $t\in T_v$ is a regular element, then $G_t$ is abelian. Since in this case the germ of quasi-character is just itself, we define $c_\theta(t)=\theta(t)$.

Let $f\in C_{c}^{\infty} (Z_G(F)\backslash G(F),\eta)$ be a strongly cuspidal function. For each $T\in \CT$, let $c_f$ be the function $c_{\theta_f}$ defined above. Define
$$I_{\eta}(f)=\sum_{T\in \CT} \mid W(H,T)\mid^{-1} \nu(T) \int_{Z_G(F)\backslash T(F)} c_f(t) D^H(t) \chi(\det(t))^{-1} dt.$$
By a similar argument as Proposition \ref{integrable 1}, we know the integral is absolutely convergent.

Now for $g\in G(F)$, we define the function $I_{\eta}(f,g)$ to be
$$I_{\eta}(f,g)=\int_{Z_H(F)\backslash H(F)} f(g^{-1}xg) \chi(\det(x))^{-1} dx.$$
And for each $N\in \BN$, we can still define the truncated function $\kappa_N$ in a similar way, let
$$I_{N,\eta}(f)=\int_{H(F)\backslash G(F)} I(f,g) \kappa_N(g) dg$$
The following theorem is an analogy of Theorem \ref{main 3} for this model, it is the geometric side of the local relative trace formula for the trilinear $\GL_2$ model.
\begin{thm}
For every strongly cuspidal function $f$ belonging to the space $C_{c}^{\infty} (Z_G(F)\backslash G(F),\eta)$, we have
$$\lim_{N\rightarrow \infty} I_{N,\eta}(f)=I_{\eta}(f).$$
\end{thm}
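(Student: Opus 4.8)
The plan is to reduce the trilinear $\GL_2$ trace formula to the machinery already developed in the body of the paper for the Ginzburg--Rallis model, since the trilinear model is both a reduced model (arising from the $(2,2,2)$ parabolic) and structurally simpler: here $R_Q = H = \GL_2(F)$ diagonally embedded with no unipotent part, so the analogue of $\xi$ is trivial and $\gf$ is replaced by the honest restriction $x \mapsto f(g^{-1}xg)$. First I would reduce to the trivial central character case exactly as in Proposition \ref{center issue}: both sides are linear in $f$, and using the finiteness of $Z_G(F)\backslash G(F)/\{g : \det g = 1\}$ one localizes $f$, extracts the character twist $\chi(\det g_0)^{-1}$, and replaces $f$ by its trivial-central-character extension $f'$. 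This shows the $\eta$-version follows from the version with $\eta$ trivial.

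Next, for $f$ with trivial central character, I would run the localization--transfer--truncation scheme of Sections 7--10 in this much simpler setting. The key point is that for $x \in H_{ss}(F)$ the relevant decomposition of $\Fg_{x,0}$ is immediate (when $x$ is central, $G_x = G$ and we recover the trilinear model itself; when $x$ is non-central, $H_x$ is a torus and the localized model is a Whittaker-type or abelian model), and the analogue of Lemma \ref{8} holds verbatim. The Premier Transform and the "generic position'' analysis of Sections 8--9 are unnecessary here because there is no unipotent radical $U$: $I(f,g) = \int_{Z_H(F)\backslash H(F)} f(g^{-1}xg)\,dx$ is already in the form to which Arthur's local trace formula (\cite{Ar91}) applies after the Weyl integration formula, with $T$ running over maximal tori of $H$ and the slice representation of $\Fh_0$ in $\Fg_0$ being essentially trivial to describe. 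One then changes $\kappa_N$ to Arthur's truncation as in Proposition \ref{change truncation} — and this is lighter because the relevant unipotent invariance statements (the analogues of 9.3(5),(6) and Lemma \ref{split zero}) involve only the much smaller groups $U_S$ coming from $G_x$ being a Levi, exactly as in Section 9.6 — obtaining $\lim_N I_N(f) = J(f)$ with $J(f) = \sum_{T\in T(G)} |W(G,T)|^{-1}\int_{\Ft^0(F)} D^G(X)^{1/2}\hat\theta_f(X)\,dX$ on the Lie algebra side, then transferring back via Propositions \ref{localization}, \ref{localization 1}, \ref{local theorem}.

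Finally I would close the gap $E(f) = J(f) - I(f) = 0$ by the argument of Section 11.2: $E$ is linear on strongly cuspidal functions, vanishes whenever all the germs $c_{\theta_f,\CO}$ vanish (using an inductive hypothesis handling $f$ whose support avoids nilpotents via the reduced/Whittaker models, for which the formula is known from Rodier \cite{Rod81}), and scales under $f \mapsto f^\lambda$; a dimension count on nilpotent orbits then shows $E$ is a multiple of $f \mapsto c_{\theta_f,\CO_{reg}}$, and this multiple is forced to be $0$ by Waldspurger's explicit test functions $f[X_d]$ attached to a split torus, for which $I(f) = J(f) = 1$ can be computed directly. The main obstacle I anticipate is bookkeeping in the non-split localization: verifying that the localized model $(G_x, H_x)$ at a non-split semisimple $x$ — where $G_x \cong \GL_2(F_v)^3$ or a product of abelian groups — produces precisely a product of lower-rank trilinear/Whittaker trace formulas whose geometric sides are already established, so that the induction in $(th')_G \Rightarrow (th)_G$ can be carried out. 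Everything else is a direct transcription of the Ginzburg--Rallis arguments with $U$ suppressed; I would present only the points where the simplification is substantive and refer to \cite{Wan17} for the routine parts.
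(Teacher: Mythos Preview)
Your outline is correct and matches the paper's approach: the paper itself does not give a proof here, stating only that it follows the Ginzburg--Rallis argument of Sections 7--10 with the obvious simplifications, and refers to \cite{Wan17} for details. Your reduction to trivial central character, localization, truncation change, and homogeneity argument to kill $E(f)$ are exactly the right steps.

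Two small corrections to your bookkeeping. First, at a non-split regular $x\in H_{ss}(F)$ the centralizer is \emph{not} $\GL_2(F_v)^3$: since $x=(x_0,x_0,x_0)$ with $x_0$ regular elliptic in $\GL_2(F)$, you get $G_x=(F_v^{\times})^3$ and $H_x=F_v^{\times}$, which is abelian --- so the localized model is trivially handled by evaluation (this is why $c_\theta(t)=\theta(t)$ in the statement of the trilinear trace formula), and no Whittaker model ever enters. Second, it is an overstatement to say the Section~8 analysis is unnecessary: you still need an integral transfer to pass from $\int_{H\backslash G}$ to $\sum_T\int_{A_T\backslash G}$, i.e.\ an explicit parametrization of the generic $H$-orbits on a slice in $\Fg_0$ by regular elements of maximal tori of $G$; it is just that without a unipotent radical this reduces to elementary linear algebra on $\Fgl_2\oplus\Fgl_2$ (the complement of the diagonal) rather than the calculations of Lemmas~\ref{unipotent orbit} and~\ref{orbit 1}.
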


\begin{rmk}
As in the Ginzburg-Rallis model case, the relative trace formula above will give us a multiplicity formula for $m(\pi)$ when $\pi$ is supercuspidal. We will skip the details here.
\end{rmk}

\subsection{The generalized trilinear $\GL_2$ models}
In this section, we consider the generalized trilinear $\GL_2$ models. These models was first considered by Prasad in \cite{P92} for general generic representations using different methods. By using our method in this paper, we can prove the supercuspidal case.

\textbf{Case I:}
Let $K/F$ be a cubic field extension, $G(F)=\GL_2(K)$, and $H(F)=\GL_2(F)$. On the mean time, let $G_D(F)=\GL_1(D_K)$ and $H_D(F)=\GL_1(D)$ where $D_K=D\otimes_F K$. For a given irreducible representation $\pi$ of $G(F)$, assume that the restriction of the central character $\omega_{\pi}:K^{\times}\rightarrow \BC^{\times}$ to $F^{\times}$ equals $\chi^2$ for some character $\chi$ of $F^{\times}$. $\chi$ will induce a one-dimensional representation $\sigma$ of $H(F)$. Let
\begin{equation}
m(\pi)=\dim \Hom_{H(F)} (\pi,\sigma).
\end{equation}
Similarly we can define $m(\pi_D)$ for an irreducible representation $\pi_D$ of $G_D(F)$. The following theorem has been proved by Prasad in \cite{P92} for general generic representation using different method. By using our method in this paper, we can prove the supercuspidal case.
\begin{thm}
If $\pi$ is a supercuspidal representation of $G$, let $\pi_D$ be the Jacquet-Langlands correspondence of $\pi$ to $G_D$. (Since $\pi$ is supercuspidal, $\pi_D$ always exist.) Then
$$m(\pi)+m(\pi_D)=1.$$
\end{thm}

We can also prove the relative trace formula for this model and the multiplicity formulas for $m(\pi)$ and $m(\pi_D)$. Since the formulas will be similar to the trilinear $\GL_2$ model case in previous section, we will skip the details here.

\textbf{Case II:} Let $E=F_v$ be a quadratic extension of $F$ where $v$ is a non-trivial square class in $F^{\times}$. Let $G(F)=\GL_2(E)\oplus \GL_2(F),\; H(F)=\GL_2(F),\; G_D(F)=\GL_2(E)\times \GL_1(D)$ and $H_D(F)=\GL_1(D)$. As in the previous cases, we can define the multiplicity $m(\pi)$ (resp. $m(\pi_D)$) for the model $(G(F),H(F))$ (resp. $(G_D(F),H_D(F))$). By using our method in this paper, we can still prove that the summation of the multiplicities over any supercuspidal L-packet is 1. We can also prove the relative trace formula and the multiplicity formula. However, there is one difference between this case and all previous cases, this will be discussed in the following remark.

\begin{rmk}
In previous cases, in the geometric side of the trace formula (or the multiplicity formula), we are integrating the germs of the distribution over all nonsplit tori of $H(F)$. But in this case, we only need to integrate over those nonsplit tori which is not isomorphic to $T_v$. The reason is that in this case, both $G(F)$ and $G_D(F)$ contain $\GL_2(E)$. As a result, for an element in $T_v(F)\cap H(F)_{reg}$ (or $T_v(F)\cap H_D(F)_{reg}$), although it is elliptic in $H(F)$ and $H_D(F)$, it will no longer be elliptic in $G(F)$ or $G_D(F)$. Therefore the localization at this element will be zero. This is why the torus $T_v$ will not show up in the multiplicity formula and the geometric side of the relative trace formula.
\end{rmk}

\subsection{The middle model}
Choose $Q$ be the parabolic subgroup of $\GL_6(F)$ (resp. $\GL_3(D)$) of $(4,2)$ type (resp. $(2,1)$ type) which contains the lower minimal parabolic subgroup. Then the reduced model $(L,R_Q)$ we get is the following (Once again to make our notation simple, we will use $(G,\;H,\; U)$ instead of $(L,H_Q)$):  Let $G=GL_4(F)\times GL_2(F)$ and $P=MU$ be the parabolic subgroup of $G(F)$ with the Levi part $M$ isomorphic to $GL_2(F)\times GL_2(F)\times GL_2(F)$ (i.e. $P$ is the product of the second $GL_2(F)$ and the parabolic subgroup $P_{2,2}$ of the first $GL_4(F)$). The unipotent radical $U$ consists of elements of the form
\begin{equation}
u=u(X):=\begin{pmatrix} 1 & X & 0 \\ 0 & 1 & 0 \\ 0 & 0 & 1 \end{pmatrix},\; X\in M_2(F).
\end{equation}
The character $\xi$ on $U$ is defined to be $\xi(u(X))=\psi(\tr(X))$. Let $H=\GL_2(F)$ diagonally embeded into $M$. For a given irreducible representation $\pi$ of $G$, assume $\omega_{\pi}=\chi^2$ for some character $\chi$ of $F^{\times}$. $\chi$ will induce a one-dimensional representation $\sigma$ of $H$. Combining $\xi$ and $\sigma$, we have a one-dimensional representation $\sigma\otimes \xi$ of $R:=H\ltimes U$. Let
\begin{equation}
m(\pi)=\dim\Hom_{R(F)} (\pi,\sigma\otimes \xi).
\end{equation}
This model can be thought as the "middle model" between the Ginzburg-Rallis model and the trilinear model of $\GL_2$.

Similarly, for the quaternion algebra case, we can define the multiplicity $m(\pi_D)$. The following theorem is an analogy of Theorem \ref{main} for this model, which can be proved by our method in this paper.

\begin{thm}
If $\pi$ is a supercuspidal representation of $G$, let $\pi_D$ be the Jacquet-Langlands correspondence of $\pi$ to $G_D$. (Since $\pi$ is supercuspidal, $\pi_D$ always exist.) Then
$$m(\pi)+m(\pi_D)=1.$$
\end{thm}

Consider the pair $(G,H)$ that is either $(G,H)$ or $(G_D,H_D)$ as defined above, and take $\CT$ to be the subset of subtorus $T$ in $H$ defined in Section 5.1. We can also prove the local relative trace formula for this model, this will be used in the proof of the spectral side of the local relative trace formula for the Ginzburg-Rallis model in the forthcoming paper \cite{Wan16}.

Let $\theta$ be a quasi-character on $Z_G(F)\backslash G(F)$ with central character $\eta=\chi^2$, and $T\in \CT$. If $T=\{1\}$, then we are in the split case. Since there is a unique regular nilpotent orbit in $\Fg(F)$, we let $c_\theta(t)=c_{\theta,\CO_{reg}}(t)$. If $T=T_v$ for some $v\in F^{\times}/(F^{\times})^2, v\neq 1$, and $t\in T_v$ is a regular element, $G_t=GL_2(F_v)\times GL_1(F_v)$. Let $\CO=\CO_1\times \CO_2$ where $\CO_1$ is the unique regular nilpotent orbit in $\Fg \Fl_2(F_v)$ and $\CO_2=\{0\}$ is the unique nilpotent orbit in $\Fg \Fl_1(F_v)$, define $c_\theta(t)=c_{\theta,\CO}(t)$. Note that $\Fg \Fl_1(F_v)$ is abelian, the only nilpotent element is zero, the germ expansion is just evaluation.

Let $f\in C_{c}^{\infty} (Z_G(F)\backslash G(F),\eta)$ be a strongly cuspidal function. For each $T\in \CT$, let $c_f$ be the function $c_{\theta_f}$ defined above. We define a function $\Delta$ on $H_{ss}(F)$ by
$$\Delta(x)=\mid \det((1-ad(x)^{-1})_{\mid U(F)/U_x(F)}) \mid_F.$$
Similarly, define $\Delta$ on $\Fh_{ss}(F)$ by
$$\Delta(X)=\mid \det((1-ad(X)^{-1})_{\mid U(F)/U_x(F)}) \mid_F.$$
Let
$$I_{\eta}(f)=\sum_{T\in \CT} \mid W(H,T)\mid^{-1} \nu(T) \int_{Z_G(F)\backslash T(F)} c_f(t) D^H(t) \Delta(t) \chi(\det(t))^{-1} dt.$$
By a similar argument as Proposition \ref{integrable 1}, we know the integral is absolutely convergent.

Now for $g\in G(F)$, we define the function ${}^g f^{\xi}$ on $H(F)$ to be
$${}^g f^{\xi}(x)=\int_{U(F)} f(g^{-1}xug)\xi(u) du.$$
This is a function belonging to $C_{c}^{\infty}(Z_H(F)\backslash H(F),\eta)$. Let
$$I_{\eta}(f,g)=\int_{Z_H(F)\backslash H(F)} f(g^{-1}xg) \chi(det(x))^{-1}dx.$$
And for each $N\in \BN$, we can still define the truncated function $\kappa_N$ in a similar way. Let
$$I_{N,\eta}(f)=\int_{U(F)H(F)\backslash G(F)} I(f,g) \kappa_N(g) dg.$$
The following theorem is an analogy of Theorem \ref{main 3} for this model, it is the geometric side of the local relative trace formula for the middle model.
\begin{thm}
For every strongly cuspidal function $f$ belonging to the space $C_{c}^{\infty} (Z_G(F)\backslash G(F),\eta)$, we have
\begin{equation}
\lim_{N\rightarrow \infty} I_{N,\eta}(f)=I_{\eta}(f).
\end{equation}
\end{thm}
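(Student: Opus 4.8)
The plan is to mirror, almost line for line, the proof of Theorem \ref{main 3} for the Ginzburg-Rallis model carried out in Sections 7 through 10 and Section 11, since the pair $(G,H,U)=(\GL_4(F)\times\GL_2(F),\GL_2(F),U)$ is spherical with geometry entirely parallel to that of $(GL_6,R)$. As a first step, exactly as in Proposition \ref{center issue}, I would reduce to the case of trivial central character: both sides are linear in $f$, the double coset space $Z_G(F)\backslash G(F)/\{g:\det(g)=1\}$ is finite, so after localizing $f$ one may extend $f|_{g_0X}$ by a trivial central character to a strongly cuspidal $f'$, and the identities $I_{N,\eta}(f)=\omega^{-1}(\det(g_0))I_N(f')$ together with its geometric analogue transport the desired equality from $f'$ back to $f$.

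Next comes the localization step. For each $x\in H_{ss}(F)$ I would fix a good neighborhood $\omega$ of $0$ in $\Fg_x(F)$ and establish $I_{N,\eta}(f)=C(x)I_{x,\omega,N}(f)$ and $I_\eta(f)=C(x)I_{x,\omega}(f)$ with $C(x)=D^H(x)\Delta(x)$, following Propositions \ref{localization} and \ref{localization 1}; the inputs are the Weyl integration formula on $H$, a Lemma \ref{8}-type statement recording which tori $T\in T(H)$ contribute after localization, and the change of variables $v\mapsto(x\exp(X)u)^{-1}v^{-1}(x\exp(X)u)v$ on $U_x(F)\backslash U(F)$ whose Jacobian is precisely $\Delta(x)$ by condition $(7)_\rho$ of the good neighborhood. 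This reduces everything to a Lie algebra trace formula (the analogue of Theorem \ref{main 2}) for the localized models: when $x$ is central the localized model is the middle model itself on $\Fg_0(F)$; when $x$ is split but not central the relevant localization vanishes (by the argument of Section 9.6, or more cleanly by the spectral argument of Appendix A.3); and when $x$ is non-split the localized model degenerates to a Whittaker-type model on a smaller general linear group, where the trace formula follows from Rodier's multiplicity-one result together with the spectral side, exactly as in Section 11.4.

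For the Lie algebra statement itself, I would run the integral-transfer machinery of Section 8: the ``Premier Transform'' expressing $(f^\xi)^{\hat{}}(Y)$ as an integral of $\hat f$ over an affine subspace $\Xi+\Sigma$, the description of $\Xi+\Sigma$ as a $U(F)$-orbit space, the identification of the subset in ``generic position'', and the resulting bijection of $(\Xi+\Sigma^0)/H(F)U(F)$ with $\coprod_{T\in T(G)}\Ft^0(F)/W(G,T)$, with Jacobian $D^G(t)^{1/2}$ as in Lemma \ref{measure 1}; this brings $I_\kappa(f)$ into the shape $\sum_T\int_{\Ft^0(F)}\int_{A_T(F)\backslash G(F)}$ that matches Arthur's local trace formula. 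Then, using Proposition \ref{truncation} and the combinatorial partition of unity of Section 9, together with the strong cuspidality of $f$ via Lemma \ref{strongly cuspidal} (which annihilates every proper parabolic contribution), I would replace $\kappa_{N,X''}$ by Arthur's $\tilde v(g)$ on the region $\Ft_0(F)[>N^{-b}]$, controlling the discarded piece $I_{N,\le N^{-b}}$ as $O(N^{-1})$ by a Schwartz-inequality estimate on measures of small neighborhoods of zero sets of polynomials. Arthur's computation of $\tilde v$ then yields $\lim_{N}I_{x,\omega,N}(f)=J_{x,\omega}(f)$, an integral of $\theta_{f,x,\omega}^{\sharp}$; by Proposition \ref{partial FT} and the Shalika-germ identities this equals $I_{x,\omega}(f)$ up to a single scalar multiple of the regular-germ coefficient, and that scalar is pinned to $0$ as in Section 11.2 by testing against Waldspurger's special functions $f[X_d]$ attached to a split torus, for which one computes $I(f[X_d])=J(f[X_d])=1$.

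The main obstacle, just as in the Ginzburg-Rallis case, will be establishing the invariance of the truncated function $\kappa_N$ under the relevant unipotent subgroups --- the analogues of Lemma \ref{U-invariant 2}, Lemma \ref{major 2} and Lemma \ref{split zero} and their case analysis --- because the character $\xi$ is not attached to the simple roots, so the unipotent-invariance statements that are automatic in the Gan-Gross-Prasad setting require genuine work. Here, however, the unipotent radical $U=\{u(X):X\in M_2(F)\}$ is abelian, which noticeably simplifies the ``generic position'' combinatorics and the Bruhat-type bounds relating $\sigma(\gamma_Y)$ to $1+|\log D^G(Y)|$; for that reason I expect these technical lemmas to go through more smoothly than in the Ginzburg-Rallis model, and once they are in place the absolute convergence of the geometric side follows from the argument proving Proposition \ref{integrable 1}.
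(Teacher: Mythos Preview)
Your proposal is correct and follows essentially the same approach the paper indicates: the paper does not give a separate proof of this theorem but explicitly states that the argument is the same as for the Ginzburg-Rallis model (Sections 7--10 and the final Section \ref{prove of theorem}), referring to \cite{Wan17} for details. Your outline faithfully reproduces that strategy---center reduction as in Proposition \ref{center issue}, localization as in Propositions \ref{localization} and \ref{localization 1}, integral transfer and change of truncation via Sections 8--9, Arthur's local trace formula as in Proposition \ref{10 final}, and the homogeneity/test-function endgame of Section 11---and your observations about the simplifications (abelian $U$, Whittaker-type localized model at non-split $x$) are accurate.
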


\begin{rmk}
As in the Ginzburg-Rallis model case, the relative trace formula above will give us a multiplicity formula for $m(\pi)$ when $\pi$ is supercuspidal. We will skip the details here.
\end{rmk}

\begin{rmk}
If $Q$ is of type $(2,4)$, the reduced model will still be the middle model as in the $(4,2)$ case. The same results in this section will still hold.
\end{rmk}

\subsection{The type II models}
Now we focus on the case $G=\GL_6(F)$. Let $Q=LU_Q$ be a good parabolic subgroup of $G$ which is not of type $(2,2,2)$, $(4,2)$ or $(2,4)$. Then we get the reduced model $(L,R_Q)$, and the character $\sigma_Q\times \xi_Q$ on $R_Q$ is just the restriction of the character $\sigma\times \xi$. For an irreducible admissible representation $\pi$ of $L(F)$, we can still define the Hom space and the multiplicity $m(\pi)$. Since there are too many parabolic subgroup of this type, we will not write down all such models. Instead, we will only write down the reduced model for maximal parabolic subgroups (i.e. type $(5,1)$, type $(3,3)$ and type $(1,5)$). All other models can be viewed as the reduced model of the maximal ones. \textbf{The most important feature of such models is that in this cases, all semisimple elements in $R_Q$ are split. As a result, when we study the localization of the local relative trace formula for such models, it will always be zero unless we are localizing at the center. Therefore for such models, the geometric side of the local relative trace formula and the geometric multiplicity formula only contain the germ at 1.} We first write down the group $R_Q$ when $Q$ is a maximal parabolic subgroup.
\\
\\
\textbf{Type (5,1):} Let $Q=LU_Q$ be the parabolic subgroup of $\GL_6(F)$ of type $(5,1)$ which contains the lower Borel subgroup. Then $L=\GL_5(F)\times \GL_1(F)$ and $R_Q=H_QU_Q\subset L$ is of the following form:
$$H_Q=\{h_Q(a,b,x)=diag(\begin{pmatrix} a&0\\ x&b \end{pmatrix}, \begin{pmatrix} a&0\\ x&b \end{pmatrix}, \begin{pmatrix} a \end{pmatrix})\times \begin{pmatrix} b \end{pmatrix}| a,b\in F^{\times}, \; x\in F\}$$
and
$$U_Q=\{u_Q(X,Y_1,Y_2)=\begin{pmatrix} I_2& X& Y_1 \\ 0&I_2&Y_2\\0&0&1  \end{pmatrix} \times \begin{pmatrix} 1 \end{pmatrix}| X\in M_{2\times 2}(F),\; Y_1,Y_2\in M_{1\times 2}(F) \}.$$
Let $Y_i=\begin{pmatrix} y_{i1}\\ y_{i2} \end{pmatrix}$ for $i=1,2$. Then the restriction of $\sigma\otimes \xi$ to $R_{\bar{Q}}$ is
$$\sigma_Q\times \xi_Q:h_Q(a,b,x)u_Q(X,Y_1,Y_2)\rightarrow \chi(ab)\psi(\tr(X)+y_{21}).$$
\\
\\
\textbf{Type (3,3):} Let $Q=LU_Q$ be the parabolic subgroup of $\GL_6(F)$ of type $(3,3)$ which contains the lower Borel subgroup. Then $L=\GL_3(F)\times \GL_3(F)$ and $R_Q=H_QU_Q\subset L$ is of the following form:
$$H_Q=\{h_Q(a,b,x)=\begin{pmatrix}a&0&0\\x&b&0\\0&0&a\end{pmatrix} \times \begin{pmatrix}b&0&0\\0&a&0\\0&x&b\end{pmatrix} | a,b\in F^{\times},\; x\in F\}$$
and
$$U_Q=\{u_Q(x_1,x_2,y_1,y_2)= \begin{pmatrix}1&0&x_1\\0&1&x_2\\0&0&1\end{pmatrix} \times \begin{pmatrix}1&y_1&y_2\\0&1&0\\0&c&1\end{pmatrix}| x_1,x_2,y_1,y_2\in F\}.$$
Then the restriction of $\sigma\otimes \xi$ to $R_{\bar{Q}}$ is
$$\sigma_Q\times \xi_Q: h_Q(a,b,x)u_Q(x_1,x_2,y_1,y_2)\mapsto \chi(ab)\psi(x_1+y_2).$$
\\
\\
\textbf{Type (1,5):} This is similar to the type (5,1) case above, we will skip it here.
\\
\\
By the description above, it is easy to see that all semisimple elements in $R_Q$ are split. Now we are ready to state the multiplicity formula and the trace formula. Let $f\in C_{c}^{\infty} (Z_L(F)\backslash L(F),\eta_L)$ be a strongly cuspidal function where $\eta_L$ is a character on $Z_L(F)$ whose restriction on $Z_G(F)$ equals to $\eta=\chi^2$. For $g\in L(F)$, we define the function ${}^g f^{\xi}$ on $H_Q(F)$ to be
$${}^g f^{\xi}(x)=\int_{U_Q(F)} f(g^{-1}xug)\xi_Q(u) du.$$
This is a function belonging to $C_{c}^{\infty}(Z_H(F)\backslash H_Q(F),\eta)$. Let
$$I_{\eta}(f,g)=\int_{Z_H(F)\backslash H_Q(F)} f(g^{-1}xg) \chi(det(x))^{-1}dx.$$
And for each $N\in \BN$, we can still define the truncated function $\kappa_N$ in a similar way. Let
$$I_{N,\eta}(f)=\int_{U(F)H(F)\backslash G(F)} I(f,g) \kappa_N(g) dg.$$
Now we are able to state the geometric multiplicity formula and the local relative trace formula for these models, which are analogies of Theorem \ref{main} and Theorem \ref{main 3}.

\begin{thm}\label{type II}
\begin{enumerate}
\item If $\pi$ is supercuspidal representation of $L$, we have
$$m(\pi)=c_{\theta_{\pi},\CO_{reg}}(1)=1.$$
\item For all strongly cuspidal function $f\in C_{c}^{\infty} (Z_L(F)\backslash L(F),\eta_L)$, we have
$$\lim_{N\rightarrow \infty} I_{N,\eta}(f)=c_{\theta_f,\CO_{reg}}(1).$$
\end{enumerate}
\end{thm}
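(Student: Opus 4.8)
\textbf{Proof strategy for Theorem \ref{type II}.}
The plan is to run the entire machinery of Sections 6--11 for each type II reduced model $(L,R_Q)$, observing that in this case it degenerates dramatically because all semisimple elements of $R_Q(F)$ are split. Concretely, for part (2) I would first set up the truncated distribution $I_{N,\eta}(f)$ exactly as in Section 5.2, define the geometric side via germs of the quasi-character $\theta_f$, and reduce to trivial central character by the argument of Proposition \ref{center issue}. The key structural input, which replaces the role of $\CT$ in the Ginzburg-Rallis case, is that $R_Q$ contains no nonsplit elliptic torus: every $x\in (H_Q)_{ss}(F)$ is split. Hence when we localize $I_N(f)$ at a semisimple point $x$ as in Section 7, either $x$ lies in the center, or $x$ is a noncentral split element. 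The first case gives the germ at $1$; the second case must be shown to contribute zero.

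The heart of the argument is therefore the vanishing of the localization at noncentral split elements, which is precisely the phenomenon already established in Section 9.6 and Appendix A.2 for the split non-central case of the Ginzburg-Rallis model, and independently reproved spectrally in Appendix A.3. For the type II models the same argument applies verbatim: after transferring the integral to Arthur's form via Section 8 and changing the truncated function via Proposition \ref{change truncation} (or its analogue), one uses that $G_x=L_x$ is a proper Levi of $L$, so that $L_x$ itself produces a nontrivial unipotent subgroup $U_S$ of $L$, and strong cuspidality of $f$ together with Lemma \ref{strongly cuspidal} kills the inner integral. Thus $I_{x,\omega,N}(f)=0$ for $N$ large whenever $x$ is noncentral, and only $x=1$ survives. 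At $x=1$ the model localizes to a Whittaker-type model on the semisimple part (the center factor contributes only its volume), and by the argument of Section 11.4 (either the geometric induction of Method I, or Rodier's multiplicity formula plus the spectral expansion of Method II) the limit equals $c_{\theta_f,\CO_{reg}}(1)$. Assembling these via the localization principle of Section 11 (the analogue of $(th')_L\Rightarrow(th')_{\Fl}\Rightarrow(th)_{\Fl}\Rightarrow(th)_L$, where now $E(f)$ is again a multiple of $f\mapsto c_{\theta_f,\CO_{reg}}$ and the multiple is pinned to zero by Waldspurger's special test function $f[X_d]$) gives part (2).

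For part (1), I would deduce it from part (2) exactly as Theorem \ref{main} follows from Theorem \ref{main 3} in Section 6: if $\pi$ is supercuspidal, its matrix coefficient $f$ is strongly cuspidal, $\tr(\rho(f))=I_{N,\eta}(f)$ for $N$ large by the Frobenius-reciprocity/Bernstein-component computation of Proposition \ref{prop 12}, and Arthur's identity $\theta_f=f(1)d(\pi)^{-1}\theta_\pi$ converts the right-hand side into $f(1)d(\pi)^{-1}c_{\theta_\pi,\CO_{reg}}(1)$; comparing gives $m(\pi)=c_{\theta_\pi,\CO_{reg}}(1)$, and this equals $1$ by Rodier's theorem \cite{Rod81} since supercuspidal (hence generic) representations of $\GL_n$ have nonzero Whittaker coefficient, and the relevant Shalika germ at the regular orbit is $1$.

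The main obstacle I anticipate is purely bookkeeping rather than conceptual: one must verify case by case, over all good parabolic $Q$ of types $(5,1),(3,3),(1,5)$ and their further degenerations, that the premier transform of Section 9.2, the orbit analysis of Section 9.3 (the analogue of Lemma \ref{orbit 1}), and the unipotent-invariance lemmas for $\kappa_N$ (the analogues of Lemma \ref{U-invariant 2}, Lemma \ref{split zero}, and the case analysis of Appendix A.2) all go through with the explicit $H_Q$, $U_Q$, $\xi_Q$ written above. Since $U_Q$ in these models is abelian or nearly so and $H_Q$ is a small solvable group, these computations are strictly easier than in the Ginzburg-Rallis case, and — as the excerpt notes — the clean way to handle them is to invoke the spectral-side shortcut of Appendix A.3: once a spectral expansion $\lim_N I_N(f)=\int_{\Pi_{temp}}\theta_f(\pi)m(\bar\pi)\,d\pi$ is available (to be proved in \cite{Wan16}), and $m(\bar\pi)=m_{geom}(\bar\pi)$ for non-discrete-series by induction, the difference $\lim_N I_N(f)-I(f)$ is supported on discrete series, hence vanishes when $\mathrm{Supp}(f)$ avoids elliptic elements, giving the vanishing of all noncentral localizations at once. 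I would present the full details in \cite{Wan17} and only sketch them here.
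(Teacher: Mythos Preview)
Your proposal is correct and aligns with the paper's own treatment: the paper does not give a proof of Theorem~\ref{type II} at all, but only states the key structural fact (all semisimple elements of $R_Q$ are split, so localization vanishes except at the center and the geometric side reduces to the single germ $c_{\theta_f,\CO_{reg}}(1)$) and refers the reader to \cite{Wan17} for details. Your sketch expands exactly this outline---running Sections~7--10 for $(L,R_Q)$, invoking the split-case vanishing of Section~9.6/Appendix~A, and deducing part~(1) from part~(2) via the matrix-coefficient argument of Proposition~\ref{prop 12}---so you have recovered the intended proof strategy in full.
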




\begin{thebibliography}{dhjiang}
\bibitem[Ar81]{Ar81}
J. Arthur,
{\it The trace formula in invariant form.} Ann. of Math. (2) 114 (1981), 1--74

\bibitem[Ar87]{Ar87}
J. Arthur,
{\it The characters of supercuspidal representations as weighted orbital integral.} Proc. Indian Acad. Sci. 97 (1987), 3--19.

\bibitem[Ar91]{Ar91}
J. Arthur,
{\it A local trace forumula.} Publ. Math. Inst. Hautes \'Etudes Sci. 73(1991), 5--96

\bibitem[Ar93]{Ar93}
J. Arthur,
{\it On elliptic tempered characters.} Acta Math. 171 (1993), 73--138

\bibitem[B12]{B12}
R. Beuzart-Plessis,
{\it La conjecture locale de Gross-Prasad pour les repr¨¦sentations temp\'er\'ees des groupes unitaires.} Prepublication, 2012.

\bibitem[B15]{B15}
R. Beuzart-Plessis,
{\it A local trace formula for the Gan-Gross-Prasad conjecture for unitary groups: the archimedean case.} Preprint, 2015.

\bibitem[DKV84]{DKV84}
P. Deligne, D. Kazhdan, M.-F. Vign\'eras,
{\it Repr\'esentations des alg\`ebres centrales simples p-adiques.}  Repr\'esentations des groupes r\'eductifs sur un corps local, Travaux en Cours, Paris(1984): Hermann, pp. 33--117


\bibitem[GGP12]{GGP12}
Wee Teck Gan, B. Gross; D. Prasad,
{\it Symplectic local root numbers, central critical $L$-values, and restriction problems in the representation theory of classical groups}.
Sur les conjectures de Gross et Prasad. I. Ast\'erisque No. 346 (2012), 1--109.

\bibitem[GR00]{GR00}
D. Ginzburg, S. Rallis,
{\it The exterior cube L-function for GL(6).} Compositio Math. 123(2000), no. 3, 243--272

\bibitem[GP92]{GP92}
B. Gross, D. Prasad,
{\it On the decomposition of a representation of $SO_n$ when restricted to $SO_{n-1}$.} Can. J. Math. Vol. 44 (1992), 974--1002

\bibitem[GP94]{GP94}
B. Gross, D. Prasad,
{\it On irreducible representations of $SO_{2n+1}\times SO_{2m}$. } . Canad. J. Math. Vol. 46 (1994), no. 5, 930--950.

\bibitem[HK04]{HK04}
M. Harris, S. Kudla,
{\it On a conjecture of Jacquet. Contributions to automorphic forms, geometry, and number theory.} Johns Hopkins Univ. Press, 2004, 355--371

\bibitem[J08]{J08}
Dihua Jiang,
{\it Residues of Eisenstein series and related problems.} Eisenstein Series and Applications, Progress in Math, 258, 187--204, 2008

\bibitem[JSZ11]{JSZ11}
Dihua Jiang, Bingyong Sun, Chengbo Zhu,
{\it Uniqueness of the Ginzburg-Rallis models: the archimedean case.} Trans. Amer. Math. Soc. 363 (2011), 2763--2802.

\bibitem[KR94]{KR94}
S. Kudla, S. Rallis,
{\it A regularized Siegel-Weil formula: the first term identity.}  Ann. Math. 140 (1994), 1--80.


\bibitem[N06]{N06}
C. Nien,
{\it Models of representations of general linear groups over p-adic fields.} PHD Thesis, University of Minnesota, 2006.

\bibitem[P90]{P90}
D. Prasad,
{\it Trilinear forms for representations of GL(2) and local (epsilon)-factors.} Compositio Math. 75 (1990), 1--46

\bibitem[P92]{P92}
D. Prasad,
{\it Invariant forms for representations of GL(2) over a local field.} American J. of Maths, vol. 114, 1317-1363 (1992).


\bibitem[Rod81]{Rod81}
F. Rodier,
{\it Mod\`ele de Whittaker et caract\`eres de repr\'esentations.} Noncommutative harmonic analysis, Lecture Notes in Mathematics, vol.466, eds J. Carmona, J. Dixmier and M. Vergne(Springer, Berlin, 1981), 151--171.


\bibitem[SV]{SV}
Y. Sakellaridis, A. Venkatesh,
{\it Periods and harmonic analysis on spherical varieties.} Prepublication 2012

\bibitem[W10]{W10}
J.-L. Waldspurger,
{\it Une formule int\'egrale reli\'ee \`a la conjecture locale de Gross-Prasad.} Compos. Math. 146(2010), no.5, 1180--1290.

\bibitem[W12]{W12}
J.-L. Waldspurger,
{\it Une formule int\'egrale reli\'ee \`a la conjecture locale de Gross-Prasad, 2e partie : extension aux repr\'esentations temp\'er\'ees.}  in "Sur les conjectures de Gross et Prasad. I" Ast\'erisque No. 346 (2012), 171--312

\bibitem[Wan16]{Wan16}
Chen Wan,
{\it Multiplicity One Theorem for the Ginzburg-Rallis Model: the tempered case.} Submitted, 2016. 74 pages

\bibitem[Wan17]{Wan17}
Chen Wan,
{\it The Ginzburg-Rallis model.} PhD Thesis, 2017.

\end{thebibliography}
\end{document}